\newcommand\Author{\small Y. LOIZIDES}
\newcommand\Title{\small NORM-SQUARE LOCALIZATION}
\titleformat{\section}{\normalfont\scshape\centering}{\thesection}{1em}{}  
\titleformat{\subsection}[runin]{\bfseries}{\thesubsection}{1em}{}[]  
\titleformat{\subsubsection}[runin]{\bfseries}{\thesubsubsection}{1em}{}[]
\theoremstyle{plain} \newtheorem{theorem}{Theorem}[section] \newtheorem{proposition}[theorem]{Proposition} \newtheorem{lemma}[theorem]{Lemma} \newtheorem{corollary}[theorem]{Corollary}
\theoremstyle{definition} \newtheorem{definition}[theorem]{Definition}  
\theoremstyle{remark} \newtheorem{remark}[theorem]{Remark}
\def\lieg{\ensuremath{\mathfrak{g}}}
\def\lieh{\ensuremath{\mathfrak{h}}}
\def\liet{\ensuremath{\mathfrak{t}}}
\def\Eul{\ensuremath{\textnormal{Eul}}}
\def\Ad{\ensuremath{\textnormal{Ad}}}
\def\pr{\ensuremath{\textnormal{pr}}} 
\def\Pol{\ensuremath{\textnormal{Pol}}}
\def\vol{\ensuremath{\textnormal{vol}}}
\def\im{\ensuremath{\textnormal{im}}}
\def\ker{\ensuremath{\textnormal{ker}}}
\def\supp{\ensuremath{\textnormal{supp}}}
\def\comp{\ensuremath{\textnormal{comp}}}
\def\ann{\ensuremath{\textnormal{ann}}}
\def\anti{\ensuremath{\textnormal{anti}}}
\def\hor{\ensuremath{\textnormal{hor}}}
\def\sgn{\ensuremath{\textnormal{sgn}}}
\def\comp{\ensuremath{\textnormal{comp}}}
\def\red{\ensuremath{\textnormal{red}}}
\def\Hom{\ensuremath{\textnormal{Hom}}}
\def\Hol{\ensuremath{\textnormal{Hol}}}
\def\DH{\ensuremath{\textnormal{DH}}}
\def\a{\ensuremath{\textnormal{a}}}
\def\Id{\ensuremath{\textnormal{Id}}}
\def\min{\ensuremath{\textnormal{min}}}
\def\Umod{\ensuremath{\textnormal{U}_{\textnormal{mod}}}}
\def\frakm{\ensuremath{\mathfrak{m}}}  
\def\fraku{\ensuremath{\mathfrak{u}}}
\def\calM{\ensuremath{\mathcal{M}}} 
\def\calR{\ensuremath{\mathcal{R}}}
\def\calC{\ensuremath{\mathcal{C}}}
\def\calW{\ensuremath{\mathcal{W}}}
\def\calF{\ensuremath{\mathcal{F}}}
\def\calV{\ensuremath{\mathcal{V}}} 
\def\calB{\ensuremath{\mathcal{B}}}
\def\calD{\ensuremath{\mathcal{D}}}
\def\calY{\ensuremath{\mathcal{Y}}}
\def\calU{\ensuremath{\mathcal{U}}}
\def\calT{\ensuremath{\mathcal{T}}}
\def\calN{\ensuremath{\mathcal{N}}}
\def\calZ{\ensuremath{\mathcal{Z}}}
\def\calS{\ensuremath{\mathcal{S}}}
\def\calX{\ensuremath{\mathcal{X}}}
\def\hcalX{\ensuremath{\hat{\mathcal{X}}}}
\def\hPhi{\ensuremath{\hat{\Phi}}}
\def\hX{\ensuremath{\hat{X}}}
\def\bC{\ensuremath{\mathbb{C}}}
\def\bbeta{\ensuremath{\overline{\beta}}}
\def\balpha{\ensuremath{\overline{\alpha}}}
\def\tN{\ensuremath{\tilde{N}}}
\def\talpha{\ensuremath{\tilde{\alpha}}}
\newcommand{\pair}[2]{\langle #1, #2 \rangle}
\renewcommand{\i}{{\mathrm{i}}}
\def\tomega{\ensuremath{\tilde{\omega}}}
\def\tphi{\ensuremath{\tilde{\phi}}}
\title{\normalsize \bfseries NORM-SQUARE LOCALIZATION FOR HAMILTONIAN $LG$-SPACES}
\author{\normalsize \textsc{Yiannis Loizides}}
\date{\vspace{-7ex}} 
\newif\ifshowtikz
\let\oldtikzpicture\tikzpicture
\let\oldendtikzpicture\endtikzpicture
\renewenvironment{tikzpicture}{%
    \ifshowtikz\expandafter\oldtikzpicture%
    \else\comment%
    \fi
}{%
    \ifshowtikz\oldendtikzpicture%
    \else\endcomment%
    \fi
}
\newlength{\bibitemsep}\setlength{\bibitemsep}{.2\baselineskip plus .05\baselineskip minus .05\baselineskip} 
\newlength{\bibparskip}\setlength{\bibparskip}{0pt} 
\let\oldthebibliography\thebibliography
\renewcommand\thebibliography[1]{%
  \oldthebibliography{#1}%
  \setlength{\parskip}{\bibitemsep}%
  \setlength{\itemsep}{\bibparskip}%
}
\begin{document}
\maketitle
\thispagestyle{empty} 
\vspace{2ex}
\begin{abstract}
We prove a formula for twisted Duistermaat-Heckman distributions associated to a Hamiltonian $LG$-space.  The terms of the formula are localized at the critical points of the norm-square of the moment map, and can be computed in cross-sections.  Our main tools are the theory of quasi-Hamiltonian $G$-spaces, as well as the Hamiltonian cobordism approach to norm-square localization introduced recently by Harada and Karshon.
\end{abstract}

\section{Introduction}
Let $M$ be a Hamiltonian $G$-space with moment map $\phi:M \rightarrow \lieg^\ast$, and equip $\lieg^\ast$ with an inner product.  The function $\|\phi\|^2:M\rightarrow \mathbb{R}$ has been studied extensively.  An early paper of Atiyah-Bott \cite{AtiyahBottYangMills} studied an infinite dimensional example, the Yang-Mills functional on the space of connections on a compact Riemann surface.  In the setting where $M$ and $G$ are compact, Kirwan \cite{Kirwan} extended techniques of Morse theory to $\|\phi\|^2$, using this to prove \emph{Kirwan surjectivity}.  Closer to the subject of this paper are the early results of Witten \cite{Witten}, who studied certain integrals on $\lieg^\ast \times M$, and found that they localize to the critical set of $\|\phi\|^2$, with the dominant contribution coming from the $0$-level set.  In \cite{Paradan97,Paradan98}, Paradan proved a norm-square localization formula for twisted Duistermaat-Heckman measures, in the general setting where $M$ can be non-compact, but $\phi$ is proper.  Another approach to these norm-square localization formulas was developed by Woodward \cite{Woodward}, and more recently an approach based on Hamiltonian cobordism techniques was introduced by Harada and Karshon \cite{KarshonHarada}.  In their approach, $M$ is found to be cobordant (in a suitable sense) to a small open neighbourhood of the critical set, once the moment map on the neighbourhood has been suitably \emph{polarized} and \emph{completed}.  A norm-square localization formula for twisted Duistermaat-Heckman measures then follows from Stokes' theorem.  We give an overview of the Harada-Karshon Theorem in Section 2 and Appendix B.

Many well-known results on compact Hamiltonian $G$-spaces have parallels for proper Hamiltonian $LG$-spaces ($LG$ is the loop group).  Examples include the cross-section theorem, the convexity theorem, and Duistermaat-Heckman formulas, cf. \cite{MWVerlindeFactorization,AlekseevMalkinMeinrenken,
AMWDuistermaatHeckman}.  The objects of study in this paper are twisted Duistermaat-Heckman (DH) distributions for Hamiltonian $LG$-spaces that carry information about cohomology pairings on symplectic quotients.  For example, the (untwisted) DH distribution that we study is a signed measure on $\liet$ (the Lie algebra of a maximal torus $T \subset G$) that gives volumes of symplectic quotients.

Let $\Psi:\calM \rightarrow L\lieg^\ast$ be a proper Hamiltonian $LG$-space.  We prove a `norm-square localization' formula expressing a twisted DH distribution $\frakm$ on $\liet$ as a sum of contributions:
\begin{equation}
\label{Goal}
\frakm = \sum_{\beta \in W \cdot \calB} \frakm_{\beta},
\end{equation}
where $\calB$ indexes components of the critical set of $\|\Psi\|^2$, and $W = N_G(T)/T$ is the Weyl group.  The sum is infinite but locally finite, in the sense that the supports of only finitely many terms intersect each bounded set.  The terms of \eqref{Goal} consist of a central contribution (from the critical value $0$), and correction terms supported in half-spaces not containing the origin.  Using the terminology of Harada-Karshon \cite{KarshonHarada}, the term $\frakm_{\beta}$ can itself be described as a twisted DH distribution for a \emph{polarized completion} of a small finite dimensional submanifold near the part of the critical set indexed by $\beta$.  Figure \ref{fig:WoodwardSums} shows an example of the decomposition \eqref{Goal} for the DH distribution of a multiplicity-free Hamiltonian $LSU(3)$-space (example due to Chris Woodward).  In this example the contribution from $0$ vanishes, and the remaining terms in \eqref{Goal} are constant multiples ($\pm 1$ relative to suitably normalized Lebesgue measure) of indicator functions for half-spaces (see Section 5 for further discussion).

\begin{figure}
\begin{minipage}[c]{0.32\textwidth}
\begin{tikzpicture}[scale=0.6]
\path[use as bounding box] (0,0) circle (4.1);   
\coordinate (0) at (0,0);

\path[name path=circle] (0,0) circle (4.1);
\path[name path=circle2] (0,0) circle (5);

\begin{pgfinterruptboundingbox}
\path[name path global=line1] ($(30:2)!-100cm!(90:2)$) -- ($(30:2)!100cm!(90:2)$);
\path[name path global=line2] ($(30:-2)!-100cm!(90:-2)$) -- ($(30:-2)!100cm!(90:-2)$);
\path[name path global=line3] ($(30:2)!-100cm!(-30:2)$) -- ($(30:2)!100cm!(-30:2)$);
\path[name path global=line4] ($(30:-2)!-100cm!(-30:-2)$) -- ($(30:-2)!100cm!(-30:-2)$);
\path[name path global=line5] ($(150:2)!-100cm!(90:2)$) -- ($(150:2)!100cm!(90:2)$);
\path[name path global=line6] ($(-30:2)!-100cm!(90:-2)$) -- ($(-30:2)!100cm!(90:-2)$);
\path[name path global=line7] ($(30:4)!-100cm!(90:4)$) -- ($(30:4)!100cm!(90:4)$);
\path[name path global=line8] ($(30:-4)!-100cm!(90:-4)$) -- ($(30:-4)!100cm!(90:-4)$);
\path[name path global=line9] ($(30:4)!-100cm!(-30:4)$) -- ($(30:4)!100cm!(-30:4)$);
\path[name path global=line10] ($(30:-4)!-100cm!(-30:-4)$) -- ($(30:-4)!100cm!(-30:-4)$);
\path[name path global=line11] ($(150:4)!-100cm!(90:4)$) -- ($(150:4)!100cm!(90:4)$);
\path[name path global=line12] ($(-30:4)!-100cm!(90:-4)$) -- ($(-30:4)!100cm!(90:-4)$);
\path[name path global=line14] ($(30:1)!-100cm!(90:1)$) -- ($(30:1)!100cm!(90:1)$);
\end{pgfinterruptboundingbox}

\draw[name intersections={of=circle2 and line14}] \foreach \i in {1,2} {(intersection-\i) coordinate (q\i)};

\clip (0,0) circle (4.1);  
\draw[fill,color=lightgray] (q1) arc (150:-30:5)--cycle;
\draw[name intersections={of=circle and line1}] (intersection-1)--(intersection-2);
\draw[name intersections={of=circle and line2}] (intersection-1)--(intersection-2);
\draw[name intersections={of=circle and line3}] (intersection-1)--(intersection-2);
\draw[name intersections={of=circle and line4}] (intersection-1)--(intersection-2);
\draw[name intersections={of=circle and line5}] (intersection-1)--(intersection-2);
\draw[name intersections={of=circle and line6}] (intersection-1)--(intersection-2);
\draw[name intersections={of=circle and line7}] (intersection-1)--(intersection-2);
\draw[name intersections={of=circle and line8}] (intersection-1)--(intersection-2);
\draw[name intersections={of=circle and line9}] (intersection-1)--(intersection-2);
\draw[name intersections={of=circle and line10}] (intersection-1)--(intersection-2);
\draw[name intersections={of=circle and line11}] (intersection-1)--(intersection-2);
\draw[name intersections={of=circle and line12}] (intersection-1)--(intersection-2);
\draw (30:-4.1)--(30:4.1);
\draw (90:-4.1)--(90:4.1);
\draw (-30:4.1)--(-30:-4.1);
\coordinate (A) at ($(30:1)!(0,0)!(90:1)$);
\draw[fill] (A) circle (1.5pt);
\end{tikzpicture}
\end{minipage}
\begin{minipage}[c]{0.32\textwidth}
\begin{tikzpicture}[scale=0.6]
\path[use as bounding box] (0,0) circle (4.1);   
\coordinate (0) at (0,0);

\path[name path=circle] (0,0) circle (4.1);
\path[name path=circle2] (0,0) circle (5);

\begin{pgfinterruptboundingbox}
\path[name path global=line1] ($(30:2)!-100cm!(90:2)$) -- ($(30:2)!100cm!(90:2)$);
\path[name path global=line2] ($(30:-2)!-100cm!(90:-2)$) -- ($(30:-2)!100cm!(90:-2)$);
\path[name path global=line3] ($(30:2)!-100cm!(-30:2)$) -- ($(30:2)!100cm!(-30:2)$);
\path[name path global=line4] ($(30:-2)!-100cm!(-30:-2)$) -- ($(30:-2)!100cm!(-30:-2)$);
\path[name path global=line5] ($(150:2)!-100cm!(90:2)$) -- ($(150:2)!100cm!(90:2)$);
\path[name path global=line6] ($(-30:2)!-100cm!(90:-2)$) -- ($(-30:2)!100cm!(90:-2)$);
\path[name path global=line7] ($(30:4)!-100cm!(90:4)$) -- ($(30:4)!100cm!(90:4)$);
\path[name path global=line8] ($(30:-4)!-100cm!(90:-4)$) -- ($(30:-4)!100cm!(90:-4)$);
\path[name path global=line9] ($(30:4)!-100cm!(-30:4)$) -- ($(30:4)!100cm!(-30:4)$);
\path[name path global=line10] ($(30:-4)!-100cm!(-30:-4)$) -- ($(30:-4)!100cm!(-30:-4)$);
\path[name path global=line11] ($(150:4)!-100cm!(90:4)$) -- ($(150:4)!100cm!(90:4)$);
\path[name path global=line12] ($(-30:4)!-100cm!(90:-4)$) -- ($(-30:4)!100cm!(90:-4)$);
\path[name path global=line13] ($(-30:1)!-100cm!(30:1)$) -- ($(-30:1)!100cm!(30:1)$);
\path[name path global=line14] ($(30:1)!-100cm!(90:1)$) -- ($(30:1)!100cm!(90:1)$);
\path[name path global=line15] ($(90:1)!-100cm!(150:1)$) -- ($(90:1)!100cm!(150:1)$);
\path[name path global=line16] ($(150:1)!-100cm!(210:1)$) -- ($(150:1)!100cm!(210:1)$);
\path[name path global=line17] ($(210:1)!-100cm!(270:1)$) -- ($(210:1)!100cm!(270:1)$);
\path[name path global=line18] ($(270:1)!-100cm!(330:1)$) -- ($(270:1)!100cm!(330:1)$);
\end{pgfinterruptboundingbox}
\draw[name intersections={of=circle2 and line13}] \foreach \i in {1,2} {(intersection-\i) coordinate (p\i)};
\draw[name intersections={of=circle2 and line14}] \foreach \i in {1,2} {(intersection-\i) coordinate (q\i)};
\draw[name intersections={of=circle2 and line15}] \foreach \i in {1,2} {(intersection-\i) coordinate (r\i)};
\draw[name intersections={of=circle2 and line16}] \foreach \i in {1,2} {(intersection-\i) coordinate (s\i)};
\draw[name intersections={of=circle2 and line17}] \foreach \i in {1,2} {(intersection-\i) coordinate (t\i)};
\draw[name intersections={of=circle2 and line18}] \foreach \i in {1,2} {(intersection-\i) coordinate (u\i)};

\foreach \n in {30,150,270} {\path[fill,color=lightgray] (\n:1)--++(\n+60:1)--++(\n+180:1)--cycle;}
\foreach \n in {90,210,330} {\path[fill] (\n:1)--++(\n+60:1)--++(\n+180:1)--cycle;}
\foreach \n in {1,2,...,6} \coordinate (A\n) at ($(60*\n-30:2)!(0,0)!(60*\n+30:2)$);

\foreach \n in {1,2,6} \coordinate (B\n) at ($(60*\n-30:1)!(0,0)!(60*\n+30:1)$);
\coordinate (c) at ($(30:2)!(0,0)!(90:2)$);
\clip (0,0) circle (4.1);  
\draw[fill,color=lightgray] (30:1)--(A1)--(90:1)--cycle;
\draw[fill] (A1)--(p1)--(r1)--cycle;
\draw[fill,color=lightgray] (A2)--(q1)--(s1)--cycle;
\draw[fill] (A3)--(t1)--(r2)--cycle;
\draw[fill,color=lightgray] (A4)--(u2)--(s2)--cycle;
\draw[fill] (A5)--(p2)--(t2)--cycle;
\draw[fill,color=lightgray] (A6)--(u1)--(q2)--cycle;
\draw[name intersections={of=circle and line1}] (intersection-1)--(intersection-2);
\draw[name intersections={of=circle and line2}] (intersection-1)--(intersection-2);
\draw[name intersections={of=circle and line3}] (intersection-1)--(intersection-2);
\draw[name intersections={of=circle and line4}] (intersection-1)--(intersection-2);
\draw[name intersections={of=circle and line5}] (intersection-1)--(intersection-2);
\draw[name intersections={of=circle and line6}] (intersection-1)--(intersection-2);
\draw[name intersections={of=circle and line7}] (intersection-1)--(intersection-2);
\draw[name intersections={of=circle and line8}] (intersection-1)--(intersection-2);
\draw[name intersections={of=circle and line9}] (intersection-1)--(intersection-2);
\draw[name intersections={of=circle and line10}] (intersection-1)--(intersection-2);
\draw[name intersections={of=circle and line11}] (intersection-1)--(intersection-2);
\draw[name intersections={of=circle and line12}] (intersection-1)--(intersection-2);
\draw (30:-4.1)--(30:4.1);
\draw (90:-4.1)--(90:4.1);
\draw (-30:4.1)--(-30:-4.1);
\end{tikzpicture}
\end{minipage}
\begin{minipage}[c]{0.32\textwidth}
\begin{tikzpicture}[scale=0.6]
\path[use as bounding box] (0,0) circle (4.1); 
\coordinate (0) at (0,0);
\draw (30:-4.1)--(30:4.1);
\draw (90:-4.1)--(90:4.1);
\draw (-30:4.1)--(-30:-4.1);

\path[name path=circle] (0,0) circle (4.1);

\begin{pgfinterruptboundingbox}
\path[name path global=line1] ($(30:2)!-100cm!(90:2)$) -- ($(30:2)!100cm!(90:2)$);
\path[name path global=line2] ($(30:-2)!-100cm!(90:-2)$) -- ($(30:-2)!100cm!(90:-2)$);
\path[name path global=line3] ($(30:2)!-100cm!(-30:2)$) -- ($(30:2)!100cm!(-30:2)$);
\path[name path global=line4] ($(30:-2)!-100cm!(-30:-2)$) -- ($(30:-2)!100cm!(-30:-2)$);
\path[name path global=line5] ($(150:2)!-100cm!(90:2)$) -- ($(150:2)!100cm!(90:2)$);
\path[name path global=line6] ($(-30:2)!-100cm!(90:-2)$) -- ($(-30:2)!100cm!(90:-2)$);
\path[name path global=line7] ($(30:4)!-100cm!(90:4)$) -- ($(30:4)!100cm!(90:4)$);
\path[name path global=line8] ($(30:-4)!-100cm!(90:-4)$) -- ($(30:-4)!100cm!(90:-4)$);
\path[name path global=line9] ($(30:4)!-100cm!(-30:4)$) -- ($(30:4)!100cm!(-30:4)$);
\path[name path global=line10] ($(30:-4)!-100cm!(-30:-4)$) -- ($(30:-4)!100cm!(-30:-4)$);
\path[name path global=line11] ($(150:4)!-100cm!(90:4)$) -- ($(150:4)!100cm!(90:4)$);
\path[name path global=line12] ($(-30:4)!-100cm!(90:-4)$) -- ($(-30:4)!100cm!(90:-4)$);
\end{pgfinterruptboundingbox}

\draw[name intersections={of=circle and line1}] (intersection-1)--(intersection-2);
\draw[name intersections={of=circle and line2}] (intersection-1)--(intersection-2);
\draw[name intersections={of=circle and line3}] (intersection-1)--(intersection-2);
\draw[name intersections={of=circle and line4}] (intersection-1)--(intersection-2);
\draw[name intersections={of=circle and line5}] (intersection-1)--(intersection-2);
\draw[name intersections={of=circle and line6}] (intersection-1)--(intersection-2);
\draw[name intersections={of=circle and line7}] (intersection-1)--(intersection-2);
\draw[name intersections={of=circle and line8}] (intersection-1)--(intersection-2);
\draw[name intersections={of=circle and line9}] (intersection-1)--(intersection-2);
\draw[name intersections={of=circle and line10}] (intersection-1)--(intersection-2);
\draw[name intersections={of=circle and line11}] (intersection-1)--(intersection-2);
\draw[name intersections={of=circle and line12}] (intersection-1)--(intersection-2);

\foreach \n in {30,150,270} {\path[fill,color=lightgray] (\n:1)--++(\n+60:1)--++(\n+180:1)--cycle;}
\foreach \n in {90,210,330} {\path[fill] (\n:1)--++(\n+60:1)--++(\n+180:1)--cycle;}
\foreach \n in {1,2,...,6} \coordinate (A\n) at ($(60*\n-30:2)!(0,0)!(60*\n+30:2)$);
\foreach \n in {1,3,5} \path[fill] (A\n)--++(30+60*\n-60:1)--++(150+60*\n-60:1)--cycle;
\foreach \n in {2,4,6} \path[fill,color=lightgray] (A\n)--++(30+60*\n-60:1)--++(150+60*\n-60:1)--cycle;
\foreach \n in {1,2,...,6} \path[fill] (30+60*\n:3)--++(270+240*\n:1)--++(30+240*\n:1)--cycle;
\foreach \n in {0,1,...,5} \path[fill,color=lightgray] (30+60*\n:3)--++(90+240*\n:1)--++(210+240*\n:1)--cycle;
\clip (0,0) circle (4.1);  
\foreach \n in {0,1,2} \path[fill] ($(30+120*\n:3)+(90+120*\n:1)$)--++(90+120*\n:1)--++(-30+120*\n:1)--cycle;
\foreach \n in {0,1,2} \path[fill] ($(90+120*\n:3)+(30+120*\n:1)$)--++(30+120*\n:1)--++(150+120*\n:1)--cycle;
\foreach \n in {0,1,2} \path[fill,color=lightgray] ($(-30+120*\n:3)+(30+120*\n:1)$)--++(30+120*\n:1)--++(-90+120*\n:1)--cycle;
\foreach \n in {0,1,2} \path[fill,color=lightgray] ($(30+120*\n:3)+(-30+120*\n:1)$)--++(-30+120*\n:1)--++(90+120*\n:1)--cycle;
\end{tikzpicture}
\end{minipage}
\label{fig:WoodwardSums} \caption{The left-most image shows a single contribution to the norm-square formula for a certain multiplicity-free Hamiltonian $LSU(3)$-space.  The next two images show the sum of the first 6 (resp. 12) contributions.  Light gray indicates regions where the sum is $+1$, while black indicates regions where it is $-1$.}
\end{figure}
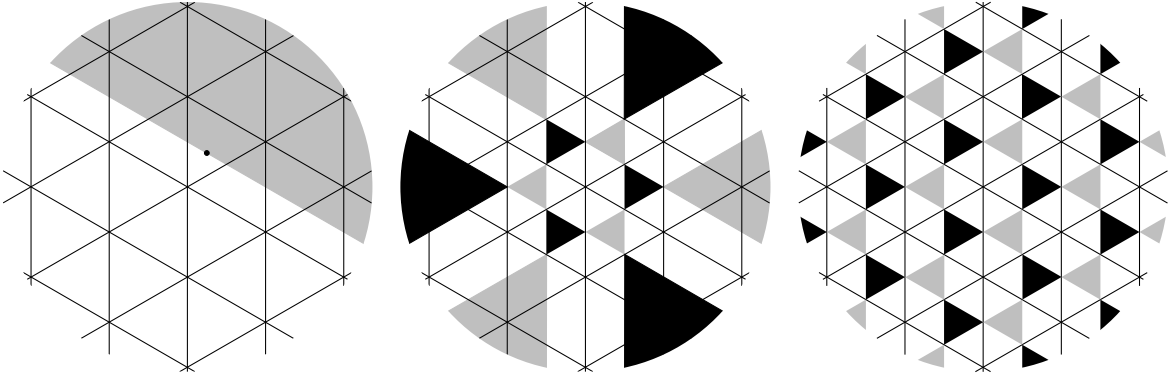

The formula \eqref{Goal} is related to the decomposition formula for Bernoulli series in \cite{BoysalVergne}.  Indeed, a collection of examples of Hamiltonian loop group spaces are moduli spaces of flat connections on a compact Riemann surface having at least 1 boundary component, with moment map given by pullback of the connection to the boundary (cf. \cite{MWVerlindeFactorization}).  In these examples, the twisted Duistermaat-Heckman distributions are essentially Bernoulli series, and the formula \eqref{Goal} coincides with the decomposition formula in \cite{BoysalVergne}.  (These examples are closely related to Witten's formulas for intersection pairings on moduli spaces of flat connections on Riemann surfaces.)  It is possible to derive \eqref{Goal} from the decomposition formula of \cite{BoysalVergne} and the Duistermaat-Heckman formula for group-valued moment maps \cite{AMWEquivariantLocalization}; further details will appear in the author's PhD thesis.  We adopt the cobordism approach here, as this approach leads quickly to the basic result (Theorem \ref{HKNormSquareFormula}).

We briefly outline the contents of the sections.  In Section 2 we summarize results of Harada-Karshon, with small modifications needed for our purposes.  As we rely on these results heavily, for completeness we have included brief outlines of the main proofs in Appendix B.  In Section 3 we recall the 1-1 correspondence between proper Hamiltonian $LG$-spaces and compact \emph{quasi-Hamiltonian} $G$-spaces \cite{AlekseevMalkinMeinrenken}.  A key tool for us is the \emph{abelianization} of a quasi-Hamiltonian $G$-space $M$: this is a (degenerate) quasi-Hamiltonian $T$-space associated to $M$.  Passing to a covering space, we obtain a (degenerate) \emph{Hamiltonian} $T$-space.  In Section 4, we apply the Harada-Karshon Theorem to this Hamiltonian $T$-space.  This yields a (rather inexplicit) norm-square localization formula.  To obtain more explicit formulas as in \cite{Paradan97,Paradan98}, we explain how to compute the contributions in cross-sections.  This takes some work, which is carried out in Section 4.  Section 5 contains examples.  Appendix A contains a proof of a version of abelian localization used in the main part of the paper.  Appendix C briefly discusses the case of a singular localizing set.

\vspace{0.3cm}

\noindent {\textbf{Acknowledgements.}}  I thank my Ph.D. supervisor, Eckhard Meinrenken, for introducing me to this problem, as well as for his patient explanations and insightful suggestions over the past few years.  I thank Michele Vergne for helpful discussions and for providing detailed feedback on an earlier version.  I also thank Yael Karshon and Paul-Emile Paradan for helpful conversations.\\

\noindent {\textbf{Notation.}}  Throughout, $G$ will denote a compact, connected Lie group with Lie algebra $\lieg$.  Fix an Ad-invariant inner product (denoted $\cdot$) on $\lieg$.  Let $T$ be a maximal torus with Lie algebra $\liet \subset \lieg$, and $W=N_G(T)/T$ the Weyl group.  We say that a function $f$ on $\liet$ is $W$-anti-symmetric if for $w \in W$, $w \cdot f = (-1)^{l(w)}f$, where $l(w)$ is the length of the element $w \in W$.  If $H \supset T$ is a maximal rank subgroup with Lie algebra $\lieh$, there is a unique $H$-invariant complement $\lieh^\perp$, the orthogonal complement to $\lieh$ for any invariant inner product on $\lieg$.  Let $\Lambda \subset \liet$ denote the kernel of the exponential map $\exp:\liet \rightarrow T$.  The dual lattice $\Lambda^\ast=\Hom(\Lambda,\mathbb{Z}) \subset \liet^\ast$ is the real weight lattice.  The real roots form a subset $\calR \subset \Lambda^\ast$.  Fix a positive Weyl chamber $\liet_+$, and let $\calR_+$ denote the positive roots.  This choice determines a complex structure on $\liet^\perp$ (hence also an orientation).

Let $M$ be a $G$-manifold.  Let $\calD^\prime(M)$ denote the space of distributions on $M$, that is, the dual of the space of compactly supported smooth functions $C^\infty_{\comp}(M)$.  A superscript $G$ will denote the $G$-invariant elements, for example, $C^\infty_{\comp}(M)^G$, $\calD^\prime(M)^G$, etc.  Given $X \in \lieg$, let $X_M$ denote the corresponding vector field on $M$.  The map $X \in \lieg \mapsto X_M$ is a Lie algebra homomorphism.  We will make extensive use of the Cartan model for equivariant cohomology, which we now briefly recall.  Let $\Omega_G(M)=(\Omega(M) \otimes \Pol(\lieg))^G$, where $\Pol(\lieg)=S\lieg^\ast$ denotes the polynomial algebra on $\lieg$ (with the coadjoint action of $G$).  Let $H_G(M)=\ker(d_G)/\im(d_G)$ where the equivariant differential $d_G$ is defined by
\[ (d_G\alpha)(X)=(d-\iota(X_M))\alpha(X). \]
For $\xi \in \lieg^\ast$, we write $\xi(\partial)$ for the directional derivative on $\lieg^\ast$ in the direction $\xi$.  This extends to a map $p \in \Pol(\lieg)=S\lieg^\ast \mapsto p(\partial)$ from polynomials on $\lieg$ to constant-coefficient differential operators on $\lieg^\ast$.  An equivariant differential form $\alpha \in \Omega_G(M)$ can be decomposed as a sum $\alpha=\sum_k \alpha_k p_k$ where $\alpha_k \in \Omega(M)$, $p_k \in \Pol(\lieg)$.  Let $\phi:M \rightarrow \lieg^\ast$ be a smooth map.  We define a linear map $C^\infty(\lieg^\ast) \rightarrow \Omega(M)$ by
\[ f \mapsto \alpha(-\partial) f \circ \phi:=\sum_k \alpha_k \phi^\ast(p_k(-\partial)f).\]
For a couple of arguments it will be convenient to use equivariant currents.  Let $\calC(M)$ denote the space of currents on $M$, that is, the dual to the space $\Omega_{\comp}(M)$ of compactly supported smooth differential forms.  The space of equivariant currents is $\calC_G(M)=(\calC(M) \otimes \Pol(\lieg))^G$.  Assuming $M$ is oriented, $\Omega(M)$ can be identified with a subspace of $\calC(M)$, and the above definitions extend naturally to $\calC_G(M)$.

\section{Harada-Karshon approach to norm-square localization}
In this section we describe a version of the Harada-Karshon (H-K) Theorem \cite{KarshonHarada}.  We restrict ourselves to the case that the localizing set $Z$ is a smooth submanifold.  This avoids some complications and has the appealing feature that the contribution from a component $Z_i \subset Z$ is especially simple to describe: it is the twisted DH distribution of any \emph{polarized completion} of a tubular neighbourhood of $Z_i$.  Only the smooth case is used in Section 3, as we eventually use a perturbation to ensure a smooth localizing set.  Some discussion of the situation when $Z$ is not smooth is included in Appendix C.  A small difference---needed later on in Section 4.1---between our setting and that in \cite{KarshonHarada}, is that we work with a moment map which might only be proper when restricted to the support of the equivariant cocycle $\alpha$ used to twist the Duistermaat-Heckman distribution.  This explains why conditions on the support of $\alpha$ appear in the statements.  For completeness, some proofs are outlined in Appendix B; the interested reader should also consult the original paper \cite{KarshonHarada} for a detailed treatment.

\subsection{Twisted Duistermaat-Heckman measures.}
Throughout this section, $(N,\omega,\phi)$ will be an \emph{oriented}, possibly degenerate, Hamiltonian $G$-space, that is, an oriented $G$-manifold $N$, equipped with a closed, equivariant 2-form
\[ \omega_G(X) = \omega - \pair{\phi}{X}, \hspace{1cm} X \in \lieg.\]
Let $\alpha \in \Omega_G(N)$ be a closed equivariant differential form, or more generally, a closed equivariant current.

We are interested in an invariant of the 4-tuple $(N,\omega,\phi,\alpha)$, the $\alpha$-\emph{twisted Duistermaat-Heckman distribution}, $\DH(N,\omega,\phi,\alpha) \in \mathcal{D}^\prime(\lieg^\ast)$.  These distributions were introduced by Jeffrey-Kirwan \cite{JeffreyKirwan}, and contain information about cohomology pairings on symplectic quotients.  In the case that $\alpha$ is compactly supported, $\DH(N,\omega,\phi,\alpha)$ can be defined in terms of its Fourier coefficients.  For $X \in \lieg$,
\begin{equation}
\label{FourierCoefficients}
\langle \DH(N,\omega,\phi,\alpha),e^{-2\pi \i \pair{\cdot}{X}} \rangle =
\int_N e^{\omega-2\pi \i \pair{\phi}{X}} \alpha(2\pi \i X).
\end{equation}
Note that the integrand is closed for the differential $d_{2\pi \i X}:=d-2\pi \i \iota(X_M)$.  More generally we define
\begin{equation}
\label{DefDH}
\langle \DH(N,\omega,\phi,\alpha), f \rangle = \int_N e^{\omega}\alpha(-\partial) f \circ \phi,
\end{equation}
for $f \in C^\infty_{\textnormal{comp}}(\lieg^\ast)$.  To ensure that the integral exists, we require that $\phi$ be \emph{proper on the support of} $\alpha$.  In the case that $\alpha=\sum_k \alpha_k p_k$ is an equivariant current, the above integral \eqref{DefDH} should be interpreted as a sum of pairings between currents $\alpha_k$ and forms $e^{\omega} \phi^\ast(p_k(-\partial) f)$.  The DH distribution $\DH(N,\omega,\phi,\alpha)$ can also be expressed in terms of push-forwards of currents:
\[ \DH(N,\omega,\phi,\alpha) = \sum_k p_k(\partial) \phi_\ast(e^\omega \alpha_k)^{[\textnormal{top}]}.\]

\begin{theorem}
\label{CohomologyClass}
Let $(N,\omega,\phi)$ be an oriented Hamiltonian $G$-space.  Let $\alpha_1,\alpha_2$ be closed equivariant currents, and suppose $\alpha_1-\alpha_2=d_G\beta$ for some equivariant current $\beta$.  Assume that $\phi$ is proper on the support of $\alpha_1,\alpha_2,\beta$.  Then
\[ \DH(N,\omega,\phi,\alpha_1)=\DH(N,\omega,\phi,\alpha_2).\]
\end{theorem}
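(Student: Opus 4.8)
The plan is to reduce the statement to a single fact: that for a closed equivariant current of the form $d_G\beta$, with $\phi$ proper on $\supp(\beta)$, the twisted DH distribution $\DH(N,\omega,\phi,d_G\beta)$ vanishes. Since the DH distribution is manifestly linear in the equivariant-current slot (from \eqref{DefDH}, or equivalently from the push-forward formula $\DH(N,\omega,\phi,\alpha)=\sum_k p_k(\partial)\phi_\ast(e^\omega\alpha_k)^{[\top]}$), subtracting gives $\DH(N,\omega,\phi,\alpha_1)-\DH(N,\omega,\phi,\alpha_2)=\DH(N,\omega,\phi,\alpha_1-\alpha_2)=\DH(N,\omega,\phi,d_G\beta)$, so it suffices to show the latter is zero. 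First I would fix $f\in C^\infty_\comp(\lieg^\ast)$ and write $\beta=\sum_k\beta_k p_k$ with $\beta_k$ ordinary currents (supported where $\phi$ is proper), so that pairing against $f$ is the finite sum of pairings $\langle\beta_k,\,\cdot\,\rangle$ of currents with compactly supported forms.

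The computational core is the identity
\[
\int_N e^\omega\,(d_G\beta)(-\partial)\,f\circ\phi \;=\; 0.
\]
Here the key observation is that the map $\alpha\mapsto e^\omega\,\alpha(-\partial)f\circ\phi$ intertwines $d_G$ with the ordinary de Rham differential $d$: because $\omega_G$ is equivariantly closed one has $d(e^\omega)= -e^\omega\,d\langle\phi,\cdot\rangle$-type corrections, and the polynomial substitution $p\mapsto\phi^\ast(p(-\partial)f)$ is precisely designed so that the contraction term $\iota(X_N)$ in $d_G$ is converted into a derivative of $f\circ\phi$; one checks $d\big(e^\omega\,\beta(-\partial)f\circ\phi\big)=e^\omega\,(d_G\beta)(-\partial)f\circ\phi$ as an equality of ordinary currents (this is the standard fact underlying the Jeffrey-Kirwan construction, and should be recorded here as a short lemma or cited). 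Granting this, $e^\omega(d_G\beta)(-\partial)f\circ\phi$ is an exact current, $d\gamma$ with $\gamma=e^\omega\beta(-\partial)f\circ\phi$. The support of $\gamma$ is contained in $\supp(\beta)$, on which $\phi$ is proper; since $f$ has compact support, $\gamma$ is a compactly supported current (its support lies in $\supp(\beta)\cap\phi^{-1}(\supp f)$, which is compact). Pairing the exact compactly supported current $d\gamma$ against the constant function $1$ (equivalently, integrating its top-degree component over $N$) gives $0$ by Stokes' theorem for currents. Taking the sum over $k$ with the coefficients supplied by $f$ yields $\langle\DH(N,\omega,\phi,d_G\beta),f\rangle=0$ for all test functions $f$, hence the distribution vanishes.

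The main obstacle, and the only place requiring genuine care, is the compact-support bookkeeping: $N$ itself is noncompact and $\phi$ is \emph{not} assumed proper globally, only on $\supp(\alpha_i)$ and $\supp(\beta)$. So one must verify that at every stage the currents being integrated or paired are compactly supported, so that Stokes' theorem applies; this is exactly why the hypothesis is phrased in terms of $\supp(\beta)$, and the argument would break without it. A secondary (routine) point is justifying the intertwining identity at the level of currents rather than smooth forms — but since $N$ is oriented, smooth forms embed into currents and the Leibniz-type computation extends by the defining duality, so no new difficulty arises. I would therefore structure the proof as: (1) linearity reduces to the exact case; (2) state/recall the intertwining lemma $d(e^\omega\beta(-\partial)f\circ\phi)=e^\omega(d_G\beta)(-\partial)f\circ\phi$; (3) observe the relevant currents are compactly supported thanks to properness of $\phi$ on $\supp\beta$ and compactness of $\supp f$; (4) apply Stokes.
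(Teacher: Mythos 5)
Your proof is correct, and it takes a genuinely different route from the paper's. The paper first reduces to the case of compactly supported $\beta$ by a $G$-invariant partition of unity $\{\rho_k\}$ (needed because the paper's subsequent argument pairs the distribution with the non-compactly-supported functions $e^{-2\pi\i\pair{\cdot}{X}}$), and then computes Fourier coefficients: since $e^{\omega-2\pi\i\pair{\phi}{X}}$ is $d_{2\pi\i X}$-closed, the Leibniz rule gives $e^{\omega-2\pi\i\pair{\phi}{X}}d_{2\pi\i X}\beta(2\pi\i X)=d_{2\pi\i X}\big(\beta(2\pi\i X)e^{\omega-2\pi\i\pair{\phi}{X}}\big)$, which integrates to zero by Stokes. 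You instead pair against a test function $f\in C^\infty_\comp(\lieg^\ast)$ from the outset and invoke the intertwining identity $d\big(e^\omega\beta(-\partial)f\circ\phi\big)=e^\omega(d_G\beta)(-\partial)f\circ\phi$; this is the ``Fourier-dual'' of the paper's Cartan-model exactness step, with the derivative substitution $p\mapsto\phi^\ast(p(-\partial)f)$ converting the $\iota(X_N)$-contraction into derivatives of $f\circ\phi$. Your support observation (that $\supp(\beta)\cap\phi^{-1}(\supp f)$ is compact by properness of $\phi$ on $\supp\beta$) then does the work of the paper's partition-of-unity reduction, making that step unnecessary. The trade-off: the paper's exactness identity is a one-line consequence of $d_G\omega_G=0$, whereas your intertwining lemma requires the slightly more involved Jeffrey-Kirwan-type computation (which you correctly flag as a fact to be recorded or cited); in exchange you avoid the Fourier analysis and the partition of unity entirely, giving a more direct argument.
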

\noindent H-K give a proof by cobordism (\cite{KarshonHarada}, Lemma 5.15). We include a proof based on methods in \cite{Meinrenken2005}.
\begin{proof}
We must show $\DH(N,\omega,\phi,d_G \beta)=0$.  If $\beta$ is not compactly supported, let $\{\rho_k \}$ be a smooth $G$-invariant (locally finite) partition of unity on $N$ with $\rho_k$ compactly supported.  For $f \in C^\infty_{\comp}(\lieg^\ast)$, $\supp(\beta)\cap \supp(\phi^\ast f)$ is compact, so intersects the supports of only finitely many of the functions $\rho_k$.  Therefore for all but finitely many $k$
\[ (d_G(\rho_k\beta))(-\partial)f\circ \phi = 0,\]
so that the sum can be brought outside the integral:
\[ \pair{\DH(N,\omega,\phi,d_G\beta)}{f}=
\sum_k \pair{\DH(N,\omega,\phi,d_G(\rho_k\beta))}{f}.\]
Since this holds for any $f \in C^\infty_{\comp}(\lieg^\ast)$ we conclude
\[ \DH(N,\omega,\phi,d_G\beta)=\sum_k \DH(N,\omega,\phi,d_G(\rho_k \beta)),\]
and each $\rho_k\beta$ is compactly supported.

So assume $\beta$ is compactly supported, in which case $\DH(N,\omega,\phi,d_G\beta)$ is a compactly supported distribution.  Let $X \in \lieg$.  Using the formula for the Fourier coefficients \eqref{FourierCoefficients}
\begin{align*}
\pair{\DH(N,\omega,\phi,d_G\beta)}{e^{-2\pi \i\pair{\cdot}{X}}}&=\int_N e^{\omega-2\pi \i\pair{\phi}{X}}d_{2\pi \i X}\beta(2\pi \i X)\\
&= \int_N d_{2\pi \i X}(\beta(2\pi \i X) e^{\omega-2\pi \i\pair{\phi}{X}})\\
&=0,
\end{align*}
where the last line follows by Stokes' theorem.
\end{proof}
\begin{remark}
It is not true in general that if $[\omega_1-\phi_1]=[\omega_2-\phi_2]$ in $H_G(N)$ then the corresponding Duistermaat-Heckman measures are equal (even for $N=\mathbb{R}^2$).  Theorem \ref{CohomologousTwoForms} gives a sufficient condition.
\end{remark}

\subsection{Taming maps and polarized completions.}
\begin{definition}
A \emph{taming map} $v:N\rightarrow \lieg$ is a smooth $G$-equivariant map.  A \emph{bounded} taming map is one for which the map $v$ is bounded.  A taming map $v$ defines a vector field $v_N$ on $N$ by
\[ v_N(p) := (v(p))_N(p), \hspace{1cm} p \in N. \]
The \emph{localizing set} or \emph{critical set} of $v$ is
\[ Z := \{p\in N | v_N(p)=0 \}. \]
We make the convention that the elements of $\phi(Z) \subset \lieg^\ast$ will be referred to as \emph{critical values}.
\end{definition}

An example of a taming map is obtained from a $G$-invariant inner product $\cdot$ on $\lieg$.  Using the inner product to identify $\lieg$ and $\lieg^\ast$, we can take $v$ to be the moment map itself.  The localizing set for $v=\phi$ is
\[ Z = G \cdot \bigcup_{\beta \in \liet^\ast_+} N^\beta \cap \phi^{-1}(\beta). \]
If $\omega$ is non-degenerate, the localizing set coincides with the set of critical points for the norm-square of the moment map \cite{Kirwan}.

An important reason for introducing taming maps is that they are a convenient way to prove that a map is \emph{proper}, as a result of the following simple Lemma.
\begin{lemma}[\cite{KarshonHarada}, Lemma 2.9]
\label{TameProper}
Let $v$ be a bounded taming map on a $G$-manifold $N$, and let $\phi:N \rightarrow \lieg^\ast$ be a continuous function.  Suppose $\pair{\phi}{v}$ is proper on a closed subset $S$.  Then $\phi$ is proper on $S$.
\end{lemma}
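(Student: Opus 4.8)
The plan is to deduce the statement from the elementary fact that a closed subset of a compact space is compact, exploiting the boundedness of $v$ via the Cauchy--Schwarz inequality. Recall that ``$\phi$ proper on $S$'' means that $\phi^{-1}(K)\cap S$ is compact for every compact $K\subset\lieg^\ast$ (equivalently, the restriction $\phi|_S$ is proper), and similarly for the real-valued function $\pair{\phi}{v}$ on $N$.

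First I would fix a constant $C_v>0$ with $\|v(p)\|\le C_v$ for all $p\in N$, using the norm $\|\cdot\|$ associated to the fixed invariant inner product on $\lieg$; let $\|\cdot\|$ also denote the dual norm on $\lieg^\ast$. Given a compact set $K\subset\lieg^\ast$ (the case $K=\emptyset$ being trivial), set $R:=\max_{\xi\in K}\|\xi\|$ and $C:=RC_v$. Then for any $p\in\phi^{-1}(K)\cap S$, Cauchy--Schwarz gives
\[ |\pair{\phi(p)}{v(p)}|\le \|\phi(p)\|\,\|v(p)\|\le RC_v=C, \]
so that $\phi^{-1}(K)\cap S\subseteq A$, where $A:=(\pair{\phi}{v})^{-1}([-C,C])\cap S$.

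Since $\pair{\phi}{v}$ is proper on $S$ by hypothesis, the set $A$ is compact. On the other hand $\phi^{-1}(K)\cap S$ is closed in $N$ (as $\phi$ is continuous and both $K$ and $S$ are closed), hence closed in $A$, and a closed subset of a compact space is compact; therefore $\phi^{-1}(K)\cap S$ is compact, as desired. I do not expect any real obstacle here: the content is simply that boundedness of $v$ forces the Cauchy--Schwarz estimate to bound $\pair{\phi}{v}$ \emph{above} in terms of $\|\phi\|$, which is exactly the direction needed; note that the $G$-equivariance built into the notion of taming map plays no role in this particular argument.
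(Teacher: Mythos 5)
Your proof is correct, and it is the standard argument for this lemma. The paper itself does not supply a proof—it simply cites \cite{KarshonHarada}, Lemma 2.9—so there is nothing to compare against in the text. Your use of Cauchy--Schwarz (equivalently, the definition of the dual norm) to bound $\pair{\phi}{v}$ by $\|\phi\|\,\|v\|$, together with the observation that the preimage $\phi^{-1}(K)\cap S$ is a closed subset of the compact set $(\pair{\phi}{v})^{-1}([-C,C])\cap S$, is exactly the right and expected route; your remark that equivariance of $v$ is irrelevant here is also accurate.
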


\begin{definition}
Let $\alpha$ be a closed equivariant differential form on a Hamiltonian $G$-space $(U,\omega,\phi)$.  Let $v$ be a taming map with smooth localizing set $\iota_Z:Z \hookrightarrow U$.  Suppose that $\pair{\phi}{v}$ is proper and bounded below on $Z \cap \supp(\alpha)$.  A $v$-\emph{polarized completion of} $(U,\omega,\phi,\alpha)$ is a Hamiltonian $G$-space $(U,\tomega,\tphi)$ with $\iota_Z^\ast (\omega-\phi)=\iota_Z^\ast (\tomega-\tphi)$, and such that $\pair{\tphi}{v}$ is proper and bounded below on the support of $\alpha$.
\end{definition}
\noindent In other words, $\tphi$ is an \emph{extension} of $\phi|_Z$ such that $\pair{\tphi}{v}$ is proper and bounded below on the support of $\alpha$.
\begin{definition}
\label{DefinitionSmoothCollapse}
Let $N$ be a $G$-manifold, $Z$ a $G$-invariant submanifold, and $U$ a $G$-invariant open set in $N$.  We say that $U$ can be \emph{smoothly collapsed to part of} $Z$ if there is a smooth $G$-invariant submanifold $Z^\prime \subset Z \cap U$ and smooth $G$-equivariant map $p:[0,1]\times U \rightarrow U$ such that
\begin{enumerate}
\item $p_0$ is the identity map on $U$,
\item $p_1(U) \subset Z^\prime$,
\item $p_t(Z^\prime) \subset Z^\prime$ for all $t \in [0,1]$.
\end{enumerate}
\end{definition}
\noindent Mainly we will use $G$-invariant tubular neighbourhoods $U \supset Z$, which are clearly examples of the definition above.  But for an argument in Section 4.4 it is convenient to have this slightly more flexible definition.
\begin{remark}
\label{SmoothCollapse}
The definition says that the identity map $\Id_U$ and the inclusion $\iota_{Z^\prime}$ are connected by a smooth, $G$-equivariant homotopy $p$.  In particular, if $\alpha_0$, $\alpha_1$ are closed equivariant differential forms on $U$ whose pullbacks to $Z^\prime$ are equal, then $\alpha_0$, $\alpha_1$ are cohomologous.  H-K \cite{KarshonHarada} work with a similar condition.
\end{remark}

\noindent The next result is the main result on \emph{existence} of polarized completions, and the \emph{uniqueness} (under conditions) of the resulting DH distribution.  A proof is included in Appendix B.

\begin{lemma}[\cite{KarshonHarada} Proposition 3.4, Lemmas 4.12, 4.17]
\label{Hypotheses}
Let $\alpha$ be a closed equivariant differential form on an oriented Hamiltonian $G$-space $(N,\omega,\phi)$.  Let $v:N \rightarrow \lieg$ be a bounded taming map with smooth localizing set $Z$.  Let $U \supset Z \cap \supp(\alpha)$ be an open subset that can be smoothly collapsed to part of $Z$.  Suppose that $\pair{\phi}{v}$ is proper and bounded below on $Z \cap \supp(\alpha)$.  Then
\begin{enumerate}
\item There exists a $v$-polarized completion $(U,\tomega,\tphi)$ of the restriction $(U,\omega|_U,\phi|_U,\alpha|_U)$.
\item Suppose $v_i, Z_i, (U_i,\tomega_i,\tphi_i)$, $i=0,1$ are two sets of data satisfying the above conditions, and such that $v_0$, $v_1$ agree on the support of $\alpha$.  Then
\[ \DH(U_1,\tomega_1,\tphi_1,\alpha)=\DH(U_0,\tomega_0,\tphi_0,\alpha).\]
\end{enumerate}
\end{lemma}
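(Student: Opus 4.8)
The plan is to follow Harada–Karshon: construct the completion in a tubular model around the localizing set, and deduce uniqueness of the resulting DH distribution by an equivariant cobordism (Stokes) argument. The collapse hypothesis is what lets us pass to such a model, and Lemma \ref{TameProper} is what converts properness of $\langle\tphi,v\rangle$ into properness of $\tphi$ (needed for $\DH(U,\tomega,\tphi,\alpha)$ to be defined).

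\emph{Existence.} I would work in a $G$-invariant tubular neighbourhood of $Z'$. Choosing an invariant metric and using the collapse $p$ (with Remark \ref{SmoothCollapse} controlling the homotopy ambiguity), identify a neighbourhood of the zero section of the normal bundle $\pi\colon\nu\to Z'$ with $U$, so that $\pi^\ast\iota_{Z'}^\ast(\omega-\phi)$ is a $d_G$-closed equivariant $2$-form agreeing with $\omega-\phi$ along $Z'$. Since $v$ is a taming map with $v_N|_{Z'}=0$, the fibrewise linearization of $v_N$ is a $G$-equivariant endomorphism of $\nu$, and one splits $\nu=\nu^+\oplus\nu^0\oplus\nu^-$ according to the sign of its symmetric part (smoothness of the localizing set being used to keep the null part $\nu^0$ under control). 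Following \cite{KarshonHarada}, the polarized completion is obtained by adding to $\pi^\ast\iota_{Z'}^\ast(\omega-\phi)$ a fibrewise polarization term — keeping the (pre)symplectic data on $\nu^+$ and reversing it on $\nu^-$ — vanishing along $Z'$ and arranging $\langle\tphi,v\rangle=\pi^\ast\big(\langle\phi,v\rangle|_{Z'}\big)+q$ with $q\ge0$ fibrewise proper. As $\langle\phi,v\rangle$ is proper and bounded below on $Z'\cap\supp(\alpha)$ by hypothesis, the sum is proper and bounded below on $\supp(\alpha)\cap U$; boundedness of $v$ and Lemma \ref{TameProper} then give properness of $\tphi$ there, so $(U,\tomega,\tphi)$ is a $v$-polarized completion.

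\emph{Uniqueness.} Given data $(v_i,Z_i,(U_i,\tomega_i,\tphi_i))$, $i=0,1$, satisfying the hypotheses with $v_0=v_1$ on $\supp(\alpha)$, the two maps have the same zero set there, so $Z_\alpha:=Z_0\cap\supp(\alpha)=Z_1\cap\supp(\alpha)\subset U_0\cap U_1$. Using the collapse hypotheses for both $U_0,U_1$ — and the fact that away from the zero set of $v$ a closed equivariant cocycle is equivariantly exact, so the corresponding part of the DH distribution vanishes by the argument of Theorem \ref{CohomologyClass} — I would reduce to two polarized completions on one common invariant tubular neighbourhood $U$ of $Z_\alpha$, each restricting to $\omega-\phi$ along $Z_\alpha$. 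For these I would run the cobordism argument: form the Hamiltonian $G$-space $(U\times[0,1],\Omega,\Phi)$ with $\Phi_t$ interpolating $\tphi_0,\tphi_1$ affinely in $t$ and $\Omega$ the matching interpolation of $2$-forms. Since the endpoints agree on $Z_\alpha$ and each has the fibrewise-positive behaviour from the existence step, $\langle\Phi_t,v\rangle$ stays proper and bounded below on $\supp(\alpha)\times[0,1]$, and Stokes' theorem — exactly as in the proof of Theorem \ref{CohomologyClass} — yields $\DH(U,\tomega_1,\tphi_1,\alpha)=\DH(U,\tomega_0,\tphi_0,\alpha)$.

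\emph{Expected main obstacle.} The crux is the polarization step in Existence: producing $(\tomega,\tphi)$ so that $\langle\tphi,v\rangle$ picks up the fibrewise-positive term while the equivariant $2$-form remains $d_G$-closed and unchanged on $Z'$. One cannot merely add a multiple of $v$ to $\phi$ (that breaks $d_G$-closedness), so the (pre)symplectic structure on the negative normal directions must genuinely be reversed and the null directions $\nu^0$ treated separately. In the uniqueness half the delicate points are keeping $\langle\Phi_t,v\rangle$ proper and bounded below for every $t$ (connectedness of the space of admissible completions) and the reduction to a common domain, since the twisted DH distribution genuinely depends on its domain of integration.
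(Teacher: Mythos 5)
Your plan diverges from the paper's proof in both halves, and there are real gaps in each.

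\textbf{Existence.} You propose to identify a tubular neighbourhood with the normal bundle $\nu$, linearize $v_N$ along $Z$, split $\nu = \nu^+ \oplus \nu^0 \oplus \nu^-$ by the sign of the symmetric part, and then reverse the (pre)symplectic data on $\nu^-$. You rightly flag that keeping $d_G$-closedness while doing this, and controlling $\nu^0$, is the crux --- but you don't resolve it. The paper (Appendix B, Lemma~\ref{ExistencePart}) avoids this entirely. It never linearizes $v_N$ or decomposes the normal bundle. Instead it constructs a $G$-invariant function $\psi$ (via a partition of unity subordinate to $\{U_Y, N \setminus Y\}$, followed by $G$-averaging, using the two point-set facts (A), (B)) that agrees with $\pair{\phi}{v}$ near $Z$ and is proper and bounded below on $\supp(\alpha)$. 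Then, with $\Theta = g(v_N,-)/g(v_N,v_N)$ on $N\setminus Z$, it sets
\[
\tomega - \tphi = (\omega - \phi) + d_G\bigl((\psi - \pair{\phi}{v})\Theta\bigr).
\]
Because $\psi - \pair{\phi}{v}$ vanishes near $Z$, this extends smoothly across $Z$ and restricts to $\omega - \phi$ there. And since $\iota(v_N)\Theta = 1$, one reads off $\pair{\tphi}{v} = \psi$, which is proper and bounded below by construction. Closedness is automatic since only a $d_G$-exact term is added. Note this construction does not even require $Z$ to be smooth (the paper's Lemma~\ref{ExistencePart} explicitly drops that hypothesis), whereas your linearization does.

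\textbf{Uniqueness.} Your first step --- reducing from $(U_0,\tomega_0,\tphi_0)$ and $(U_1,\tomega_1,\tphi_1)$ to two completions on a \emph{common} tubular neighbourhood $U$ of $Z_\alpha$ --- is the gap. You justify it by ``away from the zero set of $v$ a closed equivariant cocycle is equivariantly exact, so the corresponding part of the DH distribution vanishes by the argument of Theorem~\ref{CohomologyClass}.'' That is not what Theorem~\ref{CohomologyClass} says (it compares two cohomologous $\alpha$'s on a \emph{fixed} domain, it does not shrink the domain), and the assertion that the DH contribution from $U_i \setminus U$ vanishes is essentially an instance of the localization statement one is trying to establish; taken as given it is circular. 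The paper sidesteps the need for a common domain entirely: it builds the cobording manifold
\[
W = \bigl(U\times[0,1]\bigr) \setminus \Bigl(\bigl(U\setminus U_0\bigr)\times\{0\} \cup \bigl(U\setminus U_1\bigr)\times\{1\}\Bigr), \qquad U = U_0\cup U_1,
\]
with $\partial W = U_0 \sqcup U_1$, equips it with the interpolated \emph{taming map} $v = (1-t)v_0 + t v_1$ (not the interpolated $\tphi$'s), applies the existence lemma to $W$ to get a single polarized completion $(\tomega_W,\tphi_W)$, and invokes Theorem~\ref{CobordismEquality}. Only then does it use Remark~\ref{SmoothCollapseCohomologous} (which is where your interpolation of $\tphi_0,\tphi_1$ and the convexity of proper-and-bounded-below functions actually belong, cf.\ Theorem~\ref{CohomologousTwoForms}) to match $\tomega_W|_{U_i},\tphi_W|_{U_i}$ against the given $\tomega_i,\tphi_i$. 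Your affine interpolation of $\tphi_0,\tphi_1$ is a valid ingredient, but it only applies once the domain issue has been handled by the explicit cobordism; it does not replace that step.
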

\noindent Note that Lemma \ref{TameProper} implies that $\tphi$ is proper on the support of $\alpha|_{U}$, hence $\DH(U,\tomega,\tphi,\alpha)$ is defined.

\subsection{The Harada-Karshon Theorem.}
\begin{definition}
\label{DHGerms}
Following \cite{KarshonHarada}, in the setting of Lemma \ref{Hypotheses}, we define
\begin{equation}
\label{DefOfGermDH}
\DH^v_Z(N,\omega,\phi,\alpha) = \DH(U,\tomega,\tphi,\alpha).
\end{equation}
If $Z=\sqcup Z_i$ and $U=\sqcup U_i$, where $U_i$ is an open $G$-invariant subset that can be smoothly collapsed to part of $Z_i$, define
\[\DH^{v}_{Z_i}(N,\omega,\phi,\alpha)=\DH(U_i,\tomega,\tphi,\alpha).\]
We have
\[ \DH^{v}_Z(N,\omega,\phi,\alpha) = \sum_i \DH^{v}_{Z_i}(N,\omega,\phi,\alpha).\]
\end{definition}
The right-hand-side of \eqref{DefOfGermDH} is stable under various changes (which justifies the notation): according to Lemma \ref{Hypotheses}, it is independent of the choice of open $U$ and the choice of $v$-polarized completion.  There is flexibility to modify the taming map $v$ away from the support of $\alpha$.  Also, if the properness condition in Theorem \ref{CohomologyClass} holds, then $\alpha$ can be replaced by a cohomologous form $\alpha^\prime$; one simple case in which this properness condition is immediate is if $\supp(\alpha^\prime) \subset \supp(\alpha)$.

We can now state the main theorem of \cite{KarshonHarada}.  A proof is included in Appendix B.
\begin{theorem}[\cite{KarshonHarada}, Theorem 5.20]
\label{HKTheorem}
Consider the setting of Lemma \ref{Hypotheses}.  Suppose further that $\pair{\phi}{v}$ is proper and bounded below on the support of $\alpha$.  Then
\begin{equation}
\label{HKFormula}
\DH(N,\omega,\phi,\alpha) = \DH^{v}_Z(N,\omega,\phi,\alpha). 
\end{equation}
\end{theorem}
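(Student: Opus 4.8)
The plan is to derive the Harada--Karshon formula \eqref{HKFormula} from Theorem \ref{CohomologyClass} by exhibiting $\alpha$ as cohomologous (in a suitable sense, with the properness constraint respected) to a form supported in a neighbourhood of $Z$, after first replacing $\phi$ by a polarized completion $\tphi$. The essential mechanism is: build a taming map and a one-parameter family that ``collapses'' $N$ onto $U$ while keeping $\pair{\tphi}{v}$ proper and bounded below throughout, so that Stokes' theorem (packaged as Theorem \ref{CohomologyClass}) applies at every stage. Concretely, I would proceed in the following steps.

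\textbf{Step 1: Reduce to the completed moment map.} By hypothesis $\pair{\phi}{v}$ is proper and bounded below on $\supp(\alpha)$, so $\DH(N,\omega,\phi,\alpha)$ is already defined (via Lemma \ref{TameProper}). First I would show that replacing $\omega,\phi$ globally is unnecessary: it suffices to work near $Z$. Since $\pair{\phi}{v}$ is proper and bounded below on $\supp(\alpha)$, the ``excision'' idea is that the DH distribution only sees a neighbourhood of $Z$ together with the behaviour of $\phi$ at infinity along $\supp(\alpha)$. The goal of this step is to set up a global oriented Hamiltonian $G$-space $(N,\omega,\phi)$ together with a taming map $v$ (the given one) and to record that $Z$ is the localizing set.

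\textbf{Step 2: Interpolate between $\phi$ and a polarized completion.} Let $(U,\tomega,\tphi)$ be a $v$-polarized completion of $(U,\omega|_U,\phi|_U,\alpha|_U)$, which exists by Lemma \ref{Hypotheses}(1). The key construction is a closed equivariant form (or current) $\gamma$ on $N$ interpolating between $e^{\omega}\alpha$ near infinity and $e^{\tomega}\alpha$ near $Z$; more precisely, I would build an equivariant form on $[0,1]\times N$ using a $G$-invariant cutoff supported near $Z$, restricting to $\omega_G$ outside $U$ and to $\tomega_G$ on a smaller neighbourhood of $Z$, and argue that $\iota_Z^\ast(\omega-\phi)=\iota_Z^\ast(\tomega-\tphi)$ makes the interpolation closed up to $d_G$ of something supported in $U$. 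Applying Theorem \ref{CohomologyClass} on $N$ (after checking the properness-of-$\phi$-on-supports hypothesis, which holds because $\pair{\phi}{v}$ is proper and bounded below on $\supp(\alpha)$ and $\tphi$ agrees with $\phi$ away from $U$) yields
\[ \DH(N,\omega,\phi,\alpha) = \DH(N,\tomega_{\mathrm{glob}},\tphi_{\mathrm{glob}},\alpha), \]
where $\tomega_{\mathrm{glob}},\tphi_{\mathrm{glob}}$ agree with $\tomega,\tphi$ on a neighbourhood of $Z$ and with $\omega,\phi$ outside $U$.

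\textbf{Step 3: Collapse onto $U$.} Now $\pair{\tphi_{\mathrm{glob}}}{v}$ is proper and bounded below on $\supp(\alpha)$ (this is the point of the polarized completion). Using the smooth $G$-equivariant collapse $p:[0,1]\times U\to U$ from Definition \ref{DefinitionSmoothCollapse} together with a $G$-invariant bump function supported in $U$, I would build a homotopy on $N$ from $\Id_N$ to a map with image in $U$ on $\supp(\alpha)$, pull back $e^{\tomega_{\mathrm{glob}}}\alpha$ along it, and again invoke Theorem \ref{CohomologyClass} — with the crucial verification that the taming estimate $\pair{\tphi}{v}$ proper and bounded below persists along the homotopy so that all the intermediate supports have $\phi$ proper on them. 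This identifies $\DH(N,\tomega_{\mathrm{glob}},\tphi_{\mathrm{glob}},\alpha)$ with $\DH(U,\tomega,\tphi,\alpha) = \DH^v_Z(N,\omega,\phi,\alpha)$ by Definition \ref{DHGerms}, completing the proof.

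\textbf{The main obstacle} is Step 3: ensuring that the properness and boundedness-below of $\pair{\cdot}{v}$ are preserved uniformly along the collapsing homotopy, rather than just at its endpoints. A naive homotopy of moment maps can easily destroy properness at intermediate times, so one must choose the interpolation of the equivariant $2$-form carefully — in practice, interpolating $\tphi$ and $\phi$ only in the ``$\lieh^\perp$-directions'' that the taming map does not see, or equivalently arranging that $\pair{\tphi_t}{v}$ is a convex-type combination of two functions each proper and bounded below on the relevant support. Getting the bookkeeping of supports right (so that Theorem \ref{CohomologyClass}'s hypothesis ``$\phi$ proper on the supports of $\alpha_1,\alpha_2,\beta$'' is met at each application) is the technical heart of the argument; everything else is a formal consequence of Stokes' theorem as encapsulated in Theorem \ref{CohomologyClass} and the uniqueness statement Lemma \ref{Hypotheses}(2).
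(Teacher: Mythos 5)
Step 3 has a genuine gap. The collapse $p:[0,1]\times U\to U$ of Definition \ref{DefinitionSmoothCollapse} retracts $U$ onto a piece of $Z$; it does not give, and cannot be promoted (by a bump function or otherwise) to, a homotopy on $N$ whose time-$1$ map sends $\supp(\alpha)$ into $U$. Nothing in the hypotheses makes $N$ (or even $\supp(\alpha)$, which may well be all of $N$) retract into $U$, and a bump-function extension of $p$ would be the identity on $\supp(\alpha)\setminus U$ --- so its image cannot lie inside $U$. Independently of this, Theorem \ref{CohomologyClass} works on a \emph{fixed} manifold: it lets you replace the cocycle $\alpha$ by a cohomologous one with controlled supports on $N$, but it cannot by itself convert an integral over $N$ into an integral over the open subset $U$. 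To do that via cohomology one would have to show $\alpha$ cohomologous, with a primitive on whose support $\tphi$ is proper, to a form supported inside $U$; this is the content of an Atiyah--Bott type localization argument and is not produced by the retraction $p$.

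The paper avoids this by constructing a single \emph{proper Hamiltonian cobordism} (Theorem \ref{BasicCobordismResult}, Appendix B) rather than a chain of cohomology identities on $N$. Set $W = (N\times[0,1])\setminus\bigl((N\setminus U)\times\{1\}\bigr)$, so $\partial W$ has components $N\times\{0\}$ and $U\times\{1\}$: the passage from $N$ to $U$ happens by literally deleting $(N\setminus U)$ from the $t=1$ end of the cylinder, not by any retraction. Lemma \ref{ExistencePart} applied on $W$ with $Y=(Z\times[0,1])\cup(N\times\{0\})$ produces a polarized completion $(\tomega,\tphi)$ on $W$ agreeing with $(\omega,\phi)$ on the entire $t=0$ boundary and restricting on $U\times\{1\}$ to \emph{some} polarized completion of $(U,\omega|_U,\phi|_U,\alpha|_U)$. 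Cobordism invariance, Theorem \ref{CobordismEquality} (not Theorem \ref{CohomologyClass}), then gives the equality of the two boundary DH distributions, and the uniqueness part of Lemma \ref{Hypotheses} (via Remark \ref{SmoothCollapseCohomologous}) identifies the $t=1$ contribution with $\DH^v_Z(N,\omega,\phi,\alpha)$. Your Steps 1--2 are in the right spirit and essentially reprove Theorem \ref{CohomologousTwoForms}, but the manifold-changing step genuinely requires the cobordism formalism; it cannot be done inside $N$ alone.
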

\begin{remark}
This result is related to earlier work on the subject, in particular the work of Paradan \cite{Paradan97,Paradan98} and Woodward \cite{Woodward}.  As examples, one might look to Proposition 3.3 in \cite{Paradan97} (a general localization result) and Proposition 2.9 in \cite{Paradan98} (a formula for the contribution of a smooth component of the localizing set, in terms of a twisted DH distribution for a tubular neighbourhood $U \supset Z$).  Theorem 5.1 in \cite{Woodward} is a norm-square localization formula, expressed in terms of twisted DH distributions of small submanifolds around the critical set.
\end{remark}

The condition that $v$ be bounded can be relaxed.  If $v$ is unbounded, but is bounded on a neighbourhood $\iota_i:N_i \hookrightarrow N$ of each component $Z_i$ of $Z$, then we can extend the definition:
\[ \DH^{v}_{Z_i}(N,\omega,\phi,\alpha):=\DH^{v}_{Z_i}(N_i,\iota_i^\ast \omega, \iota_i^\ast \phi, \iota_i^\ast \alpha).\]
The flexibility noted in Definition \ref{DHGerms} holds in this setting.  It is also often possible to apply Theorem \ref{HKTheorem} (notably for the case $v=\phi$, which can be unbounded):
\begin{corollary}
\label{UnboundedTaming}
Consider the setting of Theorem \ref{HKTheorem}, except that $v$ may be unbounded.  Suppose however that the subset $\|v\|^2(Z) \subset \mathbb{R}$ is discrete, and set $Z_r=Z \cap (\|v\|^2)^{-1}(r)$.  Then
\begin{equation} 
\label{CorResult}
\DH(N,\omega,\phi,\alpha)=\sum_r \DH^{v}_{Z_r}(N,\omega,\phi,\alpha).
\end{equation}
\end{corollary}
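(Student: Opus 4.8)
The plan is to adapt the cobordism proof of Theorem~\ref{HKTheorem} (outlined in Appendix~B) to the ``relaxed'' setting in which $v$ is bounded only near the localizing set, using the hypothesis that $\|v\|^2(Z)$ is discrete to carry the argument out one critical level at a time. First I would record the geometry: since $\|v\|^2$ is locally constant on $Z$ and $\|v\|^2(Z)$ is discrete, one may choose for each $r \in \|v\|^2(Z)$ a $G$-invariant open set $N_r = (\|v\|^2)^{-1}((r - \epsilon_r, r + \epsilon_r))$ with the intervals pairwise disjoint; then the $N_r$ are disjoint, $N_r \supset Z_r$, the restriction $v|_{N_r}$ is bounded, and its localizing set is exactly $Z_r$. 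Hence $\DH^{v}_{Z_r}(N,\omega,\phi,\alpha) = \DH^{v}_{Z_r}(N_r, \omega|_{N_r}, \phi|_{N_r}, \alpha|_{N_r})$ is defined, in the sense of the discussion following Theorem~\ref{HKTheorem}.

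Next I would run the Harada--Karshon argument. Constructing a polarized completion $(U_r, \tomega_r, \tphi_r)$ of a tubular neighbourhood of $Z_r$ is a local matter near $Z_r$, so it uses only boundedness of $v$ on $N_r$ together with the properness and boundedness-below of $\pair{\phi}{v}$ on $Z_r \cap \supp(\alpha)$ --- a closed subset of $\supp(\alpha)$, on which these properties are inherited from the standing hypothesis; this produces the individual terms $\DH^{v}_{Z_r}$. For the global identity I would pick cut-off values $c_0 < c_1 < \cdots \to \infty$ with each $c_j \notin \|v\|^2(Z)$ and exactly one element of $\|v\|^2(Z)$ in each interval $(c_{j-1}, c_j)$, and perform the collapse slab by slab on the region $\{c_{j-1} \le \|v\|^2 \le c_j\}$, where $v$ is bounded, gluing the resulting Hamiltonian cobordisms along the hypersurfaces $\{\|v\|^2 = c_j\}$ (which are disjoint from $Z$, so carry no collapsing to reconcile). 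Applying Stokes' theorem to the glued cobordism --- using properness of $\pair{\phi}{v}$ on $\supp(\alpha)$, and on each slab the polarized form of $\tphi$ to make the boundary terms converge --- then gives $\DH(N,\omega,\phi,\alpha) = \sum_r \DH^{v}_{Z_r}(N,\omega,\phi,\alpha)$. Local finiteness of the sum follows from the same properness and boundedness-below of $\pair{\phi}{v}$: when $v = \phi$, for instance, the $r$-th term is supported outside a ball about the origin of radius growing like $\sqrt{r}$, hence in the complement of any prescribed compact set once $r$ is large. (Equivalently, one can obtain the identity and local finiteness together by running the slab argument on the open sets $\{\|v\|^2 < c\}$ and letting $c \to \infty$, noting that for fixed $f \in C^\infty_{\comp}(\lieg^\ast)$ the pairing $\pair{\DH(N,\omega,\phi,\alpha)}{f}$ only involves the compact set $\supp(\alpha) \cap \phi^{-1}(\supp f)$.)

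The step I expect to be the main obstacle is precisely the passage from the globally bounded case of Theorem~\ref{HKTheorem} to this slab-by-slab version: one must verify that the polarized completions on adjacent slabs glue to a single Hamiltonian cobordism and that the Stokes argument survives the unboundedness of $v$, and it is the discreteness of $\|v\|^2(Z)$ --- the absence of accumulation of critical values --- that makes the slab decomposition and the gluing possible. A secondary, routine point is to confirm that the DH distributions attached to the (non-closed) slabs and sublevel sets are well defined; here one invokes Lemma~\ref{TameProper} together with the boundedness of $v$ on each slab, after, if necessary, a harmless modification of $v$ near the slab boundary to restore properness of $\pair{\phi}{v}$ there --- a modification that does not disturb $Z$, since the boundary avoids $Z$.
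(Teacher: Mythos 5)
Your strategy is genuinely different from the paper's, and it is a good deal harder. You propose to re-run the Harada--Karshon cobordism itself slab by slab between regular levels of $\|v\|^2$ and then glue the resulting cobordisms along the hypersurfaces $\{\|v\|^2 = c_j\}$. The paper instead performs a one-step \emph{reduction to the bounded case}: it replaces $v$ by $v' = f(\|v\|^2)\,v$ for a smooth, positive, non-increasing $f$ that is constant near each critical level $r$ (possible because $\|v\|^2(Z)$ is discrete) and decays fast enough to make $v'$ bounded. Since $f>0$, the zero set of $v'_N$ is exactly $Z$; Theorem~\ref{HKTheorem} then applies verbatim to $v'$. Finally, because $f(\|v\|^2)$ is constant on a small neighbourhood $U_r$ of each $Z_r$, a map is a $v'$-polarized completion on $U_r$ if and only if it is a $v$-polarized completion there, so $\DH^{v'}_{Z_r}=\DH^{v}_{Z_r}$. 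In effect the discreteness hypothesis is used only to allow the rescaling $f$ to be locally constant near $Z$; no new cobordism, gluing, or Stokes argument is needed.

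Your plan, by contrast, leaves its central step unresolved, and you flag this yourself. Two specific problems remain open in your sketch. First, the polarized completions $(\tomega_r,\tphi_r)$ produced by Lemma~\ref{Hypotheses} on neighbouring slabs have no reason to agree along $\{\|v\|^2=c_j\}\times[0,1]$; the construction in the proof of that lemma depends on auxiliary choices (a partition of unity, a proper function $f$, a metric), so the integrand $e^{\tomega}\tilde\alpha(-\partial)f\circ\tphi$ you want to integrate over the glued cobordism is not obviously smooth across the gluing loci, and the Stokes argument does not go through without reconciling these choices. Second, your fallback of a ``harmless modification of $v$ near the slab boundary'' is not shown to be harmless: a modification must preserve the property that $v_N$ has no zeros near $\{\|v\|^2=c_j\}$, must keep $\pair{\phi}{v}$ proper and bounded below where required, and must not alter the contributions $\DH^{v}_{Z_r}$, and none of this is verified. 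Relatedly, your properness claim for the open sublevel sets $\{\|v\|^2<c\}$ does not follow from properness of $\pair{\phi}{v}$ on $\supp(\alpha)$: restricting a proper map to a non-closed subset can destroy properness. The paper's rescaling trick sidesteps all of these issues at once, which is why it is the argument of choice here.
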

\begin{proof}
Since $\|v\|^2(Z) \subset \mathbb{R}$ is discrete, it is possible to find a smooth, positive, non-increasing function $f \in C^\infty(\mathbb{R}_{\ge 0})$, constant near each $r \in \mathbb{R}_{\ge 0}$ such that $Z_r \ne \emptyset$, and vanishing sufficiently rapidly at infinity, such that the modified taming map $v^\prime=f(\|v\|^2)v$ is bounded.  The localizing sets for $v$, $v^\prime$ are the same, and Theorem \ref{HKTheorem} can be applied, which gives the right-hand-side of \eqref{CorResult} except for $v^\prime$.  Since $f(\|v\|^2)$ is constant near each $Z_r \ne \emptyset$, the condition that $(U_r,\tomega_r,\tphi_r)$ is a $v^\prime$-polarized completion is equivalent to it being a $v$-polarized completion, since we can take $U_r$ sufficiently small that $f(\|v\|^2)$ is constant on $U_r$.  Thus $\DH^{v^\prime}_{Z_r}(N,\omega,\phi,\alpha)=\DH^v_{Z_r}(N,\omega,\phi,\alpha)$.
\end{proof}

\noindent The following Lemma will be useful in Section 4.  It allows a DH distribution for a Hamiltonian $G$-space $N$, twisted by a Poincare dual form $\tau_S$ to a submanifold $S \subset N$, to be reduced to a DH distribution for the submanifold $S$.
\begin{lemma}
\label{DeltaCurrent}
Let $(N,\omega,\phi)$ be an oriented Hamiltonian $G$-space, and $\alpha$ a closed equivariant differential form.  Let $\iota:S \hookrightarrow N$ be an oriented $G$-invariant submanifold (without boundary), which is closed as a subset of $N$.  Let $\tau_S$ be a $G$-equivariant Thom form for the normal bundle $\nu(S,N) \rightarrow S$, which we view as a closed equivariant differential form on $N$ via a tubular neighbourhood embedding.  Let $v:N \rightarrow \lieg$ be a bounded taming map with smooth localizing set $Z$, such that $Z_S:=S \cap Z$ is also smooth.  Suppose $\pair{\phi}{v}$ is proper and bounded below on $Z \cap \supp(\tau_S \alpha)$.  Then
\[ \DH^v_Z(N,\omega,\phi,\tau_S \alpha)=\DH^{v}_{Z_S}(S,\iota^\ast \omega,\iota^\ast \phi,\iota^\ast \alpha).\]
\end{lemma}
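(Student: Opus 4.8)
The plan is to push the whole computation from $N$ down to $S$. The key observations are that twisting by the Thom form $\tau_S$ is, up to equivariantly cohomologous currents, the same as inserting the current of integration over $S$, and that a $v$-polarized completion near $Z_S$ inside $N$ can be obtained by pulling one back from $S$ along a tubular-neighbourhood projection; once this is arranged, the current of integration over $S$ collapses the defining integral of the left-hand side to that of the right-hand side.

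First I would replace the twisting current. Fix a $G$-invariant tubular neighbourhood $V\supseteq S$ of $S$ in $N$ with projection $\pi_V:V\to S$. Using the homotopy retracting $V$ onto $S$ together with the standard fact that $\tau_S$ and the current of integration $\iota_\ast(1)$ are equivariantly cohomologous through a $d_G$-primitive supported in $\supp(\tau_S)$, one finds that $\tau_S\alpha$ and $\iota_\ast(\iota^\ast\alpha)$ are cohomologous closed equivariant currents, with primitive supported in an arbitrarily small $G$-invariant neighbourhood of $S$. By the flexibility noted after Definition~\ref{DHGerms} (the germ construction extends to equivariant currents, and Theorem~\ref{CohomologyClass} applies since the relevant properness and boundedness conditions hold on that small neighbourhood of $S$), one concludes
\[ \DH^v_Z(N,\omega,\phi,\tau_S\alpha)=\DH^v_Z(N,\omega,\phi,\iota_\ast(\iota^\ast\alpha)). \]
The gain is that $\supp(\iota_\ast(\iota^\ast\alpha))\subseteq S$, so $Z\cap\supp(\iota_\ast(\iota^\ast\alpha))\subseteq Z\cap S=Z_S$; the germ on the right may therefore be evaluated on a $G$-invariant open set that is merely a neighbourhood of $Z_S$, not of all of $Z$ near $S$.

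Next I would build an adapted completion and collapse. Since $S$ is $G$-invariant, $(v(p))_N(p)\in T_pS$ for $p\in S$, so the restriction $v|_S$ is a bounded taming map on $S$ whose localizing set is exactly $S\cap Z=Z_S$, smooth by hypothesis; likewise $\pair{\iota^\ast\phi}{v|_S}$ is proper and bounded below on $Z_S\cap\supp(\iota^\ast\alpha)$, this set lying in $Z\cap\supp(\tau_S\alpha)$ after at worst enlarging $\supp(\tau_S)$ slightly. Choosing a $G$-invariant tubular neighbourhood $W_S\subseteq S$ of $Z_S$, Lemma~\ref{Hypotheses} provides a $v|_S$-polarized completion $(W_S,\tomega_S,\tphi_S)$ of $(W_S,\omega|_{W_S},\phi|_{W_S},\iota^\ast\alpha)$, and by Definition~\ref{DHGerms} the right-hand side of the Lemma equals $\DH(W_S,\tomega_S,\tphi_S,\iota^\ast\alpha)$. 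Now enlarge $W_S$ to a $G$-invariant tubular neighbourhood $U$ of $Z_S$ in $N$ with $U\cap S=W_S$ admitting a $G$-equivariant projection $\pi:U\to W_S$ with $\pi|_{W_S}=\Id$ (e.g. $U=\pi_V^{-1}(W_S)$ for $V,W_S$ small). Then $U$ collapses to the part $Z_S$ of $Z$ in the sense of Definition~\ref{DefinitionSmoothCollapse}, and $(U,\pi^\ast\tomega_S,\pi^\ast\tphi_S)$ is a $v$-polarized completion of $(U,\omega|_U,\phi|_U,\iota_\ast(\iota^\ast\alpha))$: pullback preserves closed equivariant $2$-forms; the matching condition on $Z_S$ holds because $\pi|_{Z_S}=\Id$ and $(\tomega_S,\tphi_S)$ is a completion on $S$; and $\pair{\pi^\ast\tphi_S}{v}$ is proper and bounded below on $\supp(\iota_\ast(\iota^\ast\alpha))\subseteq W_S$ because there it coincides with $\pair{\tphi_S}{v|_S}$. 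Hence by Definition~\ref{DHGerms} and the uniqueness clause of Lemma~\ref{Hypotheses},
\[ \DH^v_Z(N,\omega,\phi,\iota_\ast(\iota^\ast\alpha))=\DH(U,\pi^\ast\tomega_S,\pi^\ast\tphi_S,\iota_\ast(\iota^\ast\alpha)). \]
Finally, since $\iota_\ast(\iota^\ast\alpha)$ is supported on $S$ and $U\cap S=W_S$, the adjunction $\langle\iota_\ast\eta,\zeta\rangle=\pm\langle\eta,\iota^\ast\zeta\rangle$ for pushed-forward currents, together with $\pi|_{W_S}=\Id$, reduces the defining integral of the right-hand side of this last equation to that of $\DH(W_S,\tomega_S,\tphi_S,\iota^\ast\alpha)$, the sign being absorbed into the orientation on $\nu(S,N)$ implicit in the existence of $\tau_S$. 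Stringing the three displayed equalities together gives the Lemma.

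The main obstacle is the support bookkeeping in the first step: one must ensure that the cohomology $\tau_S\alpha\sim\iota_\ast(\iota^\ast\alpha)$ is realized by a primitive supported in a region where the hypothesis ``$\pair{\phi}{v}$ proper and bounded below on $Z\cap\supp(\tau_S\alpha)$'' still applies (possibly after enlarging $\supp(\tau_S)$ within a small neighbourhood of $S$), so that the flexibility of $\DH^v_Z$ is genuinely usable and the transferred properness hypothesis on $S$ is available for Lemma~\ref{Hypotheses}. Everything else—that $v|_S$ is a taming map with localizing set $Z_S$, that the pulled-back pair satisfies the two conditions defining a $v$-polarized completion, and the sign in the current computation—is routine.
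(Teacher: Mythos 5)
Your proposal is correct and follows essentially the same strategy as the paper: replace the Thom form $\tau_S$ by the (equivariantly cohomologous) current of integration supported on $S$, exploit the delta-like nature of that current to collapse the defining integral of the twisted DH distribution to $S$, and then recognize the result as a germ DH distribution for the $T$-space $S$. The only structural difference is the direction in which you produce the polarized completion: the paper takes a generic $v$-polarized completion on a neighbourhood $U$ of $U_S$ in $N$ (via the existence part of Lemma \ref{Hypotheses}) applied to $\tau_S\alpha$, replaces $\tau_S$ by $\delta_S$ inside the resulting integral, and observes that the restriction of that completion to $U\cap S$ is automatically a $v|_S$-polarized completion; you instead build the completion on $W_S\subset S$ first and pull it back to $U$ via the tubular-neighbourhood projection, verifying by hand that the pullback is a $v$-polarized completion of $(U,\omega,\phi,\iota_\ast(\iota^\ast\alpha))$. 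Both routes are valid; the paper's avoids the extra verification by never constructing an explicit completion on $U$. A small point worth being aware of: replacing $\tau_S\alpha$ by the current $\iota_\ast(\iota^\ast\alpha)$ \emph{inside the germ notation} $\DH^v_Z(\cdot)$ is justified precisely because $\supp(\iota_\ast(\iota^\ast\alpha))\subset\supp(\tau_S\alpha)$ (so a completion for the latter serves as a completion for the former and Theorem \ref{CohomologyClass} applies); the paper instead first fixes a completion and only then swaps $\tau_S$ for $\delta_S$ in the concrete integral, which is slightly cleaner logically since the germ machinery is stated for forms rather than currents.
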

\begin{proof}
Let $U_S$ be a small $G$-invariant tubular neighbourhood of $Z_S$ in $S$, and let $U$ be a small $G$-invariant tubular neighbourhood of $U_S$ in $N$.  Since we can replace $\tau_S$ with a cohomologous form having support contained in an arbitrarily small neighbourhood of $S$, we can assume $U$ contains $Z \cap \supp(\tau_S \alpha)$.  Moreover, $U$ can be smoothly collapsed to part of $Z$ (first collapse $U$ to $U_S$, then collapse the latter to $Z_S$).  Let $(U,\tomega,\tphi)$ be a $v$-polarized completion of $(U,\omega,\phi,\tau_S \alpha)$.  By definition,
\[ \DH^v_Z(N,\omega,\phi,\tau_S \alpha)=\DH(U,\tomega,\tphi,\tau_S \alpha).\]
Let $\delta_S$ be the closed equivariant current defined by $S$,
\[ \pair{\delta_S}{\eta}=\int_S \iota^\ast \eta, \hspace{1cm} \eta \in \Omega_\comp(N).\]
Then $\delta_S$, $\tau_S$ are cohomologous in $\calC_G(N)$.  Since $v$ is bounded, $\tphi$ is proper on $\supp(\tau_S \alpha)$ (Lemma \ref{TameProper}), hence also on $\supp(\delta_S \alpha)$.  Applying Theorem \ref{CohomologyClass},
\[ \DH^v_Z(N,\omega,\phi,\tau_S \alpha) = \DH(U,\tomega,\tphi,\delta_S \alpha).\]
Because of the delta-like current, the right-hand-side can be viewed as a twisted-DH distribution for $S \cap U=U_S$.  Thus
\[ \DH^v_Z(N,\omega,\phi,\tau_S \alpha)=\DH(U_S,\iota^\ast \tomega, \iota^\ast \tphi,\iota^\ast \alpha).\]
The right-hand-side of this equation is $\DH^{v}_{Z_S}(S,\iota^\ast \omega,\iota^\ast \phi,\iota^\ast \alpha)$.
\end{proof}

\section{Hamiltonian $LG$-spaces and q-Hamiltonian $G$-spaces}
Let $LG$ denote the Banach Lie group consisting of maps $S^1 \rightarrow G$ of a fixed Sobolev level $s \ge 1$, with group operation given by pointwise multiplication, and let $L \lieg=\Omega^0_s(S^1,\lieg)$ denote its Lie algebra.  We define $L \lieg^\ast$ to be the space $\Omega^1_{s-1}(S^1, \lieg)$ of $\lieg$-valued 1-forms of Sobolev level $s-1$, where the pairing with $L \lieg$ is defined using the inner product on $\lieg$, and integration over the circle.  Identifying $L\lieg^\ast$ with the space of connections on the trivial $G$-bundle over the circle, we have the affine-linear action of $LG$ by gauge transformations
\[ g \cdot \xi = \Ad_g \xi - dg g^{-1}.\]
\begin{definition}
A \emph{Hamiltonian} $LG$-\emph{space} $(\calM,\omega,\Psi)$ is a Banach manifold $\calM$ with a smooth $LG$-action, equipped with a weakly non-degenerate, $LG$-invariant 2-form $\omega$, and an $LG$-equivariant \emph{moment map} $\Psi:\calM \rightarrow L\lieg^\ast$ satisfying
\[ \iota(\xi_\calM)\omega = -d\pair{\Psi}{\xi}, \hspace{1cm} \xi \in L\lieg.\]
\end{definition}
\noindent Let $L_0G \subset LG$ denote the subgroup consisting of loops based at the identity $e \in G$.  This subgroup acts freely on $L\lieg^\ast$.  Hence, by equivariance of $\Psi$, $L_0G$ also acts freely on $\calM$.

\subsection{The norm-square of the moment map.}
For $\xi \in L\lieg^\ast$, let
\[ \|\xi\|^2 = \frac{1}{2\pi }\int_0^{2\pi} |\xi(\theta)|^2 d\theta,\]
where $|\xi(\theta)|$ denotes the norm on $\lieg$.  Given a Hamiltonian $LG$-space $(\calM,\omega,\Psi)$, the \emph{norm-square of the moment map} is the $G$-invariant function
\[ \|\Psi\|^2:\calM \rightarrow \mathbb{R}.\]
The following result gives a description of the critical set of $\|\Psi\|^2$.
\begin{proposition}[cf. \cite{BottTolmanWeitsman}]
\label{NormSquareSet}
Let $\Psi: \calM \rightarrow L\lieg^\ast$ be a proper Hamiltonian $LG$-space.  We have a decomposition
\begin{equation}
\label{DecompositionOfCrit}
\textnormal{Crit}(\|\Psi\|^2) = G \cdot \bigcup_{\beta \in \mathcal{B}} \calM^\beta \cap \Psi^{-1}(\beta)
\end{equation}
where $\calB = \{\beta \in \liet^\ast_+|\calM^\beta \cap \Psi^{-1}(\beta) \ne \emptyset \} \subset \liet^\ast_+$ is discrete.
\end{proposition}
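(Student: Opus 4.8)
The plan is to characterise $\textnormal{Crit}(\|\Psi\|^2)$ by a direct computation using the moment map condition, and then to obtain discreteness of $\calB$ by invoking the cross-section theorem for Hamiltonian $LG$-spaces to reduce the question to Kirwan's analysis of $\|\phi\|^2$ on a finite-dimensional Hamiltonian space with proper moment map.

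For the pointwise statement only the inclusion $\subseteq$ in \eqref{DecompositionOfCrit} needs an argument, since the reverse inclusion follows from $G$-invariance of $\|\Psi\|^2$ together with the computation below read backwards. So let $m\in\textnormal{Crit}(\|\Psi\|^2)$. Using the inner product to write $\|\xi\|^2=\langle\langle\xi,\xi\rangle\rangle$ for the induced $L^2$-inner product $\langle\langle\cdot,\cdot\rangle\rangle$ on $L\lieg^\ast$, the chain rule gives
\[ d(\|\Psi\|^2)_m(v)=2\langle\langle d\Psi_m(v),\Psi(m)\rangle\rangle, \qquad v\in T_m\calM, \]
so $\Psi(m)$ is $L^2$-orthogonal to the image of $d\Psi_m$. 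Testing against $v=\xi_\calM(m)$ for $\xi\in L\lieg$ and using $LG$-equivariance of $\Psi$, one has $d\Psi_m(\xi_\calM(m))=\big([\xi,a]-\tfrac{d}{d\theta}\xi\big)\,d\theta$, where $\Psi(m)=a\,d\theta$; pairing with $a\,d\theta$ the bracket term vanishes by $\Ad$-invariance of the inner product, and an integration by parts on the circle leaves $\int_0^{2\pi}\xi\cdot\tfrac{d}{d\theta}a\,d\theta=0$ for all $\xi\in L\lieg$, hence $a$ is constant, i.e.\ $\Psi(m)\in\lieg^\ast\subset L\lieg^\ast$. Note that no regularity hypotheses are needed for this: the differentiation of $\|\Psi\|^2$ never requires $\Psi(m)^\sharp$ to lie in $L\lieg$ before it is known to be constant.

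Now that $\Psi(m)$ is a constant connection, $\Psi(m)^\sharp\in\lieg$ is an honest element of $L\lieg$, so the moment map condition applied to $\xi=\Psi(m)^\sharp$ yields
\[ d(\|\Psi\|^2)_m=-2\,\omega_m\big((\Psi(m)^\sharp)_\calM(m),\,\cdot\,\big); \]
weak non-degeneracy of $\omega$ then forces $(\Psi(m)^\sharp)_\calM(m)=0$, i.e.\ $m$ is fixed by the one-parameter subgroup generated by $\Psi(m)^\sharp$. Choosing $g\in G\subset LG$ (whose gauge action on $\lieg^\ast$ is coadjoint) so that $\beta:=\Ad^\ast_g\Psi(m)\in\liet^\ast_+$, we get $\Psi(g\cdot m)=\beta$ and $g\cdot m$ fixed by the subtorus generated by $\beta^\sharp$, so $g\cdot m\in\calM^\beta\cap\Psi^{-1}(\beta)$; hence $m\in G\cdot(\calM^\beta\cap\Psi^{-1}(\beta))$ and $\beta\in\calB$. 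This proves \eqref{DecompositionOfCrit}.

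For discreteness of $\calB$ I would invoke the cross-section theorem \cite{AlekseevMalkinMeinrenken,MWVerlindeFactorization}: around a constant connection $\beta\,d\theta$ there is an $LG$-slice whose $\Psi$-preimage $\calM_\beta$ is a \emph{finite-dimensional} Hamiltonian $G_\beta$-space, for $G_\beta$ a maximal-rank subgroup of $G$, with proper moment map $\Psi_\beta=\Psi|_{\calM_\beta}$ and with $\|\Psi\|^2|_{\calM_\beta}=\|\Psi_\beta\|^2$ (the $L^2$-norm of a constant connection being the norm of its value in $\lieg^\ast$). For $\beta\in\calB$ the point $g\cdot m$ found above lies in $\calM_\beta$, is a critical point of $\|\Psi_\beta\|^2$, and has $\Psi_\beta(g\cdot m)=\beta$; Kirwan's description of $\textnormal{Crit}(\|\Psi_\beta\|^2)$ in the proper case then identifies $\beta$ as the point nearest the origin of the moment image of some connected component of a fixed-point submanifold of $\calM_\beta$. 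Since only finitely many such components meet the $\Psi_\beta$-preimage of any compact set, and since the fundamental alcove $\frakA\subset\liet$ is a compact fundamental domain for the affine Weyl group $W\ltimes\Lambda$ (which acts on constant connections through $N_G(T)$ and the large gauge transformations $\theta\mapsto\exp(\theta\lambda)$, $\lambda\in\Lambda$), one obtains that $\calB\cap\overline{\frakA}$ is finite and $\calB\subseteq(W\ltimes\Lambda)\cdot(\calB\cap\overline{\frakA})$; the latter is locally finite because $W\ltimes\Lambda$ acts properly discontinuously on $\liet^\ast$. The main obstacle is precisely this last step: a compactness argument alone (properness of $\Psi$ plus continuity of the fixed-point condition) shows only that $\calB$ is \emph{closed}, and ruling out accumulation genuinely requires the finite-dimensional input --- each fixed-point component contributes at most one critical value --- fed in through the cross-section theorem, together with the bookkeeping needed to match $\|\Psi\|^2$ and the index set $\calB$ across the slices and across the affine Weyl group action.
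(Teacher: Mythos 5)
Your pointwise characterization of $\textnormal{Crit}(\|\Psi\|^2)$ is correct and takes a slightly different (more elementary) route than the paper. Where the paper first converts the problem to the energy functional on the path space $P_{e,g}G$ via holonomy and appeals to the geodesic characterization of its critical points, you compute directly: pair $d\Psi_m(\zeta_\calM(m))=([\zeta,a]-\zeta')\,d\theta$ against $a\,d\theta$, kill the bracket term by $\Ad$-invariance, and integrate by parts to conclude $a'=0$ distributionally. Your observation that $\Psi(m)^\sharp$ need not lie in $L\lieg$ until after constancy is established is a good one --- the energy-functional argument handles the same regularity issue implicitly through smoothness of geodesics, whereas you dispose of it directly. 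Once $a$ is constant, your use of the moment map condition together with weak non-degeneracy to get $a_\calM(m)=0$ matches the paper's chain $0=d_m\|\Psi\|^2=2\pair{d_m\Psi}{\xi}=-2\iota(\xi_\calM)\omega_m$ exactly.

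Your discreteness argument has the right central idea (reduce to finite-dimensional Kirwan analysis via the cross-section theorem, where $\|\Psi\|^2$ restricted to $\calM_\beta$ really is $\|\Psi_\beta\|^2$), but the bookkeeping via the affine Weyl group is broken. The containment $\calB\subseteq(W\ltimes\Lambda)\cdot(\calB\cap\overline{\frakA})$ would require the critical set of $\|\Psi\|^2$ to be invariant under large gauge transformations, which it is not: $\|\cdot\|^2$ on $L\lieg^\ast$ is invariant only under the linear (constant-loop) part of the gauge action, not under the affine part. Concretely, if $m\in\calM^\beta\cap\Psi^{-1}(\beta)$ and $\hat g(\theta)=\exp(\theta\lambda)$ with $\lambda\in\Lambda$, then $\Psi(\hat g\cdot m)=\beta-\lambda$ but $\hat g\cdot m$ is still fixed by the constant loop $\beta$ (since $\Ad_{\hat g}\beta=\beta$), so $\hat g\cdot m\in\calM^\beta\cap\Psi^{-1}(\beta-\lambda)$, not $\calM^{\beta-\lambda}\cap\Psi^{-1}(\beta-\lambda)$; thus $\beta-\lambda$ need not be in $\calB$. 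The fix does not actually require periodicity: apply the same cross-section argument near an \emph{arbitrary} point $\xi\in\liet^\ast_+$ rather than only in $\overline{\frakA}$. On the cross-section $\calM_\xi$, finitely many connected fixed-point components have moment image meeting a given compact neighbourhood of $\xi$, and each contributes at most one critical value, so $\calB$ has no accumulation point at $\xi$. (Alternatively, one can show $W\cdot\calB$ is \emph{contained in} the genuinely periodic, locally finite set of orthogonal projections of $0$ onto the affine subspaces in the arrangement $\calW$ of Section 4.2, whose periodicity comes from the compactness of the q-Hamiltonian space $M$ and the finiteness of the occurring infinitesimal stabilizers $\liet_s$ --- but that is containment in a periodic set, not periodicity of $\calB$ itself.)
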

We make some brief remarks to motivate this result.  If $m \in \calM$ is a critical point of $\|\Psi\|^2$, then, by equivariance of $\Psi$, $\xi:=\Psi(m)$ must be a critical point of the restriction of $\|\cdot\|^2$ to the $L_0G$ orbit $L_0G \cdot \xi \subset L\lieg^\ast$.  Let $g \in G$ be the holonomy of the connection $\xi$, and let $P_{e,g}G$ denote the space of paths $\gamma:[0,1] \rightarrow G$ of Sobolev class $s$ having fixed endpoints $\gamma(0)=e$, $\gamma(1)=g$.  $L_0G$ acts on $P_{e,g}G$ by left multiplication.  For $\gamma \in P_{e,g}G$, let $E(\gamma)$ denote the \emph{energy} of the path:
\[ E(\gamma)=\int_0^{1} |\gamma(t)^{-1}\gamma^\prime(t)|^2 dt.\]
There is an $L_0G$-equivariant map
\[ \Hol:L_0G \cdot \xi \rightarrow P_{e,g}G,\]
the holonomy of the connection (using the trivialization $S^1 \times G$ of the bundle), and under this map
\[ \|\zeta\|^2 = E(\Hol(\zeta)), \hspace{1cm} \zeta \in L_0G \cdot \xi \subset L\lieg^\ast.\]
The critical points of the energy functional on $P_{e,g}G$ are smooth geodesics (for the bi-invariant metric) from $e$ to $g$.  It follows that $\Hol(\xi)=\exp(t \xi)$ with $\xi \in \lieg \subset L\lieg^\ast$ a constant connection.

We have
\[ 0=d_m\|\Psi\|^2=2\pair{d_m\Psi}{\xi}=-2\iota(\xi_\calM)\omega_m.\]
By weak non-degeneracy of $\omega$, $\xi_{\calM}(m)=0$, which shows that $m \in \calM^\xi \cap \Psi^{-1}(\xi)$.  Since $\|\Psi\|^2$ is $G$-invariant, the decomposition \eqref{DecompositionOfCrit} in terms of a subset $\calB \subset \liet^\ast_+$ follows.  Using properness of $\Psi$, it can be shown that $\calB$ is discrete.

\subsection{Q-Hamiltonian $G$-spaces and the Equivalence Theorem.}
The left (resp. right) invariant Maurer-Cartan forms on $G$ will be denoted $\theta^L$ (resp. $\theta^R$).  The Cartan 3-form $\eta$ is a bi-invariant closed 3-form on $G$ given by
\[ \eta=\tfrac{1}{12}\theta^L \cdot [\theta^L,\theta^L].\]
It has an equivariant extension (for $G$ acting on itself by conjugation)
\[ \eta_G(X)=\eta-\tfrac{1}{2}(\theta^L+\theta^R)\cdot X, \hspace{1cm} X \in \lieg.\]

\begin{definition}[\cite{AlekseevMalkinMeinrenken}]
A \emph{quasi-Hamiltonian} (q-Hamiltonian) $G$ \emph{space} is a $G$-manifold $M$, together with a $G$-invariant 2-form $\omega$ and a smooth $G$-equivariant map $\Phi:M \rightarrow G$ satisfying
\begin{equation} 
\label{DifferentialOfOmega}
d_G \omega = -\Phi^\ast \eta_G,
\end{equation}
and such that $\ker(\omega_m) \cap \ker(d_m\Phi) = \{0\}$ for all $m \in M$.  If this last \emph{minimal degeneracy} condition is not satisfied, then we refer to $M$ as a \emph{degenerate} q-Hamiltonian $G$-space.
\end{definition}
\begin{remark}
If $\rho=\tfrac{1}{2} \sum_{\alpha \in \calR_+} \alpha$, the half-sum of the positive roots, is a weight of the group $G$, then $M$ is automatically even-dimensional and orientable \cite{AMWDuistermaatHeckman} (for example, this happens if $G$ is the product of a simply connected group and a torus).  In any case, from now on we take $M$ to be a connected, oriented, even-dimensional q-Hamiltonian $G$-space.
\end{remark}
According to the Equivalence Theorem 8.3 in \cite{AlekseevMalkinMeinrenken}, there is a 1-1 correspondence between compact q-Hamiltonian $G$-spaces and proper Hamiltonian $LG$-spaces.  Let $\Psi:\calM \rightarrow L\lieg^\ast$ be a proper Hamiltonian $LG$-space with 2-form $\omega_\calM$.  The corresponding q-Hamiltonian space $\Phi:M \rightarrow G$ fits into a pullback diagram
\begin{equation}
\label{PullbackDiagram}
\begin{CD}
\calM @> \Psi >> L\lieg^\ast \\
@VV \Hol V		@VV \Hol V\\
M @> \Phi >> G
\end{CD}
\end{equation}
The vertical maps are quotients by the free action of the group of based loops $L_0G$.  There is an $LG$-invariant 2-form $\varpi \in \Omega^2(L\lieg^\ast)$ such that
\begin{equation}
\label{OmegaDescends}
\omega :=\omega_\calM + \Psi^\ast \varpi,
\end{equation}
is $L_0G$-basic, hence descends to the quotient $M$.  It satisfies
\[ d_G \omega = -\Phi^\ast \eta_G, \]
thus $(M,\omega,\Phi)$ is a q-Hamiltonian $G$-space.

For later use, define the (possibly singular) $T$-invariant closed subsets
\[ X=\Phi^{-1}(T) \subset M, \hspace{1cm} \hX =\Psi^{-1}(\liet) \subset \calM.\]
Restricting the pullback diagram \eqref{PullbackDiagram} shows that $\hX$ is the fibre product of $X$ with $\liet \subset L\lieg^\ast$.

\subsection{Cross-section Theorem.}
Let $g \in T$, and let $G_g$ be the centralizer of $g$, with Lie algebra $\lieg_g$.  Since $G_g \supset T$, $\lieg_g$ has a unique $G_g$-invariant complement $\lieg_g^\perp$.  There exists a \emph{slice} for the conjugation action near $g$, that is, an $\Ad(G_g)$ invariant open neighbourhood $U_g$ of $g$ in $G_g$ such that there is a $G$-equivariant diffeomorphism onto an open subset of $G$:
\[ G \times_{G_g} U_g \xrightarrow{\sim} \Ad(G) \cdot U_g, \hspace{1cm} [h,x] \mapsto hxh^{-1}.\]

Let $\Phi:M\rightarrow G$ be a q-Hamiltonian $G$-space, and let
\[ Y_g = \Phi^{-1}(U_g).\]
This is a smooth $G_g$-invariant submanifold.  Moreover, $G \cdot Y_g$ is an open subset of $M$ and
\begin{equation}
\label{Flowout}
G \cdot Y_g \simeq G\times_{G_g} Y_g,
\end{equation}
as $G$-spaces.  By Proposition 7.1 of \cite{AlekseevMalkinMeinrenken}, $Y_g$ is a q-Hamiltonian $G_g$-space, with the restriction of the 2-form and moment map.

In fact, the cross-section $Y_g$ can be made into a \emph{Hamiltonian} $G_g$-space.  If the chosen open neighbourhood $U_g$ is sufficiently small, then the image of the map $g^{-1}\Phi|_{Y_g}$ is contained in a neighbourhood of $e \in G$ on which the exponential map has a smooth inverse $\log$ sending $e$ to $0 \in \lieg$.  Let
\[ \phi_g := \log(g^{-1}\Phi|_{Y_g}).\]
The pullback of the Cartan 3-form $\eta$ to $U_g$ is exact.  Using a homotopy operator, one constructs a $G_g$-invariant 2-form $\varpi$ on $\lieg_g$ such that on $Y_g$ the 2-form
\[ \omega_g = \omega-\phi_g^\ast \varpi.\]
satisfies $d\omega_g=0$.  See \cite{AlekseevMalkinMeinrenken} for further details.  It follows from the construction and the fact that $\iota_T^\ast \eta=0$, that
\begin{equation}
\label{PullbackOfVarpi}
\iota_\liet^\ast \varpi =0.
\end{equation}
\begin{theorem}[\cite{AlekseevMalkinMeinrenken}]
$(Y_g,\omega_g,\phi_g)$ is a non-degenerate Hamiltonian $G_g$-space.
\end{theorem}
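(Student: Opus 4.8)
The plan is to verify directly the two defining properties of a non-degenerate Hamiltonian $G_g$-space for $(Y_g,\omega_g,\phi_g)$: that the equivariant $2$-form $\omega_g-\pair{\phi_g}{\cdot}$ is $d_{G_g}$-closed --- equivalently, $d\omega_g=0$ together with the moment map identity $\iota(\xi_{Y_g})\omega_g=-d\pair{\phi_g}{\xi}$ for $\xi\in\lieg_g$ (identifying $\lieg_g\cong\lieg_g^\ast$ via the inner product) --- and that $\omega_g$ is non-degenerate. I would start from Proposition 7.1 of \cite{AlekseevMalkinMeinrenken}, which says that $(Y_g,\omega|_{Y_g},\Phi|_{Y_g})$ is already a q-Hamiltonian $G_g$-space; thus $d_{G_g}\omega=-(\Phi|_{Y_g})^\ast\eta_{G_g}$ (with $\eta_{G_g}$ the equivariant Cartan $3$-form of $G_g$, which is the restriction of that of $G$), and the minimal degeneracy condition $\ker(\omega_m)\cap\ker(d_m\Phi)=\{0\}$ holds for $m\in Y_g$. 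The centrality of $g$ in $G_g$ enters twice: it makes $g^{-1}\Phi|_{Y_g}$ again $G_g$-equivariant for the conjugation action (so, for $U_g$ small, $\phi_g=\log(g^{-1}\Phi|_{Y_g})$ is a well-defined $G_g$-equivariant map into $\lieg_g$); and, since then $\Ad_g$ acts as the identity on $\lieg_g$, left translation by $g$ on $G_g$ fixes $\theta^L$ (left-invariance), $\theta^R$ (because $L_g^\ast\theta^R=\Ad_g\theta^R$ and $\theta^R$ is $\lieg_g$-valued) and $\eta$ (bi-invariance), so $L_g^\ast\eta_{G_g}=\eta_{G_g}$ and hence $(\Phi|_{Y_g})^\ast\eta_{G_g}=\phi_g^\ast\exp^\ast\eta_{G_g}$.

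For the closedness of $\omega_g-\pair{\phi_g}{\cdot}$, I would use that $\pr\circ\phi_g=\phi_g$ (where $\pr\colon\lieg_g\to\lieg_g$ is the identity) to rewrite it as $\omega-\phi_g^\ast\bigl(\varpi+\pair{\pr}{\cdot}\bigr)$, and then apply $d_{G_g}$, which $\phi_g^\ast$ intertwines by equivariance:
\[ d_{G_g}\bigl(\omega_g-\pair{\phi_g}{\cdot}\bigr)=d_{G_g}\omega-\phi_g^\ast\,d_{G_g}\bigl(\varpi+\pair{\pr}{\cdot}\bigr)=-\phi_g^\ast\exp^\ast\eta_{G_g}-\phi_g^\ast\,d_{G_g}\bigl(\varpi+\pair{\pr}{\cdot}\bigr).\]
Everything then reduces to the \emph{equivariant exponential identity} $d_{G_g}\bigl(\varpi+\pair{\pr}{\cdot}\bigr)=-\exp^\ast\eta_{G_g}$ on $\lieg_g$. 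Its $3$-form part, $d\varpi=-\exp^\ast\eta$, is exactly the statement (already used to arrange $d\omega_g=0$) that $\varpi$ is a primitive of $\exp^\ast\eta$; the remaining content is the relation $\iota(\xi_{\lieg_g})\varpi=d\pair{\pr}{\xi}-\tfrac12\exp^\ast(\theta^L+\theta^R)\cdot\xi$, the equivariant refinement of the q-Hamiltonian/Hamiltonian comparison in \cite{AlekseevMalkinMeinrenken}. Granting it, the two terms above cancel, which yields both $d\omega_g=0$ and $\iota(\xi_{Y_g})\omega_g=-d\pair{\phi_g}{\xi}$ at once. I expect the only genuine work here to be this exponential identity --- in particular, verifying that the equivariant primitive can be taken with function part exactly $\pr$, which is what pins the Hamiltonian moment map to $\phi_g$ rather than a translate --- but it is standard and can be quoted; the attendant sign bookkeeping and the $\lieg_g\cong\lieg_g^\ast$ identification I expect to be routine.

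For non-degeneracy, I would take $m\in Y_g$ and $v\in\ker(\omega_g|_m)$. Pairing the moment map identity with $\xi_{Y_g}(m)$ gives $d_m\pair{\phi_g}{\xi}(v)=0$ for all $\xi\in\lieg_g$, so $v\in\ker(d_m\phi_g)$; and $\ker(d_m\phi_g)=\ker(d_m\Phi)$ since $\Phi|_{Y_g}=L_g\circ\exp\circ\phi_g$ with $L_g$, $\exp$ local diffeomorphisms. Moreover, if $d_m\phi_g(v)=0$ then $(\phi_g^\ast\varpi)(v,w)=\varpi\bigl(d_m\phi_g(v),d_m\phi_g(w)\bigr)=0$ for every $w\in T_mY_g$, so $\omega_g$ and $\omega$ agree on $v$; hence $\omega_m(v,\cdot)=\omega_g|_m(v,\cdot)=0$, i.e. $v\in\ker(\omega_m)$. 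Thus $v\in\ker(\omega_m)\cap\ker(d_m\Phi)=\{0\}$ by minimal degeneracy, and $\omega_g$ is non-degenerate. The main obstacle overall is the exponential identity of the middle step; everything else is formal manipulation in the Cartan model together with the q-Hamiltonian axioms on the cross-section $Y_g$.
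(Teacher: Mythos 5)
Your argument is correct and is, in essence, the proof one finds in \cite{AlekseevMalkinMeinrenken}, which the paper cites without reproducing: the reduction to the equivariant primitive identity $d_{G_g}\bigl(\varpi+\pair{\pr}{\cdot}\bigr)=-\exp^\ast\eta_{G_g}$ (the exponential comparison between Hamiltonian and q-Hamiltonian structures), the use of centrality of $g$ in $G_g$ so that $L_g^\ast\eta_{G_g}=\eta_{G_g}$, and the derivation of non-degeneracy of $\omega_g$ from the minimal degeneracy condition $\ker\omega_m\cap\ker d_m\Phi=\{0\}$ together with $\ker d_m\phi_g=\ker d_m\Phi$ and the vanishing of $\iota(v)\phi_g^\ast\varpi$ for $v\in\ker d_m\phi_g$ are all exactly as in that reference. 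The one point you flag as needing care --- that the homotopy-operator primitive of $\exp^\ast\eta_{G_g}$ has function part exactly $-\pr$ --- does indeed check out (evaluating $(\theta^L+\theta^R)$ on $\tfrac{d}{dt}\exp(t\mu)$ gives $2\mu$), so nothing is missing.
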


As a result of equation \eqref{Flowout}, the normal bundle
\[ \nu(Y_g,M) \simeq Y_g \times \lieg_g^\perp.\]
We orient the even-dimensional subspace $\lieg_g^\perp$ compatibly with the orientation provided by the Liouville form on the symplectic submanifold $Y_g$, and the orientation on $M$.  Note in particular that when $\lieg_g^\perp = \{0 \}$, an orientation is just a sign $\pm 1$.  Let $\lieg_g/\liet$ denote the unique $T$-invariant complement of $\liet$ in $\lieg_g$.\footnote{We avoid using $\liet^\perp$ since we use this to denote the $T$-invariant complement in $\lieg$.}  Hence
\[ \liet^\perp = (\lieg_g/\liet) \oplus \lieg_g^\perp.\]
Orient $\lieg_g/\liet$ compatibly with the orientation on $\lieg_g^\perp$ and the orientation on $\liet^\perp$ determined by the positive roots.  Let $\Eul(\lieg_g/\liet,X) \in \Pol(\liet)$ denote the $T$-equivariant Euler class for the oriented $T$-equivariant vector bundle $(\lieg_g/\liet) \times \liet \rightarrow \liet$.  It is given by
\[ \Eul(\lieg_g/\liet,X) = \sgn(g) (-1)^{|\calR_{g,+}|}\prod_{\alpha \in \calR_{g,+}} \pair{\alpha}{X},\]
where $\calR_{g,+} \subset \calR_+$ is a set of positive roots of $\lieg_g$, and $\sgn(g)=\pm 1$ is a sign depending on $g$ (determined from the orientations).

\subsection{Abelianization.}
This subsection describes the `abelianization' of a q-Hamiltonian $G$-space $\Phi:M \rightarrow G$, a construction introduced in \cite{Meinrenken2005} Section 3.2.  This is a rough analog, for q-Hamiltonian spaces, of converting a Hamiltonian $G$-space into a Hamiltonian $T$-space by \emph{projecting} the moment map to $\liet^\ast$.  Consider the $\Ad(T)$-equivariant smooth map $r:T \times \liet^\perp \rightarrow G$
\[ r(t,X)=t \exp(X).\]
For a sufficiently small ball around the origin $B_\epsilon(\liet^\perp)$, the map $r$ restricts to a diffeomorphism from $T \times B_{\epsilon}(\liet^\perp)$ onto a tubular neighbourhood $\calT$ of $T$ in $G$.  Let $\pi_T:\calT \simeq T \times B_\epsilon(\liet^\perp) \rightarrow T$ denote the projection onto the first factor.  Let $\calX=\Phi^{-1}(\calT) \subset M$, and define a smooth map $\Phi_{\a}:\calX \rightarrow T$ by
\[ \Phi_{\a}=\pi_T \circ \Phi. \]
The pullback of the Cartan 3-form $\eta$ to $\calT$ is exact (its pullback to $T$ vanishes).  Using a homotopy operator, one constructs a $T$-invariant 2-form $\gamma_{\a}$ such that on $\calX:=\Phi^{-1}(\calT) \subset M$, the 2-form
\[\omega_{\a}=\omega-\Phi^\ast \gamma_{\a}, \]
satisfies $d\omega_{\a}=0$.  See \cite{Meinrenken2005} for futher details.  It follow from the construction and the fact that $\iota_T^\ast \eta=0$ that
\begin{equation}
\label{PullbackOfGamma}
\iota_T^\ast \gamma_{\a} =0.
\end{equation}
\begin{theorem}[\cite{Meinrenken2005} Proposition 3.4]
$(\calX,\omega_{\a},\Phi_{\a})$ is a (degenerate) q-Hamiltonian $T$-space, called the \emph{abelianization} of $M$.
\end{theorem}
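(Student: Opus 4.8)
The plan is to verify the three conditions in the definition of a (degenerate) quasi-Hamiltonian $T$-space for the triple $(\calX,\omega_{\a},\Phi_{\a})$: first that $\omega_{\a}$ is $T$-invariant, second that $d_T\omega_{\a} = -\Phi_{\a}^\ast\eta_T$, and third (the degenerate version) that no further nondegeneracy is claimed, so only the first two need proof. The $T$-invariance of $\omega_{\a}$ is immediate: $\omega$ is $G$-invariant hence $T$-invariant on $\calX$, $\Phi$ is $G$-equivariant hence $T$-equivariant, and $\gamma_{\a}$ is $T$-invariant by construction, so $\omega_{\a} = \omega - \Phi^\ast\gamma_{\a}$ is $T$-invariant. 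The work is entirely in the cocycle identity.

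First I would unwind what $\gamma_{\a}$ is. Since $\iota_T^\ast\eta = 0$ and $\calT \simeq T \times B_\epsilon(\liet^\perp)$ deformation retracts onto $T$, the pullback of $\eta$ to $\calT$ is exact, and the homotopy (chain) operator $h$ for this retraction produces a $T$-invariant primitive $\gamma_{\a}$ of $\iota_{\calT}^\ast\eta$ with $\iota_T^\ast\gamma_{\a} = 0$ (this last because the retraction fixes $T$, so $h$ annihilates forms pulled back from $T$, giving \eqref{PullbackOfGamma}). The same homotopy operator, applied $T$-equivariantly to the equivariant Cartan $3$-form, yields a $T$-equivariant primitive: one checks that the $T$-equivariant extension $\eta_T(X) = \eta - \tfrac12(\theta^L+\theta^R)\cdot X$ restricted to $\calT$ is $d_T$-exact, with primitive an equivariant $2$-form whose purely-form part is $\gamma_{\a}$. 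Write this equivariant primitive as $\gamma_{\a,T}(X) = \gamma_{\a} - \pair{\mu}{X}$ for a suitable $T$-equivariant map $\mu:\calT \to \liet^\ast$; the point is that by the construction via the homotopy operator and the $\Ad(T)$-equivariance of $r(t,X) = t\exp(X)$, one gets $\mu = $ (the $\liet^\ast$-component of) the natural identification coming from $\pi_T$, and in particular the equivariant cocycle identity on $\calT$ reads $d_T\gamma_{\a,T} = -\iota_{\calT}^\ast\eta_T + \Phi_{\a,T}^\ast(\text{linear term})$ in a form that I would extract from \cite{Meinrenken2005}, Section 3.2.

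Then the main computation: starting from $d_G\omega = -\Phi^\ast\eta_G$ on $M$, restrict to $\calX$ and take the $T$-equivariant part (restriction of equivariant forms along $\liet \hookrightarrow \lieg$ is a cochain map), to get $d_T(\iota_{\calX}^\ast\omega) = -\iota_{\calX}^\ast(\Phi^\ast\eta_G)|_{\liet} = -\Phi^\ast(\iota_{\calT}^\ast\eta_T)$ after identifying the restriction of $\eta_G$ to $\liet$ with $\eta_T$ on $\calT$. Now subtract $d_T(\Phi^\ast\gamma_{\a,T}) = \Phi^\ast(d_T\gamma_{\a,T})$ and use the primitive identity on $\calT$ together with $\Phi_{\a} = \pi_T\circ\Phi$; the terms $\Phi^\ast\iota_{\calT}^\ast\eta_T$ cancel, and what remains is $d_T\omega_{\a} = d_T(\iota_{\calX}^\ast\omega - \Phi^\ast\gamma_{\a,T}) = -\Phi_{\a}^\ast\eta_T$, i.e. \eqref{DifferentialOfOmega} with $G$ replaced by $T$ and $\Phi$ by $\Phi_{\a}$. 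The form part of this identity $d\omega_{\a} = 0$ is exactly the already-stated property; the new content is the linear-in-$X$ (moment map) part, which says $\iota(X_{\calX})\omega_{\a} = -d\pair{\Phi_{\a}}{X}$ up to the group-valued correction $-\tfrac12(\theta^L_T+\theta^R_T)\cdot X$ pulled back by $\Phi_{\a}$ — but on the torus $\theta^L = \theta^R$ is the flat invariant $\liet$-valued form $d\log$, so this is just the standard compatibility, and it follows by matching degree-$1$ components on both sides.

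The main obstacle will be the careful bookkeeping in the second paragraph: pinning down the equivariant primitive $\gamma_{\a,T}$ and verifying that the $\liet^\ast$-valued piece $\mu$ produced by the homotopy operator is precisely the one for which the cancellation with $\Phi_{\a}^\ast\eta_T$ is exact, rather than exact-up-to-an-error. This is where one genuinely needs the explicit form of $r(t,X) = t\exp(X)$ and the fact that $\pi_T\circ r$ is projection to the first factor — it forces the $T$-directions of $\gamma_{\a,T}$ and of $\eta_T$ pulled back via $\Phi_{\a}$ to agree. The rest (invariance, the form-level statement, degree counting for the moment map identity) is routine. I would either reproduce the relevant lines of \cite{Meinrenken2005} Section 3.2 or, more efficiently, cite Proposition 3.4 there and present the above as the verification that the construction has the claimed properties.
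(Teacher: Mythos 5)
The paper offers no proof of this theorem: it is cited directly to \cite{Meinrenken2005}, Proposition~3.4, and the surrounding text only sketches the construction of $\gamma_{\a}$ and $\omega_{\a}$. So there is no internal proof to compare against. Your overall plan---promote $\gamma_{\a}$ to a $T$-equivariant primitive on $\calT$ via the homotopy operator, and pull the equivariant cocycle relation on $\calT$ back to $\calX$ through $\Phi$---is the correct one and is what Meinrenken's argument amounts to.

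There is, however, a genuine gap in the one place where the claim is not formal, namely the linear-in-$X$ piece $\mu$ of the equivariant primitive, where you write $\gamma_{\a,T}(X)=\gamma_{\a}-\pair{\mu}{X}$. Your third paragraph silently replaces the paper's $\omega_{\a}=\omega-\Phi^\ast\gamma_{\a}$ by the equivariant form $\omega-\Phi^\ast\gamma_{\a,T}$; these differ by $\Phi^\ast\pair{\mu}{X}$. The theorem asserts that the plain $2$-form $\omega_{\a}$ together with the $T$-valued map $\Phi_{\a}$ satisfies $d_T\omega_{\a}=-\Phi_{\a}^\ast\eta_T$, whose degree-one component is $\iota(X_\calX)\omega_{\a}=-\tfrac12\Phi_{\a}^\ast(\theta^L+\theta^R)\cdot X$, and your chain of equalities produces an extra $d\Phi^\ast\pair{\mu}{X}$ unless $\mu=0$. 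You flag exactly this as the ``main obstacle,'' but your resolution---that $\mu$ is ``the natural identification coming from $\pi_T$''---is not correct and does not establish anything. What actually closes the gap is a short explicit computation: for the radial retraction $p_t(u\exp Y)=u\exp(tY)$, $u\in T$, $Y\in\liet^\perp$, the de~Rham homotopy operator $h$ gives $h(\theta^L)(u\exp Y)=Y$ and $h(\theta^R)(u\exp Y)=\Ad_u Y$, so $-\tfrac12 h(\theta^L+\theta^R)(u\exp Y)=-\tfrac12(Y+\Ad_u Y)$, which lies in $\liet^\perp$ because $\Ad_u$ preserves $\liet^\perp$ for $u\in T$. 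Pairing with any $X\in\liet$ therefore vanishes, i.e.\ $\mu=0$. Without this step the degree-one identity is an assumption, not a conclusion.

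A secondary remark on your second paragraph: the identity you write, ``$d_T\gamma_{\a,T}=-\iota_{\calT}^\ast\eta_T+\Phi_{\a,T}^\ast(\text{linear term})$,'' is garbled---$\eta_T$ lives on $T$, so $\iota_{\calT}^\ast\eta_T$ is not meaningful, and $\Phi_{\a}$ is not a map out of $\calT$. The correct statement, coming from $(d_T h + h\, d_T)(\iota_{\calT}^\ast\eta_G)=\iota_{\calT}^\ast\eta_G-\pi_T^\ast\iota_T^\ast\eta_G$ together with $\iota_T^\ast\eta_G=\eta_T$ and $d_T\eta_G=0$, is $d_T\gamma_{\a,T}=-\iota_{\calT}^\ast\eta_G+\pi_T^\ast\eta_T$ as $T$-equivariant forms on $\calT$. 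Pulling back by $\Phi$, using $\Phi_{\a}=\pi_T\circ\Phi$ and $\mu=0$, then yields the q-Hamiltonian identity for $(\calX,\omega_{\a},\Phi_{\a})$ cleanly.
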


Let $\hcalX$ be the fibre product $\calX \times_T \liet$, and $\hPhi_{\a}:\hcalX \rightarrow \liet$ the corresponding map:
\begin{equation}
\label{SecondPullbackDiagram}
\begin{CD}
\hcalX @> \hPhi_{\a} >> \liet \\
@VV \exp V		@VV \exp V\\
\calX @> \Phi_{\a} >> T
\end{CD}
\end{equation}
The covering map $\exp:\hcalX \rightarrow \calX$ is the quotient by the action of the integral lattice $\Lambda$.  Pulling $\omega_{\a}$ back to $\hcalX$, we obtain a (degenerate) Hamiltonian $T$-space $(\hcalX,\exp^\ast \omega_{\a},\hPhi_{\a})$.  Since $\calX$ is an open subset of $M$, $\calX$ and $\hcalX$ both inherit an orientation.  Comparing to pullback diagram \eqref{PullbackDiagram} shows that the subset $\hX=\Psi^{-1}(\liet) \subset \calM$ of the corresponding Hamiltonian $LG$-space, can be identified with the subset $\exp^{-1}(X) \subset \hcalX$.

\subsection{Twisted DH distributions for a q-Hamiltonian space.}
The twisted DH distributions that we study in this paper are not the same as those defined in \cite{AMWDuistermaatHeckman} (which are conjugation-invariant distributions on the group $G$), but are closely related to them by an induction-type map.  We briefly indicate the relation in remark \ref{RelationToGEquivariant} below.  The type of DH distribution considered here was first introduced for q-Hamiltonian spaces in \cite{Meinrenken2005}, where further results about DH distributions of this kind can be found.  The idea of using a smooth cut-off form, playing the role of a Poincare dual to a possibly singular subset, appeared in a paper of Jeffrey-Kirwan (see section 5 in \cite{JeffreyKirwanModuli}).

To a $G$-equivariant cocycle $\alpha \in \Omega_G(M)$, we will associate a $\Lambda$-periodic, $W$-anti-symmetric distribution $\frakm^\alpha$ on $\liet \simeq \liet^\ast$.  The distribution $\frakm^\alpha$ is intended to represent the $\alpha$-twisted DH distribution of the $T$-space $\hX=\Psi^{-1}(\liet) \subset \calM$, the only problem being that the latter might not be smooth.  To deal with this, we work instead with the slightly larger space $\hcalX \supset \hX$ (see definition in previous subsection), together with a cut-off form $\tau_{\hcalX}$ supported near $\hX$ that plays the role of a Poincare dual to $\hX$.

Let $\tau$ denote a compactly supported $T$-equivariant Thom form for the vector bundle $\pi_T:\calT \simeq T \times \liet^\perp \rightarrow T$, and let $\tau_{\hcalX}=\exp^\ast \Phi^\ast \tau$.  The twisted DH distribution that we will study for the remainder of the article is
\[ \frakm^\alpha :=\DH(\hcalX,\exp^\ast \omega_{\a},\hPhi_{\a},\tau_{\hcalX} \cdot \exp^\ast \alpha).\]
(In the future, pullbacks by $\exp$ will be omitted from the notation.)  This is well-defined since $\hPhi_{\a}$ is proper on the support of $\tau_{\hcalX}$.  It is independent of the choice of tubular neighbourhood $\calT \supset T$ and of the choice of $\tau$ by Theorem \ref{CohomologyClass}.  $\frakm^\alpha$ can equivalently be thought of as a $W$-anti-symmetric distribution on $T$.

If $\Psi$ is transverse to $\liet$ so that $\hX$ is smooth, then $\tau_{\hcalX}$ is Poincare dual to $\hX$, and it follows (similar to Lemma \ref{DeltaCurrent})  that $\frakm^{\alpha}=\DH(\hX,\exp^\ast \omega_{\a},\hPhi_{\a},\exp^\ast \alpha)$, where the orientation of $\hX$ is determined by the orientations of $\hcalX$ and $\liet^\perp$.

As already mentioned, the main reason for introducing the twisted DH distributions $\frakm^\alpha$ is that they encode certain cohomology pairings on reduced spaces---see \cite{Meinrenken2005} for the exact relation in the case of $\frakm^\alpha$.  As an example, consider the distribution $\frakm$ ($\alpha=1$), and assume $\Phi$ has a non-trivial regular value.  In this case $\frakm = f dm$ for a $\Lambda$-periodic, $W$-anti-symmetric function $f$, which is polynomial on the chambers of an affine hyperplane arrangement in $\liet$ (and $dm$=Lebesgue measure).  If $\xi \in \liet$ is a regular value of $\hPhi_{\a}$ and $g=\exp(\xi)$ is a regular element of $T$ (i.e. $G_g=T$), then $f(\xi)$ is the volume of the reduced space $\Phi^{-1}(\exp(\xi))/T$.

\vspace{0.3cm}

\begin{remark}
\label{RelationToGEquivariant}
We briefly indicate the relation of $\frakm^\alpha$ to the more usual $G$-invariant twisted Duistermaat-Heckman distributions; see \cite{Meinrenken2005} sections 4, 5 for a more detailed discussion.  Consider first the case of a $G$-equivariant cocycle $\alpha$ on a proper \emph{Hamiltonian} $G$-space $(N,\omega,\phi)$.  Let $\phi_{\liet}$ and $\phi_{\liet^\perp}$ be the components of $\phi$ with respect to the orthogonal direct sum $\lieg=\liet \oplus \liet^\perp$.  Let $U \subset N$ be the inverse image under $\phi_{\liet^\perp}$ of a small ball $B$ around the origin in $\liet^\perp$, and let $\tau_U$ be the pullback to $U$ of a $T$-equivariant Thom form with support contained in $B$.  We have the following relation between twisted Duistermaat-Heckman distributions
\begin{equation}
\label{RliegEquation}
\DH(U,\omega,\phi_{\liet},\tau_U \cdot \alpha)=R_{\lieg}(\DH(N,\omega,\phi,\alpha)),
\end{equation}
where $R_{\lieg}$ is an \emph{isomorphism}
\[ R_{\lieg}:\calD^\prime(\lieg^\ast)^G \rightarrow \calD^\prime(\liet^\ast)^{W-\anti}.\]
Up to a constant normalization, this map is the inverse of the dual map to the isomorphism
\[ f \in C_\comp^\infty(\lieg^\ast)^G \mapsto f|_{\liet^\ast} \cdot \prod_{\alpha>0} \pair{\alpha}{-} \in C_\comp^\infty(\liet^\ast)^{W-\anti}.\]
(Any $W$-anti-symmetric smooth function can be divided by $\prod_{\alpha>0} \pair{\alpha}{-}$, and the result extends to a smooth $G$-invariant function.)  The distribution $\DH(U,\omega,\phi,\tau_U \cdot \alpha)$ therefore carries the same information.  In the case of the Hamiltonian $T$-space $\hcalX$ above, we are in a situation in which the left side of \eqref{RliegEquation} is defined, but the right side is not, as the $G$-space $N$ does not exist.

The relation between $\frakm^\alpha$, viewed as a $W$-anti-symmetric distribution on $T$, and the conjugation-invariant distributions studied in \cite{AMWDuistermaatHeckman} is similar.  Assuming (for simplicity) that the half-sum of the positive roots $\rho$ is a weight of $G$, the relation is given by an isomorphism:
\[ R_G:\calD^\prime(G)^{G} \rightarrow \calD^\prime(T)^{W-\anti}.\]
Up to a constant normalization, this map is the inverse of the dual map to the isomorphism
\[ f \in C^\infty(G)^G \mapsto \big(f\big|_T \big) \cdot \sum_{w \in W} (-1)^{l(w)}e_{w\rho} \in C^\infty(T)^{W-\anti},\]
given by restriction to $T$, followed by multiplication by the Weyl denominator.
\end{remark}

\section{Norm-square localization formula}
In this section we apply the Harada-Karshon Theorem to $(\hcalX,\omega_{\a},\hPhi_{\a})$ to obtain a norm-square localization result for Hamiltonian $LG$-spaces (or equivalently, for q-Hamiltonian $G$-spaces).  In the later subsections, local models for the polarized completions are described, and these are used to derive more explicit formulas for the contributions.  Through most of this section, we identify $\liet$ and $\liet^\ast$ using an invariant inner product on $\lieg$.  We use notation introduced in Section 3 for the q-Hamiltonian space $M$, its abelianization $\calX$, the Hamiltonian covering space $(\hcalX,\omega_{\a},\hPhi_{\a})$, and so on.

\subsection{Application of the Harada-Karshon theorem.}
The map $\hPhi_{\a}: \hcalX \rightarrow \liet^\ast$ introduced in the previous section is $T$-equivariant, hence defines a taming map.  However, the localizing set for this taming map need not be smooth.  To get around this problem, we perturb the taming map by a small `generic' vector $\gamma \in \liet$, letting
\[ v=\hPhi_{\a} - \gamma.\]
Then $\pair{\hPhi_{\a}}{v}=\|\hPhi_{\a}\|^2-\pair{\hPhi_{\a}}{\gamma}$ is proper and bounded below on the support of $\tau_{\hcalX}$ (its behavior at infinity is dominated by the $\|\hPhi_{\a}\|^2$ term).  We will explain what we mean by `generic' in the next section, but it will imply that the localizing set
\[ \calZ = \{v_{\hcalX}=0 \} \subset \hcalX\] 
has some desirable properties, and in particular that it is smooth.

We have shown that the 4-tuple $(\hcalX,\omega_{\a},\hPhi_{\a},\tau_{\hcalX}\alpha)$ equipped with the taming map $v$ satisfy all the conditions required in the Harada-Karshon Theorem \ref{HKTheorem} (see also Corollary \ref{UnboundedTaming}), which gives the following `norm-square localization formula'.
\begin{theorem}
\label{HKNormSquareFormula}
Let $\Phi:M\rightarrow G$ be a q-Hamiltonian $G$-space, and let $\hPhi_{\a}:\hcalX \rightarrow \liet$ be the covering space of its abelianization, as above.  Let $v=\hPhi_{\a}-\gamma$ for a generic vector $\gamma$.  Let $\tau_{\hcalX}$ be the pullback of the Thom form.  Then
\begin{equation} 
\label{BasicNormSquareFormula}
\DH(\hcalX,\omega_{\a},\hPhi_{\a},\tau_{\hcalX}\alpha)=\DH^v_{\calZ}(\hcalX,\omega_{\a},\hPhi_{\a},\tau_{\hcalX}\alpha).
\end{equation}
\end{theorem}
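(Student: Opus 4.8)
The plan is to verify that the 4-tuple $(\hcalX,\omega_{\a},\hPhi_{\a},\tau_{\hcalX}\alpha)$ together with the taming map $v = \hPhi_{\a} - \gamma$ satisfies all the hypotheses of Theorem \ref{HKTheorem} (or its extension Corollary \ref{UnboundedTaming}), and then simply invoke that theorem. Since the statement \eqref{BasicNormSquareFormula} is literally the conclusion \eqref{HKFormula} of the Harada-Karshon Theorem applied to this data, the entire content of the proof is a checklist.

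First I would observe that $(\hcalX,\exp^\ast\omega_{\a},\hPhi_{\a})$ is an oriented (degenerate) Hamiltonian $T$-space: it is oriented because $\calX$ is open in the oriented manifold $M$ and $\hcalX$ is a covering, and it is Hamiltonian because pulling the degenerate q-Hamiltonian $T$-structure on $\calX$ back along the covering $\exp$ (which kills the group factor, so the Cartan $3$-form contribution vanishes) yields a closed equivariant $2$-form $\omega_{\a} - \langle \hPhi_{\a},\cdot\rangle$. Next, $\tau_{\hcalX}\alpha = \exp^\ast\Phi^\ast\tau \cdot \exp^\ast\alpha$ is a closed $T$-equivariant differential form (product of closed forms), so it is an admissible twisting cocycle. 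Then I would note that $v = \hPhi_{\a} - \gamma$ is smooth and $T$-equivariant (it is the equivariant map $\hPhi_{\a}$ shifted by a central element $\gamma\in\liet$), hence a taming map in the sense of the earlier definition. For the properness-and-bounded-below condition one computes $\pair{\hPhi_{\a}}{v} = \|\hPhi_{\a}\|^2 - \pair{\hPhi_{\a}}{\gamma}$; on the support of $\tau_{\hcalX}$ the map $\hPhi_{\a}$ takes values in (the preimage of) a fixed compact ball times the first factor, but more to the point $\supp(\tau_{\hcalX})$ lies over $\Phi^{-1}(\calT)$ and $\hPhi_{\a}$ is the lift of $\Phi_\a = \pi_T\circ\Phi$, so properness of $\hPhi_{\a}$ on $\supp(\tau_{\hcalX})$ holds (the Thom form has compact support in the $\liet^\perp$-directions and $M$ is compact, so $\Phi^{-1}(\calT)$ maps properly), and the dominant $\|\hPhi_{\a}\|^2$ term forces $\pair{\hPhi_{\a}}{v}$ to be bounded below and proper there. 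The same estimate shows $\pair{\hPhi_{\a}}{v}$ is proper and bounded below on $\calZ\cap\supp(\tau_{\hcalX}\alpha)$.

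The remaining hypothesis of Lemma \ref{Hypotheses}/Theorem \ref{HKTheorem} is the existence of an open set $U\supset \calZ\cap\supp(\tau_{\hcalX}\alpha)$ that can be smoothly collapsed to part of $\calZ$; for a smooth localizing set one takes $U$ to be a $T$-invariant tubular neighbourhood of $\calZ$ (Remark following Definition \ref{DefinitionSmoothCollapse}), which requires $\calZ$ to be a smooth submanifold. This smoothness is exactly what is asserted to follow from genericity of $\gamma$ — and this is the step I expect to be the real obstacle, the only place where something must actually be proved rather than checked against a definition. Since the detailed meaning of ``generic'' and the proof of smoothness of $\calZ$ are deferred to the next subsection, the argument here should simply state that under the genericity assumption $\calZ$ is smooth, so all hypotheses are met, and then apply Theorem \ref{HKTheorem} (using Corollary \ref{UnboundedTaming} to accommodate the fact that $v$ need not be bounded: $\|v\|^2(\calZ)$ is discrete because it is governed by the discrete set $\calB$ of critical values, cf. Proposition \ref{NormSquareSet}) to conclude \eqref{BasicNormSquareFormula}. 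In short: the proof is one sentence of citation preceded by a paragraph of routine verification, with the genuine mathematical work — smoothness of $\calZ$ — quarantined into the following subsection.
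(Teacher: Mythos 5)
Your proposal is correct and takes essentially the same route as the paper: a routine verification of the hypotheses of Theorem \ref{HKTheorem} (with Corollary \ref{UnboundedTaming} invoked to handle the unboundedness of $v$), with the single substantive ingredient --- smoothness of $\calZ$ for generic $\gamma$ --- deferred to the following subsection, exactly as the paper does via Proposition \ref{RegValue} and the remark after it. The only minor imprecision is the attribution of discreteness of $\|v\|^2(\calZ)$ to Proposition \ref{NormSquareSet}: that proposition controls $\calZ\cap\hX$, whereas discreteness over all of $\calZ$ comes from the finiteness of the stabilizer algebras and $\Lambda$-periodicity of $\calW$ (compactness of $M$), but the paper is equally terse on this point.
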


It will take some work to extract a more explicit description of the right-hand-side of this equation.  As a first step, note that
\[ \calZ= \bigcup_{\beta \in \liet} \hcalX^{\bbeta} \cap \hPhi_{\a}^{-1}(\beta),\]
where
\[ \bbeta := \beta-\gamma.\]
We will refer to the set
\[ \calZ_\beta=\hcalX^{\bbeta} \cap \hPhi_{\a}^{-1}(\beta)\]
as the $\beta$-\emph{component} of the localizing set (it might not be connected).

\subsection{Relation to $\|\Psi\|^2$.}
By taking the support of the Thom form $\tau$ to be sufficiently small in the `vertical' $(\liet^\perp)$ direction, we can assume that $\tau_{\hcalX}$ vanishes identically on each component of $\calZ_\beta$ which does not intersect $\hX=\exp^{-1}(\Phi^{-1}(T))$, so that these components give trivial contribution to \eqref{BasicNormSquareFormula}.  Recall that $\hX$ can be identified with the set $\Psi^{-1}(\liet)$, inside the corresponding Hamiltonian $LG$-space $\calM$, and $\hPhi_{\a}|_{\hX}=\Psi|_{\hX}$.  Under this identification
\begin{equation}
\label{IntersectWithX}
\calZ_\beta \cap \hX = \calM^{\bbeta} \cap \Psi^{-1}(\beta).
\end{equation}

If $\gamma=0$, this simplifies to
\[ \calZ_\beta \cap \hX = \calM^\beta \cap \Psi^{-1}(\beta).\]
By Proposition \ref{NormSquareSet}, the latter is non-empty iff $\beta \in W\cdot \calB$, where $\calB$ indexes components of the critical set of $\|\Psi\|^2$.  Thus if $\gamma=0$ then the non-trivial contributions to the norm-square localization formula \eqref{BasicNormSquareFormula} are indexed by the set $W \cdot \calB$.  More generally if $\gamma \ne 0$ is small and generic, then the localizing set $\calZ$ can be viewed as a kind of desingularization (cf. \cite{Paradan97,Paradan98}) of the intersection of $\hX$ with the critical set of $\|\Psi\|^2$.  We next explain this in more detail.

Let $\liet_s$, $s\in \mathcal{S}$ index the sub-algebras of $\liet$ which arise as infinitesimal stabilizers of points in $\calX$.\footnote{In our case, $\mathcal{S}$ is finite, since $M$ is compact.  The discussion here goes through more generally however, see \cite{Paradan97}, \cite{Paradan98}.}  Each connected component of $\hcalX^{\liet_s}$ is mapped by $\hPhi_{\a}$ into an affine subspace $\Delta \subset \liet$ which is a translate of the orthogonal complement to $\liet_s$.

In this way, we obtain a periodic collection $\calW$ of affine subspaces $\Delta$.  For example, for $\liet_s=0$ we obtain $\Delta=\liet$.  For $\Delta \in \calW$, let $\liet_\Delta$ be the subspace orthogonal to $\Delta$ (this is one of the sub-algebras $\liet_s$, $s \in \calS$); this is a rational subalgebra, in the sense that $T_\Delta:= \exp(\liet_\Delta)$ is closed.  Let $\liet_\Delta^\perp$ denote the orthogonal complement to $\liet_\Delta$; the affine subspace $\Delta$ is a translate of $\liet_\Delta^\perp$.

For an affine subspace $\Delta$ and point $x \in \liet$, let $\pr_\Delta(x) \in \Delta$ denote the orthogonal projection of $x$ onto $\Delta$.  Since $\liet_\Delta$, $\Delta$ are orthogonal, $\pr_\Delta(\gamma)=\beta^\prime \Rightarrow \overline{\beta^\prime}=\beta^\prime - \gamma \in \liet_\Delta$.  Thus there is a decomposition
\begin{equation}
\label{Decomposition} 
\calZ_{\beta^\prime} = \bigcup_{ \pr_\Delta(\gamma)=\beta^\prime} \hcalX^{\liet_\Delta} \cap \hPhi_{\a}^{-1}(\beta^\prime),
\end{equation}
the union being indexed by a subset of $\calW$.  Taking the intersection with $\hX$ we obtain
\begin{equation}
\label{DecompositionTwo} 
\calZ_{\beta^\prime} \cap \hX = \bigcup_{\pr_\Delta(\gamma)=\beta^\prime} \calM^{\liet_\Delta} \cap \Psi^{-1}(\beta^\prime).
\end{equation}
In particular, the critical values of $v=\hPhi_{\a}-\gamma$ are contained in the set
\[ \{\pr_\Delta(\gamma)|\Delta \in \calW \}.\]
Likewise, $W \cdot \calB$ is contained in the set
\[ \{\pr_\Delta(0)| \Delta \in \calW \}.\]
For each $\beta \in W \cdot \calB$, introduce the set
\[ S(\beta):=\{ \pr_{\Delta}(\gamma)|\Delta \in \calW, \pr_\Delta(0)=\beta \}. \]
Since $W \cdot \calB$ is discrete, if $\gamma$ is sufficiently small, the finite sets $S(\beta)$ are disjoint.

\begin{proposition}
If $\gamma$ is sufficiently small and $\beta^\prime \in \liet$ is such that $\calZ_{\beta^\prime} \cap \hX \ne \emptyset$, then $\beta^\prime \in S(\beta)$ for some $\beta \in W \cdot \calB$.
\end{proposition}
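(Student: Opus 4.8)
The plan is to leverage the two decomposition formulas \eqref{Decomposition} and \eqref{DecompositionTwo} together with the discreteness of $W \cdot \calB$ established in Proposition \ref{NormSquareSet}. Suppose $\beta^\prime \in \liet$ satisfies $\calZ_{\beta^\prime} \cap \hX \ne \emptyset$. By \eqref{DecompositionTwo}, there is some $\Delta \in \calW$ with $\pr_\Delta(\gamma) = \beta^\prime$ and $\calM^{\liet_\Delta} \cap \Psi^{-1}(\beta^\prime) \ne \emptyset$. The idea is to show that the corresponding value $\beta := \pr_\Delta(0)$ lies in $W \cdot \calB$; then by definition of $S(\beta)$ we immediately get $\beta^\prime = \pr_\Delta(\gamma) \in S(\beta)$, as desired.

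First I would argue that $\pr_\Delta(0) \in W \cdot \calB$ whenever $\Delta \in \calW$ carries a nonempty fixed-point locus $\calM^{\liet_\Delta} \cap \Psi^{-1}(\beta^\prime)$ for $\beta^\prime$ close to $\beta$. The affine subspace $\Delta$ is a translate of $\liet_\Delta^\perp$, and $\beta = \pr_\Delta(0)$ is the unique point of $\Delta$ lying in $\liet_\Delta$. A point $m \in \calM^{\liet_\Delta} \cap \Psi^{-1}(\beta^\prime)$ has $\Psi(m) = \beta^\prime \in \Delta$, and since $\liet_\Delta$ stabilizes $m$, the component of $\Psi(m)$ in $\liet_\Delta$ — which is exactly $\beta$ by the orthogonal decomposition $\beta^\prime = \beta + \overline{\beta^\prime}$ with $\overline{\beta^\prime} \in \liet_\Delta$ — is a critical value of $\|\Psi\|^2$ in the sense of Proposition \ref{NormSquareSet}, provided it is actually attained as $\Psi$ of a point fixed by its own image. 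This last point is where a small-$\gamma$ (hence small-$\overline{\beta^\prime}$) argument is needed: I would use a continuity/properness argument to pass from $m \in \calM^{\liet_\Delta} \cap \Psi^{-1}(\beta + \overline{\beta^\prime})$ with $\overline{\beta^\prime}$ small to a genuine point of $\calM^{\beta} \cap \Psi^{-1}(\beta)$. Concretely, as $\gamma \to 0$ the sets $\calZ_{\beta^\prime} \cap \hX$ accumulate only on $\bigcup_{\beta \in W \cdot \calB} \calM^\beta \cap \Psi^{-1}(\beta)$ by \eqref{IntersectWithX} and Proposition \ref{NormSquareSet}; since $M$ (hence $\calX$, and the relevant compact pieces of $\hX$) is compact and $\calW$ is finite, only finitely many affine subspaces $\Delta$ are involved, and for each the condition $\calM^{\liet_\Delta} \cap \Psi^{-1}(\Delta) \ne \emptyset$ already forces $\pr_\Delta(0) = \beta \in W \cdot \calB$ (take $\gamma = 0$ in \eqref{DecompositionTwo}–\eqref{IntersectWithX}). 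So the key observation is simply that nonemptiness of the relevant fixed-point set is a closed condition on the pair $(\Delta, \text{base point})$ that does not see $\gamma$ at all: $\calM^{\liet_\Delta} \cap \Psi^{-1}(\beta^\prime) \ne \emptyset$ with $\beta^\prime \in \Delta$ is equivalent to $\calM^{\liet_\Delta} \cap \Psi^{-1}(\Delta) \ne \emptyset$, and restricting to $\gamma = 0$ shows $\pr_\Delta(0) \in W \cdot \calB$.

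Then I would assemble the conclusion: given such a $\Delta$, set $\beta = \pr_\Delta(0)$, which we have just shown lies in $W \cdot \calB$. By construction $\Delta$ contributes to the union defining $S(\beta)$, so $\beta^\prime = \pr_\Delta(\gamma) \in S(\beta)$. Finally, the hypothesis that $\gamma$ is sufficiently small guarantees (as already noted before the Proposition) that the finite sets $S(\beta)$ for distinct $\beta \in W \cdot \calB$ are disjoint, so $\beta$ is unambiguously determined, completing the proof.

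The main obstacle I anticipate is making precise the claim that $\calM^{\liet_\Delta} \cap \Psi^{-1}(\beta^\prime) \ne \emptyset$ for some $\beta^\prime \in \Delta$ forces $\pr_\Delta(0) \in W \cdot \calB$ — i.e. cleanly separating the role of $\gamma$ (which only shifts the base point within the fixed affine subspace $\Delta$) from the intrinsic geometry of $\Delta$ and the $LG$-space. One must be careful that $\calW$ is defined via infinitesimal stabilizers on $\calX$ (equivalently on $\hcalX$), and that the identification \eqref{IntersectWithX}, $\calZ_\beta \cap \hX = \calM^{\bbeta} \cap \Psi^{-1}(\beta)$, together with its $\liet_\Delta$-refinement \eqref{DecompositionTwo}, genuinely reduces the question to Proposition \ref{NormSquareSet}; once that bookkeeping is in place, the argument is essentially a compactness-plus-discreteness statement and the smallness of $\gamma$ is used only to keep the $S(\beta)$ disjoint.
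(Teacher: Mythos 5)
Your framework is right --- reduce via \eqref{DecompositionTwo} to finding $\Delta\in\calW$ with $\pr_\Delta(\gamma)=\beta^\prime$ and $\calM^{\liet_\Delta}\cap\Psi^{-1}(\beta^\prime)\ne\emptyset$, set $\beta:=\pr_\Delta(0)$, and try to show $\beta\in W\cdot\calB$ --- and you correctly locate where the real work is. But the step you actually commit to is false. You assert that nonemptiness of $\calM^{\liet_\Delta}\cap\Psi^{-1}(\beta^\prime)$ for some $\beta^\prime\in\Delta$ is ``a closed condition \dots that does not see $\gamma$ at all,'' equivalent to $\calM^{\liet_\Delta}\cap\Psi^{-1}(\Delta)\ne\emptyset$, and that ``restricting to $\gamma=0$ shows $\pr_\Delta(0)\in W\cdot\calB$.'' This does not follow: knowing $\Psi$ restricted to $\calM^{\liet_\Delta}$ meets $\Delta$ somewhere tells you nothing about whether it meets the particular point $\beta=\pr_\Delta(0)$. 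Putting $\gamma=0$ in \eqref{DecompositionTwo} only detects the fiber over $\beta$, not the whole of $\Delta$, and that fiber could be empty even though $\calM^{\liet_\Delta}\cap\Psi^{-1}(\Delta)\ne\emptyset$. So as written there is a genuine gap, precisely at the point you yourself flag as the ``main obstacle.''

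The paper closes the gap with the continuity argument you gesture at (``as $\gamma\to 0$ the sets $\calZ_{\beta^\prime}\cap\hX$ accumulate only on \dots'') but do not carry out. Each connected component $C$ of $\calM^{\liet_\Delta}\cap\Psi^{-1}(\Delta)$ has $\Psi(C)$ a \emph{closed} subset of $\Delta$. If $\beta\notin\Psi(C)$, closedness gives a neighbourhood of $\beta$ in $\Delta$ disjoint from $\Psi(C)$; once $\gamma$ is small enough that $\beta^\prime=\pr_\Delta(\gamma)$ lies in this neighbourhood, $\beta^\prime\notin\Psi(C)$ either. Contrapositively, for $\gamma$ sufficiently small, $\beta^\prime\in\Psi(C)\Rightarrow\beta\in\Psi(C)$, hence $\calM^{\liet_\Delta}\cap\Psi^{-1}(\beta)\ne\emptyset$; and since $\beta\in\liet_\Delta$, this set lies inside $\calM^{\beta}\cap\Psi^{-1}(\beta)$, giving $\beta\in W\cdot\calB$. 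Notice this is emphatically \emph{not} $\gamma$-independent: the admissible size of $\gamma$ depends on the data. A minor separate confusion: you write the orthogonal decomposition as $\beta^\prime=\beta+\overline{\beta^\prime}$ with $\overline{\beta^\prime}\in\liet_\Delta$, but in fact $\beta^\prime-\beta\in\liet_\Delta^\perp$; in the paper's convention $\overline{\beta^\prime}=\beta^\prime-\gamma\in\liet_\Delta$, a different vector.
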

\begin{proof}
By \eqref{DecompositionTwo}, $\calZ_{\beta^\prime} \cap X$ is non-empty iff there is a $\Delta \in \calW$ such that $\pr_\Delta(\gamma)=\beta^\prime$ and 
\[\calM^{\liet_\Delta} \cap \Psi^{-1}(\beta^\prime) \ne \emptyset.\]
Let $\beta=\pr_\Delta(0)$.  Each component $C \subset \calM^{\liet_\Delta} \cap \Psi^{-1}(\Delta)$ is mapped by $\Psi$ onto a closed set in $\Delta$.  By adjusting $\gamma$, we can make $\beta^\prime=\pr_\Delta(\gamma)$ as close as we like to $\beta=\pr_\Delta(0)$, and so ensure that $\beta^\prime \in \Psi(C)$ $\Rightarrow$ $\beta \in \Psi(C)$.  It follows that for $\gamma$ sufficiently small,
\[\calM^{\liet_\Delta} \cap \Psi^{-1}(\beta^\prime) \ne \emptyset \Rightarrow \calM^{\liet_\Delta} \cap \Psi^{-1}(\beta) \ne \emptyset.\]
And thus $\beta \in W \cdot \calB$.
\end{proof}

The non-trivial contributions in \eqref{BasicNormSquareFormula} are indexed by the set of $\beta^\prime \in \liet$ for which $\calZ_{\beta^\prime} \cap \hX \ne \emptyset$.  The Proposition shows that, for $\gamma$ sufficiently small, this set is contained in the disjoint union of the finite sets $S(\beta)$, $\beta \in W \cdot \calB$.  If we define
\[ \frakm_{\beta}=\sum_{\beta^\prime \in S(\beta)} \DH^{v}_{\calZ_{\beta^\prime}}(\hcalX,\omega_{\a},\hPhi_{\a},\tau_{\hcalX}\alpha),\]
then \eqref{BasicNormSquareFormula} yields the norm-square decomposition described in the introduction:
\begin{equation}
\label{FormulaNotPerturbed}
\DH(\hcalX,\omega_{\a},\hPhi_{\a},\tau_{\hcalX}\alpha)=\sum_{\beta \in W \cdot \calB} \frakm_{\beta}.
\end{equation}

\begin{remark}
The H-K results can also be applied to $v_1=\hPhi_{\a}$ directly (without perturbing by $\gamma$), with $Z_1=\{(v_1)_{\hcalX}=0 \}$ possibly singular.  See the discussion in Appendix C, where we explain that the norm-square contributions for $v_1$ are the terms $\frakm_\beta$ in the formula above.  In particular this implies that the terms are $W$-anti-symmetric: $w_\ast \frakm_{\beta}=(-1)^{l(w)}\frakm_{w\beta}$.
\end{remark}

\subsection{Genericity conditions.}
Let $\beta \in \liet$, and $g=\exp(\beta)$.  The root space decomposition for the infinitesimal stabilizer $\lieg_g$ is
\begin{equation}
\label{RootSpaceDecomposition}
\lieg_g^{\mathbb{C}} = \liet^{\mathbb{C}} \oplus \bigoplus_{\calR_g} \lieg_\alpha,
\end{equation}
where $\calR_g$ consists of those roots $\alpha \in \calR$ such that $g^\alpha=e^{2\pi \i \pair{\alpha}{\beta}}=1$, i.e. such that $\pair{\alpha}{\beta} \in \mathbb{Z}$.  The \emph{Stieffel diagram} is the affine hyperplane arrangement in $\liet$ consisting of all $H_{\alpha,n}=\{\xi \in \liet|\pair{\alpha}{\xi} = n \}$ where $\alpha \in \calR$, $n \in \mathbb{Z}$.  Let $\sigma$ be an open face of the Stieffel diagram.  It follows from \eqref{RootSpaceDecomposition} that for $\beta \in \sigma$, the infinitesimal stabilizer $\lieg_{\exp(\beta)} \supset \liet$ is independent of $\beta$; we denote it by $\lieg_\sigma$.

In order to derive a more explicit formula for the contribution, it will be convenient to choose $\gamma$ (the center of the decomposition) sufficiently generic, similar to \cite{Paradan97}, \cite{Paradan98}.
\begin{proposition}
There is an open dense set (the complement of a periodic hyperplane arrangement in $\liet$) of points $\gamma \in \liet$ such that for all $\Delta, \Delta^\prime \in \calW$ and all open faces $\sigma$ of the Stieffel diagram we have
\begin{enumerate}
\item $\Delta^\prime \subsetneq \Delta$ $\Rightarrow$ $\pr_\Delta(\gamma) \ne \pr_{\Delta^\prime}(\gamma)$.
\item $\pr_\Delta(\gamma) \in \sigma$ $\Rightarrow$ $\Delta \subset \langle \sigma \rangle$.
\end{enumerate}
Here $\langle \sigma \rangle$ denotes the affine extension of $\sigma$.
\end{proposition}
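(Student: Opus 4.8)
My plan is to show that conditions (1) and (2) fail only when $\gamma$ lies in a locally finite, $\Lambda$-periodic union $B$ of proper affine subspaces of $\liet$, and then to take $\gamma$ anywhere in the open dense complement $\liet\setminus B$. The one elementary fact I would isolate first is that, for affine subspaces $\Delta'\subseteq\Delta$ and any $\gamma\in\liet$, one has $\pr_\Delta(\gamma)=\pr_{\Delta'}(\gamma)$ as soon as $\pr_\Delta(\gamma)\in\Delta'$: the nearest point of $\Delta$ to $\gamma$, if it happens to lie in the smaller set $\Delta'$, is automatically the unique nearest point of $\Delta'$ to $\gamma$; conversely $\pr_{\Delta'}(\gamma)\in\Delta'\subseteq\Delta$, so it cannot be strictly nearer to $\gamma$ than $\pr_\Delta(\gamma)$, and uniqueness of nearest points finishes the equivalence. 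Consequently, for $\Delta'\subsetneq\Delta$ in $\calW$,
\[ \{\gamma\in\liet : \pr_\Delta(\gamma)=\pr_{\Delta'}(\gamma)\}=\pr_\Delta^{-1}(\Delta'),\]
and since $\pr_\Delta:\liet\to\Delta$ is a surjective affine map with $\Delta'\subsetneq\Delta$, the right-hand side is a proper affine subspace of $\liet$ (it misses every point of $\Delta\setminus\Delta'$). Thus condition (1) holds for $\gamma$ provided $\gamma$ avoids $\pr_\Delta^{-1}(\Delta')$ for every nested pair $\Delta'\subsetneq\Delta$ in $\calW$.

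For condition (2) I would argue in the same spirit. Fix $\Delta\in\calW$ and an open face $\sigma$ of the Stieffel diagram with $\Delta\not\subseteq\langle\sigma\rangle$. If $\Delta\cap\langle\sigma\rangle=\varnothing$ there is no constraint, since $\pr_\Delta(\gamma)\in\Delta$ can never lie in $\sigma\subseteq\langle\sigma\rangle$. Otherwise $\Delta\cap\langle\sigma\rangle$ is a proper affine subspace of $\Delta$, and $\pr_\Delta(\gamma)\in\sigma$ forces $\pr_\Delta(\gamma)\in\Delta\cap\langle\sigma\rangle$, i.e.\ $\gamma\in\pr_\Delta^{-1}(\Delta\cap\langle\sigma\rangle)$, again a proper affine subspace of $\liet$ for the same reason as above. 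So condition (2) holds for $\gamma$ provided $\gamma$ avoids $\pr_\Delta^{-1}(\Delta\cap\langle\sigma\rangle)$ for every pair $(\Delta,\sigma)$ with $\Delta\not\subseteq\langle\sigma\rangle$.

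Finally I would collect all of these proper affine subspaces into one set $B$ and verify that it is a locally finite $\Lambda$-periodic family --- a periodic hyperplane arrangement --- so that $\liet\setminus B$ is open and dense and any $\gamma$ there satisfies (1) and (2) for all $\Delta,\Delta'\in\calW$ and all faces $\sigma$. Periodicity is quick: orthogonal projection is translation-equivariant, so $\pr_{\Delta+\lambda}^{-1}(A+\lambda)=\pr_\Delta^{-1}(A)+\lambda$, while $\calW$ and the Stieffel diagram are $\Lambda$-periodic with only finitely many $\Lambda$-orbits of components, resp.\ faces (using compactness of $M$, as in the discussion preceding the Proposition). The step I expect to be the main obstacle is the bookkeeping behind \emph{local finiteness} of $B$ --- needed so that the complement is genuinely open, not merely dense: one must check that in any bounded region only finitely many of the subspaces $\pr_\Delta^{-1}(\Delta')$ and $\pr_\Delta^{-1}(\Delta\cap\langle\sigma\rangle)$ are active. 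This is inherited from the local finiteness of $\calW$ and of the Stieffel diagram, but it does require the rationality of the subalgebras $\liet_\Delta$ (equivalently, closedness of the subtori $T_\Delta$) and of the linear spans $\langle\sigma\rangle$ in order to conclude that each $\Lambda$-orbit of these preimages is itself locally finite in $\liet$; with that in hand $B$ is a periodic hyperplane arrangement and the Proposition follows.
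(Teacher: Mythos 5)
Your proposal is correct and follows the same dimension-counting approach as the paper's (very terse) proof: in both cases the key point is that for each pair $(\Delta,\Delta')$ or $(\Delta,\sigma)$ the "bad'' locus of $\gamma$ is a proper affine subspace of $\liet$, so a small perturbation of $\gamma$ avoids it. You go further than the paper by explicitly identifying these bad loci as $\pr_\Delta^{-1}(\Delta')$ and $\pr_\Delta^{-1}(\Delta\cap\langle\sigma\rangle)$ and by flagging the local-finiteness/rationality issue needed to conclude the complement is open (not merely dense), a point the paper leaves implicit.
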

\begin{remark}
The first condition was introduced by Paradan (\cite{Paradan97}, around Proposition 6.9; see there for more discussion).  The second condition is an additional one, see Proposition \ref{AbelCrossSection}.
\end{remark}
\begin{proof}
Since $\Delta^\prime$ has lower dimension than $\Delta$, one can perturb $\gamma$ to ensure $\pr_\Delta(\gamma) \ne \pr_{\Delta^\prime}(\gamma)$.  Likewise in the second condition, if $\Delta$ is not contained in $\langle \sigma \rangle$, then $\Delta \cap \langle \sigma \rangle$ has strictly smaller dimension than $\Delta$, and so a small perturbation of $\gamma$ will ensure that $\pr_\Delta(\gamma) \notin \sigma$.
\end{proof}

\begin{definition}
We say that $\gamma \in \liet$ is \emph{generic} if it is in the open dense set described in the previous proposition.
\end{definition}

\begin{proposition}
\label{DisjointUnion}
Let $\gamma \in \liet$ be generic.  Then the union \eqref{Decomposition} is disjoint.  Equivalently, for all $p \in \hcalX^{\liet_\Delta} \cap \hPhi_{\a}^{-1}(\beta)$, the stabilizer of $p$ in $\liet$ is exactly $\liet_\Delta$.  Thus $\liet_\Delta^\perp$ acts locally freely on $\hcalX^{\liet_\Delta} \cap \hPhi_{\a}^{-1}(\beta)$.
\end{proposition}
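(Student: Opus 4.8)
The plan is to deduce both assertions from a single claim: if $\Delta\in\calW$ and $\beta:=\pr_\Delta(\gamma)$, then every $p\in\hcalX^{\liet_\Delta}\cap\hPhi_{\a}^{-1}(\beta)$ has infinitesimal $\liet$-stabilizer (denote it $\liet_p$) exactly equal to $\liet_\Delta$. Granting this, disjointness of \eqref{Decomposition} is immediate: if some $p$ lies in both $\hcalX^{\liet_\Delta}\cap\hPhi_{\a}^{-1}(\beta')$ and $\hcalX^{\liet_{\Delta'}}\cap\hPhi_{\a}^{-1}(\beta')$ with $\pr_\Delta(\gamma)=\pr_{\Delta'}(\gamma)=\beta'$, then the claim applied to $\Delta$ and to $\Delta'$ gives $\liet_\Delta=\liet_p=\liet_{\Delta'}$; since $\beta'=\pr_\Delta(\gamma)\in\Delta$ and $\beta'=\pr_{\Delta'}(\gamma)\in\Delta'$, while $\Delta$ (resp. $\Delta'$) is the translate of $\liet_\Delta^\perp=\liet_{\Delta'}^\perp$ through $\beta'$, we conclude $\Delta=\beta'+\liet_\Delta^\perp=\Delta'$. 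The local-freeness statement also follows, since for such $p$ the infinitesimal stabilizer of the $\liet_\Delta^\perp$-action at $p$ is $\liet_\Delta^\perp\cap\liet_p=\liet_\Delta^\perp\cap\liet_\Delta=\{0\}$ (the set $\hcalX^{\liet_\Delta}\cap\hPhi_{\a}^{-1}(\beta)$ being $T$-invariant because $T$ is abelian and $\hPhi_{\a}$ is $T$-invariant).

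To prove the claim, fix such a $p$; since $p$ is fixed by $\liet_\Delta$ we have $\liet_\Delta\subseteq\liet_p$. Because the infinitesimal stabilizers of points of $\hcalX$ occur among the subalgebras $\liet_s$, $s\in\calS$ (the covering $\hcalX\to\calX$ is a $T$-equivariant local diffeomorphism), the connected component $C$ of $\hcalX^{\liet_p}$ containing $p$ is mapped by $\hPhi_{\a}$ into an affine subspace $\Delta_C\in\calW$ that is a translate of the orthogonal complement $\liet_p^\perp$ of $\liet_p$ inside $\liet$; and since $\beta=\hPhi_{\a}(p)\in\hPhi_{\a}(C)\subseteq\Delta_C$, in fact $\Delta_C=\beta+\liet_p^\perp$. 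From $\liet_\Delta\subseteq\liet_p$ we get $\liet_p^\perp\subseteq\liet_\Delta^\perp$, and since $\beta\in\Delta$ while $\Delta$ is a translate of $\liet_\Delta^\perp$ we likewise have $\Delta=\beta+\liet_\Delta^\perp$, whence $\Delta_C\subseteq\Delta$. Finally, $\pr_\Delta(\gamma)=\beta$ means $\gamma-\beta\in\liet_\Delta\subseteq\liet_p$, so $\gamma-\beta$ is orthogonal to the direction $\liet_p^\perp$ of $\Delta_C$; as $\beta\in\Delta_C$, this gives $\pr_{\Delta_C}(\gamma)=\beta=\pr_\Delta(\gamma)$.

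Now genericity condition (1) forbids the simultaneous relations $\Delta_C\subsetneq\Delta$ and $\pr_{\Delta_C}(\gamma)=\pr_\Delta(\gamma)$, so necessarily $\Delta_C=\Delta$, i.e. $\liet_p^\perp=\liet_\Delta^\perp$, i.e. $\liet_p=\liet_\Delta$, which is the claim. The only step calling for genuine care is the bookkeeping of the previous paragraph---identifying the affine subspace $\Delta_C\in\calW$ attached to the component of $\hcalX^{\liet_p}$ through $p$, checking the inclusion $\Delta_C\subseteq\Delta$, and verifying the elementary but essential orthogonality computation $\pr_{\Delta_C}(\gamma)=\pr_\Delta(\gamma)$; after that the conclusion is a one-line appeal to genericity, and no analytic input is needed.
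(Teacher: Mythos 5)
Your proof is correct and takes essentially the same approach as the paper's: both reduce the statement to genericity condition (1), by showing that a strictly larger stabilizer $\liet_p \supsetneq \liet_\Delta$ would force $\beta$ to lie in a smaller subspace $\Delta_C \subsetneq \Delta$ of $\calW$ with $\pr_{\Delta_C}(\gamma) = \pr_\Delta(\gamma)$. The paper phrases this by contradiction and more tersely (in particular it asserts $\pr_{\Delta'}(\gamma)=\beta$ without the orthogonality computation you spell out), while you give the direct positive argument with all the bookkeeping verified; the underlying idea is identical.
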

\begin{proof}
If $p \in \hcalX^{\liet_\Delta} \cap \hPhi_{\a}^{-1}(\beta)$ had larger stabilizer, then $\beta=\hPhi_{\a}(p)$ would be contained in some $\Delta^\prime \in \calW$ with $\liet_{\Delta^\prime} \supset \liet_\Delta \Rightarrow \Delta^\prime \subset \Delta$.  In this case, $\pr_\Delta(\gamma)=\pr_{\Delta^\prime}(\gamma)=\beta$, which contradicts the genericity assumption.
\end{proof}

\begin{corollary}
Let $\gamma \in \liet$ be generic, $\beta=\pr_\Delta(\gamma)$.  Then $\bbeta$ acts with non-zero weights on the normal bundle $\nu(\hcalX^{\liet_\Delta},\hcalX)|_{\calZ_\beta}$.
\end{corollary}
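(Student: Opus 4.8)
The plan is to deduce the statement from genericity condition (1) of the preceding Proposition by tracking the $\liet_\Delta$-weights on the normal directions to $\hcalX^{\liet_\Delta}$. First I would record the elementary facts. As noted before \eqref{Decomposition}, the equation $\pr_\Delta(\gamma)=\beta$ forces $\bbeta=\beta-\gamma\in\liet_\Delta$; this membership is the only property of $\gamma$ used until the very end. I may also assume $\liet_\Delta\neq 0$ and $\hcalX^{\liet_\Delta}\cap\hPhi_{\a}^{-1}(\beta)\neq\emptyset$, since the statement is otherwise vacuous.

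Next I would set up the weight decomposition. The subalgebra $\liet_\Delta$ is rational, so $T_\Delta=\exp(\liet_\Delta)$ is a compact subtorus of $T$; since it is connected, $\hcalX^{\liet_\Delta}=\hcalX^{T_\Delta}$ is a $T$-invariant submanifold of $\hcalX$, and its normal bundle $\nu=\nu(\hcalX^{\liet_\Delta},\hcalX)$ is a $T$-equivariant vector bundle containing no nonzero $T_\Delta$-fixed subbundle. Over any point $p\in\hcalX^{\liet_\Delta}\cap\hPhi_{\a}^{-1}(\beta)\subset\calZ_\beta$, the fibre $\nu_p$ decomposes into $T$-weight spaces $\nu_\mu$, $\mu\in\Lambda^\ast$, and every weight $\mu$ occurring satisfies $\mu|_{\liet_\Delta}\neq 0$, precisely because $\nu_p$ contains no $\liet_\Delta$-fixed vectors. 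Since $\bbeta\in\liet_\Delta$ acts on $\nu_\mu$ by the scalar $2\pi\i\pair{\mu}{\bbeta}$, the assertion of the Corollary is exactly that $\pair{\mu}{\bbeta}\neq 0$ for every such $\mu$.

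Then I would argue by contradiction: suppose $\pair{\mu}{\bbeta}=0$ for some weight $\mu$ of $\nu_p$. Then $\bbeta\in\liet':=\liet_\Delta\cap\ker\mu$, and $\liet'\subsetneq\liet_\Delta$ because $\mu|_{\liet_\Delta}\neq0$. The key intermediate claim is that $\liet'$ occurs as one of the infinitesimal stabilizers indexed by $\calS$, and that $\beta$ lies in the affine subspace of $\calW$ attached to it. To see this I would pass to a $T_\Delta$-equivariant tubular-neighbourhood model of $\hcalX$ along $\hcalX^{\liet_\Delta}$ near $p$; using $\mathrm{Stab}_\liet(p)=\liet_\Delta$ (Proposition \ref{DisjointUnion}) together with finiteness of $\calS$, one checks that points of $\hcalX^{\liet_\Delta}$ sufficiently near $p$ still have stabilizer exactly $\liet_\Delta$, hence that every point $q=(p,w)$ with $w\in\nu_\mu$ small and nonzero has $\mathrm{Stab}_\liet(q)=\liet'$. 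Thus $\liet'=\liet_s$ for some $s\in\calS$, and the connected component $C$ of $\hcalX^{\liet'}$ through these points also contains $p$ (it is their common limit, and $C$ is closed). By the defining property of $\calW$, $\hPhi_{\a}(C)$ is contained in an affine subspace $\Delta'\in\calW$ that is a translate of $(\liet')^\perp$; in particular $\beta=\hPhi_{\a}(p)\in\Delta'$, and $\liet_\Delta^\perp\subsetneq(\liet')^\perp$ gives $\Delta\subsetneq\Delta'$. Finally, writing $\Delta'=\beta+(\liet')^\perp$ and using $\gamma-\beta=-\bbeta\in\liet'$,
\[ \pr_{\Delta'}(\gamma)=\beta+\pr_{(\liet')^\perp}(\gamma-\beta)=\beta=\pr_\Delta(\gamma),\]
contradicting genericity condition (1) applied to the pair $\Delta\subsetneq\Delta'$. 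Hence no such $\mu$ exists, which is the claim.

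The main obstacle is the middle step: verifying rigorously that $\liet'=\liet_\Delta\cap\ker\mu$ is genuinely realized as one of the stabilizer subalgebras $\liet_s$, and locating the affine subspace $\Delta'\in\calW$ with $\beta\in\Delta'$. This requires a careful (but routine) analysis of stabilizers in the local normal model of $\hcalX$ along $\hcalX^{\liet_\Delta}$ — in particular the fact that stabilizers cannot jump up at points of $\hcalX^{\liet_\Delta}$ near $p$, and that $p$ and the perturbed points $q$ lie in a single connected component of $\hcalX^{\liet'}$. Everything else is bookkeeping with orthogonal projections and standard facts about compact torus actions.
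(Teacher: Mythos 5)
Your proof is correct and takes essentially the same route as the paper's. The paper phrases it slightly differently: instead of perturbing a single point $p$ by a vector in a single weight space $\nu_\mu$ with $\pair{\mu}{\bbeta}=0$, it takes the entire $\bbeta$-fixed subbundle $\nu'$ of $\nu(C,\hcalX)$ (where $C$ is the component of $\hcalX^{\liet_\Delta}$ containing $\calZ_\beta$), forms an enlarged submanifold $C'\subset\hcalX^{\bbeta}$ with $TC'|_C=TC\oplus\nu'$, and observes that the generic infinitesimal stabilizer $\liet'$ of $C'$ is strictly smaller than $\liet_\Delta$ yet still contains $\bbeta$; from there, exactly as in your argument, one produces $\Delta'\in\calW$ with $\Delta\subsetneq\Delta'$ and $\pr_{\Delta'}(\gamma)=\beta$. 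The paper then invokes Proposition \ref{DisjointUnion} for the contradiction, whereas you appeal directly to genericity condition (1); these are equivalent, since that proposition is itself a direct consequence of condition (1). Your version is, if anything, more explicit in identifying $\liet'=\liet_\Delta\cap\ker\mu$ and verifying that it is realized as a stabilizer via the tubular-neighbourhood model, which fills in a step the paper leaves implicit.
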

\begin{proof}
Without loss of generality assume $\calZ_\beta$ is connected and let $C$ be the component of $\hcalX^{\liet_\Delta}$ containing $\calZ_\beta$.  Suppose $\bbeta$ acts trivially on a non-trivial subbundle $\nu^\prime$ of $\nu(C,\hcalX)$.  Then a slightly enlarged submanifold $C^\prime \subset \hcalX^{\bbeta}$ containing $C$ can be found, with $TC^\prime|_{C}=TC \oplus \nu^\prime$.   Since $\liet_\Delta$ acts non-trivially on $\nu^\prime$, the generic infinitesimal stabilizer of $C^\prime$ is strictly smaller than $\liet_\Delta$, and thus there is a $\Delta^\prime \in \calW$ properly containing $\Delta$.  Since $\bbeta \in \liet_{\Delta^\prime}$, $\pr_{\Delta^\prime}(\gamma)=\beta$, and $\hcalX^{\liet_{\Delta^\prime}} \cap \hPhi_{\a}^{-1}(\beta) \supset \hcalX^{\liet_\Delta} \cap \hPhi_{\a}^{-1}(\beta)$.  This contradicts Proposition \ref{DisjointUnion}.
\end{proof}

\begin{proposition}
\label{AbelCrossSection}
Let $\gamma \in \liet$ be generic, $\Delta \in \calW$, $\beta=\pr_\Delta(\gamma)$, and $g=\exp(\beta)$.  Let $\sigma \subset \liet$ be the open face of the Stieffel diagram containing $\beta$, and let $\lieg_\sigma$ be the corresponding infinitesimal stabilizer.  Then:
\begin{enumerate}
\item The centralizer of $\liet_\Delta$ in $\lieg_\sigma$ is $\liet$.
\item There is a slice $U_g$ around $g$ for the conjugation action of $G$ on itself, such that $U_g^{\liet_\Delta}=U_g \cap T$.
\item Let $Y_g=\Phi^{-1}(U_g)$ be the corresponding cross-section.  Then $Y_g^{\liet_\Delta} \subset X=\Phi^{-1}(T)$.
\end{enumerate}
\end{proposition}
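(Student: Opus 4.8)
The plan is to prove the three claims in sequence. Genericity enters only in (1); parts (2) and (3) then follow by soft arguments from equivariance of $\Phi$ and the explicit form of the slice. For (1): since $\beta=\pr_\Delta(\gamma)$ lies in the open face $\sigma$, the second genericity condition (the implication $\pr_\Delta(\gamma)\in\sigma\Rightarrow\Delta\subset\langle\sigma\rangle$) forces $\Delta\subseteq\langle\sigma\rangle$, so the linear part $\liet_\Delta^\perp$ of $\Delta$ is contained in the linear part $V$ of the affine subspace $\langle\sigma\rangle$. Next I would observe that every root $\alpha\in\calR_g$ of $\lieg_\sigma=\lieg_g$ annihilates $V$: by the definition of $\lieg_\sigma$ one has $\pair{\alpha}{\beta'}\in\mathbb{Z}$ for every $\beta'\in\sigma$, and an affine function with integer values on the connected open set $\sigma$ is constant, so $\pair{\alpha}{\cdot}$ is constant on $\langle\sigma\rangle$ and hence, identifying $\liet^\ast\cong\liet$, $\alpha\in V^\perp$. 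Now feed this into the root-space decomposition \eqref{RootSpaceDecomposition}: an element $X=X_0+\sum_\alpha X_\alpha\in\lieg_\sigma^{\bC}$ centralizes $\liet_\Delta$ if and only if $X_\alpha=0$ for every $\alpha\in\calR_g$ with $\alpha|_{\liet_\Delta}\neq 0$, so the centralizer is $\liet^{\bC}\oplus\bigoplus\lieg_\alpha$, summed over those $\alpha\in\calR_g$ with $\alpha|_{\liet_\Delta}=0$, i.e. with $\alpha\in\liet_\Delta^\perp$. Such an $\alpha$ would lie in $\liet_\Delta^\perp\subseteq V$ and simultaneously in $V^\perp$, hence vanish---impossible for a root. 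Therefore the centralizer of $\liet_\Delta$ in $\lieg_\sigma$ is $\liet$, which is (1).

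For (2): conjugation by $G_g$ on $G_g$ near $g$ is intertwined, via the map $X\mapsto g\exp(X)$, with the adjoint action of $G_g$ on $\lieg_g$ (since $hgh^{-1}=g$ for $h\in G_g$), so I would take the slice to be $U_g=g\exp(B)$ for $B$ a sufficiently small $\Ad(G_g)$-invariant ball about $0$ in $\lieg_g$. A point $g\exp(X)$, $X\in B$, is fixed by $\liet_\Delta$ precisely when $X$ centralizes $\liet_\Delta$, so by (1) $U_g^{\liet_\Delta}=g\exp(B\cap\liet)$; on the other hand, since $g\in T$ and $\exp(X)\in T\Leftrightarrow X\in\liet$ for $X$ small, one also has $U_g\cap T=g\exp(B\cap\liet)$, giving (2). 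For (3): if $m\in Y_g^{\liet_\Delta}$, then $T_\Delta$ fixes $m$, so $G$-equivariance of $\Phi$ (for the conjugation action on $G$) shows $\Phi(m)$ is fixed by conjugation by $T_\Delta$; since also $\Phi(m)\in U_g$, we get $\Phi(m)\in U_g^{\liet_\Delta}=U_g\cap T$ by (2), hence $m\in\Phi^{-1}(T)=X$.

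The only real work is in (1): translating the geometric genericity statement $\Delta\subseteq\langle\sigma\rangle$ into the representation-theoretic one, via the observation that roots of $\lieg_\sigma$ are constant on $\sigma$ (equivalently, the real span of $\calR_g$ lies in $V^\perp$). Once (1) is in hand, (2) and (3) are formal. One should also dispatch the routine technical point that, for $B$ small enough, the exponential identities used in (2) hold and $g\exp(B)$ is a legitimate slice.
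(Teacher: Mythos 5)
Your proposal is correct and follows essentially the same route as the paper: for (1) you invoke the second genericity condition to get $\Delta\subset\langle\sigma\rangle$, note that every root of $\lieg_\sigma$ vanishes on the linear part $\sigma_0$ of $\langle\sigma\rangle$, and conclude that a root centralizing $\liet_\Delta$ would have to vanish (your $V\cap V^\perp=\{0\}$ phrasing is just a repackaging of the paper's observation that $\alpha$ annihilates $\sigma_0+\liet_\Delta=\liet$). Parts (2) and (3) then follow by the same slice-shrinking and $\Phi$-equivariance considerations as in the paper, with your explicit model $U_g=g\exp(B)$ being a concrete instance of the slice the paper chooses abstractly.
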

\begin{proof}
The root space decomposition for $\lieg_\sigma$ is:
\[ \lieg_\sigma^{\mathbb{C}} = \liet^{\mathbb{C}} \oplus \bigoplus_{\alpha|_{\sigma} \in \mathbb{Z}} \lieg_\alpha.\]
Suppose the root space $\lieg_\alpha \subset \lieg_\sigma^{\mathbb{C}}$ is fixed by $\liet_\Delta$.  Then:
\[ \alpha|_{\sigma} \in \mathbb{Z} \hspace{0.7cm} \text{and} \hspace{0.7cm} \alpha|_{\liet_\Delta} = 0.\]
Let $\sigma_0$ be the subspace parallel to $\sigma$ which passes through $0$.  Then $\alpha|_{\sigma_0}$ is a fixed integer, hence must be $0$.  Thus
\[ \alpha(\sigma_0 + \liet_\Delta)=0.\]
But $\Delta$ is contained in the affine extension of $\sigma$.  This implies that the subspace $\liet_\Delta$ orthogonal to $\Delta$ contains $\sigma_0^\perp$.  Therefore
\[\sigma_0 + \liet_\Delta = \liet,\]
and $\alpha = 0$.  We have thus shown that $\lieg_\sigma^{\liet_\Delta}=\liet$ as desired.

Recall $\lieg_g=\lieg_\sigma$ is the Lie algebra of $G_g$.  The first statement shows that the centralizer $C_{G_g}(T_\Delta)=G_g^{\liet_\Delta}$ has Lie algebra $\liet$, and thus its identity component is $T$.  Since $g \in T$, we can choose a slice $U_g$ around $g$ sufficiently small that it only intersects the identity component $T$ of $G_g^{\liet_\Delta}$, hence $U_g^{\liet_\Delta}=U_g \cap T$.  That $Y_g^{\liet_\Delta} \subset X=\Phi^{-1}(T)$ follows from equivariance of the moment map $\Phi$.
\end{proof}
\begin{remark}
\label{IdentificationInCovering}
We will further assume that $U_g$ is chosen sufficiently small that $\exp$ restricts to a diffeomorphism on each component of $\exp^{-1}(U_g \cap T) \subset \liet$.  Considering the pullback diagram \eqref{SecondPullbackDiagram}, we can identify $U_g \cap T$ with the unique component of $\exp^{-1}(U_g \cap T)$ containing $\beta$.  Similarly, the cross-section $Y_g^{\liet_\Delta}$ is identified with the corresponding subset of $\hX \subset \hcalX$.  By taking $U_g$ smaller if necessary, we can assume that the only translate of $\liet_\Delta^\perp$ in $\calW$ which meets $U_g \cap T$ is $\Delta=\beta + \liet_\Delta^\perp$, and thus $\hPhi_{\a}(Y_g^{\liet_\Delta}) \subset \Delta$.
\end{remark}
\begin{corollary}
\label{HamTSpace}
In the setting of Proposition \ref{AbelCrossSection} and making the identification in Remark \ref{IdentificationInCovering}, $(Y_g^{\liet_\Delta},\omega_{\a},\hPhi_{\a})$ is a (non-degenerate) Hamiltonian $T$-space.
\end{corollary}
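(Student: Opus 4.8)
The plan is to produce the Hamiltonian $T$-space structure on $Y_g^{\liet_\Delta}$ by restricting the degenerate Hamiltonian $T$-space $(\hcalX,\omega_{\a},\hPhi_{\a})$ to this submanifold, leaving non-degeneracy of the restricted $2$-form as the only substantive point, which I would then settle by comparing with the genuine symplectic cross-section $(Y_g,\omega_g,\phi_g)$.

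First I would check that $Y_g^{\liet_\Delta}$ is a smooth $T$-invariant submanifold sitting inside $\calX=\Phi^{-1}(\calT)$. By Proposition \ref{AbelCrossSection}(3) we have $\Phi(Y_g^{\liet_\Delta})\subset U_g\cap T\subset T\subset\calT$; the subtorus $T_\Delta=\exp(\liet_\Delta)$ is closed and connected, so $Y_g^{\liet_\Delta}=Y_g^{T_\Delta}$ is a smooth submanifold, and $T$ preserves it since $T$ centralizes $T_\Delta$. Under the identification of Remark \ref{IdentificationInCovering}, $Y_g^{\liet_\Delta}$ becomes a $T$-invariant submanifold $\iota:Y_g^{\liet_\Delta}\hookrightarrow\hcalX$ on which $\hPhi_{\a}$ agrees, up to the additive constant $\beta$, with $\phi_g$. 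Since $\iota$ is $T$-equivariant and $d_T$ commutes with equivariant pullback, the restriction $\iota^\ast(\omega_{\a}-\pair{\hPhi_{\a}}{\cdot})$ of the closed equivariant $2$-form on $\hcalX$ is again closed; thus $(Y_g^{\liet_\Delta},\iota^\ast\omega_{\a},\iota^\ast\hPhi_{\a})$ is at least a degenerate Hamiltonian $T$-space, and it remains only to prove that $\iota^\ast\omega_{\a}$ is non-degenerate.

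For this I would first identify $\iota^\ast\omega_{\a}$ with the restriction of the cross-section symplectic form. On $Y_g^{\liet_\Delta}$ the map $\Phi$ takes values in $T$, so $\iota^\ast\Phi^\ast\gamma_{\a}=(\Phi|_{Y_g^{\liet_\Delta}})^\ast(\iota_T^\ast\gamma_{\a})=0$ by \eqref{PullbackOfGamma}, giving $\iota^\ast\omega_{\a}=\iota^\ast\omega$; similarly $\phi_g=\log(g^{-1}\Phi|_{Y_g})$ takes values in $\liet$ on $Y_g^{\liet_\Delta}$ (as $g\in T$), so $\iota^\ast\phi_g^\ast\varpi=0$ by \eqref{PullbackOfVarpi}, giving $\iota^\ast\omega_g=\iota^\ast\omega$. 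Hence $\iota^\ast\omega_{\a}=\iota^\ast\omega_g$ is the pullback to $Y_g^{T_\Delta}$ of the symplectic form of the Hamiltonian $G_g$-space $(Y_g,\omega_g,\phi_g)$. I would then invoke the standard fact that the fixed-point set of a torus action on a symplectic manifold is a symplectic submanifold: a $T_\Delta$-invariant $\omega_g$-compatible almost complex structure $J$ on $Y_g$ makes $T(Y_g^{T_\Delta})=(TY_g)^{T_\Delta}$ a $J$-invariant subbundle, on which $\omega_g$ is therefore non-degenerate. This shows $\iota^\ast\omega_{\a}$ is non-degenerate, and the Liouville form supplies the orientation, completing the argument.

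The only mildly delicate part is the bookkeeping in the middle step: verifying that $Y_g^{\liet_\Delta}$ maps into both $T\subset G$ (so that the abelianization modification $\Phi^\ast\gamma_{\a}$ drops out) and $\liet\subset\lieg_g$ (so that the cross-section modification $\phi_g^\ast\varpi$ drops out), which is exactly what Proposition \ref{AbelCrossSection} and the vanishing identities \eqref{PullbackOfGamma}, \eqref{PullbackOfVarpi} are for; once $\iota^\ast\omega_{\a}=\iota^\ast\omega_g$ is in hand, non-degeneracy is immediate from the symplectic submanifold fact.
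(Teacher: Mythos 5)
Your proof is correct and follows essentially the same route as the paper: use Proposition~\ref{AbelCrossSection} and Remark~\ref{IdentificationInCovering} to place $Y_g^{\liet_\Delta}$ inside $\hX$, use the vanishing identities \eqref{PullbackOfGamma} and \eqref{PullbackOfVarpi} to identify $\iota^\ast\omega_{\a}$ with $\iota^\ast\omega$ and then with $\iota^\ast\omega_g$, and conclude non-degeneracy from the symplecticity of the cross-section form $\omega_g$. The only difference is that you make explicit, via a $T_\Delta$-invariant $\omega_g$-compatible almost complex structure, the standard fact that the fixed-point set of a torus action on a symplectic manifold is symplectic, whereas the paper invokes this implicitly in the sentence ``As $\omega_g$ is symplectic, $Y_g^{\liet_\Delta}$ is a non-degenerate Hamiltonian $T$-space.''
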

\begin{proof}
By Proposition \ref{AbelCrossSection} and Remark \ref{IdentificationInCovering}, $Y_g^{\liet_\Delta} \subset \hX$.  It follows that the restrictions of $\hPhi_{\a}$ and $\phi_g + \beta$ agree, where $\phi_g=\log(\exp(-\beta)\Phi)$ is a Hamiltonian moment map for the cross-section.  Moreover, by equation \eqref{PullbackOfGamma}, the pullback of $\omega_{\a}$ to $Y_g^{\liet_\Delta}$ agrees with the pullback of $\omega$.  By equation \eqref{PullbackOfVarpi} this also agrees with the pullback of $\omega_g$ (the 2-form for the cross-section).  As $\omega_g$ is \emph{symplectic}, $Y_g^{\liet_\Delta}$ is a non-degenerate Hamiltonian $T$-space.
\end{proof}

\begin{proposition}
\label{RegValue}
In the setting of Corollary \ref{HamTSpace} and Remark \ref{IdentificationInCovering}, $\beta$ is a regular value for the restriction of $\hPhi_{\a}$, viewed as a map into $\Delta$:
\[ \hPhi_{\a}:Y_g^{\liet_\Delta} \rightarrow \Delta.\]
\end{proposition}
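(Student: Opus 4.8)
The plan is to deduce the statement from the ``exact infinitesimal stabilizer'' conclusion of Proposition \ref{DisjointUnion}, via the standard link between the differential of the moment map of a non-degenerate Hamiltonian torus action and the stabilizers of points. By Corollary \ref{HamTSpace} together with Remark \ref{IdentificationInCovering}, $(Y_g^{\liet_\Delta},\omega_{\a},\hPhi_{\a})$ is a non-degenerate Hamiltonian $T$-space whose moment map takes values in the affine subspace $\Delta=\beta+\liet_\Delta^\perp$. Moreover the subtorus $T_\Delta=\exp(\liet_\Delta)$ acts trivially on $Y_g^{\liet_\Delta}$, so $\liet_\Delta\subseteq\liet_p$ for every $p\in Y_g^{\liet_\Delta}$, where $\liet_p\subseteq\liet$ denotes the infinitesimal stabilizer of $p$ (computed in $Y_g^{\liet_\Delta}$, which agrees with the stabilizer computed in $\hcalX$ since $Y_g^{\liet_\Delta}$ sits in $\hcalX$ as a $T$-invariant submanifold).

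First I would record the defining identity of the moment map: for $p\in Y_g^{\liet_\Delta}$, $v\in T_pY_g^{\liet_\Delta}$ and $X\in\liet$ one has $\pair{d_p\hPhi_{\a}(v)}{X}=\omega_{\a}(X_{\hcalX}(p),v)$. Since $\omega_{\a}$ is non-degenerate on $Y_g^{\liet_\Delta}$, the image of $d_p\hPhi_{\a}$ is exactly the orthogonal complement $\liet_p^\perp$ (identifying $\liet\cong\liet^\ast$ as throughout this section); as this image is automatically contained in the tangent space $T_{\hPhi_{\a}(p)}\Delta=\liet_\Delta^\perp$, and $\liet_\Delta\subseteq\liet_p$, surjectivity of $d_p\hPhi_{\a}$ onto $T_{\hPhi_{\a}(p)}\Delta$ is equivalent to $\liet_p=\liet_\Delta$.

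Next I would restrict to the fibre over $\beta$: any $p$ with $\hPhi_{\a}(p)=\beta$ lies in $\hcalX^{\liet_\Delta}\cap\hPhi_{\a}^{-1}(\beta)$, and since $\gamma$ is generic and $\beta=\pr_\Delta(\gamma)$, Proposition \ref{DisjointUnion} forces $\liet_p=\liet_\Delta$ exactly. Combined with the previous paragraph, $d_p\hPhi_{\a}$ is surjective onto $T_\beta\Delta$ at every point of the fibre, so $\beta$ is a regular value of $\hPhi_{\a}\colon Y_g^{\liet_\Delta}\to\Delta$.

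I do not expect a genuine obstacle here: the content is carried entirely by Proposition \ref{DisjointUnion}, which already asserts that the stabilizer is exactly $\liet_\Delta$ on $\calZ_\beta$, together with the non-degeneracy of $\omega_{\a}$ on the cross-section furnished by Corollary \ref{HamTSpace}. The only point needing a little care is bookkeeping with the identifications of Remark \ref{IdentificationInCovering}, i.e.\ checking that the $T$-stabilizer of a point of $Y_g^{\liet_\Delta}$ is unchanged when the point is regarded as an element of $\hcalX$ --- which is immediate once one observes that $Y_g^{\liet_\Delta}$ is realized as a $T$-invariant submanifold of $\hcalX$.
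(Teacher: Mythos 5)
Your proof is correct and takes essentially the same route as the paper: both rest on Proposition \ref{DisjointUnion} (exact stabilizer $\liet_\Delta$ on the fibre) combined with the non-degeneracy of $\omega_{\a}$ on $Y_g^{\liet_\Delta}$ from Corollary \ref{HamTSpace}. The paper simply states ``it follows that $\beta$ is a regular value,'' while you spell out the standard linear-algebra step that the image of $d_p\hPhi_{\a}$ equals $\liet_p^\perp$ under $\liet\cong\liet^\ast$, which is a reasonable elaboration but not a different argument.
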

\begin{proof}
By Proposition \ref{DisjointUnion}, the stabilizer of each point in $Y_g^{\liet_\Delta} \cap \hPhi_{\a}^{-1}(\beta) \subset \hcalX^{\liet_\Delta} \cap \hPhi_{\a}^{-1}(\beta)$ is \emph{exactly} $\liet_\Delta$.  Since $Y_g^{\liet_\Delta}$ is a non-degenerate Hamiltonian $T$-space, it follows that $\beta$ is a regular value of the restriction of $\hPhi_{\a}$, when the latter is viewed as a map into $\Delta$.
\end{proof}
\begin{remark}
Shrinking $\calT$ if necessary, $\beta$ is also a regular value for the restriction
\[ \hPhi_{\a}:\hcalX^{\liet_\Delta} \cap \hPhi_{\a}^{-1}(\Delta) \rightarrow \Delta.\]
(This is an open condition.)  This implies that the level set $\hcalX^{\liet_\Delta} \cap \hPhi_{\a}^{-1}(\beta)$ is smooth.  Hence for generic $\gamma$, the critical set $\calZ$ is smooth.
\end{remark}

\subsection{Reduction to cross-sections.}
Recall that the contribution of a critical value $\beta \in \liet$ to the norm-square localization formula \eqref{BasicNormSquareFormula} is
\begin{equation} 
\label{TheContribution}
\DH^v_{\calZ_\beta}(\hcalX,\omega_{\a},\hPhi_{\a},\tau_{\hcalX}\alpha).
\end{equation}
By definition, this is a twisted DH distribution for a small tubular neighbourhood of $\calZ_\beta$ in $\hcalX$.  In this section we relate this to a twisted DH distribution for a small tubular neighbourhood of $Z_\beta:=\calZ_\beta \cap Y_g$ inside a cross-section $Y_g$, $g=\exp(\beta)$ (when $\gamma$ is generic, the level set $Z_\beta$ is smooth by Proposition \ref{RegValue}).  This in turn will lead to a more explicit formula for \eqref{TheContribution}.

Let us briefly motivate the method used to reduce \eqref{TheContribution} to a computation inside a cross-section $Y_g$.  Recall that $\tau_{\hcalX}$ is the pullback of an equivariant Thom form for the vector bundle $T \times \liet^\perp$.  We can split $\liet^\perp$ into a direct sum $\liet^\perp=\lieg_g^\perp \oplus (\lieg_g/\liet)$.  The normal bundle to $Y_g$ is isomorphic to $Y_g \times \lieg_g^\perp$ by taking tangents to the $\lieg_g^\perp$-orbit directions.  Using this observation, we replace $\tau_{\hcalX}$ with a product of forms, one of which is a Thom form for $\nu(Y_g,\hcalX)$, and then apply Lemma \ref{DeltaCurrent}.

Let $g \in T$, let $U_g \subset G_g$ be a slice around $g$ for the conjugation action, chosen sufficiently small as in Proposition \ref{AbelCrossSection}.  Recall that $\lieg_g/\liet$ denotes the unique $T$-invariant complement to $\liet$ in $\lieg_g$, oriented as described in Section 3.3.  The $T$-equivariant smooth map $r:(U_g \cap T) \times (\lieg_g/\liet) \rightarrow G$ given by
\[ r(h,X)=h\exp(X),\]
restricts to a $T$-equivariant diffeomorphism on a small neighbourhood of $(U_g \cap T) \times \{0 \}$, and in particular induces an isomorphism
\[ (U_g \cap T) \times (\lieg_g/\liet) \simeq \nu(U_g \cap T,U_g).\]
Let $\tau(\lieg_g/\liet) \in \Omega_T(U_g)$ be a $T$-equivariant Thom form for $\nu(U_g \cap T,U_g)$ (viewed as a form on $U_g$ using the map $r$), chosen such that its support is contained in the support of $\tau$ (the Thom form that we had chosen for $\pi_T:\calT \rightarrow T$).  The pullback of $\tau(\lieg_g/\liet,X)$ to $T$ is $\Eul(\lieg_g/\liet,X) \in \Pol(\liet)$.

Consider the map $c:U_g \times \lieg_g^\perp \rightarrow G$ given by
\[ c(u,X)=\exp(X)u\exp(-X).\]
This map is $T$-equivariant for the adjoint action of $T$ on $U_g$, $\lieg_g^\perp$, and $G$.  It is a smooth surjection onto the open subset $\Ad_{G}(U_g)$.  Restricting $c$ to a sufficiently small neighbourhood of $U_g \times \{0 \}$, we obtain a diffeomorphism onto an open tubular neighbourhood $\calU_g$ of $U_g$ in $G$.  This gives an identification of the normal bundle 
\[\nu(U_g,G)\simeq U_g \times \lieg_g^\perp.\]
We orient $\lieg_g^\perp$ as described in Section 3.3.  Let 
\[\pi_g:\calU_g \rightarrow U_g\] 
be the map obtained by inverting $c$ on $\calU_g$ and projecting to the first factor.  Let $\tau(\lieg_g^\perp) \in \Omega_T(\calU_g)$ be a $T$-equivariant Thom form (viewed as a form on $\calU_g$ using the map $c$), chosen such that $\supp \big(\tau(\lieg_g^\perp)\cdot \pi_g^\ast \tau(\lieg_g/\liet) \big) \subset \supp(\tau)$.  The pullback of $\tau(\lieg_g^\perp,X)$ to $T$ is $\Eul(\lieg_g^\perp,X)$.  Setting $\calY_g=\Phi^{-1}(\calU_g)$ we have a pullback diagram:
\begin{equation}
\label{FlowoutProjection}
\begin{CD}
\calY_g @> \Phi >> \calU_g \\
@VV \pi_g V		@VV \pi_g V\\
Y_g @> \Phi >> U_g
\end{CD}
\end{equation}

By choosing a smaller ball $U \subset U_g \cap T$ around $g$ and shrinking $\calT$ if necessary, we can ensure that
\[ \pi_T^{-1}(U)=: \calU \subset \calU_g.\]
See Figure \ref{fig:NearV}.  It then makes sense to restrict the equivariant forms $\tau$, $\pi_g^\ast \tau(\lieg_g/\liet)$, $\tau(\lieg_g^\perp)$ to the open set $\calU$.
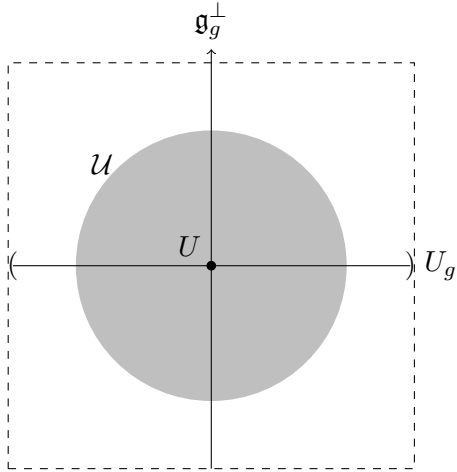
\begin{figure}
\begin{minipage}[c]{8cm}
\centering
\begin{tikzpicture}[scale=0.9]
\coordinate (0) at (0,0);
\path[fill,color=lightgray] (0,0) circle [radius=2];
\draw (-2.93,0)--(2.94,0);
\node at (2.94,0) {)};
\node at (-2.93,0) {(};
\node[right] at (3,0) {$U_g$};
\draw[->] (0,-3)--(0,3.2);
\node[above] at (0,3.2) {$\mathfrak{g}_g^\perp$};
\node[above left] at (-1.3,1.2) {$\calU$};
\draw[dashed] (-3,-3)--(-3,3)--(3,3)--(3,-3)--cycle;
\draw[fill,color=black] (0,0) circle [radius=1.8pt];
\node[above left] at (0,0) {$U$};
\end{tikzpicture}
\end{minipage}
\begin{minipage}[c]{\textwidth-8cm}
\caption{Neighbourhood of $U$ in $G$.}
View the torus $T$ as pointing perpendicular to the page.  $U$ is a small open ball in $T$ around $g$, $\calU=\pi_T^{-1}(U)$ is the gray region.  The region inside the dashed-line border is $\calU_g \simeq U_g \times \mathfrak{g}_g^\perp$.
\end{minipage}
\label{fig:NearV}
\end{figure}
\begin{lemma}
\label{CohomologousThomForms}
The restrictions to $\calU$ of $\tau$ and $\tau(\lieg_g^\perp)\cdot \pi_g^\ast \tau(\lieg_g/\liet)$ are $T$-equivariantly cohomologous.  Moreover, a $T$-equivariant primitive $\beta \in \Omega_T(\calU)$ for the difference can be taken with support contained in $D \cap \calU$.
\end{lemma}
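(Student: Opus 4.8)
\noindent\emph{Plan of proof.} The plan is to identify the two closed $T$-equivariant forms on $\calU$ by restricting them to the zero section of $\pi_T\colon\calU\to U$, and then to promote the resulting equality of cohomology classes to a $d_T$-exactness statement with the required support control, using the relative equivariant Thom isomorphism together with the fact that $\Eul(\liet^\perp,X)$ is not a zero divisor in $\Pol(\liet)$. Throughout I take $D$ (the set appearing in the statement) to be a closed disk sub-bundle of $\pi_T\colon\calU\to U$ large enough to contain the supports of $\tau|_\calU$ and $\big(\tau(\lieg_g^\perp)\cdot\pi_g^\ast\tau(\lieg_g/\liet)\big)|_\calU$; this is consistent with the support conditions fixed above when those Thom forms were chosen, after shrinking $\calT$ if necessary. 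Both forms are $d_T$-closed (with $d_T$ the $T$-equivariant differential) and supported in $D$.

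First I would compute the pullbacks of the two forms along the zero section $j\colon U\hookrightarrow\calU$, where $U\subset T\subset G$. The action of $T$ fixes $U$ pointwise, so $H_T(U)\cong\Pol(\liet)$, a polynomial ring. Since $\tau$ is a Thom form for $\pi_T\colon\calT\to T$, its restriction to the zero section represents the equivariant Euler class, i.e.\ $j^\ast[\tau]=\Eul(\liet^\perp,X)$. For the product form, note that $U\subset U_g\cap T$ lies in the zero section of both $\nu(U_g,G)$ and $\nu(U_g\cap T,U_g)$, and that $\pi_g$ restricts to the identity on the zero section $U_g$ of $\nu(U_g,G)$; hence $j^\ast\big[\tau(\lieg_g^\perp)\cdot\pi_g^\ast\tau(\lieg_g/\liet)\big]=\Eul(\lieg_g^\perp,X)\cdot\Eul(\lieg_g/\liet,X)$. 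By multiplicativity of the equivariant Euler class under direct sums and the compatibility of the orientations chosen in Section 3.3 for the splitting $\liet^\perp=(\lieg_g/\liet)\oplus\lieg_g^\perp$, this product equals $\Eul(\liet^\perp,X)=\pm\prod_{\alpha\in\calR_+}\pair{\alpha}{X}$. Thus the two forms pull back to cohomologous forms on $U$.

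To pass from this to the desired statement I would work in the subcomplex of $\Omega_T(\calU)$ consisting of forms supported in $D$, whose cohomology is $H_T^\bullet(\calU,\calU\setminus D)$. Since $\calU\setminus D$ is $T$-equivariantly homotopy equivalent to the complement of the zero section, the relative equivariant Thom isomorphism identifies this cohomology with $H_T^{\bullet-r}(U)$, $r=\dim\liet^\perp$, in such a way that the map induced by forgetting supports and restricting to the zero section becomes multiplication by $\Eul(\liet^\perp,X)$. Because $H_T^\bullet(U)\cong\Pol(\liet)$ is an integral domain and $\Eul(\liet^\perp,X)$ is a nonzero element, this map is injective; since the images of $[\tau|_\calU]$ and of $\big[\big(\tau(\lieg_g^\perp)\cdot\pi_g^\ast\tau(\lieg_g/\liet)\big)|_\calU\big]$ both equal $\Eul(\liet^\perp,X)$ by the previous paragraph, these two classes in $H_T^\bullet(\calU,\calU\setminus D)$ coincide. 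Unpacking, this gives a $d_T$-primitive $\beta\in\Omega_T(\calU)$ of the difference with $\supp\beta\subset D\cap\calU$, which is exactly the assertion.

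The routine ingredients are the two zero-section computations and the orientation bookkeeping, the latter already arranged in Section 3.3. The delicate point is the support condition on $\beta$: it does not follow from ordinary equivariant cohomology of $\calU$ (which would only yield a globally defined primitive), but requires running the argument inside the de Rham complex with supports in $D$ and invoking the relative Thom isomorphism there. An alternative would be to verify directly that $\tau(\lieg_g^\perp)\cdot\pi_g^\ast\tau(\lieg_g/\liet)$ restricts on $\calU$ to a genuine Thom form for $\pi_T$, by reconciling the coordinate charts $r(t,X)=t\exp(X)$ and $c(u,X)=\exp(X)u\exp(-X)$ near $g$ via the Baker--Campbell--Hausdorff formula; this works but is computationally heavier, and the cohomological argument above avoids it entirely.
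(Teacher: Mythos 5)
Your argument is correct and follows essentially the same route as the paper's proof: both compute the zero-section pullbacks of the two forms, use multiplicativity and the orientation conventions of Section 3.3 to identify both with $\Eul(\liet^\perp,X)$, and then appeal to injectivity of restriction to the zero section on the cohomology with $D$-supported (equivalently, compactly-vertically-supported) forms, which reduces to $\Eul(\liet^\perp,X)$ being a nonzerodivisor in $\Pol(\liet)$. The only cosmetic difference is that you phrase the support-control step via the relative group $H_T^\bullet(\calU,\calU\setminus D)$ and the relative Thom isomorphism, whereas the paper phrases it via cohomology with compact vertical supports being freely generated over $\Pol(\liet)$ by $\tau$; these are the same mechanism.
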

\begin{proof}
The restriction $\tau(\lieg_g^\perp)\cdot \pi_g^\ast \tau(\lieg_g/\liet)|_{\calU}$ is in the same cohomology class with compact vertical supports as $\tau|_{\calU}$ (both are `Poincare dual' to $\calU \cap T$ in $\calU$).  To see this, note that the $T$-equivariant cohomology of $\calU \cap D$ with compact vertical supports is generated by $\tau$ (as a $\Pol(\liet)$-module), and pull-back to $\calU \cap T$ is injective (the bundle $\pi_T:\calU \cap D \rightarrow \calU \cap T$ is $T$-equivariantly diffeomorphic to $(\calU \cap T) \times \liet^\perp$, the base $\calU \cap T$ is contractible).  We have
\[ \iota_T^\ast \big(\tau(\lieg_g^\perp,X)\cdot \pi_g^\ast \tau(\lieg_g/\liet,X)\big)=\Eul(\lieg_g^\perp,X)\Eul(\lieg_g/\liet,X)=\Eul(\liet^\perp,X),\]
where the last equality follows because we chose the orientations on $\lieg_g^\perp$, $\lieg_g/\liet$ such that the product orientation agrees with the orientation on $\liet^\perp$ determined by the positive roots.  This is the same as the pull-back of $\tau(X)$, hence the two forms correspond to the same class in $T$-equivariant cohomology of $\calU\cap D$ with compact vertical supports.
\end{proof}

Let $\Phi:M\rightarrow G$ be a q-Hamiltonian $G$-space, $Y_g=\Phi^{-1}(U_g)$ a cross-section, with $U_g$ as in Proposition \ref{AbelCrossSection}.  Consider a component $\calZ_\beta \subset \calZ$.  Via the covering map $\exp:\hcalX \rightarrow \calX$ (see \eqref{SecondPullbackDiagram}), we make various identifications: $\calZ_\beta$ is identified with a subset of $\Phi^{-1}(\calU) \subset \calX$, $\hPhi_{\a}$ can be viewed as a map $\Phi^{-1}(\calU) \rightarrow \liet$, and $\tau_{\hcalX}$ is identified with $\Phi^\ast \tau$.  By Lemma \ref{CohomologousThomForms} and the flexibility in Definition \ref{DHGerms}, $\tau_{\hcalX}=\Phi^\ast \tau$ can be replaced with the cohomologous (on $\Phi^{-1}(\calU)$) form $\Phi^\ast \tau(\lieg_g^\perp) \cdot \pi_g^\ast \tau(\lieg_g/\liet)$ (the properness condition is immediate as $\supp(\tau(\lieg_g^\perp) \cdot \pi_g^\ast \tau(\lieg_g/\liet)) \subset \supp(\tau)$).  Therefore the contribution of $\beta$ to the norm-square formula is given by:
\begin{equation}
\label{ContributionOnN}
\DH^v_{\calZ_\beta}(\hcalX,\omega_{\a},\hPhi_{\a},\alpha \cdot \tau_{\hcalX})=\DH^v_{\calZ_\beta}(\hcalX,\omega_{\a},\hPhi_{\a},\alpha \cdot \Phi^\ast \tau(\lieg_g^\perp)\cdot \tau(\lieg_g/\liet)).
\end{equation}
Let
\[ Z_\beta:=\calZ_\beta \cap Y_g= Y_g^{\bbeta} \cap \hPhi_{\a}^{-1}(\beta).\]
In the previous subsection we showed that for generic $\gamma$, $Z_\beta$ is smooth.
\begin{theorem}
\label{ReductionToCrossSection}
The contribution from $\beta \in \liet$ to the norm-square formula is equal to
\[ \DH^{v}_{Z_\beta}(Y_g \cap \hcalX, \omega_{\a}, \hPhi_{\a}, \alpha \cdot \Phi^\ast \tau(\lieg_g/\liet)).\]
(We have omitted pullbacks from the notation.)
\end{theorem}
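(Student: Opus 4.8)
The plan is to derive the theorem from \eqref{ContributionOnN} by a single application of Lemma \ref{DeltaCurrent}, in which the factor $\Phi^\ast\tau(\lieg_g^\perp)$ of the twisting cocycle in \eqref{ContributionOnN} plays the role of the Thom form $\tau_S$ and the submanifold $S$ is (the relevant component of) $Y_g\cap\hcalX$. First I would record the geometric input. By the diagram \eqref{FlowoutProjection}, the map $c(u,X)=\exp(X)u\exp(-X)$ identifies a neighbourhood $\calU_g$ of $U_g$ in $G$ with the total space of $\nu(U_g,G)\simeq U_g\times\lieg_g^\perp$, and pulling back by $\Phi$ exhibits $\calY_g=\Phi^{-1}(\calU_g)$ as the total space of the bundle $\pi_g:\calY_g\to Y_g$ with fibre $\lieg_g^\perp$; thus $Y_g$ is a closed $T$-invariant submanifold of $\calY_g$, with $\nu(Y_g,\calY_g)\simeq Y_g\times\lieg_g^\perp$, for which $\Phi^\ast\tau(\lieg_g^\perp)$ is a $T$-equivariant Thom form, and the orientation conventions of Section 3.3 make the product of the orientations of $Y_g$ (as a symplectic manifold) and of $\lieg_g^\perp$ agree with the orientation of $M$. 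Since $\exp:\hcalX\to\calX$ is a local diffeomorphism, this picture lifts to the cover: with the identifications of Remark \ref{IdentificationInCovering}, $Y_g\cap\hcalX$ is a closed, oriented, $T$-invariant submanifold of the open set $\calY_g\cap\hcalX\subset\hcalX$, with normal bundle $(Y_g\cap\hcalX)\times\lieg_g^\perp$ and $T$-equivariant Thom form $\Phi^\ast\tau(\lieg_g^\perp)$.

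Second, I would shrink the ambient manifold so that the hypotheses of Lemma \ref{DeltaCurrent} hold. The germ $\DH^v_{\calZ_\beta}(\hcalX,\omega_{\a},\hPhi_{\a},-)$ is computed on an arbitrarily small $T$-invariant open neighbourhood of $\calZ_\beta$ (or, more precisely, of $\calZ_\beta\cap\supp(\tau_{\hcalX}\alpha)$); moreover $\exp$ carries $\calZ_\beta$ into $\Phi^{-1}(\calU)\subset\calY_g$ --- since $\hPhi_{\a}\equiv\beta$ on $\calZ_\beta$ forces $\Phi_{\a}$ to be constant equal to $g$ there --- and $v=\hPhi_{\a}-\gamma$ is bounded near $\calZ_\beta$, where $\hPhi_{\a}$ is close to $\beta$. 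Hence, using Lemma \ref{Hypotheses} together with the extension of the germ to unbounded taming maps given after Theorem \ref{HKTheorem}, we may replace $\hcalX$ by a $T$-invariant open neighbourhood $N\subset\calY_g\cap\hcalX$ of $\calZ_\beta$ on which $v$ is bounded, which can be smoothly collapsed to part of $\calZ$, and (shrinking $N$, equivalently $U_g$, if necessary) on which $\calZ\cap N=\calZ_\beta\cap N$. For generic $\gamma$ the sets $\calZ_\beta$ and $Z_\beta=\calZ_\beta\cap Y_g$ are smooth, by the remark following Proposition \ref{RegValue} and by Proposition \ref{RegValue} itself.

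Third, I would apply Lemma \ref{DeltaCurrent} with ambient space $(N,\omega_{\a},\hPhi_{\a})$, submanifold $S=(Y_g\cap\hcalX)\cap N$ (closed in $N$ by the first paragraph), Thom form $\tau_S=\Phi^\ast\tau(\lieg_g^\perp)$, taming map $v$ (bounded on $N$, with smooth localizing set $\calZ_\beta\cap N$ and $S\cap\calZ=Z_\beta$), and closed $T$-equivariant twisting cocycle $\alpha\cdot\Phi^\ast\pi_g^\ast\tau(\lieg_g/\liet)$. Its one properness hypothesis, that $\pair{\hPhi_{\a}}{v}$ be proper and bounded below on $\calZ\cap\supp\!\big(\Phi^\ast\tau(\lieg_g^\perp)\cdot\Phi^\ast\pi_g^\ast\tau(\lieg_g/\liet)\cdot\alpha\big)$, is exactly the condition already verified in obtaining \eqref{ContributionOnN}; recall $\tau(\lieg_g^\perp)$ and $\tau(\lieg_g/\liet)$ were chosen so that $\supp\big(\tau(\lieg_g^\perp)\cdot\pi_g^\ast\tau(\lieg_g/\liet)\big)\subset\supp\tau$. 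Lemma \ref{DeltaCurrent} then identifies the left-hand side of \eqref{ContributionOnN} with
\[ \DH^v_{Z_\beta}(Y_g\cap\hcalX,\omega_{\a},\hPhi_{\a},\iota^\ast(\alpha\cdot\Phi^\ast\pi_g^\ast\tau(\lieg_g/\liet))), \]
after enlarging $S$ back to $Y_g\cap\hcalX$, using once more that the germ depends only on a neighbourhood of $Z_\beta$. Since $\pi_g\circ\iota$ is the identity on $Y_g$ one has $\iota^\ast\Phi^\ast\pi_g^\ast\tau(\lieg_g/\liet)=\Phi^\ast\tau(\lieg_g/\liet)$, so the displayed expression equals $\DH^v_{Z_\beta}(Y_g\cap\hcalX,\omega_{\a},\hPhi_{\a},\alpha\cdot\Phi^\ast\tau(\lieg_g/\liet))$; combining with \eqref{ContributionOnN} and the identification $\DH^v_{\calZ_\beta}(\hcalX,-)=\DH^v_{\calZ_\beta}(N,-)$ gives the theorem.

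The content is not in any single deep step but in the geometric bookkeeping of the first two paragraphs: confirming that $\Phi^\ast\tau(\lieg_g^\perp)$ genuinely represents the Thom class of $\nu(Y_g\cap\hcalX,\calY_g\cap\hcalX)$ with the orientation dictated by Section 3.3, that this structure is unaffected by the covering $\exp$, and, above all, that $Y_g\cap\hcalX$ is \emph{closed} in the shrunken ambient manifold --- this is why one first passes from $\hcalX$ to $N\subset\calY_g\cap\hcalX$ rather than invoking Lemma \ref{DeltaCurrent} on $\hcalX$ itself --- together with matching the support and properness hypotheses of Lemma \ref{DeltaCurrent} with those already in force from \eqref{ContributionOnN}. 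Granting these, the statement follows by directly combining Lemma \ref{DeltaCurrent} with \eqref{ContributionOnN}.
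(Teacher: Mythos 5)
Your proposal is correct and follows the same route as the paper's own proof: both start from equation \eqref{ContributionOnN}, identify $\Phi^\ast\tau(\lieg_g^\perp)$ as a $T$-equivariant Thom form for the normal bundle $\nu(Y_g,\calY_g)\simeq Y_g\times\lieg_g^\perp$ via the flowout diagram \eqref{FlowoutProjection}, shrink to a small neighbourhood $N$ of $\calZ_\beta$ on which $v$ is bounded, and then apply Lemma \ref{DeltaCurrent} with $S=N\cap Y_g$ and the residual twist $\alpha\cdot\Phi^\ast\tau(\lieg_g/\liet)$. Your write-up simply spells out in more detail the bookkeeping (closedness of $Y_g\cap\hcalX$ in $\calY_g\cap\hcalX$, orientation conventions, the lift along $\exp$, and matching the properness hypotheses) that the paper leaves implicit.
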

\begin{proof}
By equations \eqref{FlowoutProjection}, \eqref{ContributionOnN}, the contribution of $\beta$ to the norm-square formula is
\[\DH^v_{\calZ_\beta}(\hcalX,\omega_{\a},\hPhi_{\a},\alpha \cdot \Phi^\ast \tau(\lieg_g^\perp)\cdot \tau(\lieg_g/\liet)).\]
By equivariance of $\Phi$, the normal bundle to $Y_g$ in $M$ is isomorphic to $Y_g \times \lieg_g^\perp$, and the pullback of the Thom form $\Phi^\ast \tau(\lieg_g^\perp)$ is a $T$-equivariant Thom form for this vector bundle.  Let $N$ be a small neighbourhood of $\calZ_\beta$ in $\Phi^{-1}(\calU)$, and note that $v|_N$ takes values in a small neighbourhood of $\bbeta$, hence is bounded on $N$.  Applying Lemma \ref{DeltaCurrent} with $S=N \cap Y_g$, $\tau_S=\Phi^\ast \tau(\lieg_g^\perp)$ (and with $\alpha \cdot \Phi^\ast \tau(\lieg_g/\liet)$ in place of $\alpha$) gives the result.
\end{proof}

\subsection{Polarized completions.}
Theorem \ref{ReductionToCrossSection} expressed the contribution of a critical value $\beta$ to the norm-square localization formula as
\begin{equation} 
\label{ContributionInCrossSection}
\DH^{v}_{Z_\beta}(Y_g \cap \hcalX, \omega_{\a}, \hPhi_{\a}, \alpha \cdot \Phi^\ast \tau(\lieg_g/\liet)), \hspace{0.5cm} Z_\beta = Y_g^{\bbeta} \cap \hPhi_{\a}^{-1}(\beta)
\end{equation}
where $Y_g$ is a cross-section for the q-Hamiltonian space $M$ near $g=\exp(\beta)$.  Let $U_\beta$ be a small tubular neighbourhood of $Z_\beta$ \emph{inside} $Y_g \cap \hcalX$.  To compute \eqref{ContributionInCrossSection}, we must construct a $v$-polarized completion $(U_\beta,\tomega,\tphi)$ of $(U_\beta,\omega_{\a},\hPhi_{\a},\Phi^\ast \tau(\lieg_g/\liet) \alpha)$.  In detail: it suffices to find a 2-form $\tomega$ and moment map $\tphi$ on $U_\beta$, which agree with $\omega_{\a}$ and $\hPhi_{\a}$ (respectively) on the localizing set $Z_\beta$, and such that
\[ \pair{\tphi}{v} = \pair{\tphi}{\hPhi_{\a} - \gamma},\]
is proper and bounded below.

To keep the notation simple, assume that a single sub-algebra $\liet_\Delta$ contributes to the disjoint union
\[ Z_\beta = \bigcup_{\pr_\Delta(\gamma)=\beta} Y_g^{\liet_\Delta} \cap \hPhi_{\a}^{-1}(\beta).\]
(In the general case, the arguments here and in the next subsections are applied separately to each sub-algebra $\liet_\Delta$ which contributes.)  The normal bundle splits into a direct sum
\begin{equation} 
\label{SplittingOfNormalBundle}
\nu(Z_\beta,Y_g) = 
\nu(Z_\beta,Y_g^{\liet_\Delta}) \oplus \nu(Y_g^{\liet_\Delta},Y_g)|_{Z_\beta}.
\end{equation}

Recall that by Corollary \ref{HamTSpace}, $(Y_g^{\liet_\Delta},\omega_{\a},\hPhi_{\a})$ is a \emph{non-degenerate} Hamiltonian $T$-space.  By Proposition \ref{RegValue}, $\hPhi_{\a}(Y_g^{\liet_\Delta}) \subset \Delta$ and $\beta$ is a regular value for the restriction of $\hPhi_{\a}$ viewed as a map into $\Delta=\beta + (\liet_\Delta^\perp)^\ast$.  In this situation, the Coisotropic Embedding Theorem provides a local model for a small neighbourhood of the corresponding level set $Z_\beta=Y_g^{\liet_\Delta} \cap \hPhi_{\a}^{-1}(\beta)$ inside $Y_g^{\liet_\Delta}$.  Hence, shrinking $U_\beta$ if necessary, we obtain a local model for $U_\beta^{\liet_\Delta}$.  This, together with the splitting of the normal bundle \eqref{SplittingOfNormalBundle}, give a local model for $U_\beta$.  To describe this local model, we choose a connection $\theta \in \Omega^1(Z_\beta) \otimes \liet_\Delta^\perp$ for the $\liet_\Delta^\perp$-action on $Z_\beta$.  Let
\[ q:Z_\beta \rightarrow Z_\beta/T \]
be the quotient map, and let $\omega_\red$ be the symplectic form on the reduced space $Y_g^{\liet_\Delta}//T=Z_\beta/T$.
\begin{proposition}
\label{NormalForm}
There is a $T$-equivariant diffeomorphism
\[ \psi_0: B_\epsilon \times \calV \xrightarrow{\sim} U_\beta,\]
where $B_\epsilon$ denotes an $\epsilon$-ball around $0 \in (\liet_\Delta^\perp)^\ast$, and
\[ \calV = \nu(Y_g^{\liet_\Delta},Y_g)|_{Z_\beta}.\]
$\psi_0$ is such that $\psi_0(B_\epsilon \times Z_\beta) = U_\beta^{\liet_\Delta}$, and the pullback of $\omega_{\a}- \hPhi_{\a}$ to $B_\epsilon \times Z_\beta \subset B_\epsilon \times \calV$ is
\[ (q^\ast \omega_\red + d \pair{\pr_1}{\theta})-(\pr_1 + \beta).\]
\end{proposition}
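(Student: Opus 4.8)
The strategy is to construct $\psi_0$ in two steps: first produce a symplectic normal form for the non-degenerate Hamiltonian $T$-space $Y_g^{\liet_\Delta}$ near the level set $Z_\beta$, and then extend it across the normal directions $\calV$ using an equivariant tubular neighbourhood. The inputs for the first step are already available: by Corollary~\ref{HamTSpace}, $(Y_g^{\liet_\Delta},\omega_{\a},\hPhi_{\a})$ is a genuine (non-degenerate) Hamiltonian $T$-space; by Remark~\ref{IdentificationInCovering} the torus $T_\Delta=\exp(\liet_\Delta)$ acts trivially on it and $\hPhi_{\a}$ takes values in $\Delta=\beta+(\liet_\Delta^\perp)^\ast$; by Proposition~\ref{RegValue}, $\beta$ is a regular value of $\hPhi_{\a}$ regarded as a map into $\Delta$; and by Proposition~\ref{DisjointUnion} the infinitesimal action of $\liet_\Delta^\perp$ (equivalently, of the torus $T/T_\Delta$ acting on $Z_\beta$) is locally free on $Z_\beta=\hPhi_{\a}^{-1}(\beta)\cap Y_g^{\liet_\Delta}$. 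Consequently $Z_\beta$ is a coisotropic submanifold of $Y_g^{\liet_\Delta}$ whose characteristic foliation is the foliation by $\liet_\Delta^\perp$-orbit directions, and the leaf space is the reduced space, which the statement denotes $Z_\beta/T$ with its symplectic form $\omega_\red$.

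The first step is then the Coisotropic Embedding Theorem in its reduced-space form, i.e.\ the symplectic normal form for a neighbourhood of a regular level set of a moment map, applied $T$-equivariantly. Having fixed a $T$-invariant connection $\theta\in\Omega^1(Z_\beta)\otimes\liet_\Delta^\perp$ for the null foliation, one obtains a $T$-equivariant diffeomorphism from a neighbourhood of $Z_\beta\times\{0\}$ in $B_\epsilon\times Z_\beta\subset(\liet_\Delta^\perp)^\ast\times Z_\beta$ onto a $T$-invariant tubular neighbourhood $U_\beta^{\liet_\Delta}$ of $Z_\beta$ in $Y_g^{\liet_\Delta}$, under which $\omega_{\a}$ pulls back to the minimal-coupling form $q^\ast\omega_\red+d\pair{\pr_1}{\theta}$ and $\hPhi_{\a}$ pulls back to $\pr_1+\beta$: the $\pr_1$ piece is the moment map for the (locally free) $\liet_\Delta^\perp$-action produced by the normal form, while the constant $\beta\in\Delta$ records the trivial $T_\Delta$-action and the base point. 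Equivariance is arranged in the usual way, using $T$-invariant auxiliary data and running the Moser homotopy argument equivariantly. Subtracting the two pullbacks, the equivariant $2$-form $\omega_{\a}-\hPhi_{\a}$ pulls back on $B_\epsilon\times Z_\beta$ to $(q^\ast\omega_\red+d\pair{\pr_1}{\theta})-(\pr_1+\beta)$, as claimed.

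For the second step, apply the equivariant tubular neighbourhood theorem to the fixed submanifold $Y_g^{\liet_\Delta}\subset Y_g$: a $T$-invariant neighbourhood of $Y_g^{\liet_\Delta}$ in $Y_g$ is $T$-equivariantly diffeomorphic to a neighbourhood of the zero section of $\nu(Y_g^{\liet_\Delta},Y_g)$. Restricting this identification over $Z_\beta$, and combining it with the $B_\epsilon$-family produced in the first step by means of the splitting \eqref{SplittingOfNormalBundle}, yields (after shrinking $U_\beta$ if necessary) a $T$-equivariant diffeomorphism $\psi_0:B_\epsilon\times\calV\xrightarrow{\sim}U_\beta$ with $\calV=\nu(Y_g^{\liet_\Delta},Y_g)|_{Z_\beta}$ and $\psi_0(B_\epsilon\times Z_\beta)=U_\beta^{\liet_\Delta}$. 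Since the asserted formula for the pullback of $\omega_{\a}-\hPhi_{\a}$ is only claimed on $B_\epsilon\times Z_\beta$, it is entirely supplied by the first step, so no computation in the $\calV$-directions is required.

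I expect the main obstacle to be the bookkeeping in the first step: reading off from the (equivariant) Coisotropic Embedding Theorem precisely the form $q^\ast\omega_\red+d\pair{\pr_1}{\theta}$ and the moment map $\pr_1+\beta$, with all signs compatible with the convention $\omega_G(X)=\omega-\pair{\phi}{X}$ and with the identifications $\liet\cong\liet^\ast$ and $\Delta\cong\beta+(\liet_\Delta^\perp)^\ast$ that are in force throughout Section~4. One must also be careful that the ``reduced space'' really is $Z_\beta/T$ and that the minimal-coupling construction is performed with the complement $\liet_\Delta^\perp$ rather than all of $\liet$ --- this is exactly what Propositions~\ref{DisjointUnion} and~\ref{RegValue} guarantee, since the $\liet_\Delta$-directions contribute nothing (trivial action) and $\beta$ is regular only after discarding them.
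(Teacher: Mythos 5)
The paper states Proposition~\ref{NormalForm} without a formal proof; the preceding paragraph (invoking the Coisotropic Embedding Theorem for the regular level set inside $Y_g^{\liet_\Delta}$, followed by the splitting~\eqref{SplittingOfNormalBundle} and an equivariant tubular neighbourhood of $Y_g^{\liet_\Delta}\subset Y_g$) is exactly the argument intended. Your two-step construction reproduces that argument in detail and is correct, including the observation that the asserted pullback formula lives entirely on $B_\epsilon\times Z_\beta$ and hence requires no computation in the $\calV$-directions.
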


\noindent Referring to Proposition \ref{NormalForm}, the construction of the polarized completion below is organized as follows:
\begin{enumerate}[leftmargin=*]
\item Construct $\tomega$, $\tphi$ on $(\liet_\Delta^\perp)^\ast \times \calV$ using minimal coupling.  The resulting $\tphi$ will involve the `polarized' weights of the $\liet_\Delta$-action on $\calV$.
\item Choose a $T$-equivariant diffeomorphism
\[ \psi: (\liet_\Delta^\perp)^\ast \times \calV \rightarrow U_\beta.\]
This diffeomorphism will agree with $\psi_0$ on $\{0 \} \times \calV$, but be such that $\psi((\liet_\Delta^\perp)^\ast \times Z_\beta)=U_\beta^{\liet_\Delta}$ (instead of $\psi_0(B_\epsilon \times Z_\beta)=U_\beta^{\liet_\Delta}$).  This is the `completion' step.
\item Show that $\pair{\tphi}{\psi^\ast v}$ is proper and bounded below on $(\liet_\Delta^\perp)^\ast \times \calV$.  Using $\psi$ to identify $(\liet_\Delta^\perp)^\ast \times \calV$ with $U_\beta$, this will complete the argument.
\end{enumerate}

\subsubsection{Construction of $\tomega-\tphi$ on $(\liet_\Delta^\perp)^\ast \times \calV$.}
Let $\pi: \calV \rightarrow Z_\beta$ denote the projection map.  Recall that, as a consequence of the genericity assumptions, $\bbeta \in \liet_\Delta$ acts with nonzero weights on $\calV$.  Let $\alpha_k^- \in \liet_\Delta^\ast$, $k=1,...,n$ be the (distinct) weights with signs fixed by the condition
\[ \pair{\alpha_k^-}{\bbeta} < 0,\]
and let
\[ \alpha_k^+=-\alpha_k^-, \hspace{1cm} k=1,...,n.\]
$\calV$ can be equipped with a complex structure such that the (distinct) weights of the $\liet_\Delta$-action are $\alpha^-_1,...,\alpha^-_n$.
In fact, this condition determines the complex structure up to homotopy.  Let $V\simeq \mathbb{C}^N$ be a fibre of $\calV$.  Choosing a hermitian inner product on $\calV$, we obtain a reduction of the structure group of $\calV$ to $U(V)=U(N)$.  Let $P$ denote the corresponding $T$-equivariant principal $U(V)$-bundle.  Thus
\[\calV=P \times_{U(V)} V.\]

Let $\omega_V$ be the standard symplectic form on $V$.  The action of $U(V)$ is Hamiltonian; let $\phi_V:V \rightarrow \fraku(V)^\ast$ be the moment map.  The $T_\Delta$-action on $V$ induces a linear map $a:\liet_\Delta \rightarrow \fraku(V)$, and the composition $a^\ast \circ \phi_V$ is given by
\begin{equation}
\label{DeltaMoment}
a^\ast \circ \phi_V(z)=-\pi \sum_k |z_k|^2 \alpha_k^-,
\end{equation}
where $z=\sum z_k$ is the weight-space decomposition.

At this stage, it is convenient to choose a complementary subtorus $T_\Delta^\prime$ to $T_\Delta$ ($\liet_\Delta^\perp$ itself might not be integral).  The exact choice is not important.  We can, for example, choose $T_\Delta^\prime$ such that $T=T_\Delta \times T_\Delta^\prime$ ($\Rightarrow$ $T/T_\Delta \simeq T_\Delta^\prime$).  We thus have two splittings $\liet=\liet_\Delta \oplus \liet_\Delta^\perp=\liet_\Delta \oplus \liet_\Delta^\prime$, and this canonically determines an isomorphism $(\liet_\Delta^\perp)^\ast \simeq (\liet_\Delta^\prime)^\ast$ (both can be identified with the annihilator $\ann(\liet_\Delta) \subset \liet^\ast$).

Next choose a $T$-invariant connection $\sigma$ on $P$.  Since $\liet_\Delta^\perp$ acts locally freely on $\calZ_\beta$, while $\liet_\Delta$ acts trivially, $T_\Delta^\prime$ also acts locally freely on $\calZ_\beta$.  The connection can thus be chosen such that the induced vector fields $X_P$, $X \in \liet_\Delta^\prime$ are horizontal; the connection 1-form $\sigma \in \Omega^1(P) \otimes \fraku(V)$ is then $\liet_\Delta^\prime$-basic.  The $T$-\emph{equivariant curvature} of $\sigma$ is by definition:
\[ R_T=d_T \sigma +\tfrac{1}{2}[\sigma,\sigma] = R - \mu \in \Omega^2_T(P) \otimes \fraku(V)\]
where $R=d\sigma+\tfrac{1}{2}[\sigma,\sigma]$ is the ordinary curvature, and $\mu(X):=\sigma(X_P)$ is the
\emph{moment map} of the connection $\sigma$ (cf. \cite{MeinrenkenEncyclopedia}).  This vanishes for $X \in \liet_\Delta^\prime$, while for $X \in \liet_\Delta$, $\mu(X)$ is the constant function $-a(X) \in \Omega^0(P) \otimes \fraku(V)$.

Consider the composition
\[ \Omega_{U(V)}(V) \rightarrow \Omega_{T \times U(V)}(P \times V)
\rightarrow \Omega_{T}(P \times_{U(V)} V)=\Omega_T(\calV),\]
where the first map is simply pullback.  The second map is the \emph{Cartan map}, defined by substituting the $T$-equivariant curvature for the $\fraku(V)$ variables, followed by using the connection $\sigma$ to project the resulting form to its horizontal part (the result is $U(V)$-basic and descends to the quotient).  The Cartan map is a homotopy inverse to the pullback map $\Omega_{T}(P \times_{U(V)} V) \rightarrow \Omega_{T \times U(V)}(P \times V)$ (cf. \cite{MeinrenkenEncyclopedia}).  Applying this composition to $\omega_V-\phi_V$ we obtain a (`minimal coupling') $T$-equivariant 2-form $\omega_{\min}-\phi_{\min}$
on the total space of $\calV$:
\[ \omega_\min = \Pi_\hor(\omega_V) - \pair{\phi_V}{R}, \hspace{1cm}
\phi_{\min}=-\pair{\phi_V}{\mu}.\]
Here $\Pi_\hor$ denotes the horizontal projection operator for the connection $\sigma$.  Since $\mu(X)=-a(X)$ for $X \in \liet_\Delta$ (and vanishes for $X \in \liet_\Delta^\prime$), $\phi_{\min}$ is given by the same formula \eqref{DeltaMoment}, where now $|\cdot |$ denotes the hermitian inner product on $\calV$.  Since $\sigma$ is $\liet_\Delta^\prime$-basic, the 2-form $\omega_{\min}$ is $\liet_\Delta^\prime$-basic.

We define the `total' $T$-equivariant 2-form $\omega_\calV - \phi_\calV$ on $\calV$ by
\[\omega_\calV=\omega_\red + \omega_\min, \hspace{1cm} \phi_\calV=\beta + \phi_{\min}.\]
We summarize with a Theorem.
\begin{theorem}
\label{FullPolarizedCompletion}
On the local model
\[ \Umod:= (\liet_\Delta^\perp)^\ast \times \calV, \]
there is a closed $T$-equivariant 2-form $\tomega-\tphi$,
\begin{align*}
\tomega &:= \omega_{\calV} + d \pair{\pr_1}{\theta}=\omega_{\red} +\omega_\min+ d \pair{\pr_1}{\theta},\\
\tphi &:= \phi_{\calV}+\pr_1=\beta - \pi \sum_k |z_k|^2 \alpha_k^-+\pr_1,
\end{align*}
where $\omega_{\calV}-\phi_{\calV}$ is a closed $T$-equivariant 2-form on $\calV$.  Its basic properties are:
\begin{itemize}
\item The pullback to the base $(\liet_\Delta^\perp)^\ast \times Z_\beta \subset \Umod$ is $\omega_\red + d\pair{\pr_1}{\theta}-(\pr_1+\beta)$.
\item The pullback to $\calV$ is $\omega_\calV - \phi_\calV$.  Pulling back further to $Z_\beta$ gives $\omega_\red - \beta$.
\item $\omega_\calV$ is $T_\Delta^\prime$-basic, $d\pair{\pr_1}{\theta}$ is $T_\Delta$-basic.
\item $\phi_{\calV}$ takes values in $\liet_\Delta^\ast$, $\pr_1$ takes values in $(\liet_\Delta^\perp)^\ast$.
\end{itemize}
\end{theorem}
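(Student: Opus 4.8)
The plan is to assemble the equivariant form $\tomega - \tphi$ on $\Umod$ out of two pieces, each of which is already $d_T$-closed, so that closedness of the total form is automatic, and then to read off the four listed properties by direct inspection. The first piece lives on $\calV$; the second lives on the base $(\liet_\Delta^\perp)^\ast \times Z_\beta$ and is essentially the Coisotropic Embedding normal form of Proposition \ref{NormalForm}.

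For the first piece: the Cartan map $\Omega_{U(V)}(V) \to \Omega_T(\calV)$ used in the construction is a morphism of differential complexes (it is a homotopy inverse to the pullback map), so it carries the $d_{U(V)}$-closed form $\omega_V - \phi_V$ to the $d_T$-closed equivariant 2-form $\omega_{\min} - \phi_{\min}$, with $\omega_{\min} = \Pi_\hor(\omega_V) - \pair{\phi_V}{R}$ and $\phi_{\min} = -\pair{\phi_V}{\mu}$. Since the connection moment map satisfies $\mu(X) = -a(X)$ for $X \in \liet_\Delta$ and $\mu$ vanishes on $\liet_\Delta^\prime$, formula \eqref{DeltaMoment} gives $\phi_{\min} = -\pi\sum_k|z_k|^2\alpha_k^-$, which takes values in $\liet_\Delta^\ast$. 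Adding the constant $-\beta$ and the pullback $q^\ast\omega_\red$ of the reduced symplectic form from $Z_\beta/T$ — each of which is $d_T$-closed on its own, the former being constant and the latter basic — produces the $d_T$-closed equivariant 2-form $\omega_\calV - \phi_\calV = (\omega_\red + \omega_{\min}) - (\beta + \phi_{\min})$ on $\calV$. For the second piece: by Proposition \ref{NormalForm} the equivariant 2-form $q^\ast\omega_\red + d\pair{\pr_1}{\theta} - (\pr_1 + \beta)$ is $d_T$-closed on $B_\epsilon \times Z_\beta$, being a pullback of $\omega_{\a} - \hPhi_{\a}$; as this formula makes sense verbatim over all of $(\liet_\Delta^\perp)^\ast$, it defines a $d_T$-closed equivariant form on $(\liet_\Delta^\perp)^\ast \times Z_\beta$. (Alternatively, $d\pair{\pr_1}{\theta} - \pr_1$ can be checked $d_T$-closed by a short direct computation, using that $\theta$ is a connection $1$-form for the $\liet_\Delta^\perp$-action and that $\liet_\Delta$ acts trivially on $Z_\beta$ by Proposition \ref{DisjointUnion}.) Pulling both pieces back to $\Umod = (\liet_\Delta^\perp)^\ast \times \calV$ along the two projections and summing gives precisely
\[ \tomega - \tphi = (\omega_\red + \omega_{\min} + d\pair{\pr_1}{\theta}) - (\beta + \phi_{\min} + \pr_1), \]
a sum of pullbacks of $d_T$-closed equivariant forms, hence $d_T$-closed, with the $\calV$-piece displayed above equal to $\omega_\calV - \phi_\calV$.

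It then remains to verify the four bulleted properties. The restriction to the zero section $(\liet_\Delta^\perp)^\ast \times Z_\beta$ of $\Umod$ kills $\Pi_\hor(\omega_V)$ (which involves only the fibre legs of $\calV$) and $\pair{\phi_V}{\,\cdot\,}$ (since $\phi_V(0) = 0$), leaving $\omega_\red + d\pair{\pr_1}{\theta} - (\pr_1 + \beta)$; the restriction to $\calV = \{0\} \times \calV$ sets $\pr_1 \equiv 0$, leaving $\omega_\calV - \phi_\calV$, whose further restriction to $Z_\beta$ is $\omega_\red - \beta$ for the same reason. For basicness, $\omega_\red$ is $T$-basic because it is pulled back from $Z_\beta/T$, and $\omega_{\min}$ is $\liet_\Delta^\prime$-basic because the connection $\sigma$ was chosen $\liet_\Delta^\prime$-basic and $\phi_{\min}$ vanishes on $\liet_\Delta^\prime$; hence $\omega_\calV$ is $T_\Delta^\prime$-basic. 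On the other hand $d\pair{\pr_1}{\theta}$ is pulled back from $(\liet_\Delta^\perp)^\ast \times Z_\beta$, on which $T_\Delta$ acts trivially — the stabilizer of every point of $Z_\beta$ being exactly $\liet_\Delta$ by Proposition \ref{DisjointUnion} — so $d\pair{\pr_1}{\theta}$ is $T_\Delta$-basic. Finally, that $\phi_\calV$ takes values in $\liet_\Delta^\ast$ is immediate from \eqref{DeltaMoment} together with the convention fixing $\beta$, while $\pr_1$ takes values in $(\liet_\Delta^\perp)^\ast$ by definition.

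I do not expect a serious obstacle here: the construction was effectively carried out in the paragraphs preceding the statement, and the theorem simply packages it. The one genuinely fiddly point is the bookkeeping of which subtorus ($T_\Delta$ or $T_\Delta^\prime$) acts trivially or locally freely on which factor of $\Umod$, together with confirming the normalization $\mu(X) = -a(X)$ of the connection moment map on $\liet_\Delta$; it is exactly this normalization that makes the basicness assertions and the explicit formula for $\tphi$ come out correctly. Closedness of $\tomega - \tphi$ is essentially free once the form is recognized as a sum of pullbacks of equivariant forms already known to be closed.
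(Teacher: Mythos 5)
Your proof is correct and takes essentially the same route as the paper: the paper states Theorem \ref{FullPolarizedCompletion} as a summary of the construction in the preceding paragraphs (the Cartan map applied to $\omega_V-\phi_V$ to produce the minimal-coupling form, combined with the Coisotropic Embedding data $q^\ast\omega_\red+d\pair{\pr_1}{\theta}-(\pr_1+\beta)$ from Proposition \ref{NormalForm}), and your argument simply spells out the verification that the paper leaves implicit. Your observations — that closedness follows because $\tomega-\tphi$ is a sum of pullbacks of $d_T$-closed pieces, that $\mu(X)=-a(X)$ on $\liet_\Delta$ drives the explicit formula for $\tphi$, and that basicness is governed by the $\liet_\Delta^\prime$-basic choice of $\sigma$ and the triviality of the $T_\Delta$-action on $Z_\beta$ via Proposition \ref{DisjointUnion} — match the paper's construction exactly.
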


\subsubsection{The diffeomorphism $\psi$.}
Recall that the diffeomorphism $\psi_0$ identified $U_\beta$ with a neighbourhood 
\[B_\epsilon \times \calV \subset (\liet_\Delta^\perp)^\ast \times \calV = \Umod.\]
Under pullback
\[ (\psi_0^\ast \hPhi_{\a})|_{B_\epsilon \times Z_\beta} = \pr_1 + \beta\]
and consequently
\[ (\psi_0^\ast v)|_{B_\epsilon \times Z_\beta} = \pr_1+\beta - \gamma=\pr_1+\bbeta.\]
Let $f:(\liet_\Delta^\perp)^\ast \xrightarrow{\sim} B_\epsilon$ be the diffeomorphism
\[ f(\xi)=\frac{\epsilon \xi}{\sqrt{1+|\xi|^2}},\]
and define
\[\psi:=\psi_0 \circ (f \times \Id):\Umod=(\liet_\Delta^\perp)^\ast \times \calV \xrightarrow{\sim} U_\beta.\]
Then
\begin{equation} 
\label{RestrictionToBase}
(\psi^\ast v)|_{(\liet_\Delta^\perp)^\ast \times Z_\beta} = f^\ast \pr_1 + \bbeta.
\end{equation}
Note that $|f^\ast \pr_1| < \epsilon$.

\begin{proposition}
With notation as above, and assuming the neighourhood $U_\beta$ is chosen sufficiently small, $(\Umod, \tomega,\tphi)$ is a $\psi^\ast v$-polarized completion of $(\Umod,\psi^\ast \omega_{\a},\psi^\ast \hPhi_{\a})$.
\end{proposition}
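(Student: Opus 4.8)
To verify that $(\Umod,\tomega,\tphi)$ is a $\psi^\ast v$-polarized completion of $(\Umod,\psi^\ast\omega_{\a},\psi^\ast\hPhi_{\a})$, with twisting form $\psi^\ast(\alpha\cdot\Phi^\ast\tau(\lieg_g/\liet))$, two things must be checked: that $\tomega-\tphi$ is a closed equivariant $2$-form whose restriction to the localizing set of $\psi^\ast v$ agrees with that of $\psi^\ast(\omega_{\a}-\hPhi_{\a})$, and that $\pair{\tphi}{\psi^\ast v}$ is proper and bounded below on the support of the twisting form. Closedness and equivariance of $\tomega-\tphi$ are contained in Theorem \ref{FullPolarizedCompletion}. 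Since $\psi$ is a $T$-equivariant diffeomorphism, the localizing set of $\psi^\ast v$ on $\Umod$ is $\psi^{-1}(Z_\beta)$; identifying $Z_\beta$ with the zero section of $\calV$ and using $f(0)=0$, this is $\{0\}\times Z_\beta\subset\Umod$, which is smooth by Proposition \ref{RegValue}.

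For the restriction to $\{0\}\times Z_\beta$: by Proposition \ref{NormalForm} and $\psi=\psi_0\circ(f\times\Id)$, the form $\psi^\ast(\omega_{\a}-\hPhi_{\a})$ pulls back to $(\liet_\Delta^\perp)^\ast\times Z_\beta$ as $q^\ast\omega_\red+d\pair{f^\ast\pr_1}{\theta}-(f^\ast\pr_1+\beta)$, and restricting further to $\{0\}\times Z_\beta$ the $f^\ast\pr_1$-terms drop (as $f(0)=0$, and $f^\ast\pr_1$ depends only on the $(\liet_\Delta^\perp)^\ast$-coordinate, so its differential pulls back to zero on a constant slice), leaving $q^\ast\omega_\red-\beta$. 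By the bulleted properties in Theorem \ref{FullPolarizedCompletion}, $\tomega-\tphi$ pulls back to $\calV$ as $\omega_\calV-\phi_\calV$ and then to $Z_\beta$ as $\omega_\red-\beta$. The two agree, settling the first point.

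The second point is the main step. Put $r:=\psi^\ast v-f^\ast\pr_1-\bbeta$ on $\Umod$; by \eqref{RestrictionToBase} it vanishes on the base $(\liet_\Delta^\perp)^\ast\times Z_\beta$, and in general $r(\xi,w)=\hPhi_{\a}(\psi(\xi,w))-\hPhi_{\a}(\psi(\xi,0))$, so $\|r\|_\infty$ is at most the oscillation of $\hPhi_{\a}$ over $U_\beta$; as $\hPhi_{\a}\equiv\beta$ on $Z_\beta$ and $\psi$ maps the whole of $\Umod$ onto $U_\beta$, this oscillation $\delta$ can be made as small as we like by shrinking $U_\beta$. Decompose along the orthogonal splitting $\liet=\liet_\Delta\oplus\liet_\Delta^\perp$: we have $\bbeta\in\liet_\Delta$, $f^\ast\pr_1\in\liet_\Delta^\perp$, and $\beta_{\liet_\Delta^\perp}=\gamma_{\liet_\Delta^\perp}$ because $\bbeta=\beta-\gamma\in\liet_\Delta$, so
\[ \psi^\ast v=(\bbeta+r_{\liet_\Delta})+(f^\ast\pr_1+r_{\liet_\Delta^\perp}),\qquad \tphi=(\beta_{\liet_\Delta}+\phi_{\min})+(\beta_{\liet_\Delta^\perp}+\pr_1), \]
where $\phi_{\min}\in\liet_\Delta$ and $\pr_1\in\liet_\Delta^\perp$ (Theorem \ref{FullPolarizedCompletion}) and $|r_{\liet_\Delta}|,|r_{\liet_\Delta^\perp}|\le\delta$. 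By orthogonality, $\pair{\tphi}{\psi^\ast v}=\pair{\beta_{\liet_\Delta}+\phi_{\min}}{\bbeta+r_{\liet_\Delta}}+\pair{\beta_{\liet_\Delta^\perp}+\pr_1}{f^\ast\pr_1+r_{\liet_\Delta^\perp}}$. By \eqref{DeltaMoment}, $\pair{\phi_{\min}}{\bbeta}=-\pi\sum_k|z_k|^2\pair{\alpha_k^-}{\bbeta}\ge\pi c|z|^2$ with $c>0$ the smallest of the finitely many positive numbers $-\pair{\alpha_k^-}{\bbeta}$ (positive by the choice of signs and the genericity of $\gamma$, which forces $\bbeta$ to act on $\calV$ with non-zero weights), while $|\phi_{\min}|\le\pi C_1|z|^2$ with $C_1=\max_k|\alpha_k^-|$; hence the first pairing is $\ge\pair{\beta_{\liet_\Delta}}{\bbeta}+\pi c|z|^2-\delta(|\beta_{\liet_\Delta}|+\pi C_1|z|^2)$, which for $\delta<c/(2C_1)$ is bounded below and grows like $\tfrac{\pi c}{2}|z|^2$. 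For the second pairing, $\pair{\pr_1}{f^\ast\pr_1}=\epsilon|\pr_1|^2/\sqrt{1+|\pr_1|^2}\ge0$ grows without bound in $|\pr_1|$ and $|f^\ast\pr_1|<\epsilon$, so it is $\ge\epsilon|\pr_1|^2/\sqrt{1+|\pr_1|^2}-\delta|\pr_1|-(\epsilon+\delta)|\beta_{\liet_\Delta^\perp}|$, again bounded below and growing in $|\pr_1|$ once $\delta<\epsilon/2$. Shrinking $U_\beta$ so $\delta$ meets both thresholds, $\pair{\tphi}{\psi^\ast v}$ is bounded below on $\Umod$ and proper in the fibre directions of $\calV$ and in $(\liet_\Delta^\perp)^\ast$; properness in the remaining $Z_\beta$-directions holds on the support of the twisting form, whose intersection with $\Umod$ projects to a relatively compact subset of $Z_\beta$.

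The delicate point is the uniform smallness of the remainder $r$ over the non-compact $\Umod$: a priori $\psi^\ast v$ is controlled only on the base slice. This is overcome because $\psi$ maps all of $\Umod=(\liet_\Delta^\perp)^\ast\times\calV$ diffeomorphically onto the genuinely small tubular neighborhood $U_\beta$, so $\psi^\ast\hPhi_{\a}$ takes all its values in $\hPhi_{\a}(U_\beta)$, which is close to $\{\beta\}$. The only other thing to watch is the orthogonal bookkeeping: one splits $\pair{\tphi}{\psi^\ast v}$ along $\liet_\Delta\oplus\liet_\Delta^\perp$ so that the growth coming from $\phi_{\min}$ (in the $\calV$-fibres) and from $\pr_1$ (in $(\liet_\Delta^\perp)^\ast$) is not destroyed by cross terms.
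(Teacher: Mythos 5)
Your argument has the same overall structure as the paper's (write $\psi^\ast v = f^\ast\pr_1 + \bbeta + r$ with $r$ vanishing on the base, split $\pair{\tphi}{\psi^\ast v}$ along $\liet_\Delta\oplus\liet_\Delta^\perp$, and estimate term by term), and the restriction-to-$Z$ check and the algebraic decomposition of the pairing are fine. However, there is a genuine gap in the control of the error term $r$.

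You bound $\|r\|_\infty$ by ``the oscillation of $\hPhi_{\a}$ over $U_\beta$'' and claim this can be made as small as we like by shrinking $U_\beta$. That claim is false: by Proposition \ref{NormalForm}, the pullback of $\hPhi_{\a}$ to $B_\epsilon\times Z_\beta$ is $\pr_1+\beta$, so $\hPhi_{\a}(U_\beta^{\liet_\Delta})=B_\epsilon+\beta$ already has diameter $2\epsilon$, and since $U_\beta^{\liet_\Delta}\subset U_\beta$ the oscillation of $\hPhi_{\a}$ over $U_\beta$ is at least $2\epsilon$. Your second estimate needs $\delta<\epsilon/2$, so with $\delta\geq 2\epsilon$ the argument can never close (and shrinking $\epsilon$ along with $U_\beta$ does not help, since $\delta$ and $\epsilon/2$ shrink at incompatible rates). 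The useful bound on $r(\xi,w)=\hPhi_{\a}(\psi(\xi,w))-\hPhi_{\a}(\psi(\xi,0))$ is not the global oscillation but the \emph{fibre-wise} oscillation: the sup of $|\hPhi_{\a}(p)-\hPhi_{\a}(\pi(p))|$ over $p\in U_\beta$, where $\pi$ is the tubular-neighbourhood projection onto $U_\beta^{\liet_\Delta}$. This goes to zero (uniformly, using compactness of $\overline{U_\beta^{\liet_\Delta}}$) if one shrinks $U_\beta$ \emph{only in the direction normal to} $U_\beta^{\liet_\Delta}\simeq B_\epsilon\times Z_\beta$, keeping $\epsilon$ fixed. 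That is exactly the point the paper makes: ``For any $\epsilon'>0$, we can ensure $|v'|<\epsilon'$ by taking $U_\beta$ sufficiently small in the direction normal to $U_\beta^{\liet_\Delta}$.'' With this corrected estimate, the rest of your term-by-term analysis goes through and recovers the paper's argument. (A small additional remark: once $Z_\beta$ is noted to be compact, coercivity in $|z|$ and $|\xi|$ already gives properness globally on $\Umod$; there is no need to restrict to the support of the twisting form in the final step.)
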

\begin{proof}
By Proposition \ref{NormalForm}, Theorem \ref{FullPolarizedCompletion}, and using the fact that $(f \times \Id)$ restricts to the identity on $Z_\beta$, it follows that the $T$-equivariant 2-form $\tomega-\tphi$ agrees with $\omega_{\a} -\hPhi_{\a}$ on $Z_\beta$.  It remains to check that $\pair{\tphi}{\psi^\ast v}$ is proper and bounded below.  Write
\[ \psi^\ast v = f^\ast \pr_1 + \bbeta + v^\prime,\]
where $v^\prime|_{(\liet_\Delta^\perp)^\ast \times Z_\beta}=0$ by \eqref{RestrictionToBase} ($v^\prime$ is the error term).

For any $\epsilon^\prime>0$, we can ensure that $|v^\prime|<\epsilon^\prime$ by taking the tubular neighbourhood $U_\beta$ of $Z_\beta$ to be sufficiently small in the direction normal to $U_\beta^{\liet_\Delta} \simeq B_\epsilon \times Z_\beta$.  Decompose $v^\prime$ into components,
\[v^\prime= v_0^\prime + v_1^\prime \in \liet_\Delta \oplus \liet_\Delta^\perp.\]
Let $z \in \calV$ and $\xi \in (\liet_\Delta^\perp)^\ast$.  We have
\begin{align*} 
\psi^\ast v(z,\xi)&=f(\xi) + \bbeta + v_0^\prime(z,\xi) + v_1^\prime(z,\xi),\\
\tphi(z,\xi)&= \xi + \beta -\pi \sum |z_k|^2\alpha_k^-.
\end{align*}
Thus
\[ \pair{\tphi}{\psi^\ast v}(z,\xi)=\bigg[ \tfrac{\epsilon |\xi|^2}{\sqrt{1+|\xi|^2}} +\pair{\xi}{v_1^\prime} \bigg] + \pair{\beta}{\psi^\ast v}
- \pi \sum |z_k|^2 \pair{\alpha_k^-}{\bbeta + v_0^\prime}.\]
(Note that some terms vanish because they involve pairings of elements of $\liet_\Delta$ with elements of $(\liet_\Delta^\perp)^\ast$ or the reverse.)  We examine each of the three terms in turn:
\begin{enumerate}[leftmargin=*]
\item If we take $\epsilon^\prime < \tfrac{1}{2} \epsilon$, then
\[ |\pair{\xi}{v_1^\prime}| \le \tfrac{1}{2}\epsilon |\xi|.\]
Thus the first term is dominated by $\tfrac{\epsilon |\xi|^2}{\sqrt{1+|\xi|^2}}$ as $|\xi|$ goes to infinity.  It is therefore bounded below, and goes to infinity as $|\xi|$ goes to infinity.
\item The second term is bounded by a constant:
\[ |\pair{\beta}{\psi^\ast v}| \le |\beta|(\epsilon + |\bbeta| + \epsilon^\prime).\]
\item Recall that $\pair{\alpha_k^-}{\bbeta} < 0$.  For the third term, we take $\epsilon^\prime$ sufficiently small that for each $k=1,...,n$,
\[ -\pi \pair{\alpha_k^-}{\bbeta+v_0^\prime} > \epsilon^{\prime \prime} > 0,\]
for some constant $\epsilon^{\prime \prime}> 0$.  Then the third term is non-negative for all $z,\xi$ and
\[ -\pi \sum |z_k|^2 \pair{\alpha_k^-}{\bbeta+v_0^\prime} > \epsilon^{\prime \prime}|z|^2,\]
which goes to infinity as $|z|$ goes to infinity.
\end{enumerate}
Since $\pair{\tphi}{\psi^\ast v}$ is bounded below and goes to infinity as $|(z,\xi)| \rightarrow \infty$, this completes the proof.
\end{proof}
\noindent As explained at the beginning of this section, using $\psi$ to identify $U_\beta$ with $\Umod=(\liet_\Delta^\perp)^\ast \times \calV$, we obtain a $v$-polarized completion of $U_\beta$.

\subsection{Explicit formulas.}
Recall that the contribution of $\beta$ to the H-K formula for $\DH(\hcalX,\omega_{\a},\hPhi_{\a},\tau_{\hcalX}\alpha)$ is the twisted Duistermaat-Heckman measure
\[ \DH(\Umod, \tomega,\tphi,\Phi^\ast \tau(\lieg_g/\liet) \alpha), \]
where $\Umod, \tomega, \tphi$ are described in Theorem \ref{FullPolarizedCompletion}.  In this section, we derive an explicit formula for this contribution.  Paradan \cite{Paradan97}, \cite{Paradan98} obtained the same type of formula using different methods.  Woodward \cite{Woodward} also obtained similar formulas.  The calculations in this section show how these expressions can be recovered from the Harada-Karshon Theorem.\vspace{0.3cm} \\
{\textbf{Additional notation.}}
\begin{itemize}[leftmargin=*]
\item $X,\zeta, Y, \xi$ will denote elements of $\liet_\Delta$, $\liet_\Delta^\ast$, $\liet_\Delta^\perp$, $(\liet_\Delta^\perp)^\ast$ respectively.  (We distinguish between elements of $\liet$ and $\liet^\ast$ in this section, as this will make the presentation clearer.)
\item The top degree part of the form $e^{\pair{d\xi}{\theta}} \in \Omega(\Umod)$ has a decomposition (determined up to scalar multiples) into a product $d\vol(\xi) \cdot \nu$ where $d\vol(\xi)$ is a volume form on $(\liet_\Delta^\perp)^\ast$.  In coordinates $d\vol(\xi)=d\xi_1 \cdots d\xi_d$, $\nu = (-1)^{\tfrac{1}{2}d(d-1)} \theta^1 \cdots \theta^d$.  It is convenient to fix such a decomposition, and let $\vol(T_\Delta^\prime)$ denote the induced volume of $T_\Delta^\prime$ (using the canonical isomorphism $(\liet_\Delta^\perp)^\ast \simeq (\liet_\Delta^\prime)^\ast$).
\item Let $\balpha \in \Omega_{T_\Delta}(Z_\beta/T_\Delta^\prime)$ denote the image of a cocycle $\alpha \in \Omega_T(\Umod)$ under the composition of restriction to $Z_\beta$ followed by the \emph{Cartan map} for $Z_\beta \rightarrow Z_\beta/T_\Delta^\prime$ (cf. \cite{MeinrenkenEncyclopedia}, or the discussion in Section 4.5.1).  This composition implements the Kirwan map at the level of cocycles.
\end{itemize}
{\textbf{Orientations.}}  The reduced space $Z_\beta/T_\Delta^\prime$ is a symplectic orbifold, which we orient using its Liouville form.  We orient $(\liet_\Delta^\perp)^\ast$ using $d\vol(\xi)$, and the fibres of $Z_\beta \rightarrow Z_\beta/T_\Delta^\prime$ using $\nu$.  The product orientation on $(\liet_\Delta^\perp)^\ast \times Z_\beta$ then agrees with the orientation induced by the 2-form on the base $(\liet_\Delta^\perp)^\ast \times Z_\beta$ (which is symplectic near $Z_\beta \times \{ 0 \}$ by the Coisotropic Embedding Theorem).  The orientation of the vector bundle $\calV \rightarrow Z_\beta$ is fixed by requiring the total orientation to agree with that of $\Umod$ (an open subset of the symplectic manifold $Y_g$).  The resulting orientation on $\calV$ might not agree with the orientation induced by the complex structure on $\calV$ for which the weights are $\alpha_k^-$.  The difference is a sign $\epsilon=\pm 1$.

\begin{lemma}[cf. \cite{Vergne94}]
\label{ReplaceWithKappa}
We can replace $\Phi^\ast \tau(\lieg_g/\liet) \cdot \alpha$ with the pull-back to $\Umod$ of $\Phi^\ast \Eul(\lieg_g/\liet) \cdot \balpha$ without changing the result, that is,
\[ \DH(\Umod, \tomega,\tphi,\Phi^\ast \tau(\lieg_g/\liet) \alpha)=\Eul(\lieg_g/\liet,\partial)\DH(\Umod,\tomega,\tphi,\balpha).\]
\end{lemma}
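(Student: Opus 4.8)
\emph{Proof plan.} The plan is to show that the twisting form $\Phi^\ast\tau(\lieg_g/\liet)\cdot\alpha$ is $T$-equivariantly cohomologous on $\Umod$ to $\Eul(\lieg_g/\liet)\cdot\balpha$ (pulled back to $\Umod$), and then to conclude by combining Theorem \ref{CohomologyClass} with the elementary fact that a polynomial prefactor in the twist passes through the Duistermaat--Heckman distribution as a constant-coefficient differential operator.

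The first point to record is that $\tphi$ is proper on \emph{all} of $\Umod$. This is visible from the formula $\tphi(z,\xi)=\beta-\pi\sum_k|z_k|^2\alpha_k^-+\xi$ in Theorem \ref{FullPolarizedCompletion}: the $(\liet_\Delta^\perp)^\ast$-component of $\tphi$ controls $\xi$, while pairing the $\liet_\Delta^\ast$-component with $\bbeta$ and using $\pair{\alpha_k^-}{\bbeta}<0$ for every $k$ controls $|z|$ (alternatively this follows from Lemma \ref{TameProper} applied to the bounded taming map $\psi^\ast v$). Consequently $\DH(\Umod,\tomega,\tphi,\cdot)$ is defined on every closed equivariant form, and by Theorem \ref{CohomologyClass} it depends only on the $T$-equivariant cohomology class of its argument, with no remaining condition on supports.

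Next I would carry out the cohomology computation. The map $r\colon\Umod=(\liet_\Delta^\perp)^\ast\times\calV\to Z_\beta$ obtained by scaling $(\liet_\Delta^\perp)^\ast$ to the origin and retracting the vector bundle $\calV$ onto its zero section $Z_\beta$ is a $T$-equivariant deformation retraction; writing $j\colon Z_\beta\hookrightarrow\Umod$ for the inclusion, we have $r\circ j=\Id$ and $j\circ r$ is $T$-equivariantly homotopic to $\Id$, so every closed equivariant form $\mu$ on $\Umod$ is $d_T$-cohomologous to $r^\ast j^\ast\mu$. Applying this with $\mu=\Phi^\ast\tau(\lieg_g/\liet)\cdot\alpha$, and using that $Z_\beta\subset Y_g^{\liet_\Delta}\subset\Phi^{-1}(T)$ maps under $\Phi$ into the zero section $U_g\cap T$ of $\nu(U_g\cap T,U_g)$, so that $j^\ast\Phi^\ast\tau(\lieg_g/\liet)=(\Phi|_{Z_\beta})^\ast\Eul(\lieg_g/\liet)=\Eul(\lieg_g/\liet)$ (a purely polynomial, hence pullback-invariant, equivariant form), gives
\[ \Phi^\ast\tau(\lieg_g/\liet)\cdot\alpha\ \sim\ \Eul(\lieg_g/\liet)\cdot r^\ast\big(\iota_{Z_\beta}^\ast\alpha\big).\]
Finally, since $\balpha$ is obtained from $\iota_{Z_\beta}^\ast\alpha$ by the Cartan map for the locally free $T_\Delta^\prime$-action on $Z_\beta$, and the Cartan map is a homotopy inverse to pullback (cf. \cite{MeinrenkenEncyclopedia}), the pullback of $\balpha$ to $Z_\beta$ is $d_T$-cohomologous to $\iota_{Z_\beta}^\ast\alpha$; applying $r^\ast$ and multiplying by the closed form $\Eul(\lieg_g/\liet)$ shows that $\Phi^\ast\tau(\lieg_g/\liet)\cdot\alpha$ and $\Eul(\lieg_g/\liet)\cdot\balpha$ are $d_T$-cohomologous on $\Umod$.

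To finish, Theorem \ref{CohomologyClass} gives $\DH(\Umod,\tomega,\tphi,\Phi^\ast\tau(\lieg_g/\liet)\alpha)=\DH(\Umod,\tomega,\tphi,\Eul(\lieg_g/\liet)\cdot\balpha)$, and since $\Eul(\lieg_g/\liet,X)\in\Pol(\liet)$ is a constant (purely polynomial) equivariant form, the defining formula \eqref{DefDH} shows that a polynomial prefactor in the twist comes out as the constant-coefficient operator $\Eul(\lieg_g/\liet,\partial)$, i.e. $\DH(\Umod,\tomega,\tphi,\Eul(\lieg_g/\liet)\cdot\balpha)=\Eul(\lieg_g/\liet,\partial)\DH(\Umod,\tomega,\tphi,\balpha)$, which is the asserted identity. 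The only step requiring genuine care is the last cohomological move relating $\iota_{Z_\beta}^\ast\alpha$ to $\balpha$: one must know that the Cartan-map model of the Kirwan map induces an isomorphism on equivariant cohomology in the orbifold setting of $Z_\beta/T_\Delta^\prime$, but this is standard; everything else is a routine application of the homotopy invariance of twisted DH distributions afforded by Theorem \ref{CohomologyClass} and the fact that $\tphi$ is proper on all of $\Umod$.
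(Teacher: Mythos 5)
Your proposal is correct and follows essentially the same route as the paper's proof: establish properness of $\tphi$ on $\Umod$ (you verify it from the explicit formula, the paper invokes Lemma \ref{TameProper} via boundedness of $v|_{U_\beta}$, but these are interchangeable), invoke Theorem \ref{CohomologyClass} to permit replacement by cohomologous twists, reduce to $Z_\beta$ via the vector-bundle retraction and then to $Z_\beta/T_\Delta^\prime$ via the Cartan map, identify the pullback of the Thom form as the Euler polynomial, and pull the polynomial out as a constant-coefficient operator. No substantive differences.
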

\noindent (Recall $g=\exp(\beta)$ and $\Eul(\lieg_g/\liet,\xi)$ is a polynomial on $\liet$; see Section 3.3.)
\begin{proof}
As shown in the previous section, $\tphi$ is $v$-polarized.  Since $v|_{U_\beta}$ is bounded, $\tphi$ is proper.  Therefore by Theorem \ref{CohomologyClass}, we can replace $\Phi^\ast \tau(\lieg_g/\liet) \alpha$ with any $T$-equivariantly cohomologous form.  Since $\pr_{Z_\beta}:\Umod \rightarrow Z_\beta$ is a vector bundle, the pullback map $\iota_{Z_\beta}^\ast$ is homotopy inverse to $\pr_{Z_\beta}^\ast$.  Moreover, the Cartan map for the locally free $T_\Delta^\prime$ action on $Z_\beta$ is homotopy inverse to the pullback map $\Omega_{T_\Delta}(Z_\beta/T_\Delta^\prime) \rightarrow \Omega_T(Z_\beta)$.  This shows that we can replace $\Phi^\ast \tau(\lieg_g/\liet) \cdot \alpha$ with the pullback of its image under the map $\Omega(\Umod) \rightarrow \Omega(Z_\beta/T_\Delta^\prime)$ given by pullback to $Z_\beta$ followed by the Cartan map for $Z_\beta \rightarrow Z_\beta/T_\Delta^\prime$.  Recall that $\tau(\lieg_g/\liet)$ is a $T$-equivariant Thom form for the trivial vector bundle with fibre $\lieg_g/\liet$ over an open subset $U$ of $T$, its pullback to $T$ being $\Eul(\lieg_g/\liet)$.  Since $Z_\beta \subset \Phi^{-1}(T)$, the pullback of $\Phi^\ast \tau(\lieg_g/\liet)$ to $Z_\beta$ equals the pullback of $\Eul(\lieg_g/\liet)$.  Finally it is immediate from the definition of DH distributions that for any polynomial $p$,
\[ \DH(\Umod,\tomega,\tphi,p\balpha)=p(\partial)\DH(\Umod,\tomega,\tphi,\balpha).\]
\end{proof}
\noindent Let
\[ \frakm :=\DH(\Umod,\tomega,\tphi,\balpha).\]
For the remainder of this section, we will focus on computing $\frakm$, and replace the factor $\Eul(\lieg_g/\liet,\partial)$ only at the very end.
\begin{lemma}
$\frakm$ is a tempered distribution.  Its pairing with a Schwartz function $f$ is given by
\[ \pair{\frakm}{f} = \int_{(\liet_\Delta^\perp)^\ast} d\vol(\xi) 
\int_{\calV} \nu e^{\omega_{\calV}+\pair{\xi}{F}} \balpha(-\partial_\zeta) f \circ \tphi.\]
\end{lemma}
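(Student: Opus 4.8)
The plan is to unwind the definition \eqref{DefDH} of the twisted Duistermaat--Heckman distribution on the explicit local model $\Umod=(\liet_\Delta^\perp)^\ast\times\calV$ of Theorem \ref{FullPolarizedCompletion}, integrate out the base factor $(\liet_\Delta^\perp)^\ast$ by hand, and then check that the resulting double integral still converges --- and depends continuously on $f$ --- when $f$ is merely Schwartz. Note first that $\frakm$ is a bona fide distribution: $v|_{U_\beta}$ is bounded and $\pair{\tphi}{v}$ is proper and bounded below (previous subsection), so by Lemma \ref{TameProper} the map $\tphi$ is proper on all of $\Umod=\supp(\balpha)$, and hence $\pair{\frakm}{f}=\int_{\Umod}e^{\tomega}\,\balpha(-\partial)(f\circ\tphi)$ is defined for $f\in C^\infty_\comp(\liet^\ast)$, where we split $\liet^\ast=\liet_\Delta^\ast\oplus(\liet_\Delta^\perp)^\ast$.

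First I would rewrite the integrand. Writing $\balpha=\sum_k\balpha_k p_k$ with $p_k\in\Pol(\liet_\Delta)$ (using $T_\Delta$-equivariance of $\balpha$), the operator $\balpha(-\partial)$ only differentiates $f$ in the $\liet_\Delta^\ast$-directions, giving $\balpha(-\partial)(f\circ\tphi)=\sum_k\balpha_k\,(p_k(-\partial_\zeta)f)\circ\tphi$ with each $g_k:=p_k(-\partial_\zeta)f$ again Schwartz; this is the source of the $\partial_\zeta$ in the statement. Since $\liet_\Delta^\perp$ is abelian, $d\pair{\pr_1}{\theta}=\pair{d\xi}{\theta}+\pair{\xi}{F}$ with $F:=d\theta$ the curvature, so $e^{\tomega}=e^{\omega_\calV}\,e^{\pair{\xi}{F}}\,e^{\pair{d\xi}{\theta}}$; by the normalisation of $d\vol(\xi)$ and $\nu$ fixed in ``Additional notation'', the top-degree part over $\Umod$ only sees the term $d\vol(\xi)\cdot\nu$ of $e^{\pair{d\xi}{\theta}}$, all other terms carrying too few base-differentials. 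Slicing $\Umod$ over $(\liet_\Delta^\perp)^\ast$ with the orientation conventions of this subsection then formally produces the asserted identity; what remains is to justify absolute convergence of the double integral, which legitimises both the slicing (Fubini) and the passage to Schwartz test functions.

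The core --- and the step I expect to be the main obstacle --- is this convergence estimate, where the polynomial growth of $e^{\tomega}$ must be beaten by the rapid decay of the Schwartz data composed with $\tphi$. Here one exploits the coercivity of $\tphi$ coming from the genericity of $\gamma$: by Theorem \ref{FullPolarizedCompletion}, $\tphi(z,\xi)=\beta-\pi\sum_k|z_k|^2\alpha_k^-+\xi$, whose $\liet_\Delta^\ast$-component $\phi_\calV$ satisfies $\pair{\phi_\calV(z)-\beta}{\bbeta}=\pi\sum_k|z_k|^2\,|\pair{\alpha_k^-}{\bbeta}|\ge c_1|z|^2$ because every polarised weight has $\pair{\alpha_k^-}{\bbeta}<0$; together with $|\tphi(z,\xi)-\beta|\ge|\xi|$ this yields $|\tphi(z,\xi)|\ge c_2(|\xi|+|z|^2)-|\beta|$. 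On the other hand $Z_\beta$ is compact --- it lies inside $\calM^{\bbeta}\cap\Psi^{-1}(\beta)$, compact by properness of $\Psi$ --- the coefficients of $e^{\omega_\calV+\pair{\xi}{F}}$ are polynomials in $(z,\xi)$ of bounded degree with coefficients bounded over the compact base ($\omega_\min$ is quadratic in $z$, $\pair{\xi}{F}$ linear in $\xi$, and the exponential of a $2$-form truncates), and $\balpha_k,\nu$ are fixed bounded forms. Hence the integrand of the double integral is bounded by $C(1+|z|+|\xi|)^{d_0}\sum_k|g_k(\tphi(z,\xi))|$; invoking $|g_k(w)|\le C_{k,N}(1+|w|)^{-N}$ for arbitrarily large $N$, with $C_{k,N}$ controlled by finitely many Schwartz seminorms of $f$, and feeding in the coercivity bound, one obtains an integrable majorant of the shape $(1+|z|+|\xi|)^{d_0}(1+|\xi|+|z|^2)^{-N}$ on $\calV\times(\liet_\Delta^\perp)^\ast$ for $N$ large. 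This gives absolute convergence, so Fubini applies and the formula holds; it also shows $f\mapsto\pair{\frakm}{f}$ is continuous in the Schwartz topology, so $\frakm$ is tempered, and since the formula agrees with \eqref{DefDH} on $C^\infty_\comp(\liet^\ast)$ it is the unique tempered extension. The routine points I would not dwell on are the partition-of-unity reduction of the $\calV$-integration to a product over the compact base $Z_\beta$, the bookkeeping of orientations and signs, and the exact choice of $N$ and of which Schwartz seminorms enter the final bound.
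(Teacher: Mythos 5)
Your proof is correct and follows essentially the same route as the paper's: you establish the coercivity bound $|\tphi(z,\xi)|\ge C'(|\xi|+|z|^2)+D'$ from the two facts that the $(\liet_\Delta^\perp)^\ast$-component of $\tphi$ is $\pr_1$ and that the pairing of $\phi_\calV-\beta$ with $\bbeta$ controls $|z|^2$ via the polarization of the weights, you factor $e^{\tomega}=e^{\omega_\calV+\pair{\xi}{F}}e^{\pair{d\xi}{\theta}}$ and extract the top-degree piece $d\vol(\xi)\,\nu$, and you invoke Fubini. The extra detail you supply (explicit integrable majorant, tracing the source of $\partial_\zeta$, noting compactness of $Z_\beta$) merely makes explicit what the paper's proof summarizes as ``it follows that the integrand is rapidly decreasing.''
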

\begin{proof}
By definition,
\begin{equation} 
\label{DefiningExpressionForM}
\pair{\frakm}{f}=\int_{(\liet_\Delta^\perp)^\ast \times \calV} e^{\tomega} \balpha(-\partial_\zeta) f \circ \tphi. 
\end{equation}
Recall that the $(\liet_\Delta^\perp)^\ast$ component of $\tphi$ is $\pr_1:(\liet_\Delta^\perp)^\ast \times \calV \rightarrow (\liet_\Delta^\perp)^\ast$, so that $|\tphi(z,\xi)| \ge |\xi|$.  Also, $\pair{\tphi(\xi,z)}{\bbeta}\ge C|z|^2+D$ for some constants $C>0,D$.  Combining these facts shows that $|\tphi(z,\xi)|\ge C^\prime(|\xi|+|z|^2) + D^\prime$ for some constants $C^\prime>0, D^\prime$.  It follows that the integrand is rapidly decreasing on $\Umod$ for any Schwartz function $f$, and $\frakm$ is tempered.

We have
\[ e^{\tomega}=e^{\omega_\calV + \pair{\xi}{F}} e^{\pair{d\xi}{\theta}}.\]
Only the top degree part $d\vol(\xi) \, \nu$ of $e^{\pair{d\xi}{\theta}}$ contributes to the integral over $(\liet_\Delta^\perp)^\ast$.  Then use Fubini's theorem to re-write \eqref{DefiningExpressionForM} as an iterated integral.
\end{proof}

\begin{lemma}
\label{InverseTransform}
The $\liet_\Delta$-Fourier transform of $\frakm$ is a generalized function on $\liet_\Delta \times (\liet_\Delta^\perp)^\ast$ given by
\[ \pair{\calF_{\liet_\Delta}(\frakm)}{f dX} =\int_{(\liet_\Delta^\perp)^\ast} d\vol(\xi) \int_{\calV} \int_{\liet_\Delta} 
\balpha(2\pi \i X) \nu e^{\omega_\calV(2\pi \i X) + \pair{\xi}{F}}f(X,\xi) dX.\]
\end{lemma}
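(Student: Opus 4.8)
The plan is to unwind the definition of the partial Fourier transform and reduce everything to the integral formula for $\pair{\frakm}{f}$ established in the previous lemma. That lemma already shows $\frakm$ is tempered, so $\calF_{\liet_\Delta}(\frakm)$ is a well-defined tempered generalized function on $\liet_\Delta\times(\liet_\Delta^\perp)^\ast$, characterized by $\pair{\calF_{\liet_\Delta}(\frakm)}{g\,dX}=\pair{\frakm}{f_g}$, where for a Schwartz function $g$ on $\liet_\Delta\times(\liet_\Delta^\perp)^\ast$ we set $f_g(\zeta,\xi):=\int_{\liet_\Delta}g(X,\xi)\,e^{-2\pi\i\pair{\zeta}{X}}\,dX$; the sign of the kernel is the one forced by the Fourier convention in \eqref{FourierCoefficients}. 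Since the partial Fourier transform of a Schwartz function is again Schwartz, $f_g$ is a Schwartz function on $\liet^\ast=\liet_\Delta^\ast\oplus(\liet_\Delta^\perp)^\ast$, so the integral formula of the previous lemma applies to it.

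The computation itself is then routine. By Theorem \ref{FullPolarizedCompletion}, $\tphi=\phi_\calV+\pr_1$ with $\phi_\calV$ valued in $\liet_\Delta^\ast$ and $\pr_1$ valued in $(\liet_\Delta^\perp)^\ast$, so $(f_g\circ\tphi)(\xi,z)=\int_{\liet_\Delta}g(X,\xi)\,e^{-2\pi\i\pair{\phi_\calV(z)}{X}}\,dX$. Writing $\balpha=\sum_k\balpha_k q_k$ with $q_k\in\Pol(\liet_\Delta)$ and differentiating under the $X$-integral (legitimate since $g$ is Schwartz), the operator $\balpha(-\partial_\zeta)$ simply inserts the factor $\balpha(2\pi\i X)$ into the integrand, because $q_k(-\partial_\zeta)e^{-2\pi\i\pair{\zeta}{X}}=q_k(2\pi\i X)e^{-2\pi\i\pair{\zeta}{X}}$. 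Combining the resulting phase with $e^{\omega_\calV}$ by means of the identity $e^{\omega_\calV}e^{-2\pi\i\pair{\phi_\calV}{X}}=e^{\omega_\calV(2\pi\i X)}$ (which holds because $\omega_\calV-\phi_\calV$ is by definition the equivariant $2$-form $Y\mapsto\omega_\calV-\pair{\phi_\calV}{Y}$) turns the integrand of the previous lemma into $\int_{\liet_\Delta}\balpha(2\pi\i X)\,\nu\,e^{\omega_\calV(2\pi\i X)+\pair{\xi}{F}}\,g(X,\xi)\,dX$, pointwise on $\calV$. Substituting this into $\pair{\frakm}{f_g}=\int_{(\liet_\Delta^\perp)^\ast}d\vol(\xi)\int_\calV\nu\,e^{\omega_\calV+\pair{\xi}{F}}\,\balpha(-\partial_\zeta)(f_g\circ\tphi)$ and reading off the three nested integrals yields exactly the asserted formula.

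The only point needing a word of care is convergence, and here I would stress that no interchange across a merely conditionally convergent iteration is required: since $\balpha(-\partial_\zeta)(f_g\circ\tphi)$ \emph{is} the innermost $\liet_\Delta$-integral, the asserted triple integral is literally $\pair{\frakm}{f_g}$ rewritten, so its convergence in the displayed order is inherited from the previous lemma. Concretely, the inner $\liet_\Delta$-integral is the Fourier transform of the Schwartz function $X\mapsto g(X,\xi)\,\balpha(2\pi\i X)$ evaluated at $\phi_\calV(z)$, so it decays rapidly as $|z|\to\infty$ (because $\phi_\calV$ is proper in the fibre directions of $\calV$, as used already in the proof of the previous lemma), and this dominates the polynomial growth of the top-degree part of $e^{\omega_\calV}$; the remaining $\xi$-integral converges since $e^{\pair{\xi}{F}}$ is polynomial in $\xi$ while $g$ is Schwartz. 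I do not expect a genuine obstacle here; the main thing to get exactly right is the bookkeeping of Fourier-sign conventions, so that the inserted factor appears as $\balpha(2\pi\i X)$ consistently with the meaning of $\omega_\calV(2\pi\i X)$.
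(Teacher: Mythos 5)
Your proposal is correct and follows essentially the same route as the paper's own proof: both pass from $\pair{\calF_{\liet_\Delta}(\frakm)}{g\,dX}$ to $\pair{\frakm}{\calF_{\liet_\Delta}(g\,dX)}$, plug this into the integral formula of the preceding lemma, split $\tphi=(\phi_\calV,\pr_1)$ according to Theorem \ref{FullPolarizedCompletion}, and observe that $\balpha(-\partial_\zeta)$ applied under the $X$-integral produces the factor $\balpha(2\pi\i X)$, after which $e^{\omega_\calV}e^{-2\pi\i\pair{\phi_\calV}{X}}=e^{\omega_\calV(2\pi\i X)}$ gives the claimed expression. The paper presents this more tersely; your extra remarks on why the nested integral converges in the displayed order (rapid decay of $f_g\circ\tphi$ along the fibres via properness of $\phi_\calV$, polynomial truncation of $e^{\pair{\xi}{F}}$) are accurate and merely make explicit what the paper leaves implicit.
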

\noindent In this expression, $f(X,\xi)$ is a Schwartz function on $\liet_\Delta \times (\liet_\Delta^\perp)^\ast$, and $dX$ is a smooth translation-invariant measure on $\liet_\Delta$.
\begin{proof}
Using $\tphi=(\phi_\calV, \pr_1):\Umod \rightarrow \liet_\Delta^\ast \times (\liet_\Delta^\perp)^\ast$,
\[ \calF_{\liet_\Delta}(f\, dX)\circ \tphi=\int_{\liet_\Delta} f(X,\pr_1)e^{-2\pi \i \pair{\phi_\calV}{X}}dX.\]
By the previous lemma
\begin{align*}
\pair{\calF_{\liet_\Delta}(&\frakm)}{f\,dX}\\
&= \int_{(\liet_\Delta^\perp)^\ast} d\vol(\xi) \int_{\calV} \nu
e^{\omega_\calV + \pair{\xi}{F}}  \balpha(-\partial_\zeta) \calF_{\liet_\Delta}(f \, dX) \circ \tphi \\
&=\int_{(\liet_\Delta^\perp)^\ast} d\vol(\xi) \int_{\calV} \nu e^{\omega_\calV + \pair{\xi}{F}} \int_{\liet_\Delta}
\balpha(2\pi \i X) f(X,\xi)e^{-2\pi \i\pair{\phi_\calV}{X}} dX.
\end{align*}
\end{proof}

\noindent For the next result, let $s_\Delta$ denote the locally constant function on $Z_\beta$, whose value on a component is the number of elements in the stabilizer for the action of $T_\Delta^\prime$ on that component.  We will make use of a $T$-equivariant form with tempered generalized coefficients $\Eul_{\bbeta}^{-1}(\calV)$ introduced by Paradan \cite{Paradan97}, \cite{Paradan98}---a definition is provided in appendix A.
\begin{theorem}[compare \cite{Paradan98} equation (4.57)]
\label{ParadanFormula}
We have
\[ \frakm =
\int_{Z_\beta/T}\frac{\vol(T_\Delta^\prime)}{s_\Delta} \delta_\beta \star \balpha(\partial_\zeta) \calF^{-1}_{\liet_\Delta}\bigg(\Eul_{\bbeta}^{-1}(\calV,2\pi \i X)\bigg) e^{\omega_\red+\pair{\xi}{F}} d\vol(\xi).\]
\end{theorem}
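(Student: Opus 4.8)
The plan is to start from the explicit formula for $\calF_{\liet_\Delta}(\frakm)$ in Lemma \ref{InverseTransform} and then perform, in order, three manipulations: a fibrewise integral over the vector bundle $\calV \rightarrow Z_\beta$, a push-down along the locally free $T_\Delta^\prime$-action $Z_\beta \rightarrow Z_\beta/T_\Delta^\prime = Z_\beta/T$, and the inverse $\liet_\Delta$-Fourier transform (legitimate since $\frakm$ is tempered). Recall from Theorem \ref{FullPolarizedCompletion} that $\omega_\calV = \omega_\red + \omega_\min$ and $\phi_\calV = \beta + \phi_\min$, and that $\omega_\min - \phi_\min$ is the image of the linear model $\omega_V - \phi_V$ under the Cartan map for the principal $U(V)$-bundle $P$ with connection $\sigma$. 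The factors $\nu$, $e^{\omega_\red}$, $e^{\pair{\xi}{F}}$ and $\balpha(2\pi\i X)$ in the integrand of Lemma \ref{InverseTransform} are pulled back from $Z_\beta$, and $\phi_\calV$ contributes the constant factor $e^{-2\pi\i\pair{\beta}{X}}$ for $X \in \liet_\Delta$; so after extracting these, the fibre integral amounts to evaluating $\int_{\calV/Z_\beta} e^{\omega_\min(2\pi\i X)}$, where $\omega_\min(2\pi\i X) := \omega_\min - 2\pi\i\pair{\phi_\min}{X}$.

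This fibrewise integral is the crux. By the defining property of the Cartan map (fibre integration in the Cartan model; cf. \cite{MeinrenkenEncyclopedia}) it reduces to a Gaussian integral $\int_V e^{\omega_V(2\pi\i X)}$ over a model fibre $V \simeq \mathbb{C}^N$, which evaluates, up to the orientation sign $\epsilon$ recorded above and a normalization constant, to the product $\prod_k (2\pi\i\pair{\alpha_k^-}{X})^{-1}$ over the weights of the $\liet_\Delta$-action (with multiplicity). Because this Gaussian is purely oscillatory, it converges only in the sense of tempered generalized functions of $X$; crucially, the regularization it selects expands each factor on the side dictated by the inequality $\pair{\alpha_k^-}{\bbeta} < 0$, which holds for every $k$ by the Corollary following Proposition \ref{DisjointUnion} (a consequence of the genericity of $\gamma$). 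Re-inserting the $T$-equivariant curvature $R$ of $\sigma$ for the $\fraku(V)$-variables, this is exactly the definition of Paradan's generalized-coefficient form $\Eul^{-1}_{\bbeta}(\calV, 2\pi\i X)$ (Appendix A, cf. \cite{Paradan97,Paradan98,Vergne94}), so that $\int_{\calV/Z_\beta} e^{\omega_\min(2\pi\i X)} = \epsilon\,\Eul^{-1}_{\bbeta}(\calV, 2\pi\i X)$ as equivariant forms with generalized coefficients on $Z_\beta$.

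After the fibre integral the integrand on $Z_\beta$ is the product of $\nu$ with forms that are all pulled back from $Z_\beta/T_\Delta^\prime$: this uses that $\sigma$ was chosen $\liet_\Delta^\prime$-basic, so $R$ and hence $\Eul^{-1}_{\bbeta}(\calV)$ descends; that $F$ and $\omega_\red$ descend; and that $\balpha$ is by definition the image of $\alpha$ under restriction to $Z_\beta$ followed by the Cartan map for $Z_\beta \rightarrow Z_\beta/T_\Delta^\prime$. Integrating along the $T_\Delta^\prime$-orbit fibres, each of volume $\vol(T_\Delta^\prime)/s_\Delta$ with $s_\Delta$ the order of the generic (locally constant) stabilizer, replaces $\int_{Z_\beta} \nu \wedge (-)$ by $\tfrac{\vol(T_\Delta^\prime)}{s_\Delta}\int_{Z_\beta/T}(-)$, the sign $\epsilon$ being absorbed into the orientation conventions fixed above. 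Finally, inverting $\calF_{\liet_\Delta}$: the factor $e^{-2\pi\i\pair{\beta}{X}}$ becomes convolution with $\delta_\beta$ on $\liet_\Delta^\ast$, the factor $\balpha(2\pi\i X)$ becomes the constant-coefficient operator $\balpha(\partial_\zeta)$ applied to $\calF^{-1}_{\liet_\Delta}\big(\Eul^{-1}_{\bbeta}(\calV, 2\pi\i X)\big)$ (matching the sign conventions of the Notation section), and $d\vol(\xi)$ together with $e^{\omega_\red + \pair{\xi}{F}}$ are unaffected; a single application of Fubini then rearranges the iterated integral into the stated form. If more than one subalgebra $\liet_\Delta$ contributes to $Z_\beta$, one runs this argument on each piece and sums.

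The main obstacle is the fibrewise Gaussian integral described above: making the identity $\int_{\calV/Z_\beta} e^{\omega_\min(2\pi\i X)} = \epsilon\,\Eul^{-1}_{\bbeta}(\calV, 2\pi\i X)$ rigorous as an equality of forms with tempered generalized coefficients, and checking that the regularization produced by the oscillatory Gaussian really is Paradan's $\bbeta$-polarized inverse Euler class rather than some competing regularization. This is precisely where the genericity inequalities $\pair{\alpha_k^-}{\bbeta} < 0$ and the orientation sign $\epsilon$ enter; by contrast the push-down and the Fourier-theoretic bookkeeping are routine.
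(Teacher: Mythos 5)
Your proposal has the right three-step skeleton — fibre integral over $\calV\to Z_\beta$, push-down along $Z_\beta\to Z_\beta/T_\Delta'$, then inverse $\liet_\Delta$-Fourier transform — and this is exactly the structure of the paper's proof. The difference is entirely in how the fibre integral is handled, and that is precisely where you have a gap.

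You set the fibre integral up as a fibrewise oscillatory Gaussian, reducing via the Cartan map to $\int_V e^{\omega_V(2\pi\i X)}$ over a model fibre, and then claim that the regularization this conditionally convergent integral ``selects'' is Paradan's $\bbeta$-polarized boundary value. You are right that this is the crux, and you explicitly flag it as unproven — but that claim \emph{is} the theorem being used, and it is not something one gets for free from Gaussian heuristics. The paper does not reason this way: it applies Corollary \ref{OrbifoldVectorBundleLocalization}, which computes $\int_{\calV}\eta$ directly for a $d_{T_\Delta}$-cocycle $\eta$ that is rapidly decreasing along fibres (here $\eta=\balpha\,\nu\,e^{\omega_\calV(2\pi\i X)+\pair{\xi}{F}}$), and whose proof in Appendix A never manipulates an oscillatory Gaussian. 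Instead, Proposition \ref{BasicAbelianLocalization} uses the Thom form and the primitive for $\tau-\pi^\ast\Eul(\calV)$ plus Stokes' theorem, and Theorem \ref{VectorBundleLocalization} handles the regularization by the explicit deformation $X\mapsto X-\i s\bbeta$, a dominated-convergence argument, and the polarization estimate $|\pair{\phi}{\xi}|(v)\ge C|v|^a+D$ (the condition the paper verifies ``with exponent $a=2$''). You verify the hypothesis $\pair{\alpha_k^-}{\bbeta}<0$, but the mechanism by which that inequality produces $\Eul^{-1}_{\bbeta}$ rather than a competing boundary value is the content of Appendix A; citing it is not a corollary of ``the Gaussian converges as a tempered distribution.''

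A secondary issue is the sign. You assert $\int_{\calV/Z_\beta}e^{\omega_\min(2\pi\i X)}=\epsilon\,\Eul^{-1}_{\bbeta}(\calV,2\pi\i X)$ and then ``absorb'' $\epsilon$ into orientation conventions. In the paper's conventions the fibre integral gives $\Eul^{-1}_{\bbeta}(\calV,2\pi\i X)$ with no extra sign: $\Eul^{-1}_{\bbeta}$ is defined using the original orientation on $\calV$, and the factor $\epsilon$ only appears later, in the explicit Heaviside formula for $\calF^{-1}\bigl(\Eul^{-1}_{\bbeta}\bigr)$, where the complex structure is introduced (see the end of Appendix A and equation \eqref{FinalForm}). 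Inserting $\epsilon$ at the fibre-integral stage and again at the Fourier-transform stage would double-count it. The remaining steps — the identification $q_\ast\nu=\vol(T_\Delta')/s_\Delta$ using that $\sigma$ (hence $\Eul^{-1}_{\bbeta}$) and $\omega_\red,\,F,\,\balpha$ are $T_\Delta'$-basic, and the conversion of $e^{-2\pi\i\pair{\beta}{X}}$ to $\delta_\beta\star(\cdot)$ and of $\balpha(2\pi\i X)$ to $\balpha(\partial_\zeta)$ under $\calF^{-1}_{\liet_\Delta}$ — match the paper and are fine.
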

\begin{proof}
Lemma \ref{InverseTransform} can be written more concisely as the equality:
\[ \calF_{\liet_\Delta}(\frakm)(X,\xi) = \int_{\calV} \balpha(2\pi \i X) \nu e^{\omega_\calV(2\pi \i X) + \pair{\xi}{F}},\]
with the understanding that the integrand should be smeared with a Schwartz function $f(X,\xi)$ on $\liet_\Delta \times (\liet_\Delta^\perp)^\ast$ times a smooth translation invariant measure $dX$ on $\liet_\Delta$, before the integral over $\calV$ is performed.  With the exception of $\nu$, the forms in the integrand are $d_{T_\Delta}$-cocycles: indeed, $\balpha$ and $e^{(\omega_\calV)_{T_\Delta}}$ are $d_{T_\Delta}$-cocycles; as $F$ is the pullback of the closed form $d\theta$ on $Z_\beta \subset \calV^{T_\Delta}$, $\pair{\xi}{F}$ is also $d_{T_\Delta}$-closed.  Thus, to apply Corollary \ref{OrbifoldVectorBundleLocalization}, we just need to check a certain polarization condition.  This condition (with exponent $a=2$) is satisfied because of the \emph{polarization} of the weights $\pair{\alpha_k^-}{\bbeta}<0$, $k=1,...,n$ used in the construction of $\tphi$.  Therefore, the integral over the fibres $\calV \rightarrow Z_\beta$ can be computed using Corollary \ref{OrbifoldVectorBundleLocalization}:
\[
\calF_{\liet_\Delta}(\frakm)(X)
=\int_{Z_\beta} d\vol(\xi)\, \nu \,\balpha(2\pi \i X) e^{\omega_\red - 2\pi \i\pair{\beta}{X} +\pair{\xi}{F}} \Eul_{\bbeta}^{-1}(\calV,2\pi \i X).
\]
All the forms in the integrand are $T_\Delta^\prime$-basic except $\nu$ (the Euler form is $T_\Delta^\prime$-basic because the connection $\sigma$ that we chose on $\calV$ was $T_\Delta^\prime$-basic).  Integrating over the fibres of $q:Z_\beta \rightarrow Z_\beta/T_\Delta^\prime=Z_\beta/T$ we have
\[ q_\ast \nu = \frac{\vol(T_\Delta^\prime)}{s_\Delta}.\]
(There is no sign, because the fibres are oriented using $\nu$.)  Hence,
\[ \calF_{\liet_\Delta}(\frakm)(X)=\int_{Z_\beta/T} \frac{\vol(T_\Delta^\prime)}{s_\Delta} \balpha(2\pi \i X) e^{\omega_\red - 2\pi \i\pair{\beta}{X} + \pair{\xi}{F}} \Eul_{\bbeta}^{-1}(\calV,2\pi \i X) d\vol(\xi).\]
Now take the $\liet_\Delta$-inverse Fourier transform.
\end{proof}

It follows from Theorem \ref{ParadanFormula} that the contribution is \emph{polynomial} in $\xi$ (the exponential $e^{\pair{\xi}{F}}$ truncates at finite degree).  For $\alpha \in \liet_\Delta^\ast$, the \emph{Heaviside distribution} $H_\alpha$ has support $\mathbb{R}_{\ge 0} \alpha$ and is defined by
\[ \pair{H_\alpha}{f} = \int_0^\infty f(t\alpha) dt.\]
In the appendix we briefly describe how to compute the inverse Fourier transform of the form $\Eul^{-1}_{\bbeta }(\calV)$ (cf. \cite{GuilleminLermanSternberg,DaSilvaGuillemin,Paradan97,Paradan98} for further discussion).  Recall that $\alpha_k^+=-\alpha_k^-$, $k=1,...,n$ denote the distinct weights of the $\liet_\Delta$-action on $\calV$, with signs such that $\pair{\alpha_k^+}{\bbeta}>0$.  Let $\calV_k$ denote the subbundle on which $T_\Delta$ acts with weight $\alpha_k$.  The end result is
\begin{equation}
\label{FinalForm}
\frakm =
\epsilon \int_{Z_\beta/T}\frac{\vol(T_\Delta^\prime)}{s_\Delta} \balpha(\partial_\zeta) \delta_\beta \star \prod_{k=1}^n H_{\alpha_k^+}^{r_k} \star \sum_{\ell \ge 0} \bigg(-\sum_{m=1}^{r_k} c_m(\calV_k) H_{\alpha_k^+}^m \bigg)^\ell e^{\omega_\red+\pair{\xi}{F}} d\vol(\xi).
\end{equation}
where $c_m(\calV_k)$ is the $m^{th}$ Chern class of $\calV_k$, $r_k$ is the complex rank of $\calV_k$.  The product over $k$ and exponents $\ell$, $m$, $r_k$ in this expression denote convolution.  The sum over $\ell$ truncates after finitely many terms.  The constant $\epsilon = \pm 1$ is $+1$ iff the orientation induced by the complex structure on $\calV$ for which the $T_\Delta$ weights are $\alpha_1^-,...,\alpha_n^-$ agrees with the original orientation on $\calV$.  

Equation \eqref{FinalForm} is a finite sum of terms of the form
\[ c p(\partial_\zeta) \delta_\beta \star \prod_{k=1}^m H_{\alpha_k^+}^{N_k} d\vol(\xi),\]
where $c$ is a constant (obtained by integrating a differential form over $Z_\beta/T$), $p$ is a polynomial, and $N_k \ge r_k$.  We can see explicitly that $\frakm$ is supported in the closed cone $\beta + (\liet_\Delta^\perp)^\ast + \mathbb{R}_{\ge 0}\alpha_1^+ + \cdots + \mathbb{R}_{\ge 0}\alpha_n^+$.

Finally, we put back the factor of $\Eul(\lieg_g/\liet,\partial)$ to obtain an explicit formula for the contribution from $\beta$ to the norm-square localization formula:
\begin{equation}
\label{ExplicitForm}
\DH^v_{Z_\beta}(Y_g \cap \calN,\omega_a,\phi,\tau_{\calN}\alpha)= 
\Eul(\lieg_g/\liet,\partial) \frakm.
\end{equation}
with $\frakm$ given by equation \eqref{FinalForm} above.

\section{Examples}
Here we give two examples: the 4-sphere (a q-Hamiltonian $SU(2)$-space), and a certain multiplicity-free q-Hamiltonian $SU(3)$-space due to Chris Woodward.
\subsection{The 4-sphere.}
The q-Hamiltonian structure on $S^4$ can be constructed by gluing together two copies of a ball in $\mathbb{C}^2$ (a \emph{Hamiltonian} $SU(2)$-space) along their boundaries, analogous to the way the 2-sphere (a Hamiltonian $U(1)$-space) can be glued together from two copies of a ball in $\mathbb{R}^2$ (also Hamiltonian $U(1)$-spaces).  See \cite{AMWDuistermaatHeckman} for details.

Identify $\liet^\ast \simeq \mathbb{R}$ in such a way that $\mathbb{Z}$ is the weight lattice.  Using the basic inner product to identify $\liet \simeq \liet^\ast$, the fundamental alcove is the closed interval $[0,1]$.  We use notation as in the main part of the paper: $\Phi$ is the $SU(2)$-valued moment map, $\Phi_{\a}:\calX \rightarrow T=U(1)$ the abelianization, $\hcalX$ the covering space of $\calX$ with moment map $\hPhi_{\a}:\hcalX \rightarrow \liet^\ast \simeq \mathbb{R}$, etc.  The covering space $\hX$ of $X=\Phi^{-1}(T)$ (inside $\hcalX$) is (topologically) an infinite line of 2-spheres, with each sphere touching its two neighbours at the poles (``beads on a string''), the poles being sent by $\hPhi_{\a}$ to $\mathbb{Z} \subset \mathbb{R}$.  The poles are precisely the (isolated) $T$-fixed points (they correspond to points in $S^4$ which are fixed by all of $SU(2)$).  The space $\hcalX$ is a 4-dimensional non-singular ``thickening'' of $\hX$.

Let $\frakm=\DH(\hcalX,\omega_{\a},\hPhi_{\a},\tau) \in \calD^\prime(\liet^\ast)$; according to the discussion of Section 3.5, this can heuristically be thought of as the (untwisted) Duistermaat-Heckman measure of $\hX$.  Choose $\gamma \in (0,1)=:\sigma$.  The standard cross-section $Y_\sigma$ is the finite cylinder $\hX \cap \hPhi_{\a}^{-1}(0,1)$, which has Duistermaat-Heckman measure $\mathbf{1}_{[0,1]}dx$ where $\mathbf{1}_S$ denotes the indicator function of the subset $S$, and $dx$ denotes Lebesgue measure.  The work of Sections 4.3---4.5 implies that the central contribution to the norm-square localization formula is polynomial times Lebesgue measure and agrees with the DH measure of $Y_\sigma$ near $\gamma$.  Therefore, the central contribution is 
\[ \frakm_\gamma = 1 dx.\]
(A second way to see this is to use the correspondence between $\frakm$ and volumes of reduced spaces, the reduced space $\Phi^{-1}(\exp(\gamma))/T$ being a point.)  Note that this, together with the anti-symmetry of $\frakm$ under the affine Weyl group (generated by reflections in the lattice points $\mathbb{Z}$), determines $\frakm$ on $\mathbb{R} \setminus \mathbb{Z}$.

The correction terms $\frakm_\beta$ come from the $T$-fixed points (the maximal torus is 1-dimensional, so there are no other subtori).  Consider, for example, the critical value $\beta=0$.  The corresponding part of the critical set is a single point $p:=\calZ_\beta=\hPhi_{\a}^{-1}(0)$, and $\bbeta=0-\gamma<0$.  The contribution $\frakm_\beta$ is supported in the half-space $\bbeta x \ge 0$ $\Leftrightarrow$ $x \le 0$.  The local normal form constructed in Section 4.4 is $\calV=T_pM$.  By construction, the cross-section around $p \in M$ is isomorphic to an open ball in $\mathbb{C}^2$, as a Hamiltonian $SU(2)$-space.  Using the induced complex structure $T_pM \simeq \mathbb{C}^2$, the weights of the $U(1) \subset SU(2)$ action are $+1, -1$.  To get the correct contribution, we flip the complex structure on the second copy of $\mathbb{C}$ so that the weights of the $U(1)$ action are both $+1$ (they must have negative pairing with $\bbeta$), and so the moment map is:
\[\tphi(z_1,z_2) = -\pi (|z_1|^2 + |z_2|^2),\]
while the 2-form is the standard symplectic 2-form for $\mathbb{C}^2$ (since we reversed the complex structure on one of the copies of $\mathbb{C}$, this 2-form induces the opposite orientation from the 2-form of the cross-section).  The DH measure of this local normal form is
\[ -H_{-1} \star H_{-1} = x \cdot \mathbf{1}_{(-\infty,0]} dx.\]
Here $H_{-1}$ is the Heaviside distribution equal to $1$ on $(-\infty, 0)$, and the factor of $\epsilon=-1$ appears because the orientation on $\calV=T_pM$ induced by the complex structure for which the weights are both $+1$ is opposite the original orientation.

To obtain the norm-square contribution, the last step is to apply the differential operator $\Eul(\lieg/\liet,\partial)$ (this takes into account the effect of the Thom form), which in this case is $-2\tfrac{d}{dx}$ (since the unique positive root is $\alpha=2$ with our normalization).  And so the contribution is:
\[ \frakm_0 = -2 \cdot \mathbf{1}_{(-\infty,0]}dx.\]
Taking additional critical points into account produces new corrections in a similar way:
\[ \frac{\frakm}{dx} = 1 - 2 \cdot \mathbf{1}_{(-\infty,0]} - 2 \cdot \mathbf{1}_{[1,\infty)}+
2\cdot \mathbf{1}_{(-\infty,-1]} + 2 \cdot \mathbf{1}_{[2,\infty)} - \cdots.\]

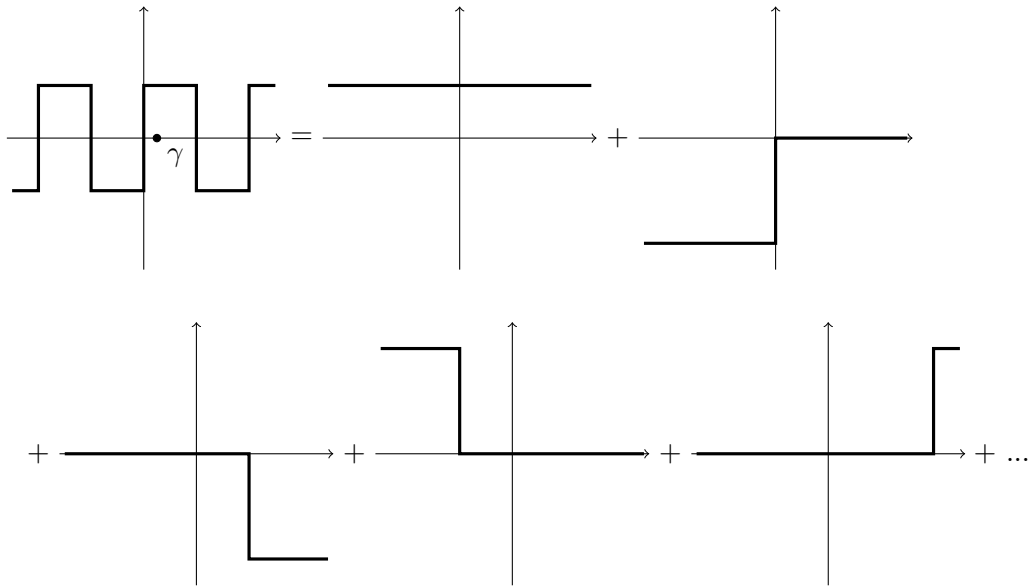
\begin{figure}
\centering
\begin{tikzpicture}[scale=0.7]
\foreach \x in {0,...,2}{
\coordinate (C\x) at ({6*\x},0);
\draw[->] ({6*\x-2.6},0)--({6*\x+2.6},0);
\draw[->] ({6*\x},-2.5)--({6*\x},2.5);
}
\node at (3,0) {=};
\node at (9,0) {+};
\node[below right] at (0.25,0) {$\gamma$};
\draw[fill,color=black] (0.25,0) circle [radius=2pt];
\foreach \x in {3,...,5}{
\coordinate (C\x) at ({6*(\x-3)+1},-6);
\draw[->] ({6*(\x-3)-1.6},-6)--({6*(\x-3)+3.6},-6);
\draw[->] ({6*(\x-3)+1},-8.5)--({6*(\x-3)+1},-3.5);
\node at ({6*(\x-3)-2},-6) {+};
}
\draw[very thick] (-2.5,-1)--(-2,-1)--(-2,1)--(-1,1)--(-1,-1)--(0,-1)--(0,1)--(1,1)--(1,-1)--(2,-1)--(2,1)--(2.5,1);
\draw[very thick] (3.5,1)--(8.5,1);
\draw[very thick] (9.5,-2)--(12,-2)--(12,0)--(14.5,0);
\draw[very thick] (3.5,-8)--(2,-8)--(2,-6)--(-1.5,-6);
\draw[very thick] (4.5,-4)--(6,-4)--(6,-6)--(9.5,-6);
\draw[very thick] (15.5,-4)--(15,-4)--(15,-6)--(10.5,-6);
\node at (16.3,-6) {+ ...};
\end{tikzpicture}
\caption{First 5 terms for the 4-sphere.} \label{fig:4sphere}
\end{figure}

\subsection{A multiplicity-free q-Hamiltonian $SU(3)$-space.}
Figure \ref{fig:Woodward} shows the moment map image $\hPhi_{\a}(\hcalX)$ corresponding to a certain multiplicity-free q-Hamiltonian $SU(3)$-space $M$ (example due to Chris Woodward; see also \cite{Woodward}, where the norm-square contributions for the analogous \emph{Hamiltonian} $SU(3)$-space are described).  The triangle indicated with a bold border is the fundamental alcove, and the smaller shaded triangle inside is the moment polytope of $M$.  The Duistermaat-Heckman measure is $0$ away from the moment map image, while inside the image, its value (relative to a suitably normalized Lebesgue measure) alternates between $\pm 1$ (it is anti-symmetric under the action of the affine Weyl group).  The three facets of the moment polytope correspond to three submanifolds fixed by 1-dimensional subtori of $T$.  Each of these fixed-point submanifolds is (topologically) a 2-torus.  Note in particular that the vertices of the moment polytope do not correspond to $T$-fixed points---this is an example of a q-Hamiltonian space with no $T$-fixed points whatsoever.

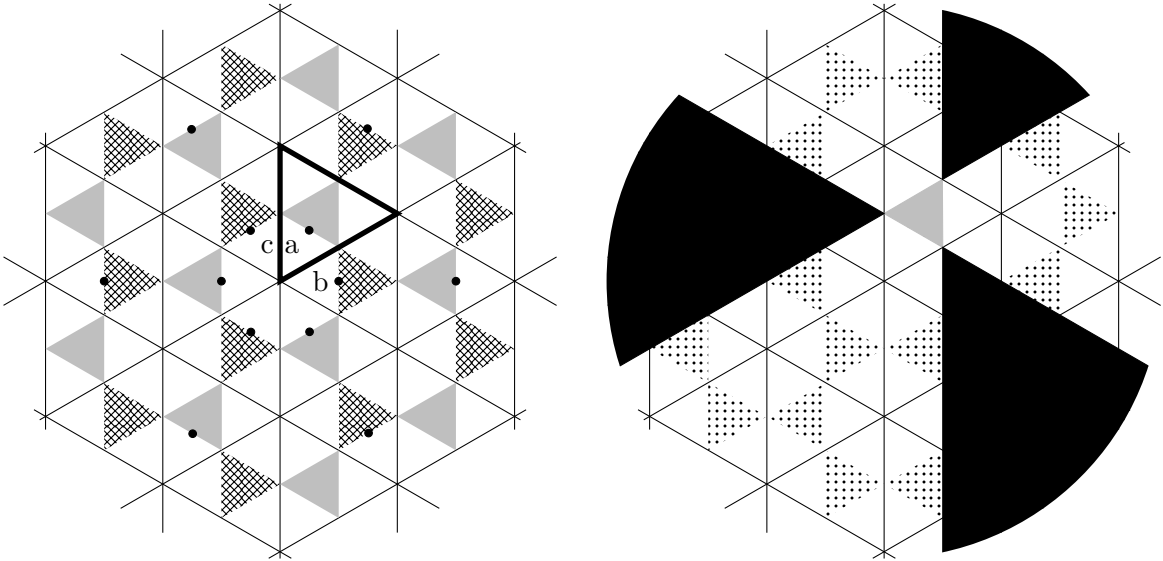
\begin{figure}
\begin{minipage}[c]{0.49\textwidth}
\begin{tikzpicture}[scale=0.9]
\path[use as bounding box] (0,0) circle (4.1);
\coordinate (0) at (0,0);
\draw (30:-4.1)--(30:4.1);
\draw (90:-4.1)--(90:4.1);
\draw (-30:4.1)--(-30:-4.1);

\path[name path=circle] (0,0) circle (4.1);

\begin{pgfinterruptboundingbox}
\path[name path global=line1] ($(30:2)!-100cm!(90:2)$) -- ($(30:2)!100cm!(90:2)$);
\path[name path global=line2] ($(30:-2)!-100cm!(90:-2)$) -- ($(30:-2)!100cm!(90:-2)$);
\path[name path global=line3] ($(30:2)!-100cm!(-30:2)$) -- ($(30:2)!100cm!(-30:2)$);
\path[name path global=line4] ($(30:-2)!-100cm!(-30:-2)$) -- ($(30:-2)!100cm!(-30:-2)$);
\path[name path global=line5] ($(150:2)!-100cm!(90:2)$) -- ($(150:2)!100cm!(90:2)$);
\path[name path global=line6] ($(-30:2)!-100cm!(90:-2)$) -- ($(-30:2)!100cm!(90:-2)$);
\path[name path global=line7] ($(30:4)!-100cm!(90:4)$) -- ($(30:4)!100cm!(90:4)$);
\path[name path global=line8] ($(30:-4)!-100cm!(90:-4)$) -- ($(30:-4)!100cm!(90:-4)$);
\path[name path global=line9] ($(30:4)!-100cm!(-30:4)$) -- ($(30:4)!100cm!(-30:4)$);
\path[name path global=line10] ($(30:-4)!-100cm!(-30:-4)$) -- ($(30:-4)!100cm!(-30:-4)$);
\path[name path global=line11] ($(150:4)!-100cm!(90:4)$) -- ($(150:4)!100cm!(90:4)$);
\path[name path global=line12] ($(-30:4)!-100cm!(90:-4)$) -- ($(-30:4)!100cm!(90:-4)$);
\end{pgfinterruptboundingbox}

\draw[name intersections={of=circle and line1}] (intersection-1)--(intersection-2);
\draw[name intersections={of=circle and line2}] (intersection-1)--(intersection-2);
\draw[name intersections={of=circle and line3}] (intersection-1)--(intersection-2);
\draw[name intersections={of=circle and line4}] (intersection-1)--(intersection-2);
\draw[name intersections={of=circle and line5}] (intersection-1)--(intersection-2);
\draw[name intersections={of=circle and line6}] (intersection-1)--(intersection-2);
\draw[name intersections={of=circle and line7}] (intersection-1)--(intersection-2);
\draw[name intersections={of=circle and line8}] (intersection-1)--(intersection-2);
\draw[name intersections={of=circle and line9}] (intersection-1)--(intersection-2);
\draw[name intersections={of=circle and line10}] (intersection-1)--(intersection-2);
\draw[name intersections={of=circle and line11}] (intersection-1)--(intersection-2);
\draw[name intersections={of=circle and line12}] (intersection-1)--(intersection-2);

\foreach \n in {30,150,270} {\path[fill,color=lightgray] (\n:1)--++(\n+60:1)--++(\n+180:1)--cycle;}
\foreach \n in {90,210,330} {\path[pattern=crosshatch] (\n:1)--++(\n+60:1)--++(\n+180:1)--cycle;}
\foreach \n in {1,2,...,6} \coordinate (A\n) at ($(60*\n-30:2)!(0,0)!(60*\n+30:2)$);
\foreach \n in {1,3,5} \path[pattern=crosshatch] (A\n)--++(30+60*\n-60:1)--++(150+60*\n-60:1)--cycle;
\foreach \n in {2,4,6} \path[fill,color=lightgray] (A\n)--++(30+60*\n-60:1)--++(150+60*\n-60:1)--cycle;
\foreach \n in {1,2,...,6} \path[pattern=crosshatch] (30+60*\n:3)--++(270+240*\n:1)--++(30+240*\n:1)--cycle;
\foreach \n in {0,1,...,5} \path[fill,color=lightgray] (30+60*\n:3)--++(90+240*\n:1)--++(210+240*\n:1)--cycle;

\foreach \n in {0,1,...,5} \coordinate (B\n) at ($(60*\n-30:1)!(0,0)!(60*\n+30:1)$);
\foreach \n in {0,1,...,5} \draw[fill,color=black] (B\n) circle [radius=1.6pt];
\node[left] at (B0) {b};
\node[below left] at (B1) {a};
\node[below right] at (B2) {c};
\foreach \n in {0,1,...,5} \coordinate (C\n) at ($(60*\n-30:3)!(0,0)!(60*\n+30:3)$);
\foreach \n in {0,1,...,5} \draw[fill,color=black] (C\n) circle [radius=1.6pt];
\draw[line width=2pt] (30:2)--(0:0)--(90:2)--cycle;
\end{tikzpicture}
\end{minipage}
\begin{minipage}[c]{0.49\textwidth}
\begin{tikzpicture}[scale=0.9]
\path[use as bounding box] (0,0) circle (4.1);
\coordinate (0) at (0,0);
\draw (30:-4.1)--(30:4.1);
\draw (90:-4.1)--(90:4.1);
\draw (-30:4.1)--(-30:-4.1);

\path[name path=circle] (0,0) circle (4.1);
\path[name path=circle2] (0,0) circle (5);

\begin{pgfinterruptboundingbox}
\path[name path global=line1] ($(30:2)!-100cm!(90:2)$) -- ($(30:2)!100cm!(90:2)$);
\path[name path global=line2] ($(30:-2)!-100cm!(90:-2)$) -- ($(30:-2)!100cm!(90:-2)$);
\path[name path global=line3] ($(30:2)!-100cm!(-30:2)$) -- ($(30:2)!100cm!(-30:2)$);
\path[name path global=line4] ($(30:-2)!-100cm!(-30:-2)$) -- ($(30:-2)!100cm!(-30:-2)$);
\path[name path global=line5] ($(150:2)!-100cm!(90:2)$) -- ($(150:2)!100cm!(90:2)$);
\path[name path global=line6] ($(-30:2)!-100cm!(90:-2)$) -- ($(-30:2)!100cm!(90:-2)$);
\path[name path global=line7] ($(30:4)!-100cm!(90:4)$) -- ($(30:4)!100cm!(90:4)$);
\path[name path global=line8] ($(30:-4)!-100cm!(90:-4)$) -- ($(30:-4)!100cm!(90:-4)$);
\path[name path global=line9] ($(30:4)!-100cm!(-30:4)$) -- ($(30:4)!100cm!(-30:4)$);
\path[name path global=line10] ($(30:-4)!-100cm!(-30:-4)$) -- ($(30:-4)!100cm!(-30:-4)$);
\path[name path global=line11] ($(150:4)!-100cm!(90:4)$) -- ($(150:4)!100cm!(90:4)$);
\path[name path global=line12] ($(-30:4)!-100cm!(90:-4)$) -- ($(-30:4)!100cm!(90:-4)$);
\path[name path global=line13] ($(-30:1)!-100cm!(30:1)$) -- ($(-30:1)!100cm!(30:1)$);
\path[name path global=line14] ($(30:1)!-100cm!(90:1)$) -- ($(30:1)!100cm!(90:1)$);
\path[name path global=line15] ($(90:1)!-100cm!(150:1)$) -- ($(90:1)!100cm!(150:1)$);
\end{pgfinterruptboundingbox}

\draw[name intersections={of=circle and line1}] (intersection-1)--(intersection-2);
\draw[name intersections={of=circle and line2}] (intersection-1)--(intersection-2);
\draw[name intersections={of=circle and line3}] (intersection-1)--(intersection-2);
\draw[name intersections={of=circle and line4}] (intersection-1)--(intersection-2);
\draw[name intersections={of=circle and line5}] (intersection-1)--(intersection-2);
\draw[name intersections={of=circle and line6}] (intersection-1)--(intersection-2);
\draw[name intersections={of=circle and line7}] (intersection-1)--(intersection-2);
\draw[name intersections={of=circle and line8}] (intersection-1)--(intersection-2);
\draw[name intersections={of=circle and line9}] (intersection-1)--(intersection-2);
\draw[name intersections={of=circle and line10}] (intersection-1)--(intersection-2);
\draw[name intersections={of=circle and line11}] (intersection-1)--(intersection-2);
\draw[name intersections={of=circle and line12}] (intersection-1)--(intersection-2);
\draw[name intersections={of=circle2 and line13}] \foreach \i in {1,2} {(intersection-\i) coordinate (p\i)};
\draw[name intersections={of=circle2 and line14}] \foreach \i in {1,2} {(intersection-\i) coordinate (q\i)};
\draw[name intersections={of=circle2 and line15}] \foreach \i in {1,2} {(intersection-\i) coordinate (r\i)};

\foreach \n in {30,150,270} {\path[pattern=dots] (\n:1)--++(\n+60:1)--++(\n+180:1)--cycle;}
\foreach \n in {90,210,330} {\path[pattern=dots] (\n:1)--++(\n+60:1)--++(\n+180:1)--cycle;}
\foreach \n in {1,2,...,6} \coordinate (A\n) at ($(60*\n-30:2)!(0,0)!(60*\n+30:2)$);
\foreach \n in {1,3,5} \path[pattern=dots] (A\n)--++(30+60*\n-60:1)--++(150+60*\n-60:1)--cycle;
\foreach \n in {2,4,6} \path[pattern=dots] (A\n)--++(30+60*\n-60:1)--++(150+60*\n-60:1)--cycle;
\foreach \n in {1,2,...,6} \path[pattern=dots] (30+60*\n:3)--++(270+240*\n:1)--++(30+240*\n:1)--cycle;
\foreach \n in {0,1,...,5} \path[pattern=dots] (30+60*\n:3)--++(90+240*\n:1)--++(210+240*\n:1)--cycle;

\foreach \n in {1,2,6} \coordinate (B\n) at ($(60*\n-30:1)!(0,0)!(60*\n+30:1)$);
\coordinate (c) at ($(30:2)!(0,0)!(90:2)$);
\clip (0,0) circle (4.1);
\draw[fill,color=lightgray] (30:1)--(c)--(90:1)--cycle;
\draw[fill] (30:1)--(p2)--(q2)--cycle;
\draw[fill] (90:1)--(q1)--(r2)--cycle;
\draw[fill] (c)--(p1)--(r1)--cycle;
\end{tikzpicture}
\end{minipage}
\caption{A multiplicity-free q-Hamiltonian $SU(3)$-space.  On the right, the sum of the contributions from a,b,c is shown.} \label{fig:Woodward}
\end{figure}

Also shown in Figure \ref{fig:Woodward} are the critical values which are closest to the origin.  In this case we can choose $\gamma=0$, and the central contribution is identically zero.  We next determine the contribution from the critical value $\beta=a$ lying in the fundamental alcove.  Let $\liet_\Delta \subset \liet$ be the subalgebra generated by $\beta$ (the direction orthogonal to the wall $\Delta$), and let $\liet_\Delta^\perp$ be its orthogonal complement.  We have a direct sum decomposition $\liet^\ast =\liet_\Delta^\ast \oplus (\liet_\Delta^\perp)^\ast$.  

As usual, let $\calX$ denote the abelianization.  The critical set $Z_\beta = \Phi_{\a}^{-1}(\exp(\beta)) \cap \calX^{\liet_\Delta}$ is (topologically) a circle inside the 2-torus $\calX^{\liet_\Delta}$.  According to the discussion in Sections 4.3---4.5, the contribution can be computed using the local normal form: $\calV \times (\liet_\Delta^\perp)^\ast$.  Since $M$ is multiplicity-free, the cross-section is $4$-dimensional, and so $\calV \simeq Z_\beta \times \mathbb{C}$.  Therefore the local normal form is
\[ \calV \times (\liet_\Delta^\perp)^\ast \simeq S^1 \times \mathbb{C} \times \mathbb{R}.\]
The weight of $T_\Delta$ on the rank $1$ complex line bundle $S^1 \times \mathbb{C}$ is $1$.

According to equation \eqref{ExplicitForm}, the contribution $\frakm_\beta$ is
\[ \frakm_\beta = \delta_\beta \star (H_1 \otimes d\vol(\xi)),\]
where $\xi \in (\liet_\Delta^\perp)^\ast$, $d\vol(\xi)$ is normalized Lebesgue measure on $(\liet_\Delta^\perp)^\ast$, and $H_1$ is a Heaviside distribution on $\liet_\Delta^\ast$.  The contributions from the critical points $b,c$ are Weyl reflections of the contribution from $a$, and also have the opposite sign ($-1$), coming from the factor $\Eul(\lieg_g/\liet,\partial)$.  The sum of the contributions from $a,b,c$ is shown in Figure \ref{fig:Woodward}.  Adding the contributions from more critical values produces similar additional corrections.

\appendix

\section{Abelian localization}
The abelian localization formula in equivariant cohomology goes back to the well-known papers of Berline-Vergne \cite{BerlineVergne2} and Atiyah-Bott \cite{AtiyahBottLocalization}.  In this appendix we give a proof of a version of abelian localization used in the main part of the paper.  This version appeared first in \cite{PratoWu}, and a generalization was proved in \cite{Paradan97}, \cite{Paradan98}.  We only treat the special case of the total space of a vector bundle, although it is not difficult to combine the cobordism methods with the vector bundle case here to obtain more general results.  Let $H$ be a torus with Lie algebra $\lieh$.  Let $\pi:\calV \rightarrow Z$ be an oriented, even-rank, $H$-equivariant vector bundle over a compact oriented base $Z$, such that $\calV^H=Z$.

We will want basic bounds on the growth of differential forms along the fibres of $\pi:\calV \rightarrow Z$.  Choose an inner product on the fibres, a metric on the base $Z$ and a connection on $\calV \rightarrow Z$.  The connection, inner product on the fibres, and metric on the base, induce a metric on $\calV$.  Let $|\cdot |$ denote the corresponding norm on the fibres of $T\calV$, and we use the same notation for the induced norm on the fibres of $\wedge T^\ast \calV$.  We will say that $f \in C^\infty(\calV)$ is rapidly decreasing along the fibres if, for each $p \in Z$ and $0\ne v \in \calV_p$, the function $t \mapsto f(tv)$ is rapidly decreasing.  Let $d\vol$ denote the Riemannian volume measure on $\calV$; it has the property that the volume of the disc bundle of radius $r$ is a polynomial in $r$ of degree equal to the real rank of the vector bundle $\calV$.  We will say that a form $\eta \in \Omega(\calV)$ is rapidly decreasing along the fibres if $|\eta|$ is rapidly decreasing along the fibres; in this case $\eta$ is integrable, and
\[ \left| \int_\calV \eta \right| \le \int_\calV |\eta| d\vol.\]

\begin{proposition}
\label{BasicAbelianLocalization}
Let $H, \calV, Z$ be as above.  Let $\eta$ be a differential form which is rapidly decreasing along the fibres of $\pi:\calV \rightarrow Z$.  Let $X=X_1+\i X_2 \in \lieh_\mathbb{C}$ and write $W=2\pi \i X$.  Suppose that $\calV^{X_2}=Z$ and
\[ d_{W}\eta :=(d-\iota(W))\eta=0.\]
Then we have the following abelian localization formula:
\[ \int_\calV \eta = \int_Z \frac{\eta}{\Eul(\calV,W)}.\]
\end{proposition}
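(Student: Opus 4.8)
The plan is to run the classical Atiyah--Bott--Berline--Vergne localization argument --- produce a $d_W$-primitive of $\eta$ away from the zero section, apply Stokes' theorem on the complement of a small disc bundle, and evaluate the boundary integral as the radius tends to $0$ --- being careful throughout about the non-compactness of $\calV$ and about the fact that $\eta$ is only rapidly decreasing along the fibres. First I would fix an $H$-invariant Riemannian metric $g$ on $\calV$ and an $H$-invariant connection, and set $\lambda := g((X_2)_\calV, \cdot) \in \Omega^1(\calV)^H$, the metric dual of the fundamental vector field of $X_2$. Writing $W_\calV = 2\pi\i (X_1)_\calV - 2\pi (X_2)_\calV$, the degree-zero component of the inhomogeneous form $d_W\lambda = \mu + d\lambda$ is $\mu := -\iota(W_\calV)\lambda = 2\pi|(X_2)_\calV|^2 - 2\pi\i\, g((X_2)_\calV,(X_1)_\calV)$, whose real part $2\pi|(X_2)_\calV|^2$ is strictly positive on $\calV\setminus\calV^{X_2} = \calV\setminus Z$ and grows quadratically along the fibres (since $H$ acts linearly on the fibres and trivially on $Z$). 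Hence $d_W\lambda$ is invertible in $\Omega(\calV\setminus Z)$, with $(d_W\lambda)^{-1} = \mu^{-1}\sum_{k\ge 0}(-d\lambda/\mu)^k$ a finite sum.

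Set $\gamma := \lambda\,(d_W\lambda)^{-1}$, a sum of odd-degree forms, and $\beta := \gamma\,\eta \in \Omega(\calV\setminus Z)$. Since $\lambda$ is $W$-invariant, $d_W(d_W\lambda) = -\calL_W\lambda = 0$, so $d_W\gamma = 1$ on $\calV\setminus Z$; as $\gamma$ is odd and $d_W\eta = 0$, one gets $d_W\beta = (d_W\gamma)\eta - \gamma\, d_W\eta = \eta$ on $\calV\setminus Z$. A growth estimate ($|\lambda|$ of linear growth, $|(d_W\lambda)^{-1}|$ of order $|\mu|^{-1}$, hence quadratic decay) shows $\beta$ is again rapidly decreasing along the fibres. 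Now let $D_\epsilon\subset\calV$ be the closed $\epsilon$-disc bundle and $S_\epsilon = \partial D_\epsilon$. Rapid decrease of $\eta$ gives $\int_\calV\eta = \lim_{\epsilon\to 0}\int_{\calV\setminus D_\epsilon}\eta$; on $\calV\setminus D_\epsilon$ only the exterior-derivative part of $d_W\beta$ contributes in top degree, so Stokes' theorem (with no contribution from infinity, by rapid decrease of $\beta$) gives
\[ \int_\calV\eta = -\lim_{\epsilon\to 0}\int_{S_\epsilon}\beta = -\lim_{\epsilon\to 0}\int_{S_\epsilon}\lambda\,(d_W\lambda)^{-1}\,\eta, \]
where $S_\epsilon$ carries its orientation as the boundary of $D_\epsilon$.

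It then remains to evaluate this limit, which is a purely local problem near $Z$. Using the connection to split $T\calV|_Z \cong TZ\oplus\calV$ and introducing fibrewise polar coordinates, one expresses $\lambda$, $d\lambda$ and $\mu$ in terms of the connection $1$-form and curvature; the fibre integral of $\lambda\,(d_W\lambda)^{-1}$ over the sphere $S^{2n-1}_\epsilon$ converges, as $\epsilon\to 0$, to $-\Eul(\calV,W)^{-1}$ --- the equivariant analogue of the fact that the transgressed angular form has fibre-integral $1$, with $\Eul(\calV,W)$ recognized via Chern--Weil and invertible precisely because $\calV^{X_2}=Z$ --- while $\iota_{S_\epsilon}^\ast\eta \to \iota_Z^\ast\eta$ to leading order. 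Combining these, $\lim_{\epsilon\to 0}\int_{S_\epsilon}\beta = -\int_Z \iota_Z^\ast\eta\,/\,\Eul(\calV,W)$, which gives the stated formula. Since both sides of the asserted identity depend only on $\iota_Z^\ast\eta$ and the equivariant Euler class (and not on the auxiliary metric or connection), verifying the equality for this one convenient choice suffices.

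The main obstacle is precisely this final limit: pinning down the constant, the sign and the orientation conventions in the identification of the sphere fibre-integral of $\lambda\,(d_W\lambda)^{-1}$ with $-\Eul(\calV,W)^{-1}$, and checking that the error terms (from the difference $\eta - \iota_Z^\ast\eta$ and from the horizontal and mixed components of $\lambda$ and $d\lambda$) genuinely vanish in the limit. An equivalent, perhaps cleaner, way to organize this step is to deform $\eta$ through the $d_W$-closed family $\eta_t := \eta\, e^{-t\,d_W\lambda}$, $t\ge 0$: the same primitive argument shows $\int_\calV\eta_t$ is independent of $t$ (the boundary term at infinity vanishing by rapid decrease, using $\operatorname{Re}\mu\ge 0$), and $\lim_{t\to\infty}\int_\calV\eta_t$ is computed by a fibrewise Gaussian/stationary-phase integral concentrated on $Z$, producing exactly $\int_Z\iota_Z^\ast\eta\,/\,\Eul(\calV,W)$; this trades the $\epsilon\to 0$ analysis for a $t\to\infty$ asymptotic analysis but reduces to the same essential computation.
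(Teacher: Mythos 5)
Your argument is correct in outline but follows a genuinely different route from the paper. You run the classical Berline--Vergne/Atiyah--Bott scheme: construct the odd form $\gamma=\lambda\,(d_W\lambda)^{-1}$ on $\calV\setminus Z$ from the metric-dual $1$-form of $(X_2)_\calV$, note $d_W\gamma=1$ there so that $\beta=\gamma\eta$ is a $d_W$-primitive of $\eta$ off the zero section, excise a tubular $\epsilon$-neighbourhood of $Z$, apply Stokes on the noncompact complement (using rapid decay of $\beta$ to kill the contribution at infinity), and evaluate the $\epsilon\to 0$ boundary limit on the sphere bundle, which produces $-\Eul(\calV,W)^{-1}$ as a fibre integral. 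The paper instead runs the computation in reverse using a compactly supported equivariant Thom form $\tau$: it rewrites $\int_Z \eta\,\Eul^{-1}(\calV,W)$ as $\int_\calV \eta\,\tau(W)\,\pi^\ast\Eul^{-1}(\calV,W)$, invokes the relation $\tau-\pi^\ast\Eul(\calV)=d_H\lambda$ with $\lambda$ the de Rham homotopy primitive (so $|\lambda|$ and $|d_W\lambda|$ are bounded along fibres), and observes that after substitution the $d_W\lambda$ term vanishes by Stokes and the $\pi^\ast\Eul$ term cancels $\pi^\ast\Eul^{-1}$, leaving $\int_\calV\eta$. The trade-off is real: the paper's route packages the delicate local computation at $Z$ into the Thom/Euler relation (and the identity $\int_\calV\sigma\tau(W)=\int_Z\iota_Z^\ast\sigma$ for $d_W$-closed $\sigma$), so no $\epsilon\to 0$ limit or orientation/constant-chasing is needed, whereas your approach carries that computation explicitly --- and indeed you flag the sphere fibre integral $\int_{S_\epsilon}\lambda(d_W\lambda)^{-1}\to-\Eul(\calV,W)^{-1}$ as the ``main obstacle,'' which is exactly the step the Thom-form approach is designed to bypass. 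Both require the same kind of fibre growth estimates to justify Stokes on the noncompact $\calV$, and your alternative deformation $\eta_t=\eta\,e^{-t\,d_W\lambda}$ is essentially the Mathai--Quillen/Witten localization shape of the same argument.
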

\begin{proof}
We adapt the argument in \cite{EquivariantDeRhamTheory}.  Since $\calV^{X_2}=Z$, the weights of the $H$-action on the fibres of $\calV$ do not vanish on $X_2$, and so do not vanish on $W$.  It follows that $\Eul(\calV,W)$ is invertible.  We consider the integral
\begin{equation}
\label{LocalizedIntegral}
\int_Z \eta \Eul^{-1}(\calV,W)
\end{equation}
Let $\tau$ be a compactly supported $H$-equivariant Thom form for the vector bundle $\calV$.  Using the defining property of the Thom form, we can re-write equation \eqref{LocalizedIntegral} as an integral over $\calV$:
\begin{equation}
\label{LocalizedIntegral2}
\int_Z \eta \Eul^{-1}(\calV,W)=\int_\calV \eta \tau(W)\pi^\ast \Eul^{-1}(\calV,W).
\end{equation}
$\tau$ is equivariantly cohomologous to the pullback of the equivariant Euler form $\pi^\ast \Eul(\calV)$, hence
\[ \tau - \pi^\ast \Eul(\calV) = d_{H} \lambda,\]
where $\lambda \in \Omega_{H}(\calV)$.  Equation \eqref{LocalizedIntegral2} becomes
\[ \int_Z \eta \Eul^{-1}(\calV,W)=\int_\calV \eta (d_{W} \lambda(W)+\pi^\ast \Eul(\calV,W))\pi^\ast \Eul^{-1}(\calV,W).\]
One way to obtain a suitable form $\lambda$ is by using the standard de Rham homotopy operator.  This involves pulling $\tau$ back along the scalar multiplication map $[0,1]\times \calV \rightarrow \calV$, $(t,v) \mapsto tv$, and then integrating over $[0,1]$.  For $\lambda$ obtained in this way, the norms $|\lambda(W)|$ and $|d_{W}\lambda(W)|$ grow slowly along the fibres (in fact both can be bounded by a constant).  Since $\eta$ is rapidly decreasing along the fibres, its product with either $d_{W}\lambda(W)$ or $\lambda(W)$ is again rapidly decreasing along the fibres.  It follows that the integral
\begin{equation} 
\label{StokesStep}
\int_\calV \eta \pi^\ast \Eul^{-1}(\calV,W) d_{W} \lambda(W),
\end{equation}
exists, and vanishes by Stokes' theorem.

For the remaining term, $\pi^\ast \Eul(\calV,W)$ cancels, and we obtain:
\[\int_\calV \eta = \int_Z \eta \Eul^{-1}(\calV,W).\]
\end{proof}

We now specialize somewhat.  Let $H, \calV, Z$ be as above.  Let $\alpha$ be a bounded $H$-equivariantly closed form on $\calV$.  Let $\omega-\phi$ be a $H$-equivariantly closed 2-form on $\calV$.  Suppose there exists a vector $\bbeta \in \lieh$ such that (1) $\calV^{\bbeta}=Z$, (2) there is an open cone $\calC$ of directions around $\bbeta \in \lieh$ such that for $\xi \in \calC$, $\pair{\phi}{\xi}$ has homogeneous growth along the fibres of $\pi$, for some positive exponent ($\phi$ is \emph{polarized}).  More precisely, assume that there are constants $C>0,a>0,D$ such that for $\xi \in \calC$, $|\xi|=1$ we have
\[ |\pair{\phi}{\xi}|(v) \ge C|v|^a+D.\]
The polarization condition implies that $\phi$ is proper, and moreover, that if $f$ is a Schwartz function on $\lieh$ then $\phi^\ast f$ is rapidly decreasing along the fibres of $\pi$.

Since $\calV^{\bbeta}=Z$, the weights of the $H$-action on the fibres of $\calV$ do not vanish on $\bbeta$.  Consequently, the form $\Eul(\calV,X-\i s\bbeta)$ is invertible for all $X \in \lieh$, $s \in \mathbb{R} \setminus \{0 \}$.  Following Paradan \cite{Paradan97}, we introduce the following $H$-equivariantly closed differential form with tempered generalized coefficients:
\[ \Eul_{\bbeta}^{-1}(\calV,X):=\lim_{s\rightarrow 0^+} \frac{1}{\Eul(\calV,X-\i s\bbeta)}.\]
(The intended meaning of this expression is that the fraction should be integrated against a smooth rapidly decreasing measure on $\lieh$, before the limit is taken.)  That this form has tempered generalized coefficients follows from the fact that the generalized function
\[ \phi(x)=\lim_{s \rightarrow 0^+} \frac{1}{x-\i s}\]
is tempered.

\begin{theorem}
\label{VectorBundleLocalization}
Let $H, \calV, Z, \omega-\phi, \bbeta,\alpha$ be as above.  We have the following equality of tempered generalized functions on $\lieh$:
\begin{equation} 
\label{EquationOfThm}
\int_{\calV} \alpha(2\pi \i X) e^{\omega-2\pi \i \pair{\phi}{X}} = \int_{Z} 
\alpha(2\pi \i X)e^{\omega-2\pi \i \pair{\phi}{X}}\Eul^{-1}_{\bbeta}(\calV,2\pi \i X).
\end{equation}
\end{theorem}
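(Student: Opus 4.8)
The plan is to deduce the identity, which lives over the real locus $\lieh \subset \lieh_\bC$ where Proposition \ref{BasicAbelianLocalization} cannot be applied directly (for a purely imaginary parameter $2\pi\i X$ the relevant fixed-point set is all of $\calV$, not $Z$), from a one-parameter family of instances of Proposition \ref{BasicAbelianLocalization} obtained by giving the equivariant parameter a small real part in the direction of $\bbeta$, followed by passing to the boundary value. First I would record that both sides of \eqref{EquationOfThm} are genuine tempered generalized functions of $X$. Pairing the left side with a Schwartz density $f(X)\,dX$ and interchanging the order of integration yields $\int_{\calV} e^{\omega}\, h$, where $h$ is the fibrewise pullback by $\phi$ of the Fourier transform of the Schwartz function $X \mapsto f(X)\alpha(2\pi\i X)$; the polarization hypothesis forces $\|\phi\|$ to grow along the fibres at least like $|v|^{a}$, so $h$ is rapidly decreasing along the fibres and the integral over $\calV$ converges absolutely and continuously in $f$. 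The right side is tempered because $Z$ is compact, $\omega|_Z$ and $\alpha|_Z$ are bounded, and $\Eul^{-1}_{\bbeta}(\calV)$ has tempered generalized coefficients by construction.

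Next, for $s > 0$ I would replace the equivariant parameter $2\pi\i X$ by $W_s := 2\pi\i(X - \i s\bbeta) = 2\pi\i X + 2\pi s\bbeta$, whose real part $2\pi s\bbeta$ has fixed-point set $\calV^{\bbeta} = Z$. The equivariant form
\[ \alpha(W_s)\, e^{\omega - 2\pi\i\pair{\phi}{X - \i s\bbeta}} = \alpha(W_s)\, e^{\omega - 2\pi\i\pair{\phi}{X}}\, e^{-2\pi s\pair{\phi}{\bbeta}} \]
is $d_{W_s}$-closed (since $\alpha$ and $\omega - \pair{\phi}{\cdot}$ are equivariant cocycles), and the damping factor $e^{-2\pi s\pair{\phi}{\bbeta}}$ is, by the polarization hypothesis, rapidly decreasing along the fibres, dominating the polynomial fibre growth of $e^{\omega}$ and $\alpha(W_s)$; hence Proposition \ref{BasicAbelianLocalization} applies and gives, for each fixed $s > 0$, an identity of smooth tempered functions of $X$,
\[ \int_{\calV} \alpha(W_s)\, e^{\omega - 2\pi\i\pair{\phi}{X - \i s\bbeta}} = \int_{Z} \frac{\alpha(W_s)\, e^{\omega - 2\pi\i\pair{\phi}{X - \i s\bbeta}}}{\Eul(\calV, W_s)}. \]

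I would then pair both sides with $f(X)\,dX$ and let $s \to 0^{+}$. On the left, Fubini (legitimate for $s > 0$) turns the pairing into $\int_{\calV} e^{\omega} g_s$ with $g_s(v) = e^{-2\pi s\pair{\phi(v)}{\bbeta}} \int_{\lieh} f(X)\alpha(W_s)\, e^{-2\pi\i\pair{\phi(v)}{X}}\, dX$; since the Schwartz seminorms of $X \mapsto f(X)\alpha(W_s)$ stay bounded for $s \in [0,1]$ and $e^{-2\pi s\pair{\phi}{\bbeta}}$ is bounded there, the $g_s$ are rapidly decreasing along the fibres uniformly in $s \in [0,1]$, so dominated convergence identifies the limit with the pairing of the left side of \eqref{EquationOfThm} with $f$. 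On the right, $Z$ is compact, so $\int_Z$ and $\int_{\lieh}$ may be interchanged freely, and the only factor not converging classically is $\Eul(\calV, W_s)^{-1}$, which converges to $\Eul^{-1}_{\bbeta}(\calV, 2\pi\i X)$ as a tempered generalized function precisely by the definition of $\Eul^{-1}_{\bbeta}$ together with the temperedness of the boundary value $x \mapsto (x - \i 0)^{-1}$; thus the limit is the pairing of the right side of \eqref{EquationOfThm} with $f$. As $f$ was arbitrary, \eqref{EquationOfThm} follows.

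I expect the main obstacle to be the $s$-uniform domination in the last step. It is what makes the limit $s \to 0^{+}$ legitimate on the $\calV$-side, and it rests entirely on the polarization hypothesis: the homogeneous growth of $\pair{\phi}{\bbeta}$ along the fibres is simultaneously what makes the unperturbed integral over $\calV$ converge as a tempered pairing and what lets the regularizing factor $e^{-2\pi s\pair{\phi}{\bbeta}}$ relax to $1$ without destroying fibrewise integrability. The remaining points---checking $d_{W_s}$-closedness of the forms involved and matching the restriction-to-$Z$ in Proposition \ref{BasicAbelianLocalization} against the right-hand side of \eqref{EquationOfThm}---are routine.
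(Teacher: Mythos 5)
Your proposal is correct and takes essentially the same route as the paper: pass to a regularized parameter $W_s=2\pi\i(X-\i s\bbeta)$ for $s>0$, apply the vector-bundle abelian localization of Proposition \ref{BasicAbelianLocalization} at each fixed $s$ (using the exponential damping from the polarization hypothesis to ensure rapid decrease along the fibres), pair with a Schwartz density, and let $s\to0^+$ using Fubini and dominated convergence on both sides, with compactness of $Z$ handling the right-hand side. The only cosmetic difference is one of ordering---the paper inserts the $s$-limit first and then pulls it through the integrals, whereas you apply the $s>0$ localization identity first and then take the limit of the paired quantities---but the ingredients and justifications are the same.
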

\noindent (In this expression, the integrands should be paired with a rapidly-decreasing smooth measure on $\lieh$ before the integrals over $\calV$, $Z$ are performed.)
\begin{proof}
Let $f\, dX$ be a rapidly decreasing smooth measure on $\lieh$, with Fourier transform $\hat{f}$.  Write $\alpha=\sum_k \alpha_k p_k$.  Then
\[ \int_{\lieh} \alpha(2\pi \i X) e^{\omega-2\pi \i \pair{\phi}{X}}f(X)\, dX=\sum_k \alpha_k e^{\omega} \phi^\ast (p_k(-\partial)\hat{f}).\]
Since $p_k(-\partial) \hat{f}$ is a Schwartz function, its pullback by $\phi$ is rapidly decreasing along the fibres of $\pi$.  It follows that the left-hand-side of \eqref{EquationOfThm} defines a tempered generalized function.  Moreover, we have:
\begin{align*}
\int_{\calV} \int_{\lieh} \alpha(2\pi \i X) e^{\omega-2\pi \i \pair{\phi}{X}} f(X)\, dX &=
\int_{\calV} \int_{\lieh} \lim_{s\rightarrow 0^+} \alpha(2\pi \i(X-\i s\bbeta)) e^{\omega-2\pi \i \pair{\phi}{X-\i s\bbeta}} f(X)\, dX,\\
&=\lim_{s\rightarrow 0^+} \int_{\calV} \int_{\lieh} \alpha(2\pi \i(X-\i s\bbeta)) e^{\omega-2\pi \i \pair{\phi}{X-\i s\bbeta}} f(X)\, dX,\\
&=\lim_{s\rightarrow 0^+} \int_{\lieh} f(X)\, dX \int_{\calV} \alpha(2\pi \i(X-\i s\bbeta)) e^{\omega-2\pi \i \pair{\phi}{X-\i s\bbeta}}.
\end{align*}
Here, the second line follows from the dominated convergence theorem, since the integrand is rapidly decreasing along the fibres of $\calV$ and on $\lieh$.  The third line follows from Fubini's theorem, using the fact that $e^{-2\pi \i \pair{\phi}{X-\i s\bbeta}}$ is rapidly decreasing along the fibres (so that the decay of $f(X)$ is no longer needed to guarantee convergence of the integral over $\calV$).  We can now apply the previous Proposition \ref{BasicAbelianLocalization} to the integral over $\calV$, since the integrand is rapidly decreasing along the fibres, and is closed for the differential $d_W$, $W=2\pi \i (X-\i s\bbeta)$.  The result is:
\[ \int_{\calV} \int_{\lieh} \alpha(2\pi \i X) e^{\omega-2\pi \i \pair{\phi}{X}} f(X)\, dX
= \lim_{s\rightarrow 0^+} \int_{\lieh} f(X) \, dX \int_{Z} \frac{\alpha(2\pi \i(X-\i s\bbeta)) e^{\omega-2\pi \i \pair{\phi}{X-\i s\bbeta}}}{\Eul(\calV,2\pi \i (X-\i s\bbeta))}.\]
Applying Fubini's theorem and the dominated convergence theorem now in the opposite direction gives the result.
\end{proof}

\noindent In order to directly apply Theorem \ref{VectorBundleLocalization} in the main part of the paper, we would need to have a version of \ref{VectorBundleLocalization} for the orbifold $\calV/T_\Delta^\prime$.  Alternatively, we can slightly modify \ref{VectorBundleLocalization} to work with basic forms on $\calV$, which is what we do now.
\begin{corollary}
\label{OrbifoldVectorBundleLocalization}
Let $H, \calV, Z, \omega-\phi, \bbeta,\alpha$ be as in Theorem \ref{VectorBundleLocalization}.  Let $H^\prime$ be a \emph{second} torus acting on $\calV$ and commuting with $H$.  Suppose the action of $H^\prime$ on the base $Z$ is locally free, and that $\alpha$, $(\omega-\phi)$ are $H^\prime$-basic.  Let $q:\calV \rightarrow \calV/H^\prime$ be the quotient map, and let $\nu \in \Omega(Z)$ be a form such that $q_\ast \nu =c$, a constant.  Then we have the following equality of tempered generalized functions on $\lieh$:
\begin{equation} 
\label{ThmEquation}
\int_{\calV} \alpha(2\pi \i X) \nu e^{\omega-2\pi \i \pair{\phi}{X}} = \int_{Z} 
\alpha(2\pi \i X) \nu e^{\omega-2\pi \i \pair{\phi}{X}}\Eul^{-1}_{\bbeta}(\calV,2\pi \i X).
\end{equation}
\end{corollary}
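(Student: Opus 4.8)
I would reduce \eqref{ThmEquation} to Theorem \ref{VectorBundleLocalization} applied to the orbifold vector bundle obtained by quotienting out the locally free $H^\prime$-action. As a preliminary step I would arrange all the auxiliary data in the localization argument to be $H^\prime$-basic. Since $H^\prime$ acts locally freely on $Z=\calV^H$, and the stabilizer of a point in a fibre of $\calV$ is a subgroup of the (finite) stabilizer of its image in $Z$, the $H^\prime$-action on all of $\calV$ is locally free; hence one may choose the connection defining the equivariant Euler form $\Eul(\calV,\cdot)$ — and, via the scalar-multiplication homotopy operator, the associated primitive $\lambda$ with $\tau-\pi^\ast\Eul(\calV)=d_H\lambda$ — to be $H^\prime$-basic, together with the compactly supported $H$-equivariant Thom form $\tau$. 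Then $\Eul(\calV,\cdot)$ and $\Eul^{-1}_{\bbeta}(\calV,\cdot)$ are $H^\prime$-basic. (This is the choice implicit in the applications in the body of the paper; any two choices of connection differ by $d_H$-exact terms, which integrate to zero by Stokes using the rapid decay along the fibres forced by the polarization hypothesis.)

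With this in hand I would push everything down to $\calV/H^\prime$. Since $\alpha$ and $\omega-\phi$ are $H^\prime$-basic they descend to an $H$-equivariantly closed form $\bar\alpha$ and an $H$-equivariantly closed $2$-form $\bar\omega-\bar\phi$ on the orbifold vector bundle $\pi\colon\calV/H^\prime\to Z/H^\prime$, which is again oriented of even rank over a compact oriented base, satisfies $(\calV/H^\prime)^{H}=Z/H^\prime$, and for which $\bbeta$ remains a polarizing vector; thus all hypotheses of Theorem \ref{VectorBundleLocalization} descend. The form $\nu$ carries out the fibre integration over the $H^\prime$-orbits: since $q_\ast\nu=c$ and $\deg\nu=\dim H^\prime$, the projection formula (base change for fibre integration along the fibre square) gives, for any form $\bar\mu$ on $\calV/H^\prime$,
\[ \int_{\calV}(q^\ast\bar\mu)\,\nu = c\int_{\calV/H^\prime}\bar\mu, \]
and likewise with $Z$, $Z/H^\prime$ in place of $\calV$, $\calV/H^\prime$. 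Applying this with $\bar\mu=\bar\alpha(2\pi\i X)e^{\bar\omega-2\pi\i\pair{\bar\phi}{X}}$ on the left side of \eqref{ThmEquation}, and with $\bar\mu=\bar\alpha(2\pi\i X)e^{\bar\omega-2\pi\i\pair{\bar\phi}{X}}\Eul^{-1}_{\bbeta}(\calV/H^\prime,2\pi\i X)$ on the right side, reduces \eqref{ThmEquation} — after cancelling the common factor $c$ — to the identity of Theorem \ref{VectorBundleLocalization} for the orbifold vector bundle $\calV/H^\prime\to Z/H^\prime$.

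It then remains to invoke that theorem in the orbifold setting. The proof of Theorem \ref{VectorBundleLocalization}, and of the underlying Proposition \ref{BasicAbelianLocalization}, uses only: existence of a compactly supported equivariant Thom form, the explicit scalar-multiplication homotopy operator (which produces a primitive with $|\lambda(W)|$ and $|d_W\lambda(W)|$ bounded along the fibres), invertibility of $\Eul(\calV,\cdot)$ off the fixed set, Fubini and dominated convergence for the regularization $s\to 0^+$, and Stokes' theorem for forms decaying rapidly along the fibres. Each of these remains available for orbifold vector bundles — they are local on uniformizing charts, or one simply checks that the constructions and estimates go through verbatim — so the argument carries over unchanged. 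I expect the only genuine point requiring care to be precisely this last step, namely confirming that nothing in the cobordism/Thom-form proof of Proposition \ref{BasicAbelianLocalization} is obstructed by orbifold singularities in the base, together with the bookkeeping in the first step that makes $\tau$, $\lambda$ and the connection simultaneously $H^\prime$-basic so that the descent to $\calV/H^\prime$ is legitimate.
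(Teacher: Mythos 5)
Your proof is correct, but it takes a structurally different route from the paper's. You reduce the whole identity \eqref{ThmEquation} to an orbifold instance of Theorem \ref{VectorBundleLocalization} on the quotient bundle $\calV/H^\prime \rightarrow Z/H^\prime$, using the projection formula $\int_{\calV}(q^\ast\bar\mu)\nu = c\int_{\calV/H^\prime}\bar\mu$ (after arranging $H^\prime$-basic data for $\tau$, $\lambda$, the connection and hence $\Eul$ and $\Eul^{-1}_{\bbeta}$), and then you argue that the entire chain of lemmas -- Thom form, homotopy operator estimates, Fubini/dominated convergence, Stokes -- carries over to orbifolds. The paper instead stays on $\calV$ throughout: it reruns the proofs of Proposition \ref{BasicAbelianLocalization} and Theorem \ref{VectorBundleLocalization} verbatim with $\eta\nu$, resp.\ $\alpha\nu$, inserted into the integrand, and observes that the \emph{only} step that is not a literal repeat is the vanishing of the Stokes term \eqref{StokesStep}, now $\int_\calV d_W(\eta\,\pi^\ast\Eul^{-1}(\calV,W)\lambda(W))\nu$. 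For that single step one pushes forward along $q$ -- all factors but $\nu$ are $H^\prime$-basic and $q_\ast\nu=c$ -- and applies orbifold Stokes on $\calV/H^\prime$. The upshot is that the paper's argument requires the orbifold input at exactly one point, whereas yours requires it wholesale; conversely, your version isolates the orbifold localization theorem as a stand-alone statement, which is conceptually transparent and would be reusable. Both are sound, but if you go your route you should at least name the orbifold Stokes theorem as the essential new input and note that the rapid-decay estimates and dominated convergence are genuinely local-in-charts statements (your closing paragraph gestures at this correctly); the paper sidesteps the need to make that case for anything except Stokes.
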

\begin{proof}
Since $H^\prime$ acts locally freely on the base $Z$, we can choose an $H^\prime$-invariant connection $\sigma$ on $\pi:\calV \rightarrow Z$ such that the vector fields generated by the $H^\prime$ action are horizontal.  The connection 1-form is then $H^\prime$-basic, and thus the corresponding representative for the Euler form is $H^\prime$-basic.  We can also take the Thom form $\tau$ to be $H^\prime$-basic (for example, using the Mathai-Quillen representative built using $\sigma$).  The proof is now almost exactly the same: we repeat the proof of Proposition \ref{BasicAbelianLocalization} with $\eta \nu$ replacing $\eta$, and then repeat the proof of Theorem \ref{VectorBundleLocalization} with $\alpha \nu$ replacing $\alpha$.  The only step requiring modification is the final step \eqref{StokesStep} in the proof of Proposition \ref{BasicAbelianLocalization}: we are left with the integral
\[ \int_\calV \eta \nu \pi^\ast \Eul^{-1}(\calV,W)d_W \lambda(W)= \int_{\calV} d_W \bigg( \eta \pi^\ast \Eul^{-1}(\calV,W) \lambda(W)\bigg) \nu.\]
All forms in the integrand are $H^\prime$-basic, except $\nu$ ($\lambda$ is basic if we use the de Rham homotopy operator, for example).  Integrating over the fibres of $q$ and applying the orbifold version of Stokes' theorem on $\calV/H^\prime$ shows that this integral vanishes.
\end{proof}
In the main part of the paper, we apply \ref{OrbifoldVectorBundleLocalization} to the case $H=T_\Delta$, $H^\prime=T_\Delta^\prime$, $\omega-\phi=\omega_\calV-\phi_\calV$, and $\alpha$ is the $T_\Delta$-equivariantly closed, $T_\Delta^\prime$-basic (in fact, $T$-basic) form $\balpha(X)e^{\pair{\xi}{F}}$ with $\xi \in (\liet_\Delta^\perp)^\ast$ fixed.

Finally, we describe briefly how to compute the inverse Fourier transform of $\Eul_{\bbeta}^{-1}(\calV)(2\pi \i X)$.  Choose a complex structure on $\calV$ such that the complex weights $\alpha_1^-,...,\alpha_n^-$ of the $H$-action have negative pairing with $\bbeta$.  The bundle $\calV$ splits into a direct sum of weight sub-bundles $\calV_k$:
\[ \calV=\bigoplus_{k=1}^n \calV_k,\]
where $H$ acts on $\calV_k$ with weight $\alpha_k^- \in \lieh^\ast$.  As the Euler class is multiplicative, it suffices to give an expression for the inverse Fourier transform of $\Eul_{\bbeta}^{-1}(\calV_k)(2\pi \i X)$ (the full result will then be a convolution).  So assume $\calV$ has complex rank $r$ and that $H$ acts on $\calV$ by a single weight $\alpha^- \in \lieh^\ast$.

Choose an $H^\prime$-basic connection on $\calV$ with $\mathfrak{u}(\calV)$-valued curvature $R$.  One has
\[ \Eul(\calV,2\pi \i X)= \epsilon \, \, \textnormal{det}_{\bC}\Big( \tfrac{\i}{2\pi}R-2\pi \i\pair{\alpha^-}{X}\Big),\]
where $\epsilon=\pm 1$ is a sign, equal to $-1$ iff the orientation on $\calV$ induced by the complex structure is opposite the original orientation on $\calV$.  For example, if $\calV$ is a (complex) line bundle, $\Eul(\calV,2\pi \i X)=\epsilon (c_1(\calV)-2\pi \i\pair{\alpha^-}{X})$.  Set $z=-2\pi \i\pair{\alpha^-}{X}$ and expand the determinant
\[ \Eul(\calV,2\pi \i X)=\epsilon z^r\bigg(1+\sum_{m=1}^rc_m(\calV)z^{-m}\bigg),\]
where $c_m(\calV)$ is the $m^{th}$ Chern class of $\calV$.  Inverting and expanding in power series:
\[ \frac{1}{\Eul(\calV,2\pi \i X)}=\frac{\epsilon}{z^r} \sum_{\ell \ge 0} \bigg(-\sum_{m=1}^r c_m(\calV)z^{-m}\bigg)^\ell.\]
Thus
\[ \Eul_{\bbeta}^{-1}(\calV,2\pi \i X)=\lim_{s\rightarrow 0^+}\frac{\epsilon}{(-2\pi \i\pair{\alpha^-}{X-\i s\bbeta})^r} \sum_{\ell \ge 0} \bigg(-\sum_{m=1}^r \frac{c_m(\calV)}{(-2\pi \i\pair{\alpha^-}{X-\i s\bbeta})^m} \bigg)^\ell.\]
The sum over $\ell$ truncates since $c_m(\calV)^\ell=0$ for $\ell$ larger than half the dimension of the base.  Using the fact that $\pair{\alpha^-}{\bbeta} < 0$,
\[ \calF^{-1}\bigg( \lim_{s\rightarrow 0^+}\frac{1}{-2\pi \i\pair{\alpha^-}{X-\i s\bbeta}}\bigg) = H_{-\alpha^-}=H_{\alpha^+},\]
where $\alpha^+=-\alpha^-$ (note $\pair{\alpha^+}{\bbeta}>0$) and $H_{\alpha^+}$ denotes the Heaviside distribution defined by
\[ \pair{H_{\alpha^+}}{f}=\int_0^\infty dt f(t\alpha^+).\]
Since the Fourier transform of a product is the convolution of the Fourier transforms, we obtain
\[ \calF^{-1}(\Eul_{\bbeta}^{-1}(\calV,2\pi \i X))=\epsilon H_{\alpha^+}^r \star \sum_{\ell \ge 0} \bigg(-\sum_{m=1}^r c_m(\calV) H_{\alpha^+}^m \bigg)^\ell.\] 
Hence in general we have
\[ \calF^{-1}(\Eul_{\bbeta}^{-1}(\calV,2\pi \i X))=\epsilon \prod_{k=1}^n H_{\alpha_k^+}^{r_k} \star \sum_{\ell \ge 0} \bigg(-\sum_{m=1}^{r_k} c_m(\calV_k) H_{\alpha_k^+}^m \bigg)^\ell.\] 
The exponents and product over $k$ in this expression denote convolution, and $r_k$ is the complex rank of $\calV_k$.

\section{Hamiltonian cobordisms}
In this appendix, we recall the definition of Hamiltonian cobordisms (\cite{GinzburgGuilleminKarshon},\cite{HamiltonianCobordismBook},\cite{KarshonHarada}), and then outline proofs of some key results referred to in Section 2.

\begin{definition}
\label{HamiltonianCobordism}
Let $(N_i,\omega_i,\phi_i,\alpha_i)$, $i=0,1$ and $(\tilde{N},\tomega,\tphi,\talpha)$ be Hamiltonian $G$-spaces equipped with closed equivariant differential forms.  We say that $(\tilde{N},\tomega,\tphi,\talpha)$ is a \emph{proper Hamiltonian cobordism} between $(N_0,\omega_0,\phi_0,\alpha_0)$ and $(N_1,\omega_1,\phi_1,\alpha_1)$ if $\tphi$ is proper on the support of $\talpha$ and
\[ \partial \tN = N_0 \sqcup (-N_1), \hspace{0.5cm} \iota_{\partial \tN}^\ast (\tomega - \tphi) = (\omega_0-\phi_0)\sqcup (\omega_1-\phi_1), \hspace{0.5cm} \iota_{\partial \tN}^\ast \talpha = \alpha_0 \sqcup \alpha_1.\]
\end{definition}
\begin{remark}
Note that the definition implies $\tphi_i$ is proper on the support of $\alpha_i$, $i=0,1$.
\end{remark}
\begin{theorem}[\cite{KarshonHarada} Lemma 5.10]
\label{CobordismEquality}
In the setting of Definition \ref{HamiltonianCobordism}, 
\[\DH(N_0,\omega_0,\phi_0,\alpha_0)=\DH(N_1,\omega_1,\phi_1,\alpha_1).\]
\end{theorem}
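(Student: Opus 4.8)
The statement is Theorem~\ref{CobordismEquality}: a proper Hamiltonian cobordism between two Hamiltonian $G$-spaces equipped with closed equivariant forms forces equality of the twisted Duistermaat-Heckman distributions. The strategy is to reduce the claim to Stokes' theorem on the cobordism $\tN$, carried out either Fourier-coefficient-by-Fourier-coefficient (for compactly supported $\talpha$) or, in general, tested against arbitrary $f \in C^\infty_{\comp}(\lieg^\ast)$ — exactly mirroring the structure of the proof of Theorem~\ref{CohomologyClass}.

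First I would reduce to the case that $\talpha$ is compactly supported. As in the proof of Theorem~\ref{CohomologyClass}, pick a smooth $G$-invariant locally finite partition of unity $\{\rho_k\}$ on $\tN$ with each $\rho_k$ compactly supported. Given $f \in C^\infty_{\comp}(\lieg^\ast)$, the set $\supp(\talpha)\cap \supp(\tphi^\ast f)$ is compact (since $\tphi$ is proper on $\supp(\talpha)$), hence meets only finitely many $\supp(\rho_k)$; this lets one pull the sum out of the integral defining $\pair{\DH(\tN,\tomega,\tphi,\talpha)}{f}$ and reduce to the case where $\talpha$ is replaced by $\rho_k\talpha$. (One should check that $\rho_k\alpha_i := \iota_{\partial\tN}^\ast(\rho_k\talpha)$ still gives a cobordism between the correspondingly-cut-down boundary data, which is immediate since the $\rho_k$ restrict to a partition of unity on $\partial\tN$.) So assume $\talpha$ compactly supported.

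Next, with $\talpha$ compactly supported the distributions in question are compactly supported, so it suffices to match Fourier coefficients. Fix $X \in \lieg$; using the formula~\eqref{FourierCoefficients},
\[ \pair{\DH(N_0,\omega_0,\phi_0,\alpha_0)}{e^{-2\pi\i\pair{\cdot}{X}}} - \pair{\DH(N_1,\omega_1,\phi_1,\alpha_1)}{e^{-2\pi\i\pair{\cdot}{X}}} = \int_{\partial\tN} \iota_{\partial\tN}^\ast\big(e^{\tomega - 2\pi\i\pair{\tphi}{X}}\talpha(2\pi\i X)\big),\]
where the sign bookkeeping works out because $\partial\tN = N_0 \sqcup (-N_1)$ and $\iota_{\partial\tN}^\ast(\tomega-\tphi) = (\omega_0-\phi_0)\sqcup(\omega_1-\phi_1)$, $\iota_{\partial\tN}^\ast\talpha = \alpha_0\sqcup\alpha_1$. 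The integrand on $\tN$ is $d_{2\pi\i X}$-closed: $\tomega_G$ is equivariantly closed, hence $e^{\tomega - 2\pi\i\pair{\tphi}{X}}$ is $d_{2\pi\i X}$-closed, and $\talpha(2\pi\i X)$ is $d_{2\pi\i X}$-closed by hypothesis, so by the Leibniz rule their product is closed and $d_{2\pi\i X}$ of it is just $d$ of its top-degree part. Since $\talpha$ is compactly supported, so is the integrand, and Stokes' theorem on $\tN$ gives that the boundary integral equals $\int_{\tN} d\big(e^{\tomega-2\pi\i\pair{\tphi}{X}}\talpha(2\pi\i X)\big) = 0$. As this holds for all $X \in \lieg$, the two compactly supported distributions have equal Fourier coefficients, hence are equal.

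**Main obstacle.** The only genuinely delicate point is the properness/compact-support bookkeeping in the reduction step — ensuring that after multiplying by $\rho_k$ one still has a legitimate cobordism (in particular that $\tphi$ remains proper on $\supp(\rho_k\talpha) \subset \supp(\talpha)$, which is automatic) and that the infinite sum genuinely commutes with the pairing against each fixed $f$; this is precisely the argument already spelled out in the proof of Theorem~\ref{CohomologyClass} and can be cited or repeated verbatim. The Stokes step itself is routine once compact support is in hand; the orientation signs on $N_0$ versus $-N_1$ are exactly what makes the difference (rather than the sum) of the two DH distributions appear as the boundary term.
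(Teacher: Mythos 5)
Your final step --- for compactly supported $\talpha$: match Fourier coefficients, note $e^{\tomega-2\pi\i\pair{\tphi}{X}}\talpha(2\pi\i X)$ is $d_{2\pi\i X}$-closed so that $d$ of it equals $2\pi\i\,\iota(X_{\tN})$ of it and hence has vanishing top-degree part, and apply Stokes on $\tN$ --- is correct and coincides with the paper's argument. (Your clause ``$d_{2\pi\i X}$ of it is just $d$ of its top-degree part'' is garbled; what you want is that the top-degree part of $d$ of it vanishes.) The gap is in the reduction to compact support. You replace $\talpha$ by $\rho_k\talpha$ with $\{\rho_k\}$ a partition of unity on $\tN$, and assert this is the move already made in the proof of Theorem~\ref{CohomologyClass}. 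It is not: there the partition of unity multiplies a form $\beta$ \emph{inside} a $d_G$, so $d_G\beta=\sum_k d_G(\rho_k\beta)$ with each summand still exact. Here $\talpha$ is closed but not exact, and $\rho_k\talpha$ is not even closed ($d_G(\rho_k\talpha)=d\rho_k\wedge\talpha$), so it is not admissible data in Definition~\ref{HamiltonianCobordism} and, more to the point, the degree argument fails for each individual $k$: $d\big(\rho_k\,e^{\tomega-2\pi\i\pair{\tphi}{X}}\talpha(2\pi\i X)\big)$ acquires the top-degree contribution $d\rho_k\wedge(\cdots)$, whose integral over $\tN$ need not vanish. These residual terms cancel only upon summing over $k$, which just returns you to the original problem.

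The paper localizes differently and this difference is essential. It takes a partition of unity $\{\rho_k\}$ subordinate to a relatively compact cover $\{U_k\}$ of $\lieg^\ast$ (not of $\tN$), pulls back by $\tphi$, and inserts the cut-off \emph{after} the exterior derivative: it defines $\pair{\frakm_k}{f}:=\int_{\tN}(\tphi^\ast\rho_k)\,d\big(e^{\tomega}\talpha(-\partial)f\circ\tphi\big)$. Since $\tphi^\ast\rho_k$ multiplies an already-differentiated form, no $d(\tphi^\ast\rho_k)$ appears; for $f=e^{-2\pi\i\pair{\cdot}{X}}$ the form under the $d$ is the full, $d_{2\pi\i X}$-closed cocycle and the degree argument kills $\frakm_k$ for every $k$. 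Compact support of $\frakm_k$ (in $\overline{U_k}\subset\lieg^\ast$) is what legitimizes pairing against the exponential. To repair your version, either adopt this device, or if you insist on a $\tN$-side partition of unity, place each $\rho_k$ outside the $d$ in the Stokes computation rather than multiplying it onto $\talpha$.
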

\begin{proof}
Using the definition, the difference is a distribution $\frakm$ whose pairing with $f \in C^\infty_\comp(\lieg^\ast)$ is
\begin{align*} 
\pair{\frakm}{f} &= \int_{\partial \tN} \iota_{\partial \tN}^\ast (e^{\tomega} \talpha(-\partial)f \circ \tphi)\\
&= \int_{\tN} d(e^{\tomega} \talpha(-\partial)f\circ \tphi).
\end{align*}
Let $\{U_k \}$ be a locally finite open cover of $\lieg^\ast$ by relatively compact sets, and let $\{\rho_k \}$ be a partition of unity subordinate to this open cover.  Define distributions $\frakm_k$,
\[ \pair{\frakm_k}{f} = \int_{\tN} (\tphi^\ast \rho_k) \, d(e^{\tomega} \talpha(-\partial)f\circ \tphi).\]
Then $\supp(\frakm_k) \subset \overline{U_k}$, and it follows that $\frakm=\sum_k \frakm_k$ (the sum is locally finite) and each $\frakm_k$ is compactly supported.  To show $\frakm_k=0$ it suffices to show all of its Fourier coefficients vanish.  For $X \in \lieg$ fixed
\begin{align*}
\pair{\frakm_k}{e^{-2\pi \i \pair{-}{X}}}&= \int_{\tN} (\tphi^\ast \rho_k) \, d(e^{\tomega-2\pi \i \pair{\tphi}{X}} \talpha(2\pi \i X))\\
&=2\pi \i \int_{\tN} (\tphi^\ast \rho_k) \, \iota(X_N)(e^{\tomega-2\pi \i \pair{\tphi}{X}} \talpha(2\pi \i X)),
\end{align*}
where we have used $(d-2\pi \i \iota(X_N))(e^{\tomega-2\pi \i \pair{\tphi}{X}} \talpha(2\pi \i X))=0$.  The last line vanishes, because the integrand has no top-degree part.
\end{proof}

\noindent The next result can be compared to Theorem \ref{CohomologyClass}.
\begin{theorem}[\cite{KarshonHarada}]
\label{CohomologousTwoForms}
Let $N$ be a $G$-manifold, and $\alpha$ a closed equivariant differential form on $N$.  Let $v:N \rightarrow \lieg$ be a bounded taming map.  Let $\omega_i-\phi_i$, $i=0,1$ be two equivariant 2-forms on $N$.  Suppose that
\begin{enumerate}
\item $[\omega_0-\phi_0]=[\omega_1-\phi_1] \in H_G(N)$
\item $\pair{\phi_i}{v}$ is proper and bounded below on $\supp(\alpha)$, $i=0,1$.
\end{enumerate}
Then $\DH(N,\omega_0,\phi_0,\alpha)=\DH(N,\omega_1,\phi_1,\alpha)$.
\end{theorem}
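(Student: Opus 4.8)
The plan is to build a \emph{proper Hamiltonian cobordism} between $(N,\omega_0,\phi_0,\alpha)$ and $(N,\omega_1,\phi_1,\alpha)$ and then invoke Theorem \ref{CobordismEquality}. First, hypothesis (1) gives an equivariant $1$-form $\mu$ with $(\omega_1-\phi_1)-(\omega_0-\phi_0)=d_G\mu$. Since a total-degree-$1$ element of the Cartan complex has no polynomial part, $\mu\in\Omega^1(N)^G$ is an ordinary invariant $1$-form, and the equation splits as $\omega_1-\omega_0=d\mu$ and $\pair{\phi_1-\phi_0}{X}=\mu(X_N)$. On $\tN:=N\times[0,1]$, with projection $\pi:\tN\to N$ and coordinate $t$, I would set
\[ \tomega-\tphi:=\pi^\ast(\omega_0-\phi_0)+d_G\big(t\,\pi^\ast\mu\big).\]
This is manifestly $d_G$-closed. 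Using $\iota(X_{\tN})(\pi^\ast\mu)=\pi^\ast\mu(X_N)=\pair{\pi^\ast(\phi_1-\phi_0)}{X}$, one computes that its moment-map part is the straight-line interpolation $\tphi=(1-t)\pi^\ast\phi_0+t\,\pi^\ast\phi_1$, while its $2$-form part is $\tomega=(1-t)\pi^\ast\omega_0+t\,\pi^\ast\omega_1+dt\wedge\pi^\ast\mu$; the essential point is that $\mu$ perturbs only the $2$-form part, not the moment map. Taking $\talpha:=\pi^\ast\alpha$ and orienting $\tN$ so that $\partial\tN=N_0\sqcup(-N_1)$ (with $N_0,N_1$ the copies of $N$ at $t=0,1$), the restrictions of $\tomega-\tphi$ and of $\talpha$ to $\partial\tN$ are $(\omega_0-\phi_0)\sqcup(\omega_1-\phi_1)$ and $\alpha\sqcup\alpha$, as required in Definition \ref{HamiltonianCobordism}.

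The remaining point is that $\tphi$ is proper on $\supp(\talpha)=\supp(\alpha)\times[0,1]$. Put $S:=\supp(\alpha)$, a closed subset of $N$, and let $\tilde v:=\pi^\ast v$, a bounded taming map on $\tN$. By Lemma \ref{TameProper} it suffices to check that $\pair{\tphi}{\tilde v}$ is proper on $S\times[0,1]$. Writing $f_i:=\pair{\phi_i}{v}$, hypothesis (2) supplies a constant $C$ with $f_i\ge -C$ and $f_i$ proper, both on $S$ (the latter, via Lemma \ref{TameProper} again, is what makes $\DH(N,\omega_i,\phi_i,\alpha)$ defined in the first place). Then $\pair{\tphi}{\tilde v}(p,t)=(1-t)f_0(p)+t f_1(p)\ge\min\{f_0(p),f_1(p)\}\ge -C$, and for any $b\in\mathbb{R}$ the subset of $S\times[0,1]$ where this convex combination is $\le b$ is contained in $\big(f_0^{-1}([-C,b])\cup f_1^{-1}([-C,b])\big)\times[0,1]$, which is compact. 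Hence $\pair{\tphi}{\tilde v}$ is proper (and bounded below) on $S\times[0,1]$.

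With this, $(\tN,\tomega,\tphi,\talpha)$ is a proper Hamiltonian cobordism between $(N,\omega_0,\phi_0,\alpha)$ and $(N,\omega_1,\phi_1,\alpha)$, and Theorem \ref{CobordismEquality} yields $\DH(N,\omega_0,\phi_0,\alpha)=\DH(N,\omega_1,\phi_1,\alpha)$. The construction of the cobordism is purely formal; the one place where real content enters is the properness check, and this is precisely where the \emph{bounded below} part of hypothesis (2) is used — a convex combination of two merely proper functions can fail to be proper (its sublevel sets can escape to infinity along directions where $f_0\to-\infty$ and $f_1\to+\infty$), so I expect this to be the main obstacle. The observation that the correcting term $d_G(t\,\pi^\ast\mu)$ leaves $\tphi$ equal to the linear interpolation of $\phi_0$ and $\phi_1$ is what makes the argument go through.
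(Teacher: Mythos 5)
Your proof is correct and takes essentially the same route as the paper's: build the cobordism $N\times[0,1]$ using $d_G(t\,\mu)$ to interpolate the two equivariant 2-forms, observe that the moment map becomes the straight-line interpolation, check properness via Lemma \ref{TameProper}, and conclude with Theorem \ref{CobordismEquality}. The only difference is that you spell out the properness of the convex combination in more detail (the paper states it as a one-line fact); your reasoning there is correct and highlights exactly why the bounded-below hypothesis is needed.
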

\begin{proof}
Write $(\omega_1-\phi_1)-(\omega_0-\phi_0)=d_G \eta$ for some $\eta \in \Omega^1_G(N)$.  Let $W=N \times [0,1]$, and pull back all equivariant forms as well as $v$ by $\pr_1$ (pullbacks omitted from notation).  Let $t$ be the coordinate on $[0,1]$.  Define $\tomega_G=\tomega-\tphi$ by
\[ \tomega_G = (\omega_0)_G + d_G(t\eta).\]
We have $\tphi=(1-t)\phi_0+t\phi_1$, and thus
\[ \pair{\tphi}{v}=(1-t)\pair{\phi_0}{v}+t\pair{\phi_1}{v}.\]
Since a convex linear combination of proper, bounded below maps is proper and bounded below, $\pair{\tphi}{v}$ is proper and bounded below on $\supp(\alpha)$.  Since $v$ is bounded, $\tphi$ is proper (Prop. \ref{TameProper}).  $(W,\tomega,\tphi,\alpha)$ is the desired proper Hamiltonian cobordism.
\end{proof}
\begin{remark}
\label{SmoothCollapseCohomologous}
Recall (see remark \ref{SmoothCollapse}) that one situation in which the condition $[\omega_0-\phi_0]=[\omega_1-\phi_1] \in H_G(N)$ is satisfied, is if $N$ can be smoothly collapsed to part of a $G$-invariant submanifold $Z$, and $\iota_Z^\ast(\omega_0-\phi_0)=\iota_Z^\ast(\omega_1-\phi_1)$.
\end{remark}

The Harada-Karshon Theorem is proven by showing that $N$ is cobordant to a small neighbourhood of the critical set $Z$.  The main challenge in the cobordism approach (\cite{GinzburgGuilleminKarshon},\cite{HamiltonianCobordismBook},\cite{KarshonHarada}) is to construct a proper moment map on the cobording manifold.  As already pointed out in Lemma \ref{TameProper}, when $v$ is bounded, $\pair{\phi}{v}$ proper and bounded below implies that $\phi$ is proper.  The convenience of working with the condition that $\pair{\phi}{v}$ be proper and bounded below comes from two point-set topology facts:
\begin{enumerate}[(A)]
\item A finite collection of proper, bounded below functions $f_i$ can be patched together with a partition of unity, and the result is again proper and bounded below (\cite{KarshonHarada}, Lemma 3.5).
\item For $G$ compact, the $G$-average of a proper, bounded below function is again proper and bounded below (\cite{KarshonHarada}, Lemma 3.6).
\end{enumerate}
\noindent We next outline the proof of Lemma \ref{Hypotheses}, which we break into `existence' and `uniqueness' parts.
\begin{lemma}[`existence', \cite{KarshonHarada} Proposition 3.4]
\label{ExistencePart}
Let $(N,\omega,\phi)$ be a Hamiltonian $G$-manifold, possibly with boundary, and $\alpha$ a closed equivariant differential form.  Let $v:N \rightarrow \lieg$ be a bounded taming map with localizing set $Z$.\footnote{We \emph{do not} require that $Z$ be smooth here.}  Let $Y \supset Z$ be a $G$-invariant closed set.  Suppose that $\pair{\phi}{v}$ is proper and bounded below on $Y \cap \supp(\alpha)$.  Then there exists a $v$-polarized completion $(N,\tomega,\tphi)$ of $(N,\omega,\phi,\alpha)$ such that $\tomega-\tphi$ equals $\omega-\phi$ on a neighbourhood of $Y$.
\end{lemma}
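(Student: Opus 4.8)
The plan is to build the completion by a modification of the form $\tomega_G := \omega_G + d_G\beta$ for a suitable $G$-invariant $1$-form $\beta \in \Omega^1(N)^G$ that vanishes on a neighbourhood of $Y$. For any such $\beta$ the equivariant form $\omega_G + d_G\beta$ is again closed, and expanding the Cartan differential gives $\tomega = \omega + d\beta$ and $\tphi = \phi + \mu_\beta$, where $\mu_\beta(X) = \iota(X_N)\beta$ for $X\in\lieg$ (using $G$-invariance of $\beta$); thus $(N,\tomega,\tphi)$ is an honest (oriented, possibly degenerate) Hamiltonian $G$-space, and $\tomega - \tphi = \omega - \phi$ wherever $\beta = 0$, in particular near $Y$. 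So the whole problem reduces to choosing $\beta$ so that $\pair{\tphi}{v} = \pair{\phi}{v} + \iota(v_N)\beta$ is proper and bounded below on $\supp(\alpha)$.

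First I would fix a complete $G$-invariant Riemannian metric on $N$ (average an arbitrary one over $G$), writing $\langle\,,\rangle$ for it and $|\cdot|$ for the fibre norms, and note that the $G$-invariant vector field $v_N$ vanishes exactly on $Z \subseteq Y$. The next step — which I expect to be the main technical obstacle — is to thicken $Y$ while retaining the properness hypothesis: starting from the assumption that $\pair{\phi}{v}$ is proper and bounded below on the closed set $Y \cap \supp(\alpha)$, one shows these properties persist on $\overline{W} \cap \supp(\alpha)$ for a suitable $G$-invariant open $W \supseteq Y$. This is a point-set statement (properness of a continuous function restricted to a closed set propagates to the closure of an appropriately chosen open neighbourhood), and it is here that completeness of the metric is used, via Hopf--Rinow, to ensure that bounded neighbourhoods of the relevant compact sublevel sets are again compact. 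Fix nested $G$-invariant opens $Y \subseteq W_0 \subseteq \overline{W_0} \subseteq W_1$ with $\pair{\phi}{v}$ proper and bounded below on $\overline{W_1} \cap \supp(\alpha)$.

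Next I would choose a $G$-invariant proper function $h \in C^\infty(N)$ with $h \ge 2|\pair{\phi}{v}| + 2$; this exists because every continuous function on a manifold is dominated by a proper one, and the $G$-average of a proper, bounded-below function is again proper and bounded below (point-set fact (B) recalled above). On $N \setminus \overline{W_0}$, where $v_N$ is nowhere zero, set $\beta_0 := \tfrac{h}{|v_N|^2}\langle v_N, \cdot\rangle$, so that $\iota(v_N)\beta_0 = h$ there; pick $\chi \in C^\infty(N)^G$ with $\chi \equiv 0$ on a $G$-invariant neighbourhood of $Y$ inside $W_0$ and $\chi \equiv 1$ on $N \setminus W_1$, and put $\beta := \chi\beta_0$, extended by zero across $Z$ (where it is smooth, since $\chi$ vanishes near $Z$). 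Then $\beta$ vanishes near $Y$, so $\tomega_G := \omega_G + d_G\beta$ is the candidate completion, and $\pair{\tphi}{v} = \pair{\phi}{v} + \chi h$.

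It remains to verify that $\pair{\tphi}{v}$ is proper and bounded below on $A := \supp(\alpha)$, which I would do by splitting $A = (A \cap \overline{W_1}) \cup (A \setminus W_1)$. On $A \cap \overline{W_1}$ one has $\chi h \ge 0$, hence $\pair{\tphi}{v} \ge \pair{\phi}{v}$, which is bounded below there and has compact sublevel sets within $A \cap \overline{W_1}$ by the choice of $W_1$; so $(\pair{\tphi}{v})^{-1}([a,b]) \cap A \cap \overline{W_1}$ is a closed subset of a compact set. On $A \setminus W_1$ one has $\chi \equiv 1$, so $\pair{\tphi}{v} = \pair{\phi}{v} + h \ge \tfrac12 h + 1 \ge 2$, whence $(\pair{\tphi}{v})^{-1}([a,b]) \cap (A \setminus W_1) \subseteq \{h \le 2b - 2\}$ is compact. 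Taking the union of the two pieces shows $\pair{\tphi}{v}$ is proper and bounded below on $A$; together with the observations of the first paragraph this exhibits $(N,\tomega,\tphi)$ as a $v$-polarized completion of $(N,\omega,\phi,\alpha)$ agreeing with $\omega - \phi$ near $Y$. Throughout one keeps all cutoffs and functions $G$-invariant by averaging and by using point-set fact (A) when patching.
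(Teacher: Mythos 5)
Your proof is essentially the same as the paper's: both modify $\omega_G-\phi_G$ by $d_G\beta$ for a $G$-invariant $1$-form $\beta$ proportional to the metric-dual of $v_N$ and supported away from $Y$, so that $\pair{\tphi}{v}=\pair{\phi}{v}+\iota(v_N)\beta$ becomes proper and bounded below on $\supp(\alpha)$. The only cosmetic difference is that the paper first builds the target function $\psi$ (proper, bounded below, equal to $\pair{\phi}{v}$ near $Y$) via a partition of unity and sets $\beta=(\psi-\pair{\phi}{v})\,\Theta$ with $\Theta=g(v_N,\cdot)/g(v_N,v_N)$, so that $\pair{\tphi}{v}=\psi$ exactly, whereas you set $\beta=\chi h\,\Theta$ and verify $\pair{\phi}{v}+\chi h$ directly.

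One step in your write-up is under-justified and deserves care: the thickening claim. You assert that there is a $G$-invariant open $W_1\supset Y$ with $\pair{\phi}{v}$ proper and bounded below on $\overline{W_1}\cap\supp(\alpha)$. This is \emph{not} the statement of Harada--Karshon Lemma 3.8 (which the paper cites), which only gives an open $U\supset Y\cap\supp(\alpha)$ with $\pair{\phi}{v}$ proper and bounded below on $\overline U$. The paper avoids needing a literal neighbourhood of all of $Y$ by introducing $U_Y=U\cup(N\setminus\supp(\alpha))$ and splitting $\supp(\alpha)\subset\overline U\cup(N\setminus U_Y)$; note that for the naive choice $W_1=U_Y$ the set $\overline{W_1}\cap\supp(\alpha)$ may contain the entire boundary $\partial\,\supp(\alpha)$, where no control is assumed. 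Your stronger claim is in fact true, but it needs a small supplementary argument (e.g.\ take $W_1=U\cup V$, where $V$ is a union of balls $B(p,\tfrac12 d(p,\supp(\alpha)))$ over $p\in Y\setminus\supp(\alpha)$, so that $\overline V\cap\supp(\alpha)\subset Y\cap\supp(\alpha)\subset U$); as written, the appeal to completeness/Hopf--Rinow is a heuristic rather than a proof of this point-set fact. Once that is patched, the rest of your argument (the split of $\supp(\alpha)$ into $A\cap\overline{W_1}$ and $A\setminus W_1$, and the lower bound $\pair{\tphi}{v}\geq\tfrac12 h+1$ off $W_1$) goes through.
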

\begin{proof}
Since $\pair{\phi}{v}$ is proper and bounded below on $Y \cap \supp(\alpha)$, it is possible to find a $G$-invariant open neighbourhood $U \supset Y \cap \supp(\alpha)$ such that $\pair{\phi}{v}$ is proper and bounded below on $\overline{U}$ (\cite{KarshonHarada}, Lemma 3.8).  Let $U_Y=(N \setminus \supp(\alpha)) \cup U$ (an open neighbourhood of $Y$).  Let $\rho_1,\rho_2$ be a partition of unity subordinate to the open cover $U_Y, N \setminus Y$, and let $f:N \rightarrow \mathbb{R}$ be any proper and bounded below smooth function.  Let
\[ \psi^\prime = \rho_1 \pair{\phi}{v}+\rho_2 f.\]
It's clear that $\psi^\prime$ agrees with $\pair{\phi}{v}$ on a neighbourhood of $Y$.  Note that $\psi^\prime$ is proper and bounded below on $\overline{U}$, by point-set topology fact (A).  Also, $\psi^\prime$ agrees with $f$ on $N \setminus U_Y$, and so $\psi^\prime|_{N \setminus U_Y}$ is proper and bounded below.  Since $\supp(\alpha)\subset \overline{U}\cup (N \setminus U_Y)$ (a union of two closed sets), this proves that $\psi^\prime$ is proper and bounded below on $\supp(\alpha)$.  Now set $\psi$ to be the $G$-average of $\psi^\prime$.  By point-set topology fact (B), $\psi$ is proper and bounded below on $\supp(\alpha)$.  Moreover $\psi$ equals $\pair{\phi}{v}$ near $Y$.

Choose a $G$-invariant Riemannian metric $g$ on $N$, and define a 1-form $\Theta$ on $N \setminus Z$,
\[ \Theta = \frac{g(v_N,-)}{g(v_N,v_N)}.\]
Finally, let
\[ \tomega-\tphi = \tomega_G := \omega_G + d_G\big( (\psi-\pair{\phi}{v})\Theta \big).\]
Since $\psi$ and $\pair{\phi}{v}$ are equal near $Y$, $\tomega_G$ is defined on all of $N$ and equals $\omega_G$ on a neighbourhood of $Y$.  And
\[ \pair{\tphi}{v} = \psi \]
which is proper and bounded below on $\supp(\alpha)$ by construction.  $(N,\tomega,\tphi)$ is the desired $v$-polarized completion.
\end{proof}

\begin{lemma}[`uniqueness', \cite{KarshonHarada} Lemmas 4.12, 4.17]
\label{UniquenessPart}
Let $(N,\omega,\phi)$ be an oriented Hamiltonian $G$-manifold, and $\alpha$ a closed equivariant differential form.  Let $v_i, Z_i, (U_i,\tomega_i,\tphi_i)$, $i=0,1$ be two sets of data satisfying the following conditions:
\begin{enumerate}
\item $v_i:N \rightarrow \lieg$ is a bounded taming map with smooth localizing set $Z_i$,
\item $U_i$ is a $G$-invariant open set that can be smoothly collapsed to part of $Z_i$,
\item $\pair{\tphi_i}{v_i}$ is proper and bounded below on $Z_i \cap \supp(\alpha)$.
\end{enumerate}
Suppose further that $v_0$, $v_1$ agree on $\supp(\alpha)$.  Then
\[ \DH(U_1,\tomega_1,\tphi_1,\alpha)=\DH(U_0,\tomega_0,\tphi_0,\alpha).\]
\end{lemma}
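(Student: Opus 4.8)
The plan is to reduce the general statement to the special case in which the two sets of data share a common taming map $v$, a common (smooth) localizing set $Z$, and a common open neighbourhood $V$, and then to extract the equality of twisted Duistermaat-Heckman distributions from Theorem \ref{CohomologousTwoForms}. (This mirrors the split into Lemmas 4.12 and 4.17 in \cite{KarshonHarada}: the first treats fixed $v$ while varying the completion and the open set, the second allows $v$ to vary among taming maps agreeing on $\supp(\alpha)$.)

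\emph{Reduction to common data.} Since $v_0$ and $v_1$ agree on the closed set $\supp(\alpha)$, their localizing sets coincide there: $Z_0\cap\supp(\alpha)=Z_1\cap\supp(\alpha)=:C$, and $U_0,U_1$ are $G$-invariant open neighbourhoods of $C$. I would first show that each $\DH(U_i,\tomega_i,\tphi_i,\alpha)$ is unchanged if we replace $(U_i,\tomega_i,\tphi_i)$ by a second $v_i$-polarized completion of the same restricted data over a smaller $G$-invariant neighbourhood $V_i\subseteq U_i$ of $C$. This uses the "existence" Lemma \ref{ExistencePart} (applied on $U_i$ with $Y$ a small closed $G$-invariant neighbourhood of $C$, and keeping $\pair{\tphi_i}{v_i}$ proper and bounded below on $\supp(\alpha)\cap U_i$ via Lemma 3.8 of \cite{KarshonHarada} and the point-set facts (A), (B)) to interpolate, followed by the cobordism invariance of twisted DH distributions (Theorem \ref{CobordismEquality}) applied over $U_i\times[0,1]$. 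Next, since the DH-germ is insensitive to modifications of the taming map away from $\supp(\alpha)$ (the flexibility noted after Definition \ref{DHGerms}), one alters $v_0,v_1$ outside a neighbourhood of $\supp(\alpha)$ so that they agree on a common $G$-invariant neighbourhood $V$ of $C$ contained in $V_0\cap V_1$; shrinking $V$, one arranges that $V$ is smoothly collapsible to part of the (now common) localizing set $Z=\{v_V=0\}\subset V$, while $\pair{\tphi_i}{v}$ remains proper and bounded below on $\supp(\alpha)\cap V$. After this we are comparing two $v$-polarized completions $(V,\tomega_0,\tphi_0)$, $(V,\tomega_1,\tphi_1)$ of the same restricted data over the same open set.

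\emph{The common-data case.} Let $Z'\subseteq Z\cap V$ be the submanifold furnished by the smooth collapsing of $V$. By definition of a polarized completion, $\iota_Z^\ast(\tomega_i-\tphi_i)=\iota_Z^\ast(\omega-\phi)$, hence $\iota_{Z'}^\ast(\tomega_0-\tphi_0)=\iota_{Z'}^\ast(\tomega_1-\tphi_1)$, and Remark \ref{SmoothCollapseCohomologous} then gives $[\tomega_0-\tphi_0]=[\tomega_1-\tphi_1]$ in $H_G(V)$. Now apply Theorem \ref{CohomologousTwoForms} on the $G$-manifold $V$, with the bounded taming map $v$, the two equivariant $2$-forms $\tomega_i-\tphi_i$, and the cocycle $\alpha|_V$: hypothesis (1) is the cohomology identity just obtained, and hypothesis (2), that $\pair{\tphi_i}{v}$ is proper and bounded below on $\supp(\alpha|_V)$, is precisely the polarized-completion condition. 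Therefore $\DH(V,\tomega_0,\tphi_0,\alpha)=\DH(V,\tomega_1,\tphi_1,\alpha)$, and tracing back through the reduction yields $\DH(U_1,\tomega_1,\tphi_1,\alpha)=\DH(U_0,\tomega_0,\tphi_0,\alpha)$.

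\emph{Main obstacle.} The delicate part is the reduction step: producing a single open set $V$ and a single bounded taming map $v$ (with smooth localizing set, to apply Theorem \ref{CohomologousTwoForms}) while preserving the "proper and bounded below on $\supp(\alpha)$" condition throughout the shrinking of the $U_i$, the modification of the $v_i$ away from $\supp(\alpha)$, and the passage to the new localizing set. This is where the point-set topology inputs of \cite{KarshonHarada} (Lemmas 3.5, 3.6, 3.8), the existence Lemma \ref{ExistencePart}, and cobordism invariance (Theorem \ref{CobordismEquality}) are all needed; once the data live over one $V$ with one $v$, the conclusion is immediate from Theorem \ref{CohomologousTwoForms}.
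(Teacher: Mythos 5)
Your proposal takes a genuinely different route from the paper's. The paper proves the lemma with a \emph{single} Hamiltonian cobordism: it sets $U = U_0 \cup U_1$, forms the manifold with corners $W = (U \times [0,1]) \setminus \big((U\setminus U_0)\times\{0\} \cup (U\setminus U_1)\times\{1\}\big)$, uses the interpolated taming map $v = (1-t)v_0 + t v_1$ (which agrees with $v_0=v_1$ on $\supp(\alpha)$, so $\pair{\phi}{v}$ is proper and bounded below on $\supp(\alpha) \cap Z$ even though the localizing set $Z = \{v_W = 0\}$ need not be smooth), applies Lemma~\ref{ExistencePart} on $W$ to produce a single polarized completion, and invokes Theorem~\ref{CobordismEquality} to equate the DH distributions of the two boundary restrictions. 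Each boundary restriction $(U_i, \tomega|_{U_i}, \tphi|_{U_i})$ is a $v_i$-polarized completion of $(U_i,\omega,\phi,\alpha)$, and Remark~\ref{SmoothCollapseCohomologous} (via Theorem~\ref{CohomologousTwoForms}) then identifies its DH distribution with that of the given $(U_i,\tomega_i,\tphi_i)$. The advantage is that one never shrinks the $U_i$ nor modifies the $v_i$; the interpolation in the $[0,1]$-direction does all the work.

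Your two-step reduction mirrors the original split in \cite{KarshonHarada} into Lemmas~4.12 and 4.17, and can likely be carried out, but as written it has gaps that the paper's one-step cobordism sidesteps. First, shrinking $U_i$ to a smaller $G$-invariant neighbourhood $V_i$ of $C$ need not preserve the smooth-collapse property of Definition~\ref{DefinitionSmoothCollapse}: the collapsing homotopy $p_t$ on $U_i$ need not map $V_i$ into itself. (It does for tubular neighbourhoods, but the definition is deliberately more flexible for the argument in Section~4.4.) Second, the sentence invoking ``the flexibility noted after Definition~\ref{DHGerms}'' is circular, since that flexibility is recorded there precisely as a consequence of the uniqueness lemma you are proving. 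Third, and most substantively, after modifying $v_0,v_1$ to a common $v$ you must verify that each $(V,\tomega_i,\tphi_i)$ is a $v$-polarized completion in order to apply Theorem~\ref{CohomologousTwoForms}. The properness-and-boundedness check goes through because $v=v_i$ on $\supp(\alpha)$, but the boundary condition $\iota_{Z'}^\ast(\tomega_i - \tphi_i)=\iota_{Z'}^\ast(\omega-\phi)$ requires the new localizing set $Z'\subset\{v_V=0\}$ to lie inside $Z_i$ — which is only ensured where $v=v_i$ identically near $Z'$, not merely on $\supp(\alpha)$. You flag these issues at the end, but the paper's construction avoids them outright.
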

\begin{proof}
Put $U=U_0 \cup U_1$ and
\[ W = (U \times [0,1]) \setminus \big( (U \setminus U_0) \times \{0 \} \cup (U \setminus U_1) \times \{1 \} \big).\]
Pull back $\omega$, $\phi$, $v_0,v_1$, $\alpha$ along the projection $U \times [0,1] \rightarrow U$, and then restrict to $W$ (pullbacks omitted from notation).  Using $t$ as a coordinate on $[0,1]$, let
\[ v=(1-t)v_0+tv_1,\]
and let $Z=\{p|v_W(p)=0 \}$ be the corresponding localizing set (it need not be smooth).  Since $v_1,v_0$ agree on $\supp(\alpha)$,
\[v|_{\supp(\alpha)}=v_1|_{\supp(\alpha)}=v_0|_{\supp(\alpha)}.\]
It follows that $\pair{\phi}{v}$ is proper and bounded below on $\supp(\alpha) \cap Z$.  Let $(W,\tomega,\tphi)$ be a $v$-polarized completion of $(W,\omega,\phi,\alpha)$ (using Lemma \ref{ExistencePart}).  Since $v$ is bounded, $\tphi$ is proper on the support of $\alpha$.  Thus $W$ is a proper Hamiltonian cobordism between the two boundary components $U_0$, $U_1$.  By Theorem \ref{CobordismEquality}
\begin{equation}
\label{EqualityOfParticularDH}
\DH(U_0,\tomega|_{U_0},\tphi|_{U_0},\alpha)=\DH(U_1,\tomega|_{U_1},\tphi|_{U_1},\alpha).
\end{equation}
For $i=0,1$, $(U_i,\tomega|_{U_i},\tphi|_{U_i})$ is a $v|_{U_i}=v_i$-polarized completion of $(U_i,\omega,\phi,\alpha)$.  However, recall that $U_i$ is a $G$-invariant open set that can be smoothly collapsed to part of $Z_i$.  Remark \ref{SmoothCollapseCohomologous} shows that in this case, any $v_i$-polarized completion has the \emph{same} DH distribution.  Hence, the result actually follows from \eqref{EqualityOfParticularDH}.
\end{proof}

\noindent Finally, we outline the proof of Theorem \ref{HKTheorem}.
\begin{theorem}[\cite{KarshonHarada} Theorem 5.20]
\label{BasicCobordismResult}
Consider the setting of Lemma \ref{Hypotheses}.  Suppose further that $\pair{\phi}{v}$ is proper and bounded below on the support of $\alpha$.  Then
\[ \DH(N,\omega,\phi,\alpha) = \DH^{v}_Z(N,\omega,\phi,\alpha). \]
\end{theorem}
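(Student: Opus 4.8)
The plan is to reduce to the Hamiltonian cobordism results of Appendix B: I will construct a proper Hamiltonian cobordism between $(N,\omega,\phi,\alpha)$ and $(U,\tomega,\tphi,\alpha)$, where $(U,\tomega,\tphi)$ is a $v$-polarized completion as in Definition \ref{DHGerms}, and then invoke Theorem \ref{CobordismEquality}. Note first that, since $v$ is bounded and $\pair{\phi}{v}$ is proper and bounded below on $\supp(\alpha)$, Lemma \ref{TameProper} shows $\phi$ is proper on $\supp(\alpha)$, so that $\DH(N,\omega,\phi,\alpha)$ is defined; the content of the theorem is then to identify it with $\DH^v_Z(N,\omega,\phi,\alpha) = \DH(U,\tomega,\tphi,\alpha)$.

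First I would form the cobording manifold
\[ W = (N \times [0,1]) \setminus \big( (N \setminus U) \times \{1\} \big), \]
a $G$-manifold ($G$ acting trivially on $[0,1]$) with $\partial W = (N \times \{0\}) \sqcup (U \times \{1\})$; since $(N \setminus U) \times \{1\}$ is closed in $N \times [0,1]$ and the two boundary pieces lie over different values of the interval coordinate, $W$ is a genuine smooth manifold with boundary and no corners. Orient $W$ so that, in the sense of Definition \ref{HamiltonianCobordism}, $\partial W = N \sqcup (-U)$ with $N$ and $U$ in their given orientations ($U$ as an open subset of $N$). Pull back $\omega$, $\phi$, $v$ and $\alpha$ along the projection $\pr : W \rightarrow N$; then $\pr^\ast v$ is again a bounded taming map, its localizing set is $Z_W = (Z \times [0,1]) \cap W$, and $\supp(\pr^\ast\alpha) = (\supp(\alpha) \times [0,1]) \cap W$.

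Next I would apply Lemma \ref{ExistencePart} on $W$ with the closed $G$-invariant set $Y = Z_W \cup (N \times \{0\})$, which contains $Z_W$. Its hypothesis, that $\pair{\phi}{v}$ be proper and bounded below on $Y \cap \supp(\pr^\ast\alpha)$, follows by combining the two properness assumptions: on $Z_W \cap \supp(\pr^\ast\alpha)$ the function $\pair{\phi}{v}$ is independent of the interval coordinate and proper and bounded below by the properness hypothesis of Lemma \ref{Hypotheses} (on $Z \cap \supp(\alpha)$), while on $(N \times \{0\}) \cap \supp(\pr^\ast\alpha)$ it is the extra hypothesis of the present theorem. The lemma produces a $(\pr^\ast v)$-polarized completion $(W,\tomega_W,\tphi_W)$ of $(W,\pr^\ast\omega,\pr^\ast\phi,\pr^\ast\alpha)$ with $\tomega_W - \tphi_W = \pr^\ast(\omega - \phi)$ on a neighbourhood of $Y$. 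In particular its restriction to $N \times \{0\}$ equals $\omega - \phi$, so that boundary component is exactly $(N,\omega,\phi,\alpha)$, while its restriction to $U \times \{1\}$ is an equivariant 2-form agreeing with $\omega - \phi$ near $(Z \cap U) \times \{1\}$. Because $\pr^\ast v$ is bounded and $\pair{\tphi_W}{\pr^\ast v}$ is proper and bounded below on $\supp(\pr^\ast\alpha)$, Lemma \ref{TameProper} gives that $\tphi_W$ is proper on $\supp(\pr^\ast\alpha)$; hence $(W,\tomega_W,\tphi_W,\pr^\ast\alpha)$ is a proper Hamiltonian cobordism, in the sense of Definition \ref{HamiltonianCobordism}, between $(N,\omega,\phi,\alpha)$ and $(U,\tomega_W|_{U\times\{1\}},\tphi_W|_{U\times\{1\}},\alpha|_U)$.

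Finally, Theorem \ref{CobordismEquality} gives $\DH(N,\omega,\phi,\alpha) = \DH(U,\tomega_W|_{U\times\{1\}},\tphi_W|_{U\times\{1\}},\alpha)$. The pair $(\tomega_W|_{U\times\{1\}},\tphi_W|_{U\times\{1\}})$ is a $v$-polarized completion of $(U,\omega|_U,\phi|_U,\alpha|_U)$: it agrees with $\omega - \phi$ near $Z \cap U$ (hence near any smooth $Z' \subset Z \cap U$), and $\pair{\tphi_W|_{U\times\{1\}}}{v}$ is proper and bounded below on $\supp(\alpha) \cap U$ by restriction. Since $U$ can be smoothly collapsed to part of $Z$, Lemma \ref{Hypotheses}(2) says every $v$-polarized completion of $(U,\omega|_U,\phi|_U,\alpha|_U)$ has the same twisted DH distribution, namely $\DH^v_Z(N,\omega,\phi,\alpha)$ by Definition \ref{DHGerms}; combining, we obtain the asserted equality. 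The main point requiring care is the set-theoretic construction of $W$ and of $Y$, and verifying that the pulled-back taming map and all the properness conditions behave as claimed — this is essentially the device already used in the proof of Lemma \ref{UniquenessPart}, so I do not expect a serious obstacle beyond bookkeeping.
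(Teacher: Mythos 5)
Your proposal is correct and follows essentially the same route as the paper's proof: build $W=(N\times[0,1])\setminus\big((N\setminus U)\times\{1\}\big)$, take $Y=(Z\times[0,1])\cup(N\times\{0\})$, apply the existence lemma (Lemma \ref{ExistencePart}) to get a polarized completion on $W$ agreeing with the original data near $Y$, and conclude via Theorem \ref{CobordismEquality} together with the uniqueness/cohomological-invariance input (the paper cites Remark \ref{SmoothCollapseCohomologous}; you cite Lemma \ref{Hypotheses}(2), which is the same content). The additional details you supply about orientations, smoothness of $W$, the closed set $Y$, and the two properness checks are correct and consistent with the paper's argument.
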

\begin{proof}
Let $U$ be a $G$-invariant tubular neighbourhood of $Z$, and put
\[ W = (N \times [0,1]) \setminus \big((N\setminus U) \times \{1 \}\big), \hspace{1cm} Y=(Z \times [0,1]) \cup (N \times \{0 \}).\]
Pull back the equivariant forms and $v$ to $N \times [0,1]$ and restrict to $W$ (pullbacks omitted from notation).  By assumption, $\pair{\phi}{v}$ is proper and bounded below on $Y \cap \supp(\alpha)$.  Apply Lemma \ref{ExistencePart} to obtain a $v$-polarized completion $(W,\tomega,\tphi)$ of $(W,\omega,\phi,\alpha)$, with $\tomega-\tphi$ being equal to $\omega-\phi$ in a neighbourhood of $Y$ (in particular, near $N \times \{0 \}$).  Then $W$ is a proper Hamiltonian cobordism between $(N,\omega,\phi,\alpha)$ and $(U,\tomega|_{U \times \{1 \}},\tphi|_{U \times \{1 \}}, \alpha|_U)$.  The result follows from Theorem \ref{CobordismEquality} and Remark \ref{SmoothCollapseCohomologous}.
\end{proof}

\section{Smoothness of $Z$}
Here we discuss the role of the assumption (Lemma \ref{Hypotheses}) that the localizing set $Z$ is a smooth submanifold.  The smoothness assumption leads to a particularly simple description of the contribution of a component $Z_i \subset Z$ to \eqref{HKFormula}: it is the twisted DH distribution of \emph{any} $v$-polarized completion of a tubular neighbourhood of $Z_i$.

However, in general $Z=\{v_N=0 \}$ is not smooth.  On the other hand, the cobordism used in the H-K Theorem (see Theorem \ref{BasicCobordismResult}, or \cite{KarshonHarada} Theorem 5.20) does not use the smoothness assumption.  Neither does the `existence' part of Lemma \ref{Hypotheses}, since, for example, for the `polarized completion' $(U,\tomega,\tphi)$ which is constructed (see Lemma \ref{ExistencePart}, or \cite{KarshonHarada} Proposition 3.4), $\tomega-\tphi$ agrees with $\omega-\phi$ on an open neighbourhood of $Z$, and so also makes sense when $Z$ is singular.  The assumption only plays a role in the `uniqueness' part of Lemma \ref{Hypotheses} (see Lemma \ref{UniquenessPart}, or \cite{KarshonHarada} Lemmas 4.12, 4.17).  Thus, one obtains a localization formula for the Duistermaat-Heckman distribution quite generally, but the contribution $\DH(U_i,\tomega,\tphi,\alpha)$ of a component $Z_i$ is not as simple to describe.

In the general case, one can use the polarized completion appearing in the proof of Lemma \ref{ExistencePart}.  Alternatively, to get hold of the contribution $\DH(U_i,\tomega,\tphi,\alpha)$, one can try to choose a new taming map $v^\prime$ on $N^\prime :=U_i$ now with a smooth localizing set, and then apply the H-K Theorem to $N^\prime$ obtaining a new Hamiltonian cobordism from $N^\prime$ to a collection of smaller open sets $U^\prime_{j}$ around the components $Z_{j}^\prime$ of the localizing set for $v^\prime$.  Then
\[ \DH(N^\prime,\tomega,\tphi,\alpha)=\sum_j \DH^{v^\prime}_{Z_{j}^\prime}(N^\prime,\omega^\prime,\phi^\prime,\alpha)\]
and the uniqueness part of Lemma \ref{Hypotheses} applies to the terms on the right side of the equation.  In this way one obtains a formula for $\DH(N,\omega,\phi,\alpha)$ using a composition of Hamiltonian cobordisms.

In the case $v=\phi$ where $\phi:N \rightarrow \liet^\ast \simeq \liet$ is the moment map for the action of a torus $T$, a suitable $v^\prime$ on $N^\prime=U_i$ can be obtained by perturbing $v$ to $v^\prime=v-\gamma$, where $\gamma$ is a small `generic' element of $\liet$ (see Section 4.3).  The perturbation $\gamma$ can even be chosen independently for each component $Z_i$ of $Z$.  This is similar to the perturbation used by Paradan in \cite{Paradan98}.

To see that the Harada-Karshon Theorem applies to $(N^\prime=U_i,\tomega,\tphi,\alpha)$ equipped with taming map $v^\prime=v-\gamma$ (with $\gamma$ sufficiently small), one needs to check that $\pair{\tphi}{v^\prime}$ is proper and bounded below on $N^\prime \cap \supp(\alpha)$.  The moment map $\tphi$ of the polarized completion $(N^\prime,\tomega,\tphi)$ can be obtained from the proof of Lemma \ref{ExistencePart}:
\[ \tphi = \phi - (\psi-\|\phi\|^2)\Theta_N, \hspace{0.5cm} \pair{\Theta_N}{X}=\frac{g(v_N,X_N)}{g(v_N,v_N)}, \hspace{0.5cm} X \in \liet\]
where $g$ is a $T$-invariant Riemannian metric on $N$ and $\psi \ge \|\phi\|^2$ is a smooth $T$-invariant function on $N^\prime$ which is proper and bounded below on $\supp(\alpha)$, and agrees with $\pair{\phi}{v}=\|\phi\|^2$ on a neighbourhood of $Z_i$.  (Note that $\psi - \|\phi\|^2=0$ on a neighbourhood of $Z_i=\{v_N=0 \} \cap N^\prime$, so it is not a problem that $\Theta_N$ is not defined there.)  Then
\[ \pair{\tphi}{v^\prime}=\psi - \pair{\phi}{\gamma}+(\psi-\|\phi\|^2)\frac{g(v_N,\gamma_N)}{g(v_N,v_N)}.\]
In our case, the neighbourhood $N^\prime=U_i$ is such that $\supp(\alpha)\cap \overline{N^\prime}$ is compact and $\overline{N^\prime} \cap \{v_N=0 \}=Z_i$.  This implies that $|\pair{\phi}{\gamma}|\le C_1$ and $|g(v_{N},\gamma_{N})|\le C_2$ are bounded on $\supp(\alpha) \cap \overline{N^\prime}$, and that $g(v_N,v_N)$ can be bounded below by a positive constant $0<C_3\le 1$ on $(\overline{N^\prime} \cap \supp(\alpha))\setminus \supp(\psi-\|\phi\|^2)$.  Replacing $\gamma$ with $C_3(C_2+1)^{-1}\gamma$, we have
\[ \pair{\tphi}{v^\prime} \ge \psi - C_1 - \frac{C_2}{C_2+1}\psi=\frac{1}{C_2+1}\psi -C_1\]
on $N^\prime \cap \supp(\alpha)$.  This implies that $\pair{\tphi}{v^\prime}$ is proper and bounded below on $N^\prime \cap \supp(\alpha)$.

Because of the `uniqueness' part of Lemma \ref{Hypotheses}, the formula obtained from the above composition of Hamiltonian cobordisms is the same as that obtained from perturbing the taming map $v^\prime=v-\gamma$ from the beginning, as we did in the main part of the paper.  From this one can see that the terms $\frakm_\beta$ of \eqref{FormulaNotPerturbed} are the same as the contributions obtained using the unperturbed taming map $v=\hPhi_{\a}$.

\begin{remark}
Harada-Karshon work with a weaker condition: that a neighbourhood of the localizing set $Z$ admit a \emph{smooth equivariant weak deformation retract} to $Z$.  (This is the same as what we have shortened to `smooth collapse' (Definition \ref{DefinitionSmoothCollapse}), except with $Z^\prime$ not required to be smooth.)  This condition is appealing in that it guarantees the `uniqueness' part of Lemma \ref{Hypotheses}.  An interesting question left open in \cite{KarshonHarada} is whether the localizing set $Z$ in the case $v=\phi$ always satisfies this weaker condition.
\end{remark}

{\small \bibliographystyle{plain} 

}
\end{document}